\newcommand{\LL}{\bm{L}}
\newcommand{\PP}{\bm{P}}
\newcommand{\TT}{\bm{T}}
\newcommand{\EE}{\bm{E}}
\newcommand{\Pg}{\mathsf{P}}
\newcommand{\Eg}{\mathsf{E}}
\newcommand{\Zg}{\mathsf{Z}}
\title{Stochastic dynamics and the Polchinski equation: an introduction}
\author{Roland Bauerschmidt\footnote{University of Cambridge, DPMMS \& Courant Institute, NYU. E-mail: {\tt bauerschmidt@cims.nyu.edu}.}
\and Thierry Bodineau\footnote{IHES, CNRS. E-mail: {\tt bodineau@ihes.fr}.}
\and Benoit Dagallier\footnote{University of Cambridge, DPMMS \& Courant Institute, NYU. E-mail: {\tt bd2543@cims.nyu.edu}.}}
\date{April 23, 2024}
\begin{document}

\maketitle
\begin{abstract}
  This introduction surveys a renormalisation group perspective on log-Sobolev inequalities and related properties of stochastic dynamics.
  We also explain the relationship of this approach to related recent and less recent developments such as
  Eldan's stochastic localisation and the F\"ollmer process,
  the Bou\'e--Dupuis variational formula and the Barashkov--Gubinelli approach,
  the transportation of measure perspective, and the classical analogues of these ideas for Hamilton--Jacobi equations
  which arise in mean-field limits.
\end{abstract}
\setcounter{tocdepth}{2}
\tableofcontents

\section{Introduction}

Functional inequalities have been thoroughly studied  in different contexts \cite{MR1849347, MR3155209}
and one important motivation is to quantify the relaxation of stochastic dynamics by using Poincar\'e and (possibly modified) log-Sobolev inequalities
\cite{MR1746301,MR1971582,MR1292280,MR1410112,MR2283379}.
Statistical mechanics offers an interesting setting
to apply these inequalities and to analyse 
 the information they provide in various physical regimes.
Indeed,  one would like to describe the relaxation to equilibrium of lattice gas  and spin dynamics, which are modelled by stochastic evolutions on high-dimensional state spaces. Their continuum limits, often described by (singular) SPDEs, are also of a lot of interest.

The structure of the equilibrium Gibbs measures is sensitive to the occurrence of phase transitions and 
the dynamical behaviour will also be strongly influenced by phase transitions. In the uniqueness regime, namely in absence of a phase transition (typically at high temperatures), one expects that the dynamics relax exponentially fast uniformly in the dimension of the state space with a speed of relaxation diverging when the temperature approaches   the critical point. 
We refer to Sections \ref{sec: Difficulties arising from statistical physics perspective} and \ref{sec: Difficulties arising from continuum perspective} for more details.
In a phase transition regime   (typically corresponding to low temperatures), one expects different types of behaviours  depending strongly on the type of boundary conditions and we will not discuss the corresponding phenomena in these notes. 

The fast relaxation towards equilibrium
in the uniqueness regime (or at least deep in it)
is well understood and we refer to 
\cite{MR1746301,zbMATH02042289, MR1971582} for very complete accounts of the corresponding theory. 
Roughly speaking, it has been shown for a wide range of models that good mixing properties of the equilibrium measure are equivalent to fast relaxation of the dynamics, namely uniform bounds (with respect to the domains and the boundary conditions) on the Poincar\'e or the log-Sobolev constants.  
For the Ising model \cite{MR1746301,MR3844472,MR4586225},
the validity of the mixing properties have been proved in the whole uniqueness regime leading to strong relaxation statements on the dynamics,
and more detailed dynamical features are also understood in that regime \cite{MR3486171,MR3020173}.
For more general systems and in particular continuous spin systems, the picture is much less complete.

The main goal of this survey is to present a different perspective on the derivation of functional inequalities based 
on the renormalisation group theory,
introduced in physics by Wilson \cite{Wilson_Kogut_1974}, and with its continuous formulation emphasised in particular by Polchinski~\cite{polchinski1984269}. 
The renormalisation group was introduced to
study the critical behaviour and the existence of continuum limits
of equilibrium models of statistical physics and quantum field theory from a unified perspective. 
The renormalisation group formalism associates with a Gibbs measure a flow of measures defined in terms of a renormalised potential
(see Section~\ref{sec:LSI-setup}).
We show how this structure can be used to prove log-Sobolev inequalities under a condition on the renormalised potential 
which is a multiscale generalisation of the Bakry-\'Emery criterion (see Section~\ref{sec: Log-Sobolev inequality via a multiscale Bakry--Emery method}).
The renormalised potential obeys a second-order Hamilton--Jacobi type equation
(the Polchinski equation)
with characteristics given by a stochastic evolution
(see Section~\ref{subsec: Pathwise realisation of the Polchinski semigroup}) which coincide with the stochastic process of Eldan's \emph{stochastic localisation} method introduced for very different purposes \cite{Eldan_ICM}.
Section~\ref{sec:stochloc} provides a dictionary to relate both points of view.

An alternative to the multiscale Bakry--\'Emery method  to derive log-Sobolev inequalities
(with much similarity and both advantages and disadvantages)
is   the entropic stability estimate recently established in \cite{2203.04163}
and reviewed  in  Section~\ref{subsec: Entropic stability estimate}.
This estimate applies to the same Polchinki flow, or its equivalent interpretation as stochastic localisation.
It originated in the spectral and entropic independence estimates \cite{MR4232133,2106.04105}
which are similar estimates for a different flow that takes the role of the Polchinski flow in another kind of model.
This analogy has already been highlighted in \cite{2203.04163} to which we refer for a discussion of this relation.
Compared to the established approaches to functional inequalities for statistical mechanical models, which typically rely on spatial decompositions, all of the approaches discussed
here are more spectral in nature. Spectral quantities are more global and therefore allow to capture
for example the near-critical behaviour better.
This is illustrated in a series of applications reviewed in Section~\ref{sec: Applications}.

The Polchinski renormalisation and the stochastic localisation can be seen as two sides of the same coin, sharing thus very similar structures.
In fact, this type of stochastic equations have been considered 
much earlier by F\"ollmer \cite{zbMATH00193914} as an optimal way to generate a target measure. 
In Section~\ref{sec:variational-transport}, the Polchinski renormalisation flow is shown to coincide   
with the optimal stochastic process associated with a suitable \emph{varying} metric.
In applications to statistical mechanics models, this metric captures the notion of \emph{scale}
so that the Polchinski flow (a continuous renormalisation group flow) provides a   canonical way of decomposing the entropy according to scale.
Finally, in Section \ref{subsec: Variational representation of the renormalised potential}, the renormalised potential
is rewritten as a variational principle using the Polchinski flow, known as the  Bou\'e-Dupuis or Borell formula in the generic context, see 
\cite{MR3112438}, whose origin is in stochastic optimal control theory.
This correspondence is at the heart of the Barashkov--Gubinelli variational method \cite{MR4173157}.

In Appendix~\ref{app:HJ}, these results are compared with the corresponding control theory for classical Hamilton--Jacobi equations,
and a simple comparison with the line of research initiated in \cite{MR4275243} is also given.

\section{Background on stochastic dynamics}
\label{sec:background}

\subsection{Motivation: Spin models and their stochastic dynamics}

Our goal is to study dynamical (and also some equilibrium) aspects
of continuous and discrete spin models of statistical mechanics
such as Euclidean field theories or Ising-type spin models.
Throughout this article, $\Lambda$ will be a general finite set (of vertices),
but we have $\Lambda \subset \Z^d$ large in mind,
or $\Lambda \subset \epsilon \Z^d$ approximating $\R^d$ (or a subset of it) when $\epsilon \to 0$
in the case of models defined in the continuum.
Sometimes we identify $\Lambda$ with $[N]=\{1,\dots, N\}$.
Spin fields are then random functions
$\varphi: \Lambda \to T$ where, for example, $T=\R$ in the case of continuous scalar spins
or $T=\{\pm 1\}$ in the case of (discrete) Ising spins.
For discrete spins, we often write $\sigma$ instead of $\varphi$
for a spin configuration.

\paragraph{Continuous spins}

In the setting of continuous spins, the equilibrium Gibbs measures have expectation of the form
\begin{equation}
  \E_{\nu}[F(\varphi)] \propto \int_{\R^\Lambda} e^{-H(\varphi)} \, F(\varphi)\, d\varphi \,
\label{eq: Gibbs measure}
\end{equation}
where the symbol $\propto$ denotes the equality of the measures up to a normalisation factor.
We will refer to $H$ as the action or as the Hamiltonian (depending on the context).
The main class of $H$ that we will focus on are of the following form:
for spins $\varphi = ( \varphi_x )_{x \in \Lambda}$ taking values in $\bbR$ (or vector spins with values in $\bbR^n$),
an interaction matrix $A$, and a local potential $V$,
\begin{equation}
  H(\varphi) = \frac12 (\varphi,A\varphi) + V_0(\varphi), \qquad V_0(\varphi) =\sum_{x\in\Lambda} V(\varphi_x).
 \label{eq: general Hamiltonian}
\end{equation}
Defining the discrete Laplace operator on $\Lambda \subset \Z^d$ by
\begin{equation}
\forall x \in \Lambda: \qquad 
(\Delta^\Lambda f)_x := \sum_{y \in \Lambda: y\sim x} (f_y-f_x),
\label{eq: discrete Laplace}
\end{equation}
a classical choice of interaction is obtained by setting  
$A=- \beta \Delta^\Lambda$ for some (inverse temperature) parameter $\beta >0$.
In this case, the Hamiltonian reads
\begin{equation}
\forall \varphi \in \R^\Lambda: \qquad 
H(\varphi) = \frac{\beta}{4} \sum_{x,y \in \Lambda, \atop x \sim y} (\varphi_x - \varphi_y)^2 
+  V_0(\varphi),
 \label{eq: specific Hamiltonian}
\end{equation}
where the nearest neighbour interaction is denoted by $x \sim y$ and the sum counts each pair $\{x,y\}$ twice.
As an example, a typical choice for the single-spin potential $V$ is the Ginzburg--Landau--Wilson $\varphi^4$ potential,
in which case one usually sets $\beta=1$ (and $r$ has the role of a temperature),
\begin{equation}
\label{eq: Ginzburg--Landau potential}
V(\varphi) = \frac14 g|\varphi|^4 +  \frac12 r |\varphi|^2
\quad \text{with $g>0$ and $r<0$.}
\end{equation}
The following Glauber--Langevin dynamics is reversible for the measure introduced in \eqref{eq: Gibbs measure}:
\begin{equation}
  d\varphi_t = -\nabla H(\varphi_t) \, dt + \sqrt{2} dB_t.
\label{eq: Langevin equation}
\end{equation}
For the choices \eqref{eq: specific Hamiltonian} and \eqref{eq: Ginzburg--Landau potential}, this stochastic differential equation (SDE) reads
\begin{equation}
  d\varphi_t
  = -A\varphi_t \, dt- \nabla V(\varphi_t) \, dt+ \sqrt{2}dB_t
  = \Delta^\Lambda\varphi_t \, dt- g|\varphi_t|^2\varphi_t \, dt
  - r \varphi_t \, dt
  + \sqrt{2}dB_t .
  \label{eq: general Glauber}
\end{equation}

In this survey, we are interested in the long time behaviour of these dynamics when the number of spins is large.  
We will consider two cases: either $\Lambda \to \Z^d$ for the Glauber dynamics of an Ising-type model with continuous spins; or $\Lambda \subset \epsilon\Z^d$ with $\Lambda \to \R^d$ or $\Lambda \to [0,1]^d$ in which case
a suitably normalised version of $\varphi$ describes the solution of a singular SPDE in the limit $\epsilon \to 0$.

\paragraph{Discrete spins}

In the setting of discrete spins, we focus on the Ising model where  $\sigma \in \{\pm 1\}^\Lambda$ and
\begin{equation}
  \E_{\nu}[F(\sigma)] \propto \sum_{\sigma \in \{\pm 1\}^\Lambda} e^{-\frac{\beta}{2} (\sigma, A\sigma)} F(\sigma)
  \label{eq:  Ising model}
\end{equation}
for some symmetric coupling matrix $A$.
Its Glauber dynamics is a continuous- or discrete-time Markov process with local transition rates
$c(\sigma,\sigma^x)$ from a configuration $\sigma$ to $\sigma^x$ where $\sigma^x \in \{\pm 1\}^\Lambda$ denotes
the configuration obtained from $\sigma\in\{\pm 1\}^\Lambda$ by flipping the sign of the spin at $x$. 
The transition rates are assumed to satisfy the detailed-balance condition 
\begin{equation}
\label{eq: detailed balance}
\nu(\sigma)c(\sigma,\sigma^x)=\nu(\sigma^x)c(\sigma^x,\sigma),
\end{equation} 
which implies that the measure \eqref{eq:  Ising model} is invariant.
Typical choices are described below in the next section.
We will be interested in the large time behaviour of the dynamics when $\Lambda \to \bbZ^d$.

\subsection{Generalities on Glauber--Langevin dynamics}
\label{sec:generalities}

We now discuss some standard general properties of the stochastic dynamics such as its (finite-dimensional state space) ergodicity.

\paragraph{Continuous spins}
The Glauber--Langevin dynamics \eqref{eq: Langevin equation}
 is a Markov process with generator
\begin{equation}
\label{eq: Langevin generator}
  \Delta^H = \Delta -(\nabla H,\nabla) = e^{+H} (\nabla, e^{-H}\nabla)
\end{equation}
where
\begin{equation}
  \Delta = \sum_{x\in\Lambda} \ddp{^2}{\varphi_x^2} , \qquad
  (\nabla H,\nabla) = \sum_{x\in\Lambda} \ddp{H}{\varphi_x}\ddp{}{\varphi_x}.
\end{equation}
The state space $\R^\Lambda$ will often be denoted by $X$.
The distribution of the spin configuration evolves in time along the stochastic dynamics and we denote by $m_t$ the distribution 
at time $t$ starting from an initial measure $m_0$: given $F_0:X \to \R$,
\begin{equation}
\label{eq: distribution at time t}
  \E_{m_t} [F_0] = \E_{m_0}[F_t] 
  \quad \text{with}  \quad F_t(\varphi) = \TT_t F (\varphi) := \E_{\varphi_0=\varphi} \qa{F(\varphi_t)} ,
\end{equation}
where $\TT_t$ is the semigroup associated with the generator $\Delta^H$.
In particular, $F_t = \TT_t F$
solves the Kolmogorov backward equation 
\begin{equation}
\label{eq: Kolmogorov backward equation}
\ddp{}{t}F_t=\Delta^H F_t.
\end{equation}

Starting from the SDE, this can be verified using It\^o's formula.
The measure $\nu$, introduced in \eqref{eq: Gibbs measure}, 
is reversible with respect to this dynamics,
and the following integration by parts formula holds for sufficiently smooth $F$:
\begin{equation}
  \E_{\nu} [F(-\Delta^H G)] = \E_{\nu}[(\nabla F,\nabla G)].
\end{equation}
The right-hand side is the Dirichlet form:
\begin{equation}
\label{eq: Dirichlet continuous}
D_\nu(F,G) := \E_{\nu}[(\nabla F,\nabla G)] \quad \text{and} \quad   D_\nu(F) := D_\nu(F,F).
\end{equation}
In particular, the measure $\nu$ is invariant, i.e., if $\varphi_0$ is distributed
according to $\nu$ then $\varphi_t$ also is:
\begin{align}
\ddp{}{t}\E_{\nu} [F_t]  = \E_{\nu} [\Delta^H F_t]  = \E_{\nu} [(\nabla F_t,\nabla 1)]  = 0.
\end{align}
Moreover, we will always impose the following ergodicity assumption:
\begin{equation}
\label{e:ergodicity-continuous}
\forall F_0\in L^2(\nu): \qquad F_t \to \E_\nu[F_0] \qquad \text{in $L^2(\nu)$}.
\end{equation}
In particular, for any bounded smooth functions $F_0: X \to \R$ and $g: \R\to \R$,
\begin{equation}
  \label{e:ergodicity-continuous2}
  \lim_{t\to\infty} \E_\nu[g(F_t)] = g(\E_\nu[F_0]).
\end{equation}
As the next exercise shows, the ergodicity assumption is qualitative if $\Lambda$ is finite
and holds in all examples of interest.

\begin{exercise}
  Show that $\frac12 |\nabla H|^2-\Delta H \to\infty$ as $|\varphi|\to\infty$ implies that $-\Delta^H$ has discrete spectrum on $L^2(\nu)$
  with unique minimal eigenvalue $0$, and deduce \eqref{e:ergodicity-continuous} and \eqref{e:ergodicity-continuous2}.
\end{exercise}

For the discreteness of the spectrum,
one may observe that the multiplication operator $U= e^{\frac12 H}$ is an isometry from $L^2(\nu)$ onto $L^2(\R^N)$
that maps $-\Delta^H$ to the Schr\"odinger operator $-\Delta + W$
on $\R^N$ with $W = \frac14 |\nabla H|^2 -\frac12 \Delta H$.
The result therefore follows from the spectral theorem and the result that a Schr\"odinger operator with a potential $W \in L^1_{\rm loc}(\R^N)$
that is bounded below and satisfies $W\to\infty$ has compact resolvent
\cite[Theorem XIII.67]{MR0493421} (a version of Rellich's theorem).

For further general facts on stochastic dynamics in the continuous setting, we refer to \cite{MR3155209,MR1971582,MR1292280}.
Even though we will not need it, let us also mention that if the distribution of $m_t$ is written as $dm_t = G_t \, d\nu$ where $\nu=m_\infty$ is the invariant measure and $G_t = dm_t/d\nu$ is the density of $m_t$ relative to it, then
\begin{equation}
  \ddp{}{t}G_t = (\Delta^H)^* G_t = \Delta^H G_t
\end{equation}
where $(\Delta^H)^*=\Delta^H$ is the adjoint of $\Delta^H$ with respect to $\nu$.
This can also be expressed as an equation for $m_t$ (interpreted in a weak sense),
which is the Fokker--Planck equation:
\begin{equation}
  \ddp{m_t}{t} = \Delta m_t +(\nabla,m_t\nabla H) = (\nabla, m_t\nabla (\log m_t+H)).
\end{equation}

\paragraph{Discrete spins}

A similar structure can also be associated with discrete dynamics.
In particular, the Glauber dynamics of an Ising model is determined by its local jump rates
$c(\sigma,\sigma^x)$ satisfying the detailed balance condition
as in \eqref{eq: detailed balance}.
For all $F: \Omega \to \R$, where $\Omega=\{\pm 1\}^\Lambda$ is the finite state space,
the generator and Dirichlet form associated with the Glauber dynamics are
\begin{equation}
\label{eq: generator discrete}
  \Delta_c F(\sigma) = \sum_{x\in\Lambda} c(\sigma,\sigma^x) (F(\sigma^x)-F(\sigma))
\end{equation}
and
\begin{equation} 
\label{e:Dirichlet-discrete}
D_\nu (F)
  = -\sum_{\sigma\in\Omega} F(\sigma) \Delta_c F(\sigma) \nu(\sigma)
  = \frac12 \sum_{x\in\Lambda} \sum_{\sigma\in\Omega} c(\sigma,\sigma^x) (F(\sigma^x)-F(\sigma))^2\nu(\sigma),
\end{equation}
where we used the detailed balance condition for the second equality.
We will again write $D_\nu (F,F)$ for the quadratic form associated with $D_\nu (F)$ by polarisation.
As in the continuous setting, we will always impose an irreducibility assumption which is equivalent to the analogue of \eqref{e:ergodicity-continuous}:
\begin{equation} \label{e:ergodicity-discrete}
  \forall F_0: \Omega\to \R: \qquad F_t \to \E_{\nu}[F_0],
\end{equation}
where $F_t(\sigma) = e^{\Delta_c t}F_0(\sigma)= \E_{\sigma_0=\sigma}[F(\sigma_t)]$.
Indeed, assuming irreducibility, the convergence \eqref{e:ergodicity-discrete} is a consequence of the Perron--Frobenius theorem, see, e.g., \cite{MR1490046}.

Many choices of jump rates can be considered, but as long as the jump rates are uniformly bounded from above and below the different
Dirichlet forms are equivalent and the large time behaviour of the microscopic dynamics will be similar.
Often a natural choice of jump rates is that corresponding to the \emph{standard} Dirichlet form.
This choice formally corresponds
to $c(\sigma,\sigma^x)=1$ in \eqref{e:Dirichlet-discrete} which however are not the jump rates of the associated Markov process
because the constant function $1$ does not satisfy the detailed balance condition.
However, rewriting \eqref{e:Dirichlet-discrete} as
\begin{equation}
  D_\nu (F)
  = \frac12 \sum_{x\in\Lambda} \sum_{\sigma\in\Omega} \frac12\qa{c(\sigma,\sigma^x)+c(\sigma^x,\sigma)\frac{\nu(\sigma^x)}{\nu(\sigma)}} (F(\sigma^x)-F(\sigma))^2\nu(\sigma),
\end{equation}
we see that the standard Dirichlet form corresponds to the jump rates (satisfying detailed balance)
\begin{equation}
c(\sigma,\sigma^x) = \frac12\pa{1 + \frac{\nu(\sigma^x)}{\nu(\sigma)}}.
\end{equation}
Another popular choice are the heat-bath jump rates which are given by
\begin{equation}
  c^{\rm HB}(\sigma,\sigma^x)
  = \frac{\nu(\sigma^x)}{\nu(\sigma)+\nu(\sigma^x)}
  = \pa{1+\frac{\nu(\sigma)}{\nu(\sigma^x)}}^{-1}
\end{equation}
with the corresponding Dirichlet form 
\begin{equation}
  D_\nu^{\rm HB}(F)
  = \frac12
  \sum_{x\in\Lambda} \sum_{\sigma\in\Omega} \Psi(\nu(\sigma),\nu(\sigma^x)) (F(\sigma)-F(\sigma^x))^2, \qquad \Psi(a,b)=\frac{ab}{a+b}.
\end{equation}
The Metropolis jump rates correspond to $\Psi(a,b)= \min\{a,b\}$.
For further general discussion of Glauber dynamics in the discrete case, see \cite{MR1746301} and \cite{MR1490046,MR2283379}.

As in \eqref{eq: distribution at time t}, the distribution at time $t$ will be denoted by $m_t$.

\subsection{Log-Sobolev inequality}

In the above examples (with $\Lambda$ finite), one always has the qualitative ergodicity
$m_t \to \nu= m_\infty$
which amounts to an irreducibility condition.
One of the main questions we are interested in is how fast this convergence is.
A very good measure for the distance between $m_t$ and $\nu= m_\infty$, with many further
applications, is the relative entropy:
\begin{equation}
  \bbH(m_t |\nu) = \E_{\nu} [F_t \log F_t] = \ent_{\nu}(F_t ), \qquad F_t = \frac{dm_t}{d\nu}.
\end{equation}
More generally, when $F$ is nonnegative but does not necessarily satisfy $\E_\nu \qa{F} = 1$, define
\begin{equation}
\label{eq: def entropie}
  \ent_\nu (F)= \E_{\nu} [\Phi(F)]- \Phi(\E_\nu [F]), \qquad \Phi(x)= x\log x.
\end{equation}
The relative entropy is not symmetric and thus not a metric, but it has many very useful
properties making it a good quantity, and it controls the total variation distance by Pinsker's inequality:
\begin{equation} \label{e:Pinsker}
  \|m_t-\nu\|_{\rm TV}^2  \leq 2 \bbH(m_t|\nu).
\end{equation}
One of the most important properties is that the relative entropy decreases under the dynamics.
We begin with the continuous case.

\begin{proposition}[de Bruijn identity]
\label{prop: de Bruijn identity}
Consider the (continuous spin) stochastic dynamics \eqref{eq: Langevin equation} with invariant measure $\nu$ and 
Dirichlet form $D_{\nu}$ defined in \eqref{eq: Dirichlet continuous}.
Then for  $F_t(\varphi) = \E_{\varphi_0=\varphi} \qa{F(\varphi_t)}$ as in \eqref{eq: distribution at time t},%
\begin{equation} 
\label{e:dBidentity}
    \ddp{}{t} \ent_{\nu}(F_t) = -D_{\nu}(\log F_t, F_t) = 
    - I_\nu (F_t) \leq 0,
\end{equation}
where the Fisher information is defined in terms of the Dirichlet form \eqref{eq: Dirichlet continuous}:
\begin{equation} 
\label{e:Idef}
I_\nu (F_t)
  := \E_{\nu} \qa{\frac{(\nabla F_t)^2}{F_t}}
  = 4D_\nu \big(  \sqrt{F_t} \big).
\end{equation} 
\end{proposition}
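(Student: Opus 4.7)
The plan is to differentiate $\ent_\nu(F_t)$ under the integral sign, use the Kolmogorov backward equation to replace $\partial_t F_t$ by $\Delta^H F_t$, integrate by parts against the reversible measure $\nu$, and finally rewrite the resulting Dirichlet form as a Fisher information via the chain rule.

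More concretely, first I would note that since $\nu$ is invariant, $\E_\nu[F_t] = \E_\nu[F_0]$ does not depend on $t$, so the constant $\Phi(\E_\nu[F_0])$ in the definition \eqref{eq: def entropie} drops out of the time derivative. Writing $\Phi(x) = x\log x$, one gets formally
\begin{equation}
\ddp{}{t}\ent_\nu(F_t) = \E_\nu\qa{\Phi'(F_t)\, \partial_t F_t} = \E_\nu\qa{(1+\log F_t)\, \Delta^H F_t},
\end{equation}
where the second identity uses the Kolmogorov backward equation \eqref{eq: Kolmogorov backward equation}. Applying the integration by parts formula for $\Delta^H$ against $\nu$ gives
\begin{equation}
\E_\nu\qa{(1+\log F_t)\, \Delta^H F_t} = -\E_\nu\qa{(\nabla(1+\log F_t), \nabla F_t)} = -\E_\nu\qa{(\nabla\log F_t, \nabla F_t)},
\end{equation}
which is exactly $-D_\nu(\log F_t, F_t)$ by the definition \eqref{eq: Dirichlet continuous}. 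The chain rule $\nabla \log F_t = \nabla F_t/F_t$ then identifies this with $-I_\nu(F_t)$, and the identity $4D_\nu(\sqrt{F_t}) = I_\nu(F_t)$ follows from $\nabla\sqrt{F_t} = \nabla F_t/(2\sqrt{F_t})$. Nonnegativity of $I_\nu(F_t)$ is obvious.

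The main obstacle, as usual in such derivations, is purely technical: justifying the interchange of $\partial_t$ with $\E_\nu$, ensuring that $F_t$ is bounded away from $0$ so that $\log F_t$ and $(\nabla F_t)^2/F_t$ are well defined, and checking enough decay at infinity to apply the integration by parts without boundary terms. One standard way to handle this is to approximate $F_0$ first by a function bounded above and below by positive constants (replacing $F_0$ by $\epsilon + (1-\epsilon)F_0/\E_\nu[F_0]$ and using that $\TT_t$ preserves positivity and the constant $1$), for which the formal computation above is rigorous because $F_t$ inherits smoothness and positive lower bounds from the semigroup, and then pass to the limit using the monotonicity and convexity properties of $\ent_\nu$ and $I_\nu$. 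I would simply state these regularity caveats and leave the technicalities to the references \cite{MR3155209,MR1971582} already cited in the text.
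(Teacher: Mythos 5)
Your proof is correct and follows essentially the same route as the paper's: differentiate under the integral, use the backward Kolmogorov equation to replace $\partial_t F_t$ by $\Delta^H F_t$, integrate by parts against $\nu$ to produce $-D_\nu(\Phi'(F_t),F_t)$, drop the affine part of $\Phi'$, and identify the Dirichlet form with the Fisher information via the chain rule. The regularity caveats you flag (positive lower bound on $F_t$, interchange of derivative and expectation) are left implicit in the paper's proof as well, so there is no substantive difference.
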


\begin{proof}
  Since $\Phi(\E_\nu[F_t]) = \Phi(\E_\nu[F_0])$ is independent of $t$ and recalling that $\Delta^H$ is defined in~\eqref{eq: Langevin generator},
  \begin{align}
  \label{eq: Modified dirichlet}
    \ddp{}{t} \ent_{\nu}(F_t)
    &= \ddp{}{t} \E_{\nu}[\Phi(F_t)]
    = \E_\nu [\Phi'(F_t)\dot F_t]
    = \E_\nu [\Phi'(F_t)\Delta^H F_t]
      \nnb
    &= -D_\nu (\Phi'(F_t),F_t)
    = -D_\nu (\log F_t+1,F_t)
    = -D_\nu (\log F_t,F_t).
\end{align}
To complete the identity \eqref{e:dBidentity}, it is enough to notice that 
\begin{equation}  \label{e:Idifferent}
 D_\nu (\log F_t,F_t) = \E_{\nu}[(\nabla \log F_t, \nabla F_t)]
  = \E_{\nu} \qa{\frac{(\nabla F_t)^2}{F_t}}
  = 4 \E_{\nu} [(\nabla \sqrt{F_t})^2].
\qedhere
\end{equation} 
\end{proof}

Using the identity \eqref{e:dBidentity}, the decay of the entropy can be quantified
in terms of the \emph{log-Sobolev constant} which will be a key quantity we study.

\begin{definition}
\label{def: LSI}
A probability measure $\nu$ on $X=\bbR^N$,  satisfies the log-Sobolev inequality (LSI) with respect to $D_\nu$ if 
there is a constant $\gamma>0$ such that the following holds for any smooth, compactly supported function $F:X\to\R_+$:
\begin{equation} 
\label{e:LSI}
\ent_{\nu} (F) \leq \frac{2}{\gamma} D_\nu(\sqrt{F}).
\end{equation}
The largest  choice of $\gamma$ in this inequality is the log-Sobolev constant (with respect to $D_\nu$).
The normalisation with the above factor $2$ is convenient (see Proposition~\ref{prop:spectralgap}).
\end{definition}

The upshot is that the exponential decay of the relative entropy $\bbH(m_t|\nu)$ of the distribution $m_t$ (defined in \eqref{eq: distribution at time t}) along the flow of the Glauber-Langevin dynamics,
\begin{equation}
\label{eq: exponential decay}
  \bbH(m_t|\nu) \leq e^{-2\gamma t}\, \bbH(m_0|\nu),
\end{equation}
follows, by Gr\"onwall's lemma, from
\begin{equation}
\ddp{}{t} \bbH(m_t|\nu)\leq -2\gamma\, \bbH(m_t|\nu),
\end{equation}
which, by the de Bruijn identity, is a consequence of the log-Sobolev inequality \eqref{e:LSI}.
Thus the log-Sobolev constant provides a quantitative estimate on the speed of relaxation 
of the dynamics towards its stationary measure.
This is one of the main motivations for deriving the log-Sobolev inequality.

The log-Sobolev inequality \eqref{e:LSI} has also other consequences. Especially, it  is equivalent to the hypercontractivity of the associated Markov semigroup.
The hypercontractivity was, in fact, its original motivation \cite{MR0420249},
see also \cite{MR0210416,doi:10.1063/1.1664760}.

\begin{theorem}[Hypercontractivity \cite{MR0420249}]
\label{thm: Hypercontractivity}
  The measure $\nu$ satisfies the log-Sobolev inequality \eqref{e:LSI} with constant $\gamma$
  if and only if
  the associated semigroup $\TT_t$ is hypercontractive:
  \begin{equation}
    \|\TT_tF\|_{L^{q(t)}(\nu)} \leq \|F\|_{L^p(\nu)}
    \quad \text{with} \quad 
    \frac{q(t)-1}{p-1} = e^{2\gamma t}.
\end{equation}
\end{theorem}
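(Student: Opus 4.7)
The plan is to prove the equivalence by introducing the one-parameter family of $L^q$ norms along the semigroup trajectory and showing that their monotonicity is equivalent to the LSI. Fix $p>1$ and a positive, bounded function $F$ (the case $F\geq 0$ follows by approximation $F+\epsilon$ then $\epsilon\to 0$; negative or complex $F$ is handled by $|F|$ since $|\TT_t F|\leq \TT_t |F|$). Define $q:[0,\infty)\to[p,\infty)$ as the unique smooth solution of $q(t)-1=(p-1)e^{2\gamma t}$, set $u_t=\TT_t F$, and introduce
\begin{equation}
\Lambda(t) := \|u_t\|_{L^{q(t)}(\nu)}, \qquad N(t) := \E_\nu[u_t^{q(t)}],
\end{equation}
so that $\log\Lambda(t)=\frac{1}{q(t)}\log N(t)$. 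The two endpoints to compare are $\Lambda(0)=\|F\|_{L^p(\nu)}$ and $\Lambda(t)=\|\TT_t F\|_{L^{q(t)}(\nu)}$, so showing $\Lambda$ is nonincreasing gives hypercontractivity.

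The computational heart of the proof is the following identity, which I would establish by differentiating $N(t)$ using $\partial_t u_t=\Delta^H u_t$ (Proposition on the Kolmogorov backward equation), then integration by parts against $\nu$ via the Dirichlet form, and finally rearranging:
\begin{equation}
q(t)^2\, N(t)\, \frac{d}{dt}\log\Lambda(t) = q'(t)\,\ent_\nu\!\bigl(u_t^{q(t)}\bigr) - 4\bigl(q(t)-1\bigr)\, D_\nu\!\bigl(u_t^{q(t)/2}\bigr).
\end{equation}
The single calculation to carry out uses $D_\nu(u^{q-1},u)=(q-1)\E_\nu[u^{q-2}|\nabla u|^2]$ and the chain-rule identity $|\nabla u^{q/2}|^2=(q/2)^2 u^{q-2}|\nabla u|^2$. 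Writing $v_t=u_t^{q(t)/2}$, the right-hand side becomes $q'(t)\,\ent_\nu(v_t^2) - 4(q(t)-1)\,D_\nu(v_t)$. The clever choice $q'(t)=2\gamma(q(t)-1)$, which is exactly what our definition of $q$ satisfies, turns this into $2\gamma(q(t)-1)\bigl[\ent_\nu(v_t^2)-\tfrac{2}{\gamma}D_\nu(v_t)\bigr]$, and this is $\leq 0$ precisely by the LSI \eqref{e:LSI} applied to $F=v_t^2$. Hence $\Lambda$ is nonincreasing and hypercontractivity follows.

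For the converse, I would use the same identity at $t=0$ with $p=2$. Since $\Lambda(0)=\|F\|_{L^2(\nu)}$ and hypercontractivity gives $\Lambda(t)\leq\Lambda(0)$ for $t\geq 0$, we have $\frac{d}{dt}\log\Lambda(t)|_{t=0}\leq 0$. With $q(0)=2$ and $q'(0)=2\gamma$, the identity evaluated at $t=0$ with $G=F^2$ yields $2\gamma\,\ent_\nu(G) - 4 D_\nu(\sqrt{G})\leq 0$, which is exactly \eqref{e:LSI}. Running $F$ over a dense class of smooth positive functions then gives the LSI for all admissible $F$.

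The main obstacle is not conceptual but technical: one must justify differentiating $\Lambda(t)$ under the expectation (so $u_t$ should be bounded above and below away from $0$, and $q\mapsto u^q$ should be differentiable in a uniform way). The cleanest route is to first assume $F$ is smooth with $0<c\leq F\leq C<\infty$, which is preserved by $\TT_t$ by the maximum principle for the generator $\Delta^H$, carry out the argument in that class, then pass to general nonnegative $F\in L^p(\nu)$ by monotone convergence on both sides of the inequality. The Schrödinger-operator conjugation recalled just after \eqref{e:ergodicity-continuous2} also provides enough regularity of $\TT_t$ to justify all derivatives rigorously.
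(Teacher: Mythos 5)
The paper does not give a proof of Theorem~\ref{thm: Hypercontractivity}: it is stated with a citation to Gross~\cite{MR0420249} and the discussion moves on. Your argument correctly reproduces the standard Gross proof — the differential identity $q^2 N \tfrac{d}{dt}\log\Lambda = q'\ent_\nu(u^q) - 4(q-1)D_\nu(u^{q/2})$ derived from $\partial_t u = \Delta^H u$ and integration by parts, the choice $q'=2\gamma(q-1)$ making the sign equivalent to LSI, and the converse via the derivative of $\Lambda$ at $t=0$ with $p=2$ — and the regularity discussion at the end is the right way to make the computation rigorous.
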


We note that the hypercontractivity does not follow in this form from the modified log-Sobolev inequality
which will be introduced in \eqref{e:mLSI} below for dynamics with discrete state spaces.

\medskip

More generally, the log-Sobolev inequality is part of a larger class of functional inequalities.
In particular, it implies the spectral gap inequality (also called Poincar\'e inequality).

\begin{proposition}[Spectral gap inequality] \label{prop:spectralgap}
  The log-Sobolev inequality 
  with constant $\gamma$ implies the spectral gap inequality (also called Poincar\'e inequality)
  with the same constant:
  \begin{equation}
    \var_\nu [F] \leq \frac{1}{\gamma} \E_\nu \qa{ (\nabla F)^2 }.
  \end{equation}
\end{proposition}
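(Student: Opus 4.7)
The plan is the standard linearisation argument: apply the LSI to a family $F_\varepsilon$ that reduces to the constant function $1$ as $\varepsilon\to 0$, and match the terms of order $\varepsilon^2$ on both sides.

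More concretely, fix a smooth, bounded test function $g$ on $X$. Since both sides of the Poincaré inequality are invariant under adding a constant to $g$, I may assume $\E_\nu[g]=0$. For $\varepsilon>0$ small enough that $1+\varepsilon g>0$ pointwise, set $F_\varepsilon = (1+\varepsilon g)^2$, so that $\sqrt{F_\varepsilon}=1+\varepsilon g$. The point of this choice is that it makes the right-hand side of \eqref{e:LSI} linear-looking and exactly quadratic in $\varepsilon$:
\begin{equation}
D_\nu(\sqrt{F_\varepsilon}) = \E_\nu[(\nabla(1+\varepsilon g))^2] = \varepsilon^2 \E_\nu[(\nabla g)^2].
\end{equation}

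For the left-hand side, I would Taylor-expand $\Phi(x)=x\log x$ around $x=1$ using $\Phi(1+u)=u+\tfrac12 u^2-\tfrac16 u^3+O(u^4)$. Writing $F_\varepsilon = 1 + (2\varepsilon g + \varepsilon^2 g^2)$ and plugging in, the first-order term integrates to $0$ because $\E_\nu[g]=0$, and a careful collection of the $\varepsilon^2$ contributions gives
\begin{equation}
\E_\nu[\Phi(F_\varepsilon)] = 3\varepsilon^2\E_\nu[g^2] + O(\varepsilon^3),\qquad \Phi(\E_\nu[F_\varepsilon]) = \varepsilon^2\E_\nu[g^2] + O(\varepsilon^4),
\end{equation}
so $\ent_\nu(F_\varepsilon) = 2\varepsilon^2\var_\nu(g) + O(\varepsilon^3)$. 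Dividing \eqref{e:LSI} by $\varepsilon^2$ and letting $\varepsilon\downarrow 0$ yields $\var_\nu(g) \leq \frac{1}{\gamma}\E_\nu[(\nabla g)^2]$, which is the spectral gap inequality for all bounded smooth $g$; a standard density argument then extends it to the usual class of test functions.

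There is no real obstacle here; the only point requiring a touch of care is justifying that the $O(\varepsilon^3)$ error terms in the entropy expansion are genuinely controlled uniformly so that they vanish after dividing by $\varepsilon^2$. This is why I restrict to bounded $g$ at the outset: boundedness of $g$ makes $F_\varepsilon$ and $\log F_\varepsilon$ uniformly bounded for small $\varepsilon$, legitimising the Taylor expansion of $\Phi$ inside the expectation by dominated convergence. The discrete-spin analogue works identically with $\nabla$ replaced by the discrete gradient appearing in \eqref{e:Dirichlet-discrete}, using the inequality $(\sqrt{a}-\sqrt{b})^2\leq (a-b)^2/(4\min(a,b))$ to relate $D_\nu(\sqrt{F_\varepsilon})$ to $\varepsilon^2 D_\nu(g)$ at leading order.
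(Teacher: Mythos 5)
Your proof is correct and uses the same standard linearisation argument the paper cites (apply the LSI to a small perturbation of the constant function $1$, expand in $\varepsilon$, and match the $\varepsilon^2$ terms); the only cosmetic difference is that you take $F_\varepsilon=(1+\varepsilon g)^2$ rather than $1+\varepsilon g$, which saves you from expanding $\sqrt{1+\varepsilon g}$. One small remark: in the discrete case your choice already gives $D_\nu(\sqrt{F_\varepsilon})=\varepsilon^2 D_\nu(g)$ exactly, since $\sqrt{F_\varepsilon}=1+\varepsilon g$, so the inequality $(\sqrt a-\sqrt b)^2\leq (a-b)^2/(4\min(a,b))$ you invoke is superfluous.
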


The same conclusion holds assuming the modified log-Sobolev inequality \eqref{e:mLSI} below instead of the log-Sobolev inequality.
The proof follows by applying the log-Sobolev inequality to the test function $1 + \varepsilon F$ and then letting 
$\varepsilon$ tend to 0, see, e.g., \cite[Lemma~2.2.2]{MR1490046}.

\medskip

We refer to \cite{MR3155209} for an in-depth account on related functional inequalities and to \cite{MR1849347} for applications of the log-Sobolev inequality to the concentration of measure phenomenon.

\bigskip

For discrete spin models, the counterpart of Proposition \ref{prop: de Bruijn identity} is  the following proposition.
\begin{proposition}[de Bruijn identity]
\label{prop: de Bruijn identity discrete}
Consider the discrete dynamics with invariant measure $\nu$ \eqref{eq:  Ising model} and 
 Dirichlet form $D_{\nu}$ defined in \eqref{e:Dirichlet-discrete}.
 Then for  $F_t(\sigma) = \E_{\sigma_0=\sigma} \qa{F(\sigma_t)}$,
 \begin{equation} 
 \label{e:dBidentity bis}
    \ddp{}{t} \ent_{\nu}(F_t) = -D_{\nu}(\log F_t, F_t)   \leq 0.
  \end{equation}
\end{proposition}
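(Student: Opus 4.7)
The proof mirrors the continuous case of Proposition~\ref{prop: de Bruijn identity}, with the one essential difference being that the chain-rule identity \eqref{e:Idifferent} no longer holds: in the discrete setting, $D_\nu(\log F_t, F_t)$ cannot in general be rewritten as $4 D_\nu(\sqrt{F_t})$, which is precisely why a \emph{modified} log-Sobolev inequality \eqref{e:mLSI} will be needed later. Nonetheless, non-negativity of $D_\nu(\log F_t, F_t)$ still holds and suffices for \eqref{e:dBidentity bis}.

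The plan is the following. First, I would fix $F_0 : \Omega \to (0,\infty)$ strictly positive (ensuring $\log F_t$ is well-defined along the semigroup, since the semigroup of the irreducible chain preserves strict positivity) and note that $\E_\nu[F_t] = \E_\nu[F_0]$ is constant in $t$ by invariance of $\nu$, so $\Phi(\E_\nu[F_t])$ does not contribute to the time derivative. Differentiating under the finite sum then gives, just as in \eqref{eq: Modified dirichlet},
\begin{equation}
\ddp{}{t} \ent_\nu(F_t) = \E_\nu[\Phi'(F_t)\, \partial_t F_t] = \E_\nu[\Phi'(F_t)\, \Delta_c F_t].
\end{equation}

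Second, I would use reversibility (the detailed balance condition \eqref{eq: detailed balance}), which makes $\Delta_c$ self-adjoint in $L^2(\nu)$ and underlies the symmetric expression in \eqref{e:Dirichlet-discrete}, to obtain the integration-by-parts identity
\begin{equation}
\E_\nu[G\, \Delta_c F] = -D_\nu(G, F) := -\frac12 \sum_{x \in \Lambda} \sum_{\sigma \in \Omega} c(\sigma, \sigma^x) \bigl(G(\sigma^x) - G(\sigma)\bigr)\bigl(F(\sigma^x) - F(\sigma)\bigr)\, \nu(\sigma)
\end{equation}
for every $G : \Omega \to \R$. Applying this with $G = \Phi'(F_t) = \log F_t + 1$, and using that $D_\nu(\cdot, F_t)$ annihilates the constant function, yields
\begin{equation}
\ddp{}{t} \ent_\nu(F_t) = -D_\nu(\log F_t + 1, F_t) = -D_\nu(\log F_t, F_t).
\end{equation}

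Third, to conclude that this quantity is $\leq 0$, I would read off from the symmetric expression for the Dirichlet form that
\begin{equation}
D_\nu(\log F_t, F_t) = \frac12 \sum_{x \in \Lambda} \sum_{\sigma \in \Omega} c(\sigma, \sigma^x)\, \bigl(\log F_t(\sigma^x) - \log F_t(\sigma)\bigr)\bigl(F_t(\sigma^x) - F_t(\sigma)\bigr)\, \nu(\sigma),
\end{equation}
and invoke the elementary inequality $(\log a - \log b)(a - b) \geq 0$ for all $a, b > 0$, combined with $c \geq 0$. Each term is non-negative, so $D_\nu(\log F_t, F_t) \geq 0$.

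The only mild subtlety, which I see as the main point to be careful about rather than a genuine obstacle, is the positivity of $F_t$: if $F_0$ vanishes at some configurations one should either approximate $F_0$ by $F_0 + \varepsilon$ and send $\varepsilon \to 0$, or invoke irreducibility to see that $F_t > 0$ for $t > 0$ whenever $F_0 \not\equiv 0$. Once this is handled, the derivation above is a short and direct discrete analogue of the continuous argument.
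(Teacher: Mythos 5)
Your argument reproduces exactly the paper's (one-line) proof: differentiate the entropy, use reversibility of $\Delta_c$ to integrate by parts onto the Dirichlet form, absorb the constant $+1$ from $\Phi'$, and read off non-negativity from the monotonicity of $\log$. The only addition is making the positivity and sign steps explicit, which is consistent with what the paper leaves implicit.
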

The proof is identical to \eqref{eq: Modified dirichlet} replacing the continuous generator by $\Delta_c$
defined in \eqref{eq: generator discrete}.
As the chain rule no longer applies in the discrete setting, the Fisher information cannot be recovered.  Nevertheless the exponential decay of the entropy \eqref{eq: exponential decay}  can be established under  the modified log-Sobolev inequality (mLSI), i.e., if there is $\gamma >0$ such that for any function $F: \{ \pm 1\}^\Lambda \mapsto \bbR^+$:
\begin{equation} 
\label{e:mLSI}
\ent_{\nu} (F) \leq \frac{1}{2\gamma} D_{\nu} (\log F, F).
\end{equation}
In view of Exercise~\ref{ex:DlogDsqrt}, this inequality is weaker than the standard log-Sobolev inequality \eqref{e:LSI},
a point discussed in detail in \cite{MR2283379}.

\begin{exercise} \label{ex:DlogDsqrt}
  In the discrete case, the different quantities in \eqref{e:Idifferent} are not equal.
  Verify the inequality $4(\sqrt{a}-\sqrt{b})^2 \leq (a-b)\log(a/b)$ for $a,b>0$ and hence show $D_\nu(\log F,F)\geq 4D_\nu(\sqrt{F})$.
\end{exercise}

\subsection{Bakry--\'Emery theorem}

In verifying the log-Sobolev inequality for spins taking values in a continuous space, a very useful criterion is the Bakry--\'Emery theorem which 
applies to log-concave probability measures.

\begin{theorem}[Bakry--\'Emery \cite{MR889476}]
\label{thm:BE}
Consider a probability measure on $X = \R^N$ (or a linear subspace) of the form \eqref{eq: Gibbs measure} and assume 
that there is $\lambda>0$ such that as quadratic forms:
\begin{equation}
  \forall \varphi \in X:\qquad
  \He H(\varphi) \geq \lambda \id.
\end{equation}
Then the  log-Sobolev constant of $\nu$ satisfies $\gamma \geq \lambda$.
\end{theorem}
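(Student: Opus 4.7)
The plan is to combine the de Bruijn identity with a commutation estimate for the semigroup derived from the curvature hypothesis $\He H \ge \lambda \id$, via the classical Bakry--\'Emery $\Gamma_2$ argument.

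Concretely, I would fix a smooth $F > 0$ (a density, say) and use the semigroup interpolation
\[
h(s) := \TT_s\big[\Phi(\TT_{t-s}F)\big], \qquad s\in[0,t], \qquad \Phi(x)=x\log x,
\]
so that $h(0) = \Phi(\TT_t F)$ and $h(t) = \TT_t[\Phi(F)]$. Differentiating and using the Kolmogorov backward equation \eqref{eq: Kolmogorov backward equation} together with $\Phi''(x) = 1/x$, the two $\Delta^H$ contributions combine to give the pointwise identity
\[
h'(s) = \TT_s\!\left[\frac{|\nabla \TT_{t-s}F|^2}{\TT_{t-s}F}\right].
\]
Integrating from $0$ to $t$, taking expectation under the invariant measure $\nu$, and sending $t\to\infty$ (invoking ergodicity \eqref{e:ergodicity-continuous2} so that $\E_\nu[\Phi(\TT_tF)]\to\Phi(\E_\nu F)$) would yield
\[
\ent_\nu(F) = \int_0^\infty I_\nu(\TT_u F)\, du,
\]
i.e.\ the entropy is the time-integral of the Fisher information along the flow.

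The heart of the argument, and the main obstacle, is then the Fisher-information decay estimate
\[
\frac{|\nabla \TT_r G|^2}{\TT_r G} \le e^{-2\lambda r}\, \TT_r\!\left[\frac{|\nabla G|^2}{G}\right], \qquad r\ge 0.
\]
I would derive this from the Bochner-type identity
\[
\Gamma_2(G) := \tfrac12 \Delta^H|\nabla G|^2 - (\nabla G,\nabla \Delta^H G) = \|\He G\|_{\mathrm{HS}}^2 + \langle \nabla G, (\He H)\nabla G\rangle,
\]
the first term being nonnegative and the second being at least $\lambda |\nabla G|^2$ under the hypothesis, so that the Bakry--\'Emery curvature inequality $\Gamma_2(G)\ge \lambda\, \Gamma(G,G)$ holds. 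Given this, differentiating $s\mapsto \TT_s[(\TT_{r-s}G)^{-1}|\nabla \TT_{r-s}G|^2]$ and using the chain rule produces, after a short but careful computation, a lower bound on the derivative of the form $2\lambda$ times the quantity itself, yielding the displayed exponential estimate.

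With this estimate in hand, applying it inside $h'(s)$ with $G=F$ and $r=t-s$, using invariance of $\nu$ under $\TT_t$, and integrating gives
\[
\ent_\nu(F) \le \frac{1}{2\lambda}\, I_\nu(F) = \frac{2}{\lambda}\, D_\nu(\sqrt F),
\]
which by Definition~\ref{def: LSI} means $\gamma\ge\lambda$. A small technical step is to justify interchanging limits and derivatives for smooth compactly supported $F$ (then extending by density/approximation), but this is standard once $-\Delta^H$ has discrete spectrum, which is the setting guaranteed by the ergodicity discussion following \eqref{e:ergodicity-continuous2}.
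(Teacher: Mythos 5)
Your proposal is correct and follows essentially the same Bakry--\'Emery argument as the paper: both express $\ent_\nu(F)=\int_0^\infty I_\nu(\TT_u F)\,du$ and then invoke the Bochner identity (the paper's \eqref{e:BE-pf}, your $\Gamma_2(G)=\|\He G\|_{\mathrm{HS}}^2+\langle\nabla G,\He H\,\nabla G\rangle\ge\lambda\,\Gamma(G)$) to exploit the curvature hypothesis. The only cosmetic difference is that you establish the pointwise commutation estimate $\frac{|\nabla \TT_rG|^2}{\TT_r G}\le e^{-2\lambda r}\TT_r\!\big[\frac{|\nabla G|^2}{G}\big]$ and then integrate against $\nu$, whereas the paper differentiates $I_\nu(F_t)$ directly and applies Gr\"onwall on the resulting scalar inequality---the two routes are equivalent and rest on the same computation.
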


For quadratic $H$, one can verify (by a simple choice of test function) that in fact the equality $\gamma = \lambda$ holds.
An equivalent way to state the assumption $\He H(\varphi) \geq \lambda \id$ is to say that $H$
can be written as $H(\varphi)= \frac12 (\varphi,A\varphi)+V_0(\varphi)$ with
a symmetric matrix $A\geq \lambda \id$
and $V_0$ convex.

\begin{proof}[Proof of Theorem~\ref{thm:BE}]
The entropy \eqref{eq: def entropie} can be estimated by interpolation along the semigroup \eqref{eq: distribution at time t} of  the Langevin dynamics associated with $\nu$. Setting $F_t = P_t (F)$, we note that
\begin{equation}
\label{eq: time interpolation entropy}
\ent_\nu (F)= \E_{\nu} [\Phi(F)]- \Phi(\E_\nu [F])
= \E_{\nu} [\Phi(F_0) - \Phi(F_\infty ) ]
\end{equation}
where we used that the dynamics converges to the invariant measure \eqref{e:ergodicity-continuous2} which implies
\begin{equation}\label{eq: ergodicity assumption}
  \Phi(\E_\nu[F]) = \lim_{t\to\infty} \E_\nu[\Phi(F_t)].
\end{equation}
Indeed, we may assume that $F$ takes values in a compact interval $I \subset (0,\infty)$. 
By the positivity of the semigroup $F_t$ then takes values in $I$ for all $t\geq 0$, and we can replace $\Phi$ by a bounded smooth function $g$ that
coincides with $\Phi$ on $I$.
By the de Bruijn identity \eqref{e:dBidentity} (and using that $\E_\nu[F_t]$ is independent of $t$), therefore
\begin{equation}
  \label{eq: de Bruijn bis}
  \ent_\nu (F)
  = -\int_0^\infty dt \; \ddp{}{t} \E_{\nu} [\Phi(F_t)]
  = \int_0^\infty dt \; I_\nu(F_t).
\end{equation}

Provided that the Fisher information $I_\nu(F_t)$ introduced in \eqref{e:Idef} satisfies
  \begin{equation}
  \label{eq: 2nd derivative entropy}
I_\nu (F_t) \leq e^{-2 \lambda t} I_\nu (F_0),
  \end{equation}
the log-Sobolev inequality follows from \eqref{eq: time interpolation entropy} and \eqref{eq: de Bruijn bis}
by integrating in time:
\begin{equation}
\ent_\nu (F)  = \int_0^\infty dt \; I_\nu (F_t) 
\leq \frac{1}{2\lambda}  I_\nu (F_0)
= \frac{2}{\lambda} D_\nu(\sqrt{F}). 
\end{equation}

\medskip
To prove \eqref{eq: 2nd derivative entropy}, differentiate again:
\begin{equation}
  \ddp{}{t}I(\nu_t|\nu)
  = \ddp{}{t}\E_\nu \qa{\frac{(\nabla F_t)^2}{F_t}}
  = \E_\nu \qbb{ (\ddp{}{t}-\Delta^H) \frac{(\nabla F_t)^2}{F_t} },
\end{equation}
where we used that $\E_\nu[\Delta^HG] =0$ for every sufficiently nice function $G: X \to \R$.
It is an elementary but somewhat tedious exercise to verify that
\begin{equation} \label{e:BE-pf}
  (\ddp{}{t}-\Delta^H) \frac{(\nabla F_t)^2}{F_t}
  = -2 F_t \qbb{ \underbrace{|\He \log F_t|_2^2}_{\geq 0} + (\nabla \log F_t, \underbrace{\He H(\varphi)}_{\geq \lambda \id} \nabla \log F_t)}.
\end{equation}
Hence
\begin{equation}
  \ddp{}{t} I_\nu (F_t)
  \leq
  -2\lambda\E_\nu [F_t(\nabla \log F_t, \nabla \log F_t)]
  =
  -2\lambda\E_\nu \qa{\frac{(\nabla F_t)^2}{F_t}}
  = -2\lambda I_\nu (F_t)
\end{equation}
which implies the claim \eqref{eq: 2nd derivative entropy}.
\end{proof}

\subsection{Decomposition and properties of the entropy}
\label{sec: Decomposition and properties of the entropy}

The Bakry--\'Emery criterion (Theorem \ref{thm:BE}) implies the validity of the log-Sobolev inequality for all the Gibbs measures with strictly convex potentials. For more general measures, the  log-Sobolev inequality 
is often derived by decomposing the entropy thanks to successive conditionings.
We are going to sketch this procedure below.

Assume that the expectation under $\nu$ is of the form
\begin{equation}
\label{eq: decomposition nu1 nu2}
\E_\nu  \qa{F} : = \E_\nu  \qa{F (\varphi_1,\varphi_2) } =\E_2 \qa{ \E_1 \qa{F  (\varphi_1,\varphi_2) \big| \varphi_2 } } =  \E_2 \qa{ \E_1 \qa{F}},
\end{equation}
where $\E_1 [ \cdot ] :=\E_1 [ \cdot | \varphi_2 ]$ is the conditional measure with respect to the variable $\varphi_2$. 
Then the entropy can be split into two parts:
\begin{align}
\label{eq: entropy decomposition}
  \ent_{\nu}(F)
  &= 
    \E_{\nu} [\Phi(F)]- \Phi(\E_\nu [F])
           \nnb
  &= \E_2 \big[ \underbrace{\E_1[\Phi(F)] -\Phi(\E_1 [F])}_{\ent_1 (F) } \big] 
  + {\underbrace{\E_2 \qa{\Phi(\E_1[F])} -
  \Phi(\E_2 \qa{\E_1 [F]} )}_{\ent_2(\E_1 [F])}}.
\end{align}
Given $\varphi_2$, the first term involves the relative entropy of a simpler measure $\E_1 ( \cdot | \varphi_2)$   as the integration refers only to the coordinate $\varphi_1$:
\begin{equation}
\ent_1 (F) (\varphi_2) =  \E_1[\Phi(F)  | \varphi_2 ] -\Phi(\E_1 \qa{ F  | \varphi_2} ).
\end{equation}
The strategy is to estimate this term (uniformly in $\varphi_2$) by the desired Dirichlet form acting only on $\varphi_1$.
The second term $\E_2 \qa{\Phi(\E_1[F])}$ is more complicated because the expectation $\E_1 [ F( \cdot, \varphi_2)  | \varphi_2 ]$ is inside the relative entropy. 
For a product measure $\nu = \nu_1 \otimes \nu_2$, this term can be estimated easily (as recalled in Example \ref{example: tensorisation} below).
In this way, the log-Sobolev inequality for $\nu = \nu_1 \otimes \nu_2$ is reduced to establishing log-Sobolev inequalities for the simpler measures  $\nu_1$ and $\nu_2$.

In general, the conditional expectations are intertwined and the second term $\ent_2(\E_1 [F])$ is much more difficult to estimate. 
There are  two general strategies: either one also bounds this term   by the desired Dirichlet form (and thus one somehow has to move the expectation out 
of the entropy) or one bounds it by $\kappa \ent_\nu (F)$ with $\kappa < 1$. In the latter case, the estimate reduces to the first term,
at the expense of an overall factor $(1-\kappa)^{-1}$.

For a given  measure $\nu$, the entropy decomposition~\eqref{eq: entropy decomposition} 
can be achieved with different choices of the measures $\E_1, \E_2$. 
The optimal  choice depends on the structure of the measure $\nu$. 
In this survey, we focus on Gibbs measures of the form \eqref{eq: Gibbs measure} which arise naturally in statistical mechanics. 
The renormalisation group method 
constitutes a framework to study such Gibbs measures 
(see \cite{MR3969983} for an introduction and references)
and provides strong insight on a good entropy decomposition. 
This is the core of the method presented in Section \ref{sec_gaussian_integration_and_polchinski}, 
which is based on the Polchinski equation, a continuous version of the renormalisation group.

\begin{example}[Tensorisation]
\label{example: tensorisation}
  Assume that probability measures $\nu_1$ and $\nu_2$ satisfy log-Sobolev inequalities with the constants
  $\gamma_1$ and $\gamma_2$. Then the product measure $\nu = \nu_1\otimes \nu_2$ also satisfies a log-Sobolev inequality
  with constant $\gamma = \min  \{ \gamma_1,\gamma_2 \}$ (with the natural Dirichlet form on the product space).
\end{example}

\begin{proof}
For simplicity, assume that $\nu_1, \nu_2$ are probability measures on $\bbR$ and denote by $\E_1, \E_2$  their expectations so that $\E_\nu \qa{F} =\E_2 \qa{ \E_1 \qa{F} }$ for functions $F( \varphi_1, \varphi_2)$.
As discussed  in \eqref{eq: entropy decomposition}, the entropy can be decomposed as 
\begin{equation}
  \ent(F) = \E_2[\ent_1(F)] + \ent_2(G) 
  \quad \text{with} \quad G(\varphi_2) = \E_1[F(\cdot,\varphi_2)].
  \end{equation}
  The log-Sobolev inequalities for $\E_1$ and $\E_2$ imply that for $\gamma = \min \{ \gamma_1,\gamma_2 \}$:
\begin{equation}
2\gamma \ent(F) \leq \E_2[D_1(\sqrt{F})] + D_2(\sqrt{G}).
\end{equation}
It remains to recover the Dirichlet form associated with the product measure $\nu$: 
\begin{equation}
D_\nu (\sqrt{F}) = \E_\nu \qa{ (\partial_{\varphi_1} \sqrt{F})^2 + (\partial_{\varphi_2} \sqrt{F})^2 }
\label{eq: dirichlet product}.
\end{equation}
The first derivative is easily identified:
\begin{equation}
\E_2[D_1(\sqrt{F})] = \E_2 \qa{\E_1 \qa{ (\partial_{\varphi_1} \sqrt{F})^2} }
= \E_\nu \qa{(\partial_{\varphi_1} \sqrt{F})^2}.
\end{equation}
For the second derivative:
\begin{equation}
\partial_{\varphi_2} \sqrt{G(\varphi_2)} = \frac{1}{2 \sqrt{G(\varphi_2)}} \E_1[ \partial_{\varphi_2} F(\cdot,\varphi_2)] = \frac{1}{\sqrt{G(\varphi_2)}} 
\E_1[ \sqrt{F(\cdot,\varphi_2)} \; \partial_{\varphi_2} \sqrt{ F(\cdot,\varphi_2)} ] ,
\end{equation}
so that by Cauchy-Schwarz inequality,  we deduce that 
\begin{equation}
D_2(\sqrt{G}) = \E_2 \qa{ \big( \partial_{\varphi_2} \sqrt{G(\varphi_2)} \big)^2 } \leq 
\E_2 \qa{ \E_1 \qa{ \left( \partial_{\varphi_2} \sqrt{ F(\cdot,\varphi_2)} \right)^2 } }
= \E_\nu \qa{  (\partial_{\varphi_2} \sqrt{F})^2 }.
\label{eq: interchange product case}
\end{equation}
This reconstructs the Dirichlet form \eqref{eq: dirichlet product} and completes the proof.

A similar argument applies in the discrete case, see for example \cite[Lemma~2.2.11]{MR1490046}.
More abstractly, the tensorisation  of the log-Sobolev constant also follows
from the equivalence between the log-Sobolev inequality and hypercontractivity (which tensorises more obviously). \end{proof}

We conclude this section by stating useful variational characterisations of the entropy.
\begin{proposition}[Entropy inequality]
\label{prop: variational characterisation entropy}
The entropy of a function $F \geq 0$ can be rewritten as   
\begin{equation}
\label{e:entineq}
    \ent_\nu(F) = \sup \hbb{\E_\nu[FG] : \quad \text{Borel functions $G$ such that } \E_\nu[e^G] \leq 1}
  \end{equation}
  with equality if $G = \log(\frac{F}{\E_\nu[F]})$, or as
  \begin{equation} 
  \label{e:entineq3}
    \ent_\nu(F) = \sup \hbb{\E_\nu [F \log F -F \log t - F + t]: t> 0 }
  \end{equation}
  with equality if $t = \E_\nu[F]$.
  Finally, one has (also called the entropy inequality):
  \begin{equation} 
  \label{e:entineq2}
  \ent_\nu(F) = \sup \hbb{ \E_\nu [FG] - \E_\nu[F] \log \E_{\nu}[e^G]:
  \quad \text{Borel functions $G$}}
  \end{equation}
  with equality if $G = \log F$.
\end{proposition}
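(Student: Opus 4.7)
My plan is to prove the three identities in the order \eqref{e:entineq2}, \eqref{e:entineq}, \eqref{e:entineq3}, since \eqref{e:entineq2} is the underlying Gibbs variational principle and the other two follow as a direct specialisation and a one-variable calculus exercise respectively.

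For \eqref{e:entineq2}, I would use the standard tilting trick. Assume $\E_\nu[F] > 0$ and $0 < \E_\nu[e^G] < \infty$ (boundary cases being handled by truncation), and introduce the probability measures $d\mu = (F/\E_\nu[F])\, d\nu$ and $d\tilde\nu = (e^G/\E_\nu[e^G])\, d\nu$. A direct computation of $\log(d\mu/d\tilde\nu)$ yields the chain-rule identity
\begin{equation}
\bbH(\mu \,|\, \nu) = \bbH(\mu \,|\, \tilde\nu) + \E_\mu[G] - \log \E_\nu[e^G].
\end{equation}
Multiplying by $\E_\nu[F]$ and using $\E_\nu[F]\bbH(\mu\,|\,\nu) = \ent_\nu(F)$ together with $\E_\nu[F]\E_\mu[G] = \E_\nu[FG]$ gives
\begin{equation}
\ent_\nu(F) = \E_\nu[F]\bbH(\mu \,|\, \tilde\nu) + \E_\nu[FG] - \E_\nu[F]\log\E_\nu[e^G].
\end{equation}
Since $\bbH(\mu\,|\,\tilde\nu) \geq 0$, taking the supremum over $G$ produces \eqref{e:entineq2}, with equality iff $\mu = \tilde\nu$, i.e.\ $G = \log F + \text{const}$; the choice $G = \log F$ verifies the equality case instantly.

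Identity \eqref{e:entineq} is then a one-line specialisation: under the constraint $\E_\nu[e^G] \leq 1$, the term $-\E_\nu[F]\log\E_\nu[e^G]$ is nonnegative, so dropping it from \eqref{e:entineq2} only strengthens the bound to $\E_\nu[FG] \leq \ent_\nu(F)$. The extremiser $G = \log(F/\E_\nu[F])$ saturates the constraint ($\E_\nu[e^G] = 1$) and produces $\E_\nu[FG] = \ent_\nu(F)$ by direct substitution. Identity \eqref{e:entineq3} is pure one-variable calculus: setting $\psi(t) := \E_\nu[F\log F] - \E_\nu[F]\log t - \E_\nu[F] + t$, one sees that $\psi'(t) = 1 - \E_\nu[F]/t$ vanishes only at $t_\star = \E_\nu[F]$, $\psi''(t) = \E_\nu[F]/t^2 > 0$ gives strict convexity, and substitution gives $\psi(t_\star) = \ent_\nu(F)$, identifying the extremum.

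The only genuine technical point, and the main obstacle, lies in justifying the tilting chain-rule behind \eqref{e:entineq2} when either $F$ or $e^G$ vanishes or blows up, since then the densities $d\mu/d\tilde\nu$ and the logarithms must be interpreted carefully. The standard conventions ($0\log 0 = 0$, restricting first to bounded $G$ with $\E_\nu[e^G]$ bounded away from $0$ and $\infty$, and then extending by monotone convergence both in $F$ and in the truncation of $G$) make this routine; once that integrability bookkeeping is done, the algebraic identity above delivers everything.
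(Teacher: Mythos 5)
Your argument is correct and takes a genuinely different route from the paper's. You start from \eqref{e:entineq2}, proving it via the tilting chain-rule identity $\bbH(\mu|\nu) = \bbH(\mu|\tilde\nu) + \E_\mu[G] - \log\E_\nu[e^G]$ for $d\mu = (F/\E_\nu[F])d\nu$, $d\tilde\nu = (e^G/\E_\nu[e^G])d\nu$, and then deduce \eqref{e:entineq} as a one-line corollary by dropping the nonnegative term $-\E_\nu[F]\log\E_\nu[e^G]$. The paper instead treats \eqref{e:entineq} and \eqref{e:entineq3} as independent consequences of Young's inequality $ab\leq a\log a - a + e^b$ (proved pointwise and under the expectation respectively), and derives \eqref{e:entineq2} separately from Jensen's inequality applied to the tilted measure $F\,d\nu$. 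Your route is arguably more conceptual: it makes explicit that \eqref{e:entineq} and \eqref{e:entineq2} are two faces of the Gibbs variational principle and exhibits the whole family of extremisers $G = \log F + \text{const}$; the paper's Young-inequality route is more elementary and requires no measure-theoretic bookkeeping about supports and densities, which is exactly the "technical point" you flag at the end. One small remark: your convexity analysis of $\psi(t)$ in \eqref{e:entineq3} ($\psi'' > 0$, unique critical point) correctly shows that $t = \E_\nu[F]$ is a \emph{minimum}, so the displayed identity \eqref{e:entineq3} should read $\inf$ rather than $\sup$ — this is a typo in the statement (the paper's own proof likewise establishes $\ent_\nu(F) \leq \E_\nu[F\log F - F\log t - F + t]$), and you might flag it explicitly rather than silently prove the correct version.
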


\begin{proof}
Since $\ent_\nu(F) = \E_\nu \qa{ F\log \frac{F}{\E_\nu[F]} }$,
to show \eqref{e:entineq},
it is enough to consider the case $\E_\nu[F] = 1$, by homogeneity of both sides.
Applying Young's inequality
\begin{equation} 
\label{eq: Young's inequality}
\forall a \geq 0, b \in \bbR: \qquad 
a b \leq a \log a - a + e^b ,
\end{equation}
with $\E_\nu[e^G] \leq 1$, we get 
\begin{equation}
\E_\nu[FG] \leq \ent_\nu(F) -1 + \E_\nu[e^G] \leq \ent_\nu(F).
\end{equation}
This implies \eqref{e:entineq} as the converse inequality holds with $G = \log F$.

The variational formula \eqref{e:entineq3} follows directly from Young's inequality by choosing $a=\E_\nu[F]$ and $b = \log t$:
\begin{equation}
  \ent_\nu(F) = \E_\nu[F\log F] - a \log a \leq \E_\nu[F\log F] - ab -a +e^b
  = \E_\nu[F \log F - F \log t - F + t],
\end{equation}
where again \eqref{e:entineq3} follows since equality holds with $t=\E_\nu[F]$.

To show \eqref{e:entineq2}, we may again assume $\E_\nu[F]=1$. Then apply Jensen's inequality with respect to the probability measure $d\nu^F = F\, d\nu$:
\begin{equation}
  \log \E_{\nu}[e^G] = \log \E_{\nu^F}[F^{-1}e^G] \geq \E_{\nu^F}[\log(F^{-1}e^G)] = - \E_\nu[F\log F] + \E_\nu[FG],
\end{equation}
with equality if $G = \log F$.
\end{proof}

The Holley--Stroock criterion for the log-Sobolev inequality is a simple consequence of \eqref{e:entineq3}, see e.g.,
the presentation in \cite{MR1292280}.

\begin{exercise}[Holley--Stroock criterion]
\label{exo: Holley--Stroock criterion}
  Assume a measure $\nu$ satisfies the log-Sobolev inequality with constant $\gamma$. Then the measure
  $\nu^F$ with $d\nu^F/d\nu =F$ satisfies a log-Sobolev inequality with
  constant $\gamma^F \geq (\inf F/\sup F)\gamma$.
\end{exercise}

\subsection{Difficulties arising from statistical physics perspective}
\label{sec: Difficulties arising from statistical physics perspective}

To explain the difficulties arising in the derivation of log-Sobolev inequalities and to motivate our set-up of renormalisation,
we are going to consider lattice spin systems with continuous spins and Hamiltonian of the form \eqref{eq: specific Hamiltonian}. 
The strength of the interaction is tuned by the parameter $\beta \geq 0$ and the Gibbs measure 
\eqref{eq: Gibbs measure} has a density on $\bbR^\Lambda$ of the form 
\begin{equation}
  \nu (d\varphi) 
  \propto \exp \qa{ - \frac{\beta}{4} \sum_{x,y \in \Lambda, \atop x \sim y} (\varphi_x - \varphi_y)^2 
    -\sum_{x \in \Lambda} V(\varphi_x) } \prod_{x \in \Lambda} d \varphi_x.
\label{eq: specific Gibbs measure}
\end{equation} 
Further examples will be detailed in Section~\ref{sec: Applications to Euclidean field theory}. 

We are interested in the behaviour of the measure~\eqref{eq: specific Gibbs measure} in the limit where the the number of sites $|\Lambda|$ (and thus the dimension of the configuration space) is large.
In this limit, when the potential $V$ is not convex, 
the measure can have one or more phase transitions at critical values of the parameter $\beta$. 
These phase transitions separate regions of values of $\beta$ between which the measure $\nu$ has a different correlation structure, 
different concentration properties, and so on. The speed of convergence of the associated Glauber dynamics~\eqref{eq: general Glauber} is also affected. 
See the book~\cite{MR3752129}
or \cite[Chapter~1]{MR3969983}
for background on phase transitions in statistical mechanics.

To analyse the log-Sobolev inequality for the measure~\eqref{eq: specific Gibbs measure}, 
note that the lack of convexity precludes the use of the Bakry--\'Emery criterion (Theorem \ref{thm:BE}), 
and the Holley-Stroock criterion (Exercise~\ref{exo: Holley--Stroock criterion}) is not effective due to the large dimension of the configuration space when $\beta>0$.

On the other hand, when $\beta=0$, the Gibbs measure is a product measure and the log-Sobolev inequality holds 
uniformly in $\Lambda$, 
with the same constant as for the single spin $|\Lambda|=1$ measure (Example~\ref{example: tensorisation}). 
This tensorisation property has been generalised for $\beta$ small enough
in terms of mixing conditions
and for some spin systems up to the critical value $\beta_c$, see \cite{MR1746301} for a review.
Indeed, for $\beta$ small, the interaction between the spins is small, 
in the sense that one can show that correlations between spins decay exponentially in their distance.
At distances larger than a correlation length $\xi_\beta < \infty$ approximate independence between the spins is then recovered. 
By splitting the domain $\Lambda$ into boxes (of size larger than $\xi_\beta$), and using appropriate conditionings the system can  be analysed as a renormalised model of weakly interacting spins \cite{MR1715549}. 
In so-called second order phase transitions, 
when $\beta$ approaches the critical $\beta_c$ the correlation length diverges as a function of $\beta_c-\beta$, and so does the inverse log-Sobolev constant, i.e., the dynamics slows down (as can usually be verified by simple test functions in the spectral gap or log-Sobolev inequality).
Spins are thus more and more correlated for $\beta$ close to $\beta_c$.
Nevertheless  in some cases \cite{MR1746301,MR3020173,MR3486171} the  strong dynamical mixing properties were
derived up to the critical value $\beta_c$ by using a strategy which however can be seen as a (large) perturbation of the product case with respect to $\beta >0$. 
For this reason,  it seems difficult to extract the precise divergence of the log-Sobolev constant near $\beta_c$ with this type of approach.

To study the detailed static features of measures of the form~\eqref{eq: specific Gibbs measure} close to $\beta_c$, 
different types of renormalisation schemes have been devised with an emphasis on the Gaussian structure of the interaction.
In many cases one expects that the long range structure at the critical point is well described in terms of a Gaussian free field \cite{Wilson_Kogut_1974}.
Compared with the previously mentioned approaches, the perturbation theory no longer uses the product measure as a reference, but  the Gaussian free field.
In the following, this structure will serve as a guide to decompose the entropy as alluded to in \eqref{eq: entropy decomposition}.
Before describing this procedure in Section~\ref{sec_gaussian_integration_and_polchinski}, 
the elementary example of the Gaussian free field,
which illustrates the difficulty of many length scales equilibrating at different rates, 
is presented in Example~\ref{example_free_field} below.

\subsection{Difficulties arising from continuum perspective}
\label{sec: Difficulties arising from continuum perspective}

The long-distance problem discussed in the previous subsection is closely related to the short-distance problem
occurring in the study of continuum limits as they appear in quantum field theory and weak interaction limits,
which also arise as invariant measure of (singular) SPDEs.
In field theory, one is interested in the physical behaviour of a measure that is defined not on a lattice,
but in the continuum (say on $L\T^d$ with $\T^d = [0,1)^d$ the $d$-dimensional torus), formally reading:
\begin{equation}
\nu_L(d\varphi)
\propto e^{-H_L(\varphi)}\prod_{x\in L\T^d}d\varphi_x
.
\label{eq: continuum measure}
\end{equation}
Now $\varphi$ should be a (generalised) function from $L\T^d$ to $\R$, and a typical example for $H_L$ would be the continuum $\varphi^4$ model, defined for $g >0$ and $r \in\R$ by:
\begin{equation}
\label{eq: continuous potential}
H_L(\varphi) 
=
\frac{1}{2}\int_{L\T^d}|\nabla \varphi|^2\, dx
+ \int_{L\T^d}\Big[\frac{g}{4}\varphi(x)^4 + \frac{r}{2}\varphi(x)^2\Big]\, dx
.
\end{equation}
Of course, the formal definition~\eqref{eq: continuum measure} does not make sense as it stands,
and a standard approach  to understand such measures is
as a limit of measures defined on lattices $\Lambda_{\epsilon,L} = L\T^d\cap \epsilon\Z^d$ with a vanishing $\epsilon$:
\begin{equation}
\nu_{\epsilon,L}(d\varphi)
\propto
e^{-H_{\epsilon,L}(\varphi)}\prod_{x\in \Lambda_{\epsilon,L}}d\varphi_x,
\end{equation}
for a discrete approximation $H_{\epsilon,L}$ of $H_L$ of the form
\begin{equation} 
H_{\epsilon,L} (\varphi)  = \frac{\epsilon^{d-2}}{4} \sum_{y\sim x}\big(\varphi_y-\varphi_x\big)^2
+ \epsilon^d\sum_{x\in\Lambda_{\epsilon,L}} V^\epsilon(\varphi_x) ,
\label{eq: approx HL}
\end{equation}
where the potential $V^\epsilon$ is of the form \eqref{eq: Ginzburg--Landau potential}.
In the example of the $\varphi^4$ model, it turns out that such a limit can be constructed if $d<4$,
but in $d\geq 2$
the coefficient $\mu$ of the potential must be tuned correctly as a function of $\epsilon \to 0$,
see Section~\ref{sec: Applications to Euclidean field theory} for details.
This tuning is known as addition of ``counterterms'' in quantum field theory. These are the infamous infinities arising there.
The relation to the statistical physics (long-distance) problem is that these limits correspond to statistical physics models near a phase transition, with
scaled (weak) interaction strength ($\epsilon^d g \to 0$ as $\epsilon \to 0$).
In particular, due to the counterterms, the resulting measures are usually again very non-convex microscopically,
precluding the use of the Bakry--\'Emery theory and the Holley--Stroock criteria.
Nonetheless
the regularisation parameter $\epsilon$ is not expected to have any influence on the physics of the model, 
in the sense that the existence of a phase transition, the speed of the Glauber dynamics, concentration properties, and so on,
should all be uniform in the small scale parameter $\epsilon$ and depend only on the large scale parameter $L$.  
Techniques to control the regularisation parameter $\epsilon$ are often simpler than for the large scale problem near the critical point,
but they are also based on renormalisation arguments relying on comparisons with the Gaussian free field 
(corresponding to a quadratic $H_{\epsilon,L}$).

\bigskip
Finally,
the problem of relaxation at different scales is illustrated next in this simple model.

\begin{example}[Free field dynamics]
\label{example_free_field}
Consider now the Gaussian free field dynamics corresponding to $V^\epsilon =0$ in \eqref{eq: approx HL}:
\begin{equation}
\label{eq: GFF example dynamics}
  d\varphi_t = -A\varphi_t \, dt + \sqrt{2} dW_t,
\end{equation}
where $A$ is the Laplace operator on $\Lambda_{\epsilon,L}$ as in~\eqref{eq: approx HL}
and the white noise is defined with respect to the inner produce $\epsilon^d \sum_{x\in\Lambda_{\epsilon,L}} u_xv_x$, i.e., each $W_t(x)$
is a Brownian motion of variance $\epsilon^{-d}$.
On the torus $\Lambda_{\epsilon,L}= L\T^{d} \cap \epsilon\Z^d$ of mesh size $\epsilon$ and side length $L$, the eigenvalues of the Laplacian are
\begin{equation}
p \in \Lambda_{\epsilon,L}^* = (-\frac{\pi}{\epsilon},\frac{\pi}{\epsilon}]^d \cap \frac{2\pi}{L}\Z^d
\qquad   \lambda(p) = \epsilon^{-2}\sum_{i=1}^d 2(\cos(\epsilon p_i)-1) \underset{|p|\lesssim 1}{\approx} -|p|^2.
\end{equation}
All Fourier modes of $\varphi$ evolve independently according to Ornstein--Uhlenbeck processes:
\begin{equation}
\label{eq: modes eigenvector}
p \in \Lambda_{\epsilon,L}^*: \qquad
  d\hat\varphi(p) = -\lambda(p)\hat\varphi(p) \, dt + \sqrt{2} d\hat W_t(p),
\end{equation}
where the $\hat W(p) = (\hat W_t(p))_{t}$ are independent standard Brownian motions for $p \in \Lambda^*$.
In particular, small scales corresponding to $|p| \gg 1$ converge very quickly to equilibrium,
while the large scales $|p| \ll 1$ are slowest. Thus the main contribution to the log-Sobolev constant comes from the large scales
and we expect that a similar structure remains relevant in many interacting systems close to a critical point.
\end{example}

In both the statistical and continuum perspectives, for measures with 
an interaction $V_0(\varphi) = \sum_{x\in\Lambda} V(\varphi_x) \neq 0$ on top of the free field interaction,
the main difficulties result from the simple fact that the local (in real space) interaction do not interact well with the above Fourier decomposition.
The Polchinski flow that we will introduce in the next section can be seen as
a replacement for the Fourier decomposition, in which the Fourier variable $p$ takes the role of scale, by a
smoother scale decomposition.

\section{Gaussian integration and the Polchinski equation}
\label{sec_gaussian_integration_and_polchinski}

In this section, we first review abstractly a continuous renormalisation procedure,
which goes back to Wilson \cite{Wilson_Kogut_1974} and Polchinski \cite{polchinski1984269, MR914427} in physics
in the context of equilibrium phase transitions and quantum field theory (viewed as an problem of statistical mechanics in the continuum).
We then explain how the entropy of a measure can be decomposed by this method in order to derive a 
log-Sobolev inequality via a multiscale Bakry--\'Emery criterion.

\subsection{Gaussian integration}

For $C$ a positive semi-definite matrix on $\R^N$, we denote by
$\Pg_C$ the corresponding Gaussian measure  with covariance $C$ and by $\Eg_C$ its expectation.
The measure $\Pg_C$ is supported on the image of $C$.
In particular, if $C$ is strictly positive definite on $\R^N$,
\begin{equation} \label{e:Gauss-E}
  \Eg_C[F] \propto \int_{\R^N} e^{-\frac12 (\zeta,C^{-1}\zeta)} F(\zeta)\, d\zeta.
\end{equation}
A fundamental property of the Gaussian measure is its semigroup property: if $C=C_{1}+C_2$ with  $C_1, C_2$ also positive semi-definite then
\begin{equation} 
\label{e:Gauss-conv}
  \Eg_C[F(\zeta)] =
  \Eg_{C_2}[\Eg_{C_1}[F(\zeta_1+\zeta_2)]],
\end{equation}
corresponding to its probabilistic interpretation that if $\zeta_1$ and $\zeta_2$ are independent Gaussian random variables
then $\zeta_1+\zeta_2$ is also Gaussian and the covariance of $\zeta_1+\zeta_2$ is the sum of the covariances of $\zeta_1$ and $\zeta_2$.

As discussed in Section~\ref{sec: Decomposition and properties of the entropy}, recall that
our goal is to decompose the entropy of a measure by splitting this measure into simpler parts as in \eqref{eq: entropy decomposition}.
The above Gaussian decomposition will be a basic step for this. 
As we have seen in Example \ref{example_free_field}, the dynamics of a spin or particle system close to a phase transition will depend on a very large number of modes and it will be necessary to iterate the decomposition 
\eqref{e:Gauss-conv} many times in order to decouple all the relevant modes.
In fact, it is even convenient to introduce a  continuous version of the decomposition~\eqref{e:Gauss-conv}, as follows.

\medskip

For a covariance matrix $C$ as above, define an associated Laplace operator $\Delta_C$ on $\R^N$:
\begin{equation} 
\label{e:DeltaC}
  \Delta_C = \sum_{i,j} C_{ij} \ddp{^2}{\varphi_i\partial \varphi_j},
\end{equation}
and write $(\cdot,\cdot)_C$ for the inner product associated with the covariance $C$:
\begin{equation}
\label{eq: scalar product C}
  (u,v)_C = \sum_{i,j} C_{ij} u_iv_j
  \quad \text{and} \quad 
  |u|_C^2 = (u)_C^2 = (u,u)_C.
\end{equation}
The standard scalar product  is denoted by $(u,v) = \sum_i u_i v_i$.

Let $t\in [0,+\infty] \mapsto C_t$ be a function of positive semidefinite matrices on $\R^N$ increasing continuously as quadratic forms to a matrix $C_\infty$.
More precisely, we assume that $C_t = \int_0^t \dot C_s \, ds$ for all $t$,
where $t\mapsto \dot C_t$ is a bounded  cadlag (right-continuous with left limits) function with values in the space of positive semidefinite matrices
that is the derivative of $C_t$ except at isolated points. We say that $C_\infty = \int_0^\infty \dot C_s\, ds$ is a \emph{covariance decomposition}
and 
 write $X  \subset \R^N$ for the image of $C_\infty$.
We emphasise that the (closed) interval $[0,+\infty]$ parametrising the covariances has no special significance and that all constructions
will be invariant under appropriate reparametrisation. For example, one can equivalently use $[0,1]$.

\begin{proposition} \label{prop:heat}
  For a $C^2$ function $F: X \to \R$, let $F_t = \Pg_{C_t} * F$, i.e., $F_t(\varphi)= \Eg_{C_t} \qa{F(\varphi+\zeta)}$.
  Then for all $t$ which are not discontinuity points of $\dot C_t$,
  \begin{equation}
    \ddp{}{t} F_t = \frac12 \Delta_{\dot C_t} F_t, \qquad F_0 = F.
  \end{equation}
Thus the Gaussian measures $\Pg_{C_t}$ satisfy the heat equation
\begin{equation}
\label{eq: evolution proba PgCt}
  \ddp{}{t} \Pg_{C_t} = \frac12 \Delta_{\dot C_t} \Pg_{C_t},
\end{equation}
interpreted in a weak sense if $C_t$ is not strictly positive definite.
\end{proposition}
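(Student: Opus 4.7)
The plan is to exploit the semigroup property \eqref{e:Gauss-conv} of Gaussian convolution. Since $t \mapsto C_t$ is nondecreasing in the PSD order, the increment $C_{t+h} - C_t = \int_t^{t+h} \dot C_s \, ds$ is itself positive semi-definite for any $h > 0$, and the semigroup property gives
\begin{equation*}
F_{t+h}(\varphi) = \Eg_{C_{t+h}}[F(\varphi + \zeta)] = \Eg_{C_{t+h} - C_t}[F_t(\varphi + \zeta)].
\end{equation*}
This reduces the computation of $\partial_t F_t$ to the infinitesimal analysis of a Gaussian convolution with small covariance, which I would handle by a second-order Taylor expansion in $\zeta$.

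Expanding $F_t(\varphi + \zeta) = F_t(\varphi) + (\nabla F_t(\varphi), \zeta) + \tfrac{1}{2}\sum_{i,j} \zeta_i \zeta_j\, \partial_i\partial_j F_t(\varphi) + R(\varphi,\zeta)$ and taking expectation under $\Pg_{C_{t+h}-C_t}$, the linear term vanishes by centering, the quadratic term yields $\tfrac{1}{2} \Delta_{C_{t+h}-C_t} F_t(\varphi)$ by the definition \eqref{e:DeltaC}, and the remainder $R$ is controlled by higher moments of $\zeta$. Since odd Gaussian moments vanish and $\Eg_{C_{t+h}-C_t}[|\zeta|^4] = O((\operatorname{tr}(C_{t+h}-C_t))^2) = O(h^2)$ while $\operatorname{tr}(C_{t+h}-C_t) = O(h)$, dividing by $h$ and letting $h \to 0$ at a continuity point of $\dot C_s$ gives the pointwise identity $\partial_t F_t = \tfrac{1}{2} \Delta_{\dot C_t} F_t$.

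The weak heat equation \eqref{eq: evolution proba PgCt} for the measures $\Pg_{C_t}$ then follows by duality: for a smooth compactly supported test function $\psi$ one has $\int \psi \, d\Pg_{C_t} = \psi_t(0)$ with $\psi_t = \Pg_{C_t} * \psi$, so applying the identity just established to $\psi$ and using that the constant-coefficient operator $\Delta_{\dot C_t}$ commutes with convolution by $\Pg_{C_t}$,
\begin{equation*}
\ddp{}{t} \int \psi \, d\Pg_{C_t} = \tfrac{1}{2}\, \Delta_{\dot C_t} \psi_t(0) = \tfrac{1}{2}\, (\Delta_{\dot C_t} \psi)_t(0) = \tfrac{1}{2} \int \Delta_{\dot C_t} \psi \, d\Pg_{C_t}.
\end{equation*}

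The main technical subtlety is ensuring that the Taylor remainder is genuinely $o(h)$ uniformly enough to extract the derivative; this requires some third-order control of $F_t$, which for $F$ merely $C^2$ is not automatic when $C_t$ is degenerate on $X^\perp$. One either argues by mollifying $F$, deriving the equation for the mollified object, and passing to the limit, or invokes the smoothing effect of Gaussian convolution (so that $F_t$ is $C^\infty$ as soon as some strictly positive part of $C_t$ has accumulated, the boundary case $t=0$ needing only $C^2$ for the second-order Taylor bound). The cadlag but possibly discontinuous character of $\dot C_s$ and the degeneracy of $C_t$ on $X^\perp$ explain the restriction to continuity points in the statement and the weak interpretation of \eqref{eq: evolution proba PgCt}.
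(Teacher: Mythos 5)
Your argument is correct in spirit but takes a genuinely different route from the paper's two proofs. The paper's primary proof works directly from the explicit Gaussian density formula \eqref{e:Gauss-E}: once $C_t$ is strictly positive definite (or after passing to the subspace where the image of $C_s$ is locally constant), one differentiates the density $e^{-\frac12(\zeta,C_t^{-1}\zeta)}/\sqrt{\det(2\pi C_t)}$ in $t$ and integrates by parts. The paper then also gives an alternative proof via It\^o's formula applied to the process $\zeta_t = \varphi + \int_0^t \sqrt{\dot C_s}\,dB_s$. Your approach is distinct from both: it exploits the Chapman--Kolmogorov/convolution semigroup property \eqref{e:Gauss-conv} to reduce $\partial_t F_t$ to the infinitesimal generator of a Gaussian convolution with small covariance $C_{t+h}-C_t$, which you then extract by second-order Taylor expansion and moment control. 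This is the most ``elementary probabilist'' of the three: it uses neither the explicit form of the density nor stochastic calculus, only moment estimates.

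What the paper's first proof buys is a one-line argument once $C_t$ is nondegenerate, and it handles the degenerate case by restricting the Lebesgue integral to the image $X_t$ of $C_t$ (which is locally constant away from jumps of $\dot C$), sidestepping any need for smoothing. Your proof, by contrast, has to confront head-on the fact that a second-order Taylor expansion of a merely $C^2$ function has a remainder that is only $o(|\zeta|^2)$ rather than $O(|\zeta|^3)$, so the clean moment bound $\Eg[|\zeta|^4]=O(h^2)$ is not by itself enough --- one needs a dominated-convergence argument with the modulus of continuity of $\He F_t$ (for which some local uniform control over the Hessian is necessary), or the mollification route you sketch, or Gaussian smoothing in the directions where $C_t$ is nondegenerate. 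You correctly flag this as the technical crux. A small clarification worth making explicit in a polished version: even after Gaussian smoothing has accumulated, $F_t$ is smooth only in directions within the image of $C_t$, so in directions of $X$ orthogonal to that image one still relies on the original $C^2$ regularity of $F$ --- which is precisely why the paper restricts to $t$ avoiding jumps of $\dot C$, where the image does not change locally. Modulo that repair, your duality argument for the weak form \eqref{eq: evolution proba PgCt} is clean and correct.
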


\begin{proof}
  In the case that $C_{t-\epsilon}$ is strictly positive definite for some $\epsilon>0$ (and by monotonicity then also for all larger times),
  this is a direct computation from \eqref{e:Gauss-E}.
  For $t$ that are not discontinuity points of $\dot C_t$, the image $X_s$ of $C_s$ is independent of $s \in [t-\epsilon,t+\epsilon]$
  and one has the representation \eqref{e:Gauss-E} on $X_t$.
\end{proof}

Alternatively, one can prove the proposition using It\^o's formula.
For any $\varphi \in \R^N$, define the process 
\begin{equation}
\label{eq: continuous gaussian decomposition}
\forall t \geq 0: \quad    \zeta_t = \varphi + \int_0^t  \sqrt{\dot C_s} \; dB_s \in \R^N,
\end{equation}
where $(B_s)_s$ is a Brownian motion taking values in $\R^N$. By construction $\zeta_t$ is a Gaussian variable with mean $\varphi$ and variance $C_t = \int_0^t \dot C_s \, ds$. In particular $F_t(\varphi) = \bbE \qa{F(\zeta_t)}$ and by It\^o's formula,
\begin{equation}
\forall t \geq0: \qquad 
\ddp{}{t} F_t(\varphi) = \bbE \qa{ \frac12 \Delta_{\dot C_t}  F(\zeta_t)} = \frac12 \Delta_{\dot C_t}  \Eg_{C_t} \qa{F(\varphi+\zeta)}
= \frac12 \Delta_{\dot C_t}  F_t(\varphi),
\end{equation}
with the derivative interpreted as the right-derivative at the discontinuity points of $\dot C_t$.

Note that the decomposition \eqref{eq: continuous gaussian decomposition} is the natural extension of the discrete decomposition $\zeta = \zeta_1 + \zeta_2$. 
Given a covariance matrix $C$, many decompositions are possible, such as:
\begin{equation}
C = \int_0^\infty \dot C_s \, ds 
\quad \text{with} \quad  \dot C_s = C \; 1_{s \in [0,1)}.
\end{equation}
For a  given model from statistical mechanics, it will be important to adjust the decomposition according to the specific spatial structure (of $\Lambda$) of this model. 
In Example \ref{example_free_field}, the Gaussian free field has covariance matrix $A^{-1}$
with $A$ the discrete Laplace operator as in \eqref{eq: discrete Laplace}. 
There are many decompositions of $A^{-1}$ of the form $\int_0^\infty \dot C_s \, ds$ and the best choice will dependent on the application.
Nevertheless a suitable decomposition should capture the mode structure of the  decomposition \eqref{eq: modes eigenvector} in order to separate the different scales in the dynamics. 
Indeed, the key idea of a renormalisation group approach is to integrate the different scales one after the other in order.
This is especially important in strongly correlated systems, in which the different scales do not decouple,
and integrating some scales has an important effect on the remaining scales: the interaction potential will get renormalised.
We refer to Section \ref{sec: Applications} for several applications.

\subsection{Renormalised potential and Polchinski equation}
\label{sec:LSI-setup}

In this section, we define the Polchinski flow and analyse its structure. 
A simple explicit example is worked out in Example~\ref{sec:1d_example} below.
We will focus on probability measures $\nu_0$ supported on a linear subspace $X \subset \R^N$.
By considering the measure $\nu_0(A-a)$ for $a\in \R^N$,
this also includes measures supported on an affine subspace which is of interest for 
conservative dynamics.
For generalisations to non-linear spaces, see Section~\ref{sec:geometry}.

Let $C_\infty = \int_0^\infty \dot C_t\, dt$ be a covariance decomposition,
and consider a probability measure $\nu_0$ on $X$ with expectation 
 given by
\begin{equation}
\label{e:nu0-Cinfty}
\E_{\nu_0} [F]
  \propto
  \Eg_{C_\infty} \qa{e^{- V_0(\zeta)} F(\zeta)} ,
\end{equation}
with a potential $V_0:X \to \R$, where the Gaussian expectation acts on the variable $\zeta$.
To avoid technical problems, we always assume in the following that $V_0$ is bounded below.
We are going to use the Gaussian representation introduced in the previous subsection in order to
decompose the measure $\nu_0$. For this, let us first introduce some notation.

\begin{definition} 
\label{def:renormpot}
For $t>s>0$, $F: X \to \R$ bounded, and $\varphi \in X$, define:
\begin{itemize}
\item
the \emph{renormalised potential} $V_t$:
\begin{equation} 
\label{e:V-def}
V_t(\varphi) = - \log \Eg_{C_t} \qa{e^{-V_0(\varphi+\zeta)}};
\end{equation}
\item the  \emph{Polchinski semigroup} $\PP_{s,t}$:
\begin{equation} 
  \label{e:P-def-bis}
  \PP_{s,t}F(\varphi) = e^{V_t(\varphi)} \Eg_{C_t-C_s}
  \qa{e^{-V_s(\varphi+\zeta)} F(\varphi+\zeta)};
\end{equation}
\item the \emph{renormalised measure} $\nu_t$:
\begin{equation} 
  \label{e:nu-def-bis}
  \E_{\nu_t} [F] = \PP_{t,\infty}F(0) = e^{V_\infty(0)} \Eg_{C_\infty-C_t} \qa{e^{-V_t(\zeta)} F(\zeta)},
\end{equation}
\end{itemize}
where all the Gaussian expectations apply to $\zeta$.
\end{definition}

We stress that the renormalised measure $\nu_t$ evolving according to the Polchinski semigroup 
is different from the measure $m_t$ in  \eqref{eq: distribution at time t} evolving along the flow of the Langevin 
dynamics (which we will not discuss directly in this section).

Note that in \eqref{e:nu-def-bis}, $e^{+V_\infty(0)}$ is the normalisation factor of the probability measure $\nu_t$.
More generally, the function $V_\infty$ is equivalent to the moment generating function of the measure $\nu_0$: 
changing variables from $\zeta$ to $\zeta + C_\infty h$,
\begin{align} \label{e:Vinfty}
  V_\infty(C_\infty h)
  = -\log \Eg_{C_\infty}[e^{-V_0(C_\infty h+\zeta)}]
  &= \frac12 (h, C_\infty h) -\log \Eg_{C_\infty}[e^{-V_0(\zeta)}e^{(h,\zeta)}]
    \nnb
  &= \frac12 (h, C_\infty h) -\log \E_{\nu_0}[e^{(h,\zeta)}] + V_\infty(0).
\end{align}

The renormalised measure $\nu_t$ is related to $\nu_0$ by the following identity.
\begin{proposition}
\label{prop: mesure decomposition 1 pas}
For $t \geq 0$ and any $F : X \mapsto \R$ such that the following quantities make sense,
\begin{equation} 
\label{e:polchinski-semigroup level 0}
\E_{\nu_0} \qa { F } =   \E_{\nu_t} \qa{ \PP_{0,t} F } .
\end{equation}
\end{proposition}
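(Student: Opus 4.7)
The plan is simply to unfold both sides of \eqref{e:polchinski-semigroup level 0} using the definitions in Definition~\ref{def:renormpot} and then apply the Gaussian semigroup property \eqref{e:Gauss-conv}. This is a purely algebraic identity, with no analytic difficulties expected, so the main task is bookkeeping.

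First I would expand the right-hand side. Using \eqref{e:nu-def-bis} and then \eqref{e:P-def-bis} with $s=0$ (noting $C_0 = 0$), one gets
\begin{equation}
\E_{\nu_t}[\PP_{0,t} F]
= e^{V_\infty(0)} \Eg_{C_\infty - C_t}\qa{e^{-V_t(\zeta)} \, \PP_{0,t} F(\zeta)},
\end{equation}
and the factor $e^{V_t(\zeta)}$ appearing in $\PP_{0,t}F(\zeta)$ cancels the factor $e^{-V_t(\zeta)}$ from $\nu_t$, leaving
\begin{equation}
\E_{\nu_t}[\PP_{0,t} F]
= e^{V_\infty(0)} \Eg_{C_\infty - C_t}\qa{\Eg_{C_t}\qa{e^{-V_0(\zeta+\zeta')} F(\zeta+\zeta')}}.
\end{equation}
This cancellation is the whole point of the renormalised potential's definition \eqref{e:V-def}: $V_t$ is chosen precisely so that integrating out the Gaussian increment of scale $C_t$ reproduces the original Boltzmann weight $e^{-V_0}$.

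Next I would apply the Gaussian convolution identity \eqref{e:Gauss-conv} to the decomposition $C_\infty = C_t + (C_\infty - C_t)$, which collapses the two nested expectations into a single Gaussian expectation against $\Pg_{C_\infty}$:
\begin{equation}
\E_{\nu_t}[\PP_{0,t} F]
= e^{V_\infty(0)} \Eg_{C_\infty}\qa{e^{-V_0(\zeta)} F(\zeta)}.
\end{equation}
Finally I would identify this with $\E_{\nu_0}[F]$. By taking $F \equiv 1$ in \eqref{e:nu-def-bis} (and using that $\E_{\nu_t}[1]=1$), or equivalently by specialising \eqref{e:Vinfty} at $h=0$, one sees $e^{V_\infty(0)} = 1/\Eg_{C_\infty}[e^{-V_0(\zeta)}]$, which is exactly the normalising constant of $\nu_0$ from \eqref{e:nu0-Cinfty}. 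Hence the right-hand side equals $\E_{\nu_0}[F]$, as required.

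The only potential obstacle is integrability: one must ensure all expectations make sense, which is covered by the standing assumption that $V_0$ is bounded below together with the hypothesis in the statement that $F$ is such that the expressions are well defined. No convexity or smoothness of $V_0$ is needed, so the identity holds in the stated generality.
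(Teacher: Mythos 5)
Your proposal is correct and is essentially the paper's own proof read in reverse: the paper starts from $\E_{\nu_0}[F]$, applies the Gaussian semigroup property \eqref{e:Gauss-conv}, and inserts $e^{-V_t}e^{+V_t}$ to identify $\PP_{0,t}F$ and $\nu_t$, whereas you expand the right-hand side and observe the same cancellation. The only cosmetic difference is that you track the normalising factor $e^{V_\infty(0)}$ explicitly while the paper hides it behind the $\propto$ convention.
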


\begin{proof}
Starting from \eqref{e:nu0-Cinfty}, from the Gaussian decomposition \eqref{e:Gauss-conv} we get
\begin{align}
\Eg_{C_\infty} \qa{e^{- V_0(\zeta)} F(\zeta)} 
&= \Eg_{C_\infty - C_t} \qa{ \Eg_{C_t} \qa{ e^{- V_0(\varphi + \zeta)} F(\varphi + \zeta)} } \nnb
& =  \Eg_{C_\infty - C_t} \qa{ e^{- V_t(\varphi)}  \; e^{+ V_t(\varphi)} \Eg_{C_t} \qa{ e^{- V_0(\varphi + \zeta)} F(\varphi + \zeta)} } \nnb
& \propto  \E_{\nu_t} \qa{ \PP_{0,t} F } ,
\end{align}
where $\zeta$ is integrated with respect to $\Eg_{C_t}$ and $\varphi$ with respect to $\Eg_{C_\infty - C_t}$.
We used the definitions \eqref{e:P-def-bis} and \eqref{e:nu-def-bis} in the last line,
and recall that $\propto$ is an equality up to a normalising factor so that $\nu_t$ is a probability measure.
This completes the proof of the identity \eqref{e:polchinski-semigroup level 0}.
\end{proof}

Using the definition \eqref{e:V-def} of $V_t$,  
the action of the Polchinski semigroup \eqref{e:P-def-bis} can be interpreted as a conditional expectation  
with respect to $\varphi$:
\begin{equation} 
\label{e:fluctuationmeasure-def} 
  \PP_{0,t}F(\varphi) = 
\frac{   \Eg_{C_t}  \qa{e^{-V_0(\varphi+\zeta)} F(\varphi+\zeta)}}{   \Eg_{C_t}  \qa{e^{-V_0(\varphi+\zeta)}}}
 =:  \E_{\mu_t^\varphi} [F(\zeta)].
\end{equation}
This defines a probability measure $\mu_t^\varphi$  called the \emph{fluctuation measure}. 
Assuming $C_t$ is invertible and  changing variables from $\varphi+\zeta$ to $\zeta$,
the fluctuation measure can be written equivalently as
\begin{equation} 
\label{e:fluctuationmeasure-alt}
  \mu_t^\varphi(d\zeta) = e^{+V_t(\varphi)}
  e^{-\frac12(\varphi-\zeta,C_t^{-1}(\varphi-\zeta)) - V_0(\zeta)} \, d\zeta
  \propto e^{-\frac12( \zeta,C_t^{-1} \zeta) + (\zeta,C_t^{-1} \varphi) - V_0(\zeta)} \, d\zeta .
\end{equation}
Besides the addition of an external field $C_t^{-1} \varphi$, the structure of this new measure is similar to the one of the original measure  $\nu_0$ introduced in \eqref{e:nu0-Cinfty}, but the  covariance of the Gaussian integration is now  $C_t$. 
By construction $C_t \leq C_\infty$, so that the Hamiltonian of the conditional measure \eqref{e:fluctuationmeasure-def} is more convex and will  hopefully be easier to handle. 
The fluctuation measure is central in the \emph{stochastic localisation} framework which will be presented in 
Section~\ref{sec:stochloc}.

For all bounded function $F: X \to \R$ and all $t>0$, the identity \eqref{e:polchinski-semigroup level 0} reads
\begin{equation}
  \E_{\nu_0}[F] = \E_{\nu_t}[\PP_{0,t}F(\varphi)] = \E_{\nu_t}[\E_{\mu_t^\varphi}[F(\zeta)]],
\end{equation}
where $\varphi$ denotes the variable of $\nu_t$ and $\zeta$ the variable of $\mu_t^\varphi$.
This  is therefore an instance of the measure decomposition  \eqref{eq: decomposition nu1 nu2} by successive conditionings.
The splitting of the covariance $C_\infty = C_\infty - C_t + C_t$ will be chosen so that the field  $\zeta$ encodes the local interactions, which correspond to the fast scales of the dynamics, and $\varphi$ the long range part of the interaction, associated with the slow dynamical modes.
Integrating out the short scales boils down to considering a new test function 
$\PP_{0,t} F(\varphi)$ and a measure $\nu_t$ \eqref{e:nu-def-bis} which is expected to have better properties than the original measure $\nu_0$. 
This is illustrated in a one-dimensional case in Example~\ref{sec:1d_example}.

Models from statistical mechanics often involve a multiscale structure when approaching the phase transition. For this reason, it is not enough to split the measure into two parts as in \eqref{e:polchinski-semigroup level 0}. The renormalisation procedure is based on a recursive procedure with successive integrations of the fast scales in order to simplify the measure step by step. 
As an example, let us describe  a two step procedure: for $s < t$, splitting the covariance into 
$C_s, C_t- C_s, C_\infty - C_t$, can be achieved by applying twice \eqref{e:polchinski-semigroup level 0}
\begin{equation} 
\label{eq: semigroup structure}
\E_{\nu_0} \qa { F } =   \E_{\nu_s} \qa{ \PP_{0,s} F }
= \E_{\nu_t} \qa{ \PP_{s,t} \Big( \PP_{0,s} F \Big)} = \E_{\nu_t} \qa{ \PP_{0,t} F } .
\end{equation}
Thus $\PP_{s,t}$ inherits a semigroup property from the nested integrations.
For infinitesimal renormalisation steps, we are going to show in Proposition \ref{prop:polchinski2} that 
the Polchinski semigroup is in fact a Markov semigroup with a structure reminiscent of the Langevin semigroup
\eqref{eq: Kolmogorov backward equation}.
To implement this renormalisation procedure, one has also to control the renormalised measure $\nu_t$. 
For infinitesimal renormalisation steps, its potential $V_t$  evolves according to the following Hamilton--Jacobi--Bellman equation, known as \emph{Polchinski equation}.
\begin{proposition} 
\label{prop:polchinski1}
  Let $(C_t)$ be as above, and let $V_0 \in C^2$.
  Then for every $t$ such that $C_t$ is differentiable
  the renormalised potential $V_t$ defined in \eqref{e:V-def} satisfies the \emph{Polchinski equation}
  \begin{equation}
    \label{e:polchinski-bis}
    \ddp{}{t} V_t = \frac12 \Delta_{\dot C_t} V_t - \frac12 (\nabla V_t)_{\dot C_t}^2
  \end{equation}
where $\Delta_{\dot C_t}$ was defined in \eqref{e:DeltaC} and 
the scalar product in \eqref{eq: scalar product C}.
\end{proposition}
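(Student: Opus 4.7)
The plan is a Cole--Hopf style calculation: introduce the partition function
\[
Z_t(\varphi) := \Eg_{C_t}\qa{e^{-V_0(\varphi+\zeta)}} = e^{-V_t(\varphi)},
\]
so that $V_t = -\log Z_t$, and derive the Polchinski equation from the linear heat equation satisfied by $Z_t$. The reason this is the natural strategy is that $Z_t = \Pg_{C_t} * e^{-V_0}$ is precisely a Gaussian convolution of the type covered by Proposition~\ref{prop:heat}, so we already know the PDE it solves; the nonlinearity in \eqref{e:polchinski-bis} should then arise purely from the logarithm.

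First I would apply Proposition~\ref{prop:heat} to $F = e^{-V_0}$ to obtain, at every point of differentiability of $C_t$,
\[
\ddp{}{t} Z_t = \tfrac12 \Delta_{\dot C_t} Z_t.
\]
Then, using $V_t = -\log Z_t$, the chain rule gives $\partial_t V_t = -Z_t^{-1}\partial_t Z_t = -\tfrac{1}{2Z_t}\Delta_{\dot C_t} Z_t$, and differentiating twice in the spatial variables yields
\[
\partial_i\partial_j V_t \;=\; -\frac{\partial_i\partial_j Z_t}{Z_t} + \frac{\partial_i Z_t\, \partial_j Z_t}{Z_t^2}
\;=\; -\frac{\partial_i\partial_j Z_t}{Z_t} + (\partial_i V_t)(\partial_j V_t).
\]
Contracting with $(\dot C_t)_{ij}$ and recalling the definitions \eqref{e:DeltaC} and \eqref{eq: scalar product C} gives
\[
\Delta_{\dot C_t} V_t = -\frac{1}{Z_t}\Delta_{\dot C_t} Z_t + (\nabla V_t)^2_{\dot C_t},
\]
so that $-\tfrac{1}{2Z_t}\Delta_{\dot C_t} Z_t = \tfrac12 \Delta_{\dot C_t} V_t - \tfrac12 (\nabla V_t)^2_{\dot C_t}$. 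Substituting this back into the expression for $\partial_t V_t$ gives the Polchinski equation \eqref{e:polchinski-bis}.

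I do not expect any serious obstacle here: the boundedness below of $V_0$ ensures $Z_t > 0$ so $\log Z_t$ is well-defined and smooth in $\varphi$, and the $C^2$ hypothesis on $V_0$ (together with the Gaussian smoothing) guarantees that the spatial derivatives and the time derivative can be exchanged with the Gaussian expectation. The only mild subtlety is that the identity holds only at points where $\dot C_t$ is defined, which is exactly the hypothesis of the proposition, so the right-derivative at jump points of $\dot C_t$ can be handled as in the remark following Proposition~\ref{prop:heat}.
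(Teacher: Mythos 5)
Your proof is correct and follows essentially the same Cole--Hopf argument as the paper: define $Z_t = e^{-V_t}$, invoke Proposition~\ref{prop:heat} to get the linear heat equation for $Z_t$, then differentiate the logarithm. The only cosmetic difference is that the paper compresses your explicit second-derivative computation into the one-line identity $-\tfrac12 e^{V_t}\Delta_{\dot C_t} e^{-V_t} = \tfrac12 \Delta_{\dot C_t} V_t - \tfrac12 (\nabla V_t)_{\dot C_t}^2$.
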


\begin{proof}
Let $Z_t(\varphi)= \Eg_{C_t}[e^{-V_0(\varphi+\zeta)}]$.
By Proposition~\ref{prop:heat},
it follows that the Gaussian convolution
acts as the heat semigroup with time-dependent generator $\frac12 \Delta_{\dot C_t}$, i.e.,
if $Z_0$ is $C^2$ in $\varphi$ so is $Z_t$ for any $t>0$,
that $Z_t(\varphi)>0$ for any $t$ and $\varphi$, and that for any $t>0$
such that $C_t$ is differentiable, 
\begin{equation}
  \ddp{}{t} Z_t = \frac12 \Delta_{\dot C_t} Z_t, \quad Z_0=e^{-V_0}.
\end{equation}
Since $Z_t(\varphi)>0$ for all $\varphi$, its logarithm $V_t = -\log Z_t$ is well-defined and satisfies the Polchinski equation
\begin{equation}
  \ddp{}{t} V_t = - \frac{\ddp{}{t} Z_t}{Z_t}
  = -\frac{\Delta_{\dot C_t} Z_t}{2Z_t}
  = -\frac12 e^{V_t} \Delta_{\dot C_t} e^{-V_t}
  = \frac12 \Delta_{\dot C_t} V_t - \frac12 (\nabla V_t)_{\dot C_t}^2
  .\qedhere
\end{equation}
\end{proof}

The semigroup structure is analysed in the following proposition. As mentioned above, we assume $V_0$ to be bounded below to avoid technical problems.
\begin{proposition}
\label{prop:polchinski2}
  The operators $(\PP_{s,t})_{s\leq t}$ form a time-dependent Markov semigroup with generators $(\LL_t)$,
  in the sense that
 \begin{equation}
 s\leq r \leq t: \qquad
 \PP_{t,t} = \id
 \quad \text{and} \quad \PP_{r,t}\PP_{s,r}=\PP_{s,t},
 \end{equation}
and $\PP_{s,t}F \geq 0$ if $F \geq 0$ with  $\PP_{s,t}1=1$.
  
  Furthermore for all $t$ at which $C_t$ is differentiable
  (respectively $s$ at which $C_s$ is differentiable),
  \begin{equation} 
  \label{e:polchinski-generator}
s \leq t: \qquad     \ddp{}{t} \PP_{s,t}F = \LL_t  \PP_{s,t} F,
    \qquad
    -\ddp{}{s} \PP_{s,t}F = \PP_{s,t} \LL_s F,
 \end{equation}
  for all smooth functions $F$, where $\LL_t$ acts on a smooth function $F$ by
  \begin{equation}  
    \label{e:polchinski-L}
    \LL_tF = \frac12 \Delta_{\dot C_t} F - (\nabla V_t, \nabla F)_{\dot C_t}.
  \end{equation}
  The measures $\nu_t$ evolve dual to $(\PP_{s,t})$ in the sense that
  \begin{equation} 
  \label{e:polchinski-semigroup}
    \E_{\nu_t} \qa{ \PP_{s,t} F }= \E_{\nu_s} \qa { F }\quad (s \leq t),
        \qquad
    -\ddp{}{t} \E_{\nu_t} \qa{F} = \E_{\nu_t} \qa{ \LL_t F}.
  \end{equation}
\end{proposition}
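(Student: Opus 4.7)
My plan is to establish the four claims in sequence, using the Polchinski equation (Proposition~\ref{prop:polchinski1}) and the Gaussian heat semigroup (Proposition~\ref{prop:heat}) as the two workhorses. The forward Kolmogorov equation is the main calculation; the remaining statements are corollaries.

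\textbf{Semigroup properties.} First I would check $\PP_{s,t}1 = 1$: combining the definition \eqref{e:V-def} with the Gaussian convolution identity \eqref{e:Gauss-conv} applied to the splitting $C_t = C_s + (C_t-C_s)$,
\begin{equation}
\Eg_{C_t-C_s}\qa{e^{-V_s(\varphi+\zeta)}} = \Eg_{C_t-C_s}\qa{\Eg_{C_s}\qa{e^{-V_0(\varphi+\zeta+\eta)}}} = e^{-V_t(\varphi)},
\end{equation}
so the prefactor $e^{V_t(\varphi)}$ in \eqref{e:P-def-bis} cancels. Positivity $\PP_{s,t}F\geq 0$ when $F\geq 0$ is immediate from \eqref{e:P-def-bis}, and $\PP_{t,t}=\id$ follows because $\Eg_0$ is evaluation at $0$. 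The composition identity $\PP_{r,t}\PP_{s,r}=\PP_{s,t}$ is obtained by applying \eqref{e:Gauss-conv} to $C_t-C_s = (C_t-C_r)+(C_r-C_s)$, mirroring the proof of Proposition~\ref{prop: mesure decomposition 1 pas}.

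\textbf{Forward equation.} For fixed $s$, write $\PP_{s,t}F = e^{V_t} G_t$ with $G_t(\varphi) = \Eg_{C_t-C_s}\qa{e^{-V_s(\varphi+\zeta)}F(\varphi+\zeta)}$. Proposition~\ref{prop:heat}, applied to the covariance decomposition $t\mapsto C_t-C_s$, gives $\partial_t G_t = \tfrac12 \Delta_{\dot C_t}G_t$, and Proposition~\ref{prop:polchinski1} supplies $\partial_t V_t$. Combining via the product rule
\begin{equation}
\Delta_{\dot C_t}(e^{V_t}G_t) = e^{V_t}\qa{\Delta_{\dot C_t}G_t + 2(\nabla V_t,\nabla G_t)_{\dot C_t} + \pa{\Delta_{\dot C_t}V_t + (\nabla V_t)_{\dot C_t}^2}G_t},
\end{equation}
and substituting the Polchinski equation, the $\Delta_{\dot C_t}V_t$ and $(\nabla V_t)_{\dot C_t}^2$ contributions cancel exactly, leaving $\partial_t\PP_{s,t}F = \tfrac12\Delta_{\dot C_t}\PP_{s,t}F - (\nabla V_t,\nabla \PP_{s,t}F)_{\dot C_t} = \LL_t\PP_{s,t}F$. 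This cancellation is the main content of the proposition; the Polchinski equation is precisely engineered to make it happen.

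\textbf{Backward equation and dual evolution.} For the backward equation, differentiate $\PP_{s,t}F$ in $s$. The $s$-dependence enters through both $V_s$ in the exponent and through the covariance $C_t-C_s$; the latter contributes $-\tfrac12\Delta_{\dot C_s}$ acting on the integrand (by the reverse of Proposition~\ref{prop:heat}, or integration by parts against $\partial_s$ of the Gaussian density), while the former contributes $-\partial_s V_s$. Inserting the Polchinski equation, the same cancellation pattern as above produces $-\partial_s\PP_{s,t}F = \PP_{s,t}\LL_s F$. For the dual relation, substitute \eqref{e:nu-def-bis} and \eqref{e:P-def-bis} into $\E_{\nu_t}\qa{\PP_{s,t}F}$: the $e^{V_t}e^{-V_t}$ factors cancel and \eqref{e:Gauss-conv} applied to $C_\infty-C_s = (C_\infty-C_t)+(C_t-C_s)$ collapses the nested expectations to $\E_{\nu_s}\qa{F}$. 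Differentiating this identity in $t$ and sending $s\to t$, using the forward equation together with $\PP_{t,t}=\id$, yields $\partial_t \E_{\nu_t}\qa{F} + \E_{\nu_t}\qa{\LL_t F} = 0$. The only real difficulty is the algebraic cancellation in the forward equation; everything else is bookkeeping built on top of it.
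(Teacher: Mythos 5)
Your proposal is correct and follows essentially the same strategy as the paper's proof: unfold the definition of $\PP_{s,t}F = e^{V_t}G_t$, apply Proposition~\ref{prop:heat} to $G_t$ and Proposition~\ref{prop:polchinski1} to $V_t$, and observe that the product rule makes the $\frac12\Delta_{\dot C_t}V_t - \frac12(\nabla V_t)_{\dot C_t}^2$ terms cancel; the remaining structural identities are bookkeeping built on the Gaussian convolution \eqref{e:Gauss-conv}. One small organizational difference: for the second identity in \eqref{e:polchinski-semigroup}, the paper differentiates the constant-in-$(s,t)$ identity $\E_{\nu_t}[\PP_{s,t}F] = \E_{\nu_s}[F]$ in $s$ (measure $\nu_t$ fixed) and applies the backward equation, then sets $s=t$, whereas you differentiate in $t$ (which hits both $\nu_t$ and $\PP_{s,t}F$) with the forward equation before sending $s\to t$; both are valid and yield the same conclusion, though the paper's route avoids having to unbundle the $t$-derivative of the renormalised measure from that of the semigroup before taking the limit.
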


The operator $\LL_t$ in \eqref{e:polchinski-L} is obtained by linearising the Polchinski equation \eqref{e:polchinski-bis} and has a structure similar to the generator $\Delta^H$
defined in \eqref{eq: Langevin generator}. 
According to \eqref{e:polchinski-semigroup}, the renormalised measure evolves according to
$- \partial_t \nu_t =  \LL_t^* \nu_t$ where $\LL_t^*$ is the formal adjoint of $\LL_t$ (with respect to the Lebesgue measure).
This is another way to rephrase the Polchinski equation \eqref{e:polchinski-bis}.

\begin{proof} 
By assumption, $V_0$ is bounded below. 
The weak convergence of the Gaussian measure $\Pg_{C_t-C_s}$ to the Dirac measure at $0$ when $t\downarrow s$ thus implies $\PP_{t,t}=\id$. 
The semi-group property, i.e. $\PP_{r,t}\PP_{s,r} = \PP_{s,t}$ for any $s\leq r\leq t$,
then follows from \eqref{eq: semigroup structure}. 
The definition~\eqref{e:P-def-bis} also implies continuity since $\|\PP_{s,t}F\|_\infty\leq \|F\|_\infty$ for each bounded $F$.
Equation~\eqref{e:P-def-bis} also implies that $\PP_{s,t}F\geq 0$ if $F\geq 0$. 

To verify that the generator $\LL_t$ of the Polchinski semigroup is given by \eqref{e:polchinski-L},
set  for $s<t$:
\begin{equation}
F_{s,t}(\varphi) = \PP_{s,t}F(\varphi) = e^{V_t(\varphi)} \Eg_{C_t-C_s}[e^{-V_s(\varphi+\zeta)} F(\varphi+\zeta)].
\end{equation}
Computing the time derivatives using Propositions~\ref{prop:heat} and~\ref{prop:polchinski1}, this leads to
\begin{align}
  \ddp{}{t} F_{s,t}
  &= (\ddp{}{t} V_t) F_{s,t} + e^{V_t} \frac12 \Delta_{\dot C_t} \Eg_{C_t-C_s}[e^{-V_s(\cdot+\zeta)} F(\cdot+\zeta)]
    \nnb
  &= (\ddp{}{t} V_t) F_{s,t} + e^{V_t} \frac12 \Delta_{\dot C_t} (e^{-V_t} F_{s,t})
    \nnb
  &= (\ddp{}{t} V_t) F_{s,t} - (\frac12 \Delta_{\dot C_t} V_t) F_{s,t} + \frac12 (\nabla V_t)_{\dot C_t}^2 F_{s,t} + \frac12 \Delta_{\dot C_t} F_{s,t} - (\nabla V_t, \nabla F_{s,t})_{\dot C_t}
    \nnb
  &= \frac12 \Delta_{\dot C_t} F_{s,t} - (\nabla V_t, \nabla F_{s,t})_{\dot C_t}
    \nnb
  &= \LL_t F_{s,t}
    ,
\end{align}
which is the first equality in \eqref{e:polchinski-generator}.
The second equality in \eqref{e:polchinski-generator} follows analogously.

The first equality in \eqref{e:polchinski-semigroup} holds as in Proposition \ref{prop: mesure decomposition 1 pas}.  
The second identity follows by taking  derivatives in $s$ in the first identity and then using \eqref{e:polchinski-generator} so that 
\begin{equation}
\ddp{}{s} \E_{\nu_s} \qa{ F }
=
\ddp{}{s} \E_{\nu_t} \qa{ \PP_{s,t} F }  =  \E_{\nu_t} \qa{ \ddp{}{s} \PP_{s,t} F }  = - \E_{\nu_t} \qa{   \PP_{s,t}\LL_s F }.
\end{equation}
For $t=s$ then $\PP_{s,s}\LL_s F = \LL_sF$ and the second identity  in \eqref{e:polchinski-semigroup} is recovered.
\end{proof}

\subsection{Log-Sobolev inequality via a multiscale Bakry--\'Emery method}
\label{sec: Log-Sobolev inequality via a multiscale Bakry--Emery method}

In this section, the Polchinski renormalisation  is used to derive a log-Sobolev inequality under a criterion on the renormalised potentials, which can be interpreted as a multiscale 
condition generalising the strict convexity of the Hamiltonian in the Bakry--\'Emery criterion (Theorem \ref{thm:BE}).

We impose the following technical \emph{continuity assumption} analogous to \eqref{e:ergodicity-continuous2}:
for all bounded smooth functions $F: X \to \R$ and $g:\R \to \R$,
\begin{equation} 
\label{e:continuity}
\lim_{t \to \infty}  \E_{\nu_t} \qa{g(\PP_{0,t}F)} = g \big(  \E_{\nu_0} \qa{F} \big).
\end{equation}
This can be easily checked in all examples of practical interest.

\begin{theorem} 
\label{thm:LSI-mon}
Consider a measure $\nu_0$ of the form \eqref{e:nu0-Cinfty} associated with a covariance decomposition  $\dot C_t$ differentiable for all $t$ (see Section~\ref{sec:LSI-setup}), and assume also \eqref{e:continuity}.
  
Suppose there are real numbers $\dot\lambda_t$ (allowed to be negative) such that
\begin{equation} 
  \label{e:assCt-mon}
  \forall \varphi \in X, t> 0:\qquad
  \dot C_t \He V_t(\varphi) \dot C_t - \frac{1}{2} \ddot C_t \geq \dot\lambda_t \dot C_t,
\end{equation}
and define
\begin{equation} 
  \label{e:gammadef-mon}
  \lambda_t = \int_0^t \dot \lambda_s \, ds,
  \qquad \frac{1}{\gamma} = \int_0^\infty e^{-2\lambda_t} \, dt .
\end{equation}
Then $\nu_0$ satisfies the log-Sobolev inequality
\begin{equation} 
\label{e:LSI-mon}
\ent_{\nu_0} \qa{ F }   \leq \frac{2}{\gamma} \E_{\nu_0}\qa{(\nabla \sqrt{F})^2_{\dot C_0}}.
\end{equation}
\end{theorem}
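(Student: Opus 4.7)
The proof is a multiscale version of the Bakry--\'Emery interpolation, carried out along the Polchinski semigroup $(\PP_{s,t})$ rather than the Langevin semigroup. Setting $F_t := \PP_{0,t}F$ and using the duality relation \eqref{e:polchinski-semigroup}, I would first rewrite the entropy as an integral of dissipation:
\begin{equation}
\ent_{\nu_0}(F) \;=\; -\int_0^\infty \ddp{}{t} \E_{\nu_t}[\Phi(F_t)]\, dt, \qquad \Phi(x) = x\log x,
\end{equation}
where the boundary term at $t=\infty$ vanishes by the continuity assumption \eqref{e:continuity} (since $\PP_{0,t}F \to \E_{\nu_0}[F]$, a constant). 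Using $\partial_t F_t = \LL_t F_t$, the duality $-\partial_t \E_{\nu_t}[G] = \E_{\nu_t}[\LL_t G]$ (for $G$ not depending on $t$), and the chain rule for the diffusion operator $\LL_t$, the drift parts of $\LL_t \Phi(F_t)$ and $\Phi'(F_t)\LL_t F_t$ cancel, leaving only the carr\'e du champ and giving
\begin{equation}
\label{eq:plan-entdiss}
\ent_{\nu_0}(F) \;=\; \frac12 \int_0^\infty \E_{\nu_t}\qbb{\frac{|\nabla F_t|^2_{\dot C_t}}{F_t}}\, dt.
\end{equation}

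The remaining, and main, task is to show the pointwise-in-$t$ decay of this weighted Fisher information:
\begin{equation}
\label{eq:plan-Fisherdecay}
\E_{\nu_t}\qbb{\frac{|\nabla F_t|^2_{\dot C_t}}{F_t}} \;\leq\; e^{-2\lambda_t}\, \E_{\nu_0}\qbb{\frac{|\nabla F|^2_{\dot C_0}}{F}}.
\end{equation}
Once this is in hand, substituting into \eqref{eq:plan-entdiss}, using $|\nabla F|^2 / F = 4|\nabla \sqrt F|^2$, and invoking the definition of $\gamma$ in \eqref{e:gammadef-mon} yields the log-Sobolev inequality \eqref{e:LSI-mon}.

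To prove \eqref{eq:plan-Fisherdecay} I would set $\phi(t) := \E_{\nu_t}[|\nabla F_t|^2_{\dot C_t}/F_t]$ and show that $e^{2\lambda_t}\phi(t)$ is non-increasing via Gr\"onwall. Differentiating, one has $\partial_t \phi(t) = \E_{\nu_t}[(\partial_t - \LL_t)(|\nabla F_t|^2_{\dot C_t}/F_t)]$, and a $\Gamma_2$-style computation analogous to \eqref{e:BE-pf} but in the time-dependent metric $\dot C_t$ produces three contributions: a non-negative squared-Hessian term (to be discarded); a drift-commutator term of the form $2(\nabla F_t, \dot C_t\He V_t(\varphi)\dot C_t \nabla F_t)/F_t$ arising from commuting $\nabla$ with the drift $-(\nabla V_t, \nabla\cdot)_{\dot C_t}$ in $\LL_t$; and a term $-(\nabla F_t, \ddot C_t \nabla F_t)/F_t$ coming from the explicit $t$-dependence of the metric. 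The hypothesis \eqref{e:assCt-mon} is precisely calibrated so that the sum of the last two dominates $2\dot\lambda_t \cdot (\nabla F_t, \dot C_t \nabla F_t)/F_t$, giving $\partial_t \phi(t) \leq -2\dot\lambda_t \phi(t)$.

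The main obstacle is this $\Gamma_2$-type computation: because the metric $\dot C_t$ itself evolves in $t$, the usual Bakry--\'Emery identity acquires the $\ddot C_t$ correction, and one must verify that the Hessian of $V_t$ enters conjugated by $\dot C_t$ on both sides. Identifying this structure --- and recognising that \eqref{e:assCt-mon} is exactly the form needed to close the estimate --- is the essential novelty relative to the classical single-scale Bakry--\'Emery argument recalled in Theorem~\ref{thm:BE}.
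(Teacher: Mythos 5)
Your proposal is correct and follows the paper's argument in essence: both pass to the entropy-production formula along the Polchinski flow and then close a Gr\"onwall estimate via the Bochner/$\Gamma_2$ identity of Lemma~\ref{lem:LSI-pf-identity}. The only organizational difference is that you run the Gr\"onwall directly on $\phi(t)=\E_{\nu_t}[|\nabla F_t|^2_{\dot C_t}/F_t]$, whereas the paper first establishes the stronger pointwise commutation estimate $(\nabla\sqrt{\PP_{0,t}F})^2_{\dot C_t}\le e^{-2\lambda_t}\PP_{0,t}[(\nabla\sqrt F)^2_{\dot C_0}]$ and then takes $\E_{\nu_t}$; your version is a mild (and arguably cleaner) rephrasing. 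A couple of small sign slips to fix in a write-up: the three terms you list are those of $(\LL_t-\partial_t)G_t$, not of $(\partial_t-\LL_t)G_t$ as you wrote just before, and the ``boundary term at $t=\infty$'' does not vanish but equals $\Phi(\E_{\nu_0}[F])$, which is precisely what is subtracted in the definition of $\ent_{\nu_0}(F)$. Neither affects the correctness of the argument.
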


Contrary to the Bakry--\'Emery criterion (Theorem~\ref{thm:BE}), the initial potential $V_0$ is not required to be convex. The relevant parameter is an integrated estimate \eqref{e:gammadef-mon} on the Hessian of the renormalised potentials $V_t$. Thus if one can prove that the renormalisation flow improves the non-convexity of the original potential so that the integral in \eqref{e:gammadef-mon} is finite, then the log-Sobolev inequality holds.
In the case of convex potential $V_0$, the convexity is preserved by the Polchinski equation (see Proposition \ref{prop:convex}) and the Bakry--\'Emery criterion can be recovered. In general, the analysis of the renormalised potential $V_t$
is model dependent.

The covariances $\dot C_t$ play the role of an inverse metric on $X$. In our examples of
interest, this metric becomes increasingly coarse approximately implementing the ``block spin renormalisation picture''.
See Section~\ref{sec:geometry} for further discussion of this.

\begin{remark} \label{rk:LSI-mon1}
With the same proof, the log-Sobolev inequality \eqref{e:LSI-mon}
can be generalised to one for each of the renormalised measures $\nu_s$:
\begin{equation}
    \ent_{\nu_s}(F)
    \leq \frac{2}{\gamma_s} \E_{\nu_s}\qa{ (\nabla \sqrt{F})^2_{\dot C_s} },
    \qquad
    \frac{1}{\gamma_s} = \int_s^\infty e^{-2(\lambda_u-\lambda_s)} \, du.
  \end{equation}
\end{remark}

\begin{remark} 
\label{rk:LSI-mon2}
  The condition \eqref{e:assCt-mon}--\eqref{e:LSI-mon} is invariant under reparametrisation in $t$.
  For example, if $a: [0,+\infty] \to [0,+\infty]$ is a smooth reparametrisation,
  set
  \begin{equation}
    C^a_t = C_{a(t)}, \qquad V^a_t = V_{a(t)}.
  \end{equation}
  Then $\dot C^a_t = \dot a(t) \dot C_{a(t)}$ and $\ddot C^a_t = \ddot a(t) \dot C_{a(t)} + \dot a(t)^2 \ddot C_{a(t)}$ and therefore
  \eqref{e:assCt-mon} is equivalent to
  \begin{equation}
    \dot C_t^a \He V_t^a \dot C_t^a - \frac12 \ddot C_t^a = \dot a(t)^2 \qB{ \dot C_{a(t)} \He V_{a(t)} \dot C_{a(t)} - \frac12 \ddot C_{a(t)}} - \frac12 \ddot a(t) \dot C_{a(t)} \geq \dot \lambda^a_t \dot C_t^a
  \end{equation}
  with
  \begin{equation}
    \dot\lambda^a_t
    = \dot a(t)\dot\lambda_{a(t)} - \frac12 \frac{\ddot a(t)}{\dot a(t)}
    = \dot a(t)\dot\lambda_{a(t)} - \frac12 \ddp{}{t} \log \dot a(t)
    .
  \end{equation}
  Thus \eqref{e:gammadef-mon} becomes
  \begin{equation}
    \lambda^a_t 
    = \int_0^t \dot \lambda^a_s \, ds
    = \int_0^t \dot a(s) \dot\lambda_{a(s)} \, ds  - \frac12 \log \frac{\dot a(t)}{\dot a(0)},
  \end{equation}
  and hence
  \begin{align}
    \dot C_0^a  \int_0^\infty e^{-2\lambda^a_t} \, dt
    &= \frac{\dot C_0^a }{\dot a(0)} \int_0^\infty e^{-2\int_0^t \dot\lambda_{a(s)} \dot a(s) \, ds}\,  \dot a(t) \, dt
    \nnb
    &= \dot C_0 \int_0^\infty e^{-2\int_0^{u} \dot\lambda_{u} \, du}\, du    = \dot C_0 \int_0^\infty e^{-2\lambda_u}\, du.
  \end{align}

  Analogously, one can parametrise by $[0,T]$ instead of $[0,+\infty]$,
  i.e., use a covariance decomposition $C=\int_0^T \dot C_t\, dt$, and then
  obtain  the same conclusion with $T$ instead of $\infty$ in  the estimates.
\end{remark}
\begin{remark} \label{rk:LSI-mon3}
  For a covariance decomposition such that $\dot C_t$ is not differentiable for all $t$, an alternative criterion that does not involve $\ddot C_t$ can be formulated, see~\cite[Theorem~2.6]{MR4303014}.
\end{remark}

\begin{proof}[Proof of Theorem~\ref{thm:LSI-mon}]
The proof follows the strategy of the Bakry--\'Emery theorem (Theorem~\ref{thm:BE}), replacing the Langevin dynamics by the Polchinski flow.
We consider a curve of probability measures $(\nu_t)_{t\geq 0}$
and a corresponding dual time-dependent Markov semigroup $(\PP_{s,t})$
with generators $(\LL_t)$ as in Proposition~\ref{prop:polchinski2}.

For $F: X\to \R$  a function  with values in a compact subset $I$ of $(0,\infty)$, we write
$F_t = \PP_{0,t} F \in I$.  Since the function $\Phi$ is smooth on $I$, it 
can be extended to a bounded smooth function on $\R$ and
we deduce from \eqref{e:continuity} that
\begin{equation} 
\label{eq: consequence continuity}
\lim_{t \to \infty}  \E_{\nu_t} \qa{ \Phi (\PP_{0,t}F)} = \Phi \big(  \E_{\nu_0} \qa{F} \big).
\end{equation}
Thus 
\begin{equation}
\label{eq: time interpolation entropy 2}
\ent_{\nu_0} (F)= \E_{\nu_{0}} [\Phi(F)]- \Phi(\E_{\nu_0} [F])
 = -\int_0^\infty dt \; \ddp{}{t} \E_{\nu_t} [\Phi(F_t)].
\end{equation}
It remains to prove the counterpart of the de Bruijn Formula \eqref{e:dBidentity}.
Denoting  $\dot F_t=\ddp{}{t} F_t$,
using first \eqref{e:polchinski-semigroup} and then \eqref{e:polchinski-L},
it follows that
\begin{align} \label{e:dEnt}
  - \ddp{}{t} \E_{\nu_t} [\Phi (F_{t})]
  &=
    \E_{\nu_t} 
    \qbb{
    \LL_t (\Phi (F_{t}))
    -
    \Phi '(F_{t})\dot{F}_{t}
    }
  \nnb
  &
    =
    \E_{\nu_t}
    \qbb{
    \Phi' (F_{t})\LL_t F_t
    +
    \Phi''(F_{t}) \frac{1}{2} (\nabla F_{t})_{\dot C_t}^{2}
    -
    \Phi'( F_{t}) \dot{F}_{t}
    }
  \nnb
  &
    =
    \frac12
    \E_{\nu_t}
    \qbb{
    \Phi''(F_t) (\nabla F_{t})_{\dot C_t}^{2}
    }
    =
    2
    \E_{\nu_t}
    \qbb{
    (\nabla \sqrt{F_{t}})_{\dot C_t}^{2}
    }
    .
\end{align}
Integrating this relation using \eqref{eq: time interpolation entropy 2} gives
\begin{equation}
  \label{e:Ent-P}
  \ent_{\nu_0}(F)
  = 2\int_0^\infty \E_{\nu_{t}} \qa{(\nabla \sqrt{\PP_{0,t} F})_{\dot C_t}^2} \, dt
  .
\end{equation}
The above entropy production formula \eqref{e:dEnt} is analogous to the de Bruijn identity  \eqref{e:dBidentity}
and the entropy decomposition to   \eqref{eq: de Bruijn bis}, but an important difference is that
the reference measure $\nu_t$ here changes as well.
In Section~\ref{sec:background}, we used that $\E_{\nu}[\Delta^H F] =0$ for any $F$ in the derivation of the de Bruijn identity,
but more conceptually what we used is that
the measure $\nu$ satisfies
\begin{equation}
  -\ddp{}{t} \E_{\nu}[\cdot] = \E_{\nu}[\Delta^H (\cdot)],
  \label{eq_IBP_BE_case}
\end{equation}
since both sides are $0$ (because the stationary measure $\nu$ does not depend on $t$). 
In the computation above, both $\nu_t$ and $F_t$ vary with $t$, 
but in a dual way, 
and the analogue of~\eqref{eq_IBP_BE_case} is~\eqref{e:polchinski-semigroup level 0}.

It remains to derive the counterpart of \eqref{eq: 2nd derivative entropy} 
and show that 
  \begin{equation}
  \label{eq: exp decay exchanged gradient}
 \forall t \geq 0: \qquad    (\nabla \sqrt{\PP_{0,t}F})^2_{\dot C_t}   
    \leq  e^{-2\lambda_t} \PP_{0,t} \left[ (\nabla \sqrt{F})^2_{\dot C_0} \right].
  \end{equation}
Plugging this relation in \eqref{e:Ent-P} and recalling that 
$\E_{\nu_{t}} \qa{ \PP_{0,t} \left[ (\nabla \sqrt{F})^2_{\dot C_0} \right]} = 
\E_{\nu_0} \qa{  (\nabla \sqrt{F})^2_{\dot C_0}}$, 
the log-Sobolev inequality \eqref{e:LSI-mon} is recovered.

\medskip

We turn now to the proof of \eqref{eq: exp decay exchanged gradient}.
The following lemma is essentially the Bakry--\'Emery argument adapted to the Polchinski flow.
\begin{lemma}
\label{lem:LSI-pf-identity}
  Let $\LL_t$, $\PP_{0,t}$, $\dot C_t$, $V_t$ be as in Section~\ref{sec:LSI-setup}.
  Then the following identity holds
  for any $t$-independent positive definite matrix $Q$:
  \begin{equation} \label{e:LSI-pf-identity}
    (\LL_{t}-\partial_t)(\nabla \sqrt{\PP_{0,t}F})^2_{Q}
    = 2(\nabla \sqrt{\PP_{0,t}F},\He V_t \dot C_t \nabla \sqrt{\PP_{0,t}F})_{Q}
    + \frac14 (\PP_{0,t}F) |\dot C_t^{1/2} (\He \log \PP_{0,t}F) Q^{1/2}|_2^2
    ,
  \end{equation}
  where $|M|_2^2 = \sum_{p,q}|M_{pq}|^2$ denotes the squared Frobenius norm of a matrix $M=(M_{pq})$.
\end{lemma}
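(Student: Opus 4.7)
The proof mimics the Bakry--\'Emery $\Gamma_2$-computation underlying Theorem~\ref{thm:BE}, adapted to the time-dependent generator $\LL_t$ of the Polchinski semigroup. The plan is to write $G_t = \PP_{0,t}F$ and $u = \sqrt{G_t}$ so the quantity of interest is $(\nabla u, Q\nabla u)$, then compute $\LL_t(\nabla u, Q\nabla u)$ and $\partial_t(\nabla u, Q\nabla u)$ separately. To relate $\partial_t u$ to $\LL_t u$, combine $\partial_t G_t = \LL_t G_t$ from Proposition~\ref{prop:polchinski2} with the carr\'e-du-champ identity $\LL_t(u^2) = 2u \LL_t u + (\nabla u)^2_{\dot C_t}$ (immediate from the Leibniz rule applied to the diffusion form of $\LL_t$), giving
\begin{equation}
  \partial_t u = \LL_t u + \frac{(\nabla u)^2_{\dot C_t}}{2u}.
\end{equation}

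The main calculation is a Bochner-type expansion of $\LL_t(\nabla u, Q\nabla u)$. Since $\dot C_t$ and $Q$ are independent of $\varphi$, coordinate-wise differentiation using $\LL_t = \tfrac12\Delta_{\dot C_t} - (\nabla V_t, \nabla)_{\dot C_t}$ yields, for the diffusion part,
\begin{equation}
  \tfrac12 \Delta_{\dot C_t}(\nabla u, Q\nabla u) = (\nabla u, Q\nabla \Delta_{\dot C_t} u) + |\dot C_t^{1/2} (\He u) Q^{1/2}|_2^2,
\end{equation}
while for the drift part one uses $\partial_q(\nabla V_t, \nabla u)_{\dot C_t} = (\He V_t\, \dot C_t \nabla u)_q + (\He u\, \dot C_t \nabla V_t)_q$. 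Eliminating third derivatives via $\Delta_{\dot C_t} u = 2\LL_t u + 2(\nabla V_t, \nabla u)_{\dot C_t}$ assembles everything into the $\Gamma_2$-style identity
\begin{equation}
  \LL_t(\nabla u, Q \nabla u) = 2(\nabla u, Q\nabla \LL_t u) + 2(\nabla u, \He V_t\, \dot C_t \nabla u)_Q + |\dot C_t^{1/2}(\He u) Q^{1/2}|_2^2.
\end{equation}
Subtracting $\partial_t(\nabla u, Q\nabla u) = 2(\nabla u, Q\nabla \partial_t u)$, computed from the formula for $\partial_t u$ above, the terms $2(\nabla u, Q\nabla \LL_t u)$ cancel, leaving the desired Hessian-of-$V_t$ contribution together with a residual $|\dot C_t^{1/2}(\He u) Q^{1/2}|_2^2 - \big(\nabla u, Q\nabla [(\nabla u)^2_{\dot C_t}/u]\big)$.

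The final step is to identify this residual with $u^2 |\dot C_t^{1/2}(\He \log u) Q^{1/2}|_2^2$, after which $\He \log G_t = 2\He \log u$ converts the prefactor $u^2$ into $\tfrac14 G_t$ and the inner Hessian from $\log u$ to $\log G_t$, yielding the stated expression. This is a purely algebraic identity obtained by expanding $\He \log u = u^{-1}(\He u) - u^{-2}(\nabla u)(\nabla u)^{\top}$ inside the Frobenius norm and, in parallel, computing $\nabla [(\nabla u)^2_{\dot C_t}/u]$ by the chain rule: the cross-terms produced in both calculations coincide, via the symmetry of $\dot C_t$, $Q$, and $\He u$, with $-\tfrac{2}{u}(\nabla u, \dot C_t (\He u) Q\nabla u)$, and the quartic-in-$\nabla u$ terms also match. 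Nothing here is conceptually delicate, but the bookkeeping of matrix products is the principal obstacle and is also the one place where the specific ``$\He \log$'' form of the remainder term is pinned down.
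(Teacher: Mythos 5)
Your proof is correct and is essentially the same Bakry--\'Emery/Bochner computation as the paper's; the only difference is bookkeeping. You derive the modified evolution $\partial_t u = \LL_t u + (\nabla u)^2_{\dot C_t}/(2u)$ for $u=\sqrt{\PP_{0,t}F}$ and apply a $\Gamma_2$-type Bochner identity directly to $u$, whereas the paper first proves the Bochner formula \eqref{e:Bochner} for $\PP_{0,t}F$ itself (which satisfies $(\LL_t-\partial_t)\PP_{0,t}F=0$) and then passes to $\sqrt{\PP_{0,t}F}$ via the chain-rule expansion \eqref{e:Bochner-pf1}; both routes reduce to the same final algebraic identity involving $\He\log$, which the paper records as \eqref{e:Bochner-pf2} and you verify directly in the variable $u$.
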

The derivation of the lemma is postponed. Applying it  with $Q=\dot C_t$ implies
  \begin{multline} \label{e:LSI-pf-identity-t-mon}
    (\LL_{s}-\partial_s)(\nabla \sqrt{\PP_{0,s}F})^2_{\dot C_s}
    = 2(\nabla \sqrt{\PP_{0,s}F},\He V_s \dot C_s \nabla \sqrt{\PP_{0,s}F})_{\dot C_s}
    - (\nabla \sqrt{\PP_{0,s}F})^2_{\ddot C_s}
    \\
    + \frac14 (\PP_{0,s}F) |\dot C_s^{1/2} (\He \log \PP_{0,s}F) \dot C_s^{1/2}|_2^2
    .
  \end{multline}
  By the assumption \eqref{e:assCt-mon} and since the last term is positive, it follows that
  \begin{equation} \label{e:LSI-pf-D-contract}
    (\LL_{s}-\partial_s)(\nabla \sqrt{\PP_{0,s}F})^2_{\dot C_s}
    \geq 2\dot\lambda_{s} (\nabla \sqrt{\PP_{0,s}F})^2_{\dot C_s}
    .
  \end{equation}
  Equivalently,
  $\psi(s) := e^{-2\lambda_t+2\lambda_s} \PP_{s,t} \left[ (\nabla \sqrt{\PP_{0,s}F})^2_{\dot C_s} \right]$ satisfies $\psi'(s) \leq 0$ for $s<t$.
  This implies $\psi(t) \leq \psi(0)$ so that \eqref{eq: exp decay exchanged gradient} holds. 
\end{proof}

At first sight, the proof of Theorem~\ref{thm:LSI-mon} may seem mysterious, but the idea is simply to iterate the entropy decomposition
\eqref{eq: entropy decomposition} by using the Polchinski flow to decompose the measure into its scales. 
To illustrate this, let us consider a discrete decomposition of the entropy using the Polchinski flow.
Given $\delta>0$ and the sequence $(t_i = i \delta)_{i \geq 0}$, one has 
\begin{align}
\ent_{\nu_0} (F) & = \E_{\nu_{0}} [\Phi(F)]- \Phi(\E_{\nu_0} [F])\nnb
& = \sum_i \E_{\nu_{t_i}} [\Phi(  \PP_{0,t_i} (F))] - \E_{\nu_{t_{i+1}}} [\Phi(  \PP_{0,t_{i+1}} (F))] \nnb
&  = \sum_i \E_{\nu_{t_{i+1}}} [ \PP_{t_i,t_{i+1}} \Phi(  \PP_{0,t_i} (F)) -  \Phi(  \PP_{0,t_{i+1}} (F))]   \nnb
&= \sum_i \E_{\nu_{t_{i+1}}} [ \ent_{\PP_{t_i,t_{i+1}}}  (  \PP_{0,t_i} (F)) ]  .
\end{align}
The measure $\PP_{t_i,t_{i+1}}$ associated with a small increment satisfies a log-Sobolev inequality 
as the associated Gaussian covariance $C_{t_{i+1}} - C_{t_i}$ is tiny for $\delta$ small (so that the Hamiltonian corresponding to the measure $\PP_{t_i,t_{i+1}}$ is extremely convex). 
This suggests that for each   interval $[t_i,t_{i+1}]$, one can reduce to estimating 
$\E_{\nu_{t_{i+1}}} [   ( \nabla \sqrt{\PP_{0,t_i} (F)})^2_{\delta \dot C_{t_i}} ]$ and the delicate issue is then to interchange $\nabla$ and $\PP_{0,t_i}$ (note that a similar step already occurred even in the product case \eqref{eq: interchange product case}).
Such a discrete decomposition was implemented in \cite{MR4061408} to derive a spectral gap for certain models.
The proof of Theorem~\ref{thm:LSI-mon} relies on the limit where $\delta$ tends to 0 which greatly simplifies the argument as the analytic structure of the Polchinski flow kicks in.

\begin{proof}[Proof of Lemma \ref{lem:LSI-pf-identity}]
For a more detailed proof, see \cite[Lemma~2.8]{MR4303014}.
One can first verify the so-called `Bochner formula':
\begin{equation} \label{e:Bochner}
  (\LL_{t}-\partial_t)(\nabla \PP_{0,t}F)^2_{Q}
  = 2(\nabla \PP_{0,t}F, \He V_t \dot C_t \nabla \PP_{0,t}F)_{Q}
  + |\dot C_t^{1/2} \He \PP_{0,t}F Q^{1/2}|_2^2.
\end{equation}
The claim \eqref{e:LSI-pf-identity} then follows: writing $F$ instead of $\PP_{0,t}F$ for short,
dropping other $t$-subscripts, 
\begin{equation} \label{e:Bochner-pf1}
  (\LL_{t}-\partial_t)(\nabla \sqrt{F})^2_{Q}
  = \frac{(\LL_{t}-\partial_t) (\nabla F)^2_Q}{4F}
  - \frac{(\nabla F)^2_Q (\LL_{t}-\partial_t) F}{4F^2}
  - \frac{(\nabla (\nabla F)^2_Q, \nabla F)_{\dot C}}{4F^2}
  + \frac{(\nabla F)^2_Q (\nabla F)_{\dot C}^2}{4F^3}.
\end{equation}
Using $(\LL_t-\partial_t)F=0$ and \eqref{e:Bochner} the right-hand side equals that in \eqref{e:LSI-pf-identity} since
\begin{equation} \label{e:Bochner-pf2}
  F |\dot C^{1/2} \He \log F Q^{1/2}|_2^2
  =
  \frac{|\dot C^{1/2}\He F Q^{1/2}|_2^2}{F}
  - \frac{(\nabla (\nabla F)^2_Q, \nabla F)_{\dot C}}{F^2}
  + \frac{(\nabla F)^2_Q (\nabla F)_{\dot C}^2}{F^3}.
\end{equation}
To see this, observe that the left-hand side is (with summation convention)
\begin{equation}
  F \dot C_{ij} Q_{kl} (\He \log F)_{ik}(\He \log F)_{jl}
  =
  F \dot C_{ij} Q_{kl} (\frac{F_{ik}}{F} - \frac{F_iF_k}{F^2})(\frac{F_{jl}}{F} - \frac{F_jF_l}{F^2}),
\end{equation}
and the right-hand side is
\begin{equation}
  \dot C_{ij} Q_{kl} \qa{ \frac{F_{ik} F_{jl}}{F} - \frac{(F_kF_l)_iF_j}{F^2} + \frac{F_iF_jF_kF_l}{F^3}}.
\end{equation}
So both are indeed equal.
\end{proof}

\subsection{Derivatives of the renormalised potential}
\label{sec: Derivatives of the renormalised potential}

Checking the multiscale assumption in Theorem~\ref{thm:LSI-mon} boils down to controlling the Hessian of the renormalised potential $V_t$. For a well chosen covariance decomposition, the structure of the potential $V_t$ is often expected to improve along the flow of the Polchinski equation \eqref{e:polchinski-bis}. In particular, one may hope that $V_t$ becomes more convex. This is illustrated in the Example~\ref{sec:1d_example} below which considers the case of a single variable.
However, for a given microscopic model the convexification can be extremely difficult to check.
Some examples where it is possible are discussed in Section~\ref{sec: Applications}.

Even though the analysis of the derivatives of $V_t$ is model dependent, we state a few  
general identities for these derivatives which will be used later. 
\begin{lemma} 
\label{lem:dVdH}
Let  $U_t=\nabla V_t$ and $H_t=\He V_t$. Then 
  \begin{equation}
  \label{e:dVdH}
    \partial_t U_t = \LL_tU_t, \qquad 
    \partial_t H_t = \LL_tH_t - H_t \dot C_t H_t.
  \end{equation}
  Moreover, for all $f \in X$ and $t\geq s \geq 0$,
  \begin{align}
    \label{e:PtdV}
    (f,\nabla V_t) &= \PP_{s,t}(f,\nabla V_{s}),\\
    \label{e:PtHessV}
    (f,\He V_tf) &= \PP_{s,t} (f, \He V_{s}f)
                   - \qB{ \PP_{s,t}((f, \nabla V_{s})^2) -(\PP_{s,t}(f, \nabla V_{s}))^2}.
  \end{align}
\end{lemma}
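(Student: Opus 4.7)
The plan is to derive the two evolution equations in \eqref{e:dVdH} by direct differentiation of the Polchinski equation \eqref{e:polchinski-bis}, and then obtain the identities \eqref{e:PtdV} and \eqref{e:PtHessV} by showing that appropriate quantities are constant in $s$ along the semigroup flow.

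First I would tackle the evolution equations. Differentiating the Polchinski equation gives $\partial_t \nabla V_t = \tfrac12 \Delta_{\dot C_t} \nabla V_t - \He V_t \, \dot C_t \nabla V_t$, since $\nabla (\nabla V_t)_{\dot C_t}^2 = 2 \He V_t\, \dot C_t \nabla V_t$. On the other hand, applying the generator $\LL_t$ componentwise to $U_t$, one obtains exactly the same expression, because $\LL_t (\partial_k V_t) = \tfrac12 \partial_k \Delta_{\dot C_t} V_t - (\nabla V_t, \nabla \partial_k V_t)_{\dot C_t}$ and $\Delta_{\dot C_t}$ commutes with $\partial_k$. For the Hessian, differentiating \eqref{e:polchinski-bis} twice and using the identity
\begin{equation}
\partial_k \partial_l (\nabla V_t)_{\dot C_t}^2 = 2 \sum_{ij} (\dot C_t)_{ij} (\partial_l H_t)_{ki} (\partial_j V_t) + 2 (H_t \dot C_t H_t)_{kl}
\end{equation}
shows that $\partial_t H_t = \LL_t H_t - H_t \dot C_t H_t$, where the quadratic correction reflects the fact that the Polchinski equation is the logarithmic transform of the heat equation.

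For \eqref{e:PtdV} I would fix $t$ and consider $G(s) := \PP_{s,t}(f, \nabla V_s)$. Using Proposition~\ref{prop:polchinski2} and the product rule,
\begin{equation}
\partial_s G(s) = - \PP_{s,t} \LL_s (f, \nabla V_s) + \PP_{s,t} \partial_s (f, \nabla V_s).
\end{equation}
By the first identity in \eqref{e:dVdH}, $\partial_s (f, \nabla V_s) = (f, \LL_s \nabla V_s) = \LL_s (f, \nabla V_s)$ (the scalar generator $\LL_s$ can be pulled through the linear form in $f$), so $\partial_s G \equiv 0$. Since $G(t) = (f, \nabla V_t)$, this yields \eqref{e:PtdV}.

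For \eqref{e:PtHessV} I would similarly set $G(s) := \PP_{s,t}(f, \He V_s f)$ and $K(s) := \PP_{s,t}((f, \nabla V_s)^2)$ and show $\partial_s(K - G) = 0$. The derivative of $G$ picks up a quadratic correction from the second equation in \eqref{e:dVdH}, namely $\partial_s G = - \PP_{s,t}(f, \He V_s \, \dot C_s \He V_s f)$. For $K$, the key input is the carré du champ identity $\LL_s F^2 = 2 F \LL_s F + (\nabla F)_{\dot C_s}^2$ applied with $F = (f, \nabla V_s)$; since $\nabla (f, \nabla V_s) = \He V_s f$, this gives $(\nabla F)_{\dot C_s}^2 = (f, \He V_s \dot C_s \He V_s f)$, and the computation for $\partial_s K$ collapses to exactly the same expression as $\partial_s G$. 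Evaluating at $s = t$ gives $K(t) - G(t) = (f, \nabla V_t)^2 - (f, \He V_t f)$, and replacing $(f, \nabla V_t)^2$ by $(\PP_{s,t}(f, \nabla V_s))^2$ via \eqref{e:PtdV} yields \eqref{e:PtHessV}. The main bookkeeping obstacle is keeping track of the second-order terms in the interplay between $\partial_s$ and $\LL_s$, but the cancellation in the $K - G$ combination is precisely what makes the identity work.
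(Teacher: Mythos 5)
Your handling of \eqref{e:dVdH} matches the paper's (both obtain it by differentiating the Polchinski equation), but your route to \eqref{e:PtdV} and \eqref{e:PtHessV} is genuinely different. The paper differentiates the explicit representation $V_t(\varphi) = -\log\Eg_{C_t-C_s}[e^{-V_s(\varphi+\zeta)}]$ once and twice in $\varphi$ and directly recognises the resulting Gaussian averages as $\PP_{s,t}(\nabla V_s)$ and the corresponding second-derivative expression; this is short and elementary, costing two lines of calculus. You instead turn the two identities into conservation laws: using the evolution equations from \eqref{e:dVdH}, the duality $-\partial_s\PP_{s,t} = \PP_{s,t}\LL_s$ from Proposition~\ref{prop:polchinski2}, and the carr\'e du champ identity $\LL_s(F^2) = 2F\LL_s F + (\nabla F)_{\dot C_s}^2$, you show that $s\mapsto \PP_{s,t}(f,\nabla V_s)$ and $s\mapsto \PP_{s,t}((f,\nabla V_s)^2) - \PP_{s,t}(f,\He V_s f)$ are constant, and then evaluate at $s=t$. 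Both arguments are correct. Your approach has the advantage of being structurally transparent and largely insensitive to the explicit Gaussian-kernel form of $\PP_{s,t}$ --- it would carry over to the curved setting of Section~\ref{sec:geometry}, where no such closed formula is available --- and it makes visible the same martingale structure that underlies the stochastic versions (Proposition~\ref{prop:V-mart} and the subsequent proposition for $\nabla V_t$, $\He V_t$). The paper's direct computation is shorter and does not presuppose the full semigroup machinery; in particular it does not require any regularity beyond what is needed to differentiate under the Gaussian integral.
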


\begin{proof}
\eqref{e:dVdH} follows by differentiating \eqref{e:polchinski-bis}.
  
To recover   \eqref{e:PtdV}, we recall from \eqref{e:V-def} that 
$V_t (\varphi) = - \log \Eg_{C_t - C_s} \qa{e^{-V_s(\varphi+\zeta)}}$.
Identity \eqref{e:PtdV} follows by differentiating and then identifying $\PP_{s,t}$ by \eqref{e:P-def-bis}
\begin{equation} 
\nabla V_t (\varphi) = \frac{\Eg_{C_t - C_s} \qa{e^{-V_s(\varphi+\zeta)} \nabla V_s(\varphi+\zeta)}}{\Eg_{C_t - C_s} \qa{e^{-V_s(\varphi+\zeta)}}}
= 
\PP_{s,t}(\nabla V_{s}) (\varphi).
\end{equation}
Identity \eqref{e:PtHessV} can then be obtained by taking an additional derivative in the previous expression.
\end{proof}

Alternatively, one can rewrite the derivatives of the renormalised potential in terms of the fluctuation measure 
$\mu_t^\varphi$ introduced in \eqref{e:fluctuationmeasure-alt}.
\begin{lemma}
\label{lem:fluctuationmeasure} 
The first derivative of the renormalised potential is related to an expectation  
\begin{align}
\label{eq: nabla Vt mean}
  \nabla V_t(\varphi)
 = \E_{\mu_t^\varphi}[\nabla V_0(\zeta)] =\E_{\mu_t^\varphi}[C_t^{-1}(\varphi-\zeta)].       
\end{align}
The second derivative is encoded by a variance under the fluctuation measure
\begin{align}
\label{eq: hess Vt variance}
\forall f \in X: \qquad   (f,\He V_t(\varphi)f)
  &= \E_{\mu_t^\varphi}[(f,\He V_0(\zeta) f)] - \var_{\mu_t^\varphi} \big( ( f,\nabla V_0(\zeta))  \big)\\
      &= (f,C_t^{-1}f) - \var_{\mu_t^\varphi}\big( (C_t^{-1}f,\zeta) \big), \nonumber
\end{align}
where the second equalities hold if $C_t$ is invertible.
\end{lemma}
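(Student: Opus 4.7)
The plan is a direct double differentiation of the defining relation \eqref{e:V-def}, combined with either the Gaussian representation \eqref{e:fluctuationmeasure-def} of $\mu_t^\varphi$ or its equivalent form \eqref{e:fluctuationmeasure-alt}.

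For the first derivative I would begin from
\begin{equation}
  \nabla V_t(\varphi)
  = \frac{\Eg_{C_t}[e^{-V_0(\varphi+\zeta)}\,\nabla V_0(\varphi+\zeta)]}{\Eg_{C_t}[e^{-V_0(\varphi+\zeta)}]},
\end{equation}
obtained by differentiating \eqref{e:V-def} under the Gaussian expectation. Recognising the right-hand side as the fluctuation-measure average from \eqref{e:fluctuationmeasure-def} immediately yields $\nabla V_t(\varphi)=\E_{\mu_t^\varphi}[\nabla V_0(\zeta)]$. To reach the second equality of \eqref{eq: nabla Vt mean}, I would switch to the alternative representation \eqref{e:fluctuationmeasure-alt}: for invertible $C_t$, after a translation $\zeta\mapsto\zeta-\varphi$, one has $V_t(\varphi)=-\log\!\int\! e^{-\frac12(\varphi-\zeta,C_t^{-1}(\varphi-\zeta))-V_0(\zeta)}\,d\zeta$ up to a $\varphi$-independent constant, and since $\nabla_\varphi\tfrac12(\varphi-\zeta,C_t^{-1}(\varphi-\zeta))=C_t^{-1}(\varphi-\zeta)$, differentiating the logarithm directly gives $\nabla V_t(\varphi)=\E_{\mu_t^\varphi}[C_t^{-1}(\varphi-\zeta)]$.

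For the Hessian I would differentiate the ratio $N(\varphi)/D(\varphi)$ above once more. Setting $N=\Eg_{C_t}[e^{-V_0}\nabla V_0]$ and $D=\Eg_{C_t}[e^{-V_0}]$ (with the argument $\varphi+\zeta$ suppressed), the chain rule gives
\begin{equation}
  \nabla_\varphi N = \Eg_{C_t}[e^{-V_0}\He V_0]-\Eg_{C_t}[e^{-V_0}\,\nabla V_0\otimes\nabla V_0],\qquad \nabla_\varphi D = -N,
\end{equation}
so that by the quotient rule
\begin{equation}
  \He V_t(\varphi) = \frac{\nabla_\varphi N}{D} + \frac{N\otimes N}{D^2}
  = \E_{\mu_t^\varphi}[\He V_0(\zeta)]-\big(\E_{\mu_t^\varphi}[\nabla V_0\otimes\nabla V_0]-\E_{\mu_t^\varphi}[\nabla V_0]^{\otimes 2}\big),
\end{equation}
which after pairing with a test vector $f\in X$ produces exactly the first identity in \eqref{eq: hess Vt variance}. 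The second identity follows the same differentiation pattern applied instead to $\nabla V_t(\varphi)=\E_{\mu_t^\varphi}[C_t^{-1}(\varphi-\zeta)]$ via \eqref{e:fluctuationmeasure-alt}: the explicit $\varphi$-dependence of the fluctuation density contributes a $C_t^{-1}$ from $\nabla_\varphi[C_t^{-1}(\varphi-\zeta)]$, while the remaining terms reorganise into the variance of $(C_t^{-1}f,\zeta)$ under $\mu_t^\varphi$.

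I do not expect any real obstacle: the computation is essentially the standard cumulant identity $\partial^2\log Z=\mathrm{Var}$ applied to the tilted measure $\mu_t^\varphi$. The only points that require some care are (i) the equivalence of the two forms of $\nabla V_t$, which amounts to a Gaussian integration by parts $\Eg_{C_t}[\zeta\, F(\zeta)]=C_t\,\Eg_{C_t}[\nabla F(\zeta)]$ and therefore holds even when $C_t$ is only positive semidefinite provided the statement is restricted to $\mathrm{Im}(C_t)$, and (ii) the fact that the second form of \eqref{eq: hess Vt variance} requires $C_t$ invertible, which is the hypothesis explicitly stated in the lemma.
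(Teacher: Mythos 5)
Your proposal is correct and essentially the same as the paper's: the paper obtains \eqref{eq: nabla Vt mean} and \eqref{eq: hess Vt variance} by citing $\nabla V_t = \PP_{0,t}\nabla V_0$ from Lemma~\ref{lem:dVdH} and identifying $\PP_{0,t}$ with $\mu_t^\varphi$, but Lemma~\ref{lem:dVdH} is itself proved by exactly the direct differentiation of \eqref{e:V-def} that you carry out. You have simply inlined that step rather than deferring to the intermediate lemma, and the quotient-rule / cumulant computation for the Hessian and the change-of-variables argument for the $C_t^{-1}(\varphi-\zeta)$ form match the paper's.
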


\begin{proof}
The first part of \eqref{eq: nabla Vt mean} follows from the identity $\nabla V_t = \PP_{0,t} (\nabla V_0)$ obtained in \eqref{e:PtdV} and the identification of the fluctuation measure $\mu_t^\varphi$ with the semigroup $\PP_{0,t}$ in \eqref{e:fluctuationmeasure-def}. The second equality is obtained by an integration by parts using the form 
\eqref{e:fluctuationmeasure-alt} of the fluctuation measure.

In the same way the first equality in \eqref{eq: hess Vt variance} is deduced from \eqref{e:PtHessV} by identifying $\PP_{0,t}$ and $\mu_t^\varphi$. The second equality follows by 
differentiating $\varphi \mapsto \E_{\mu_t^\varphi}[C_t^{-1}(\varphi-\zeta)]$ and using \eqref{e:fluctuationmeasure-alt}. 
\end{proof}

\medskip

We consider the case of convex potentials and show that they remain convex along the Polchinski flow.
\begin{proposition} 
\label{prop:convex}
  Assume that $V_0$ is convex. Then $V_t$ is convex for all $t \geq 0$.
\end{proposition}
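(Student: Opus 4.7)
The plan is to read off convexity of $V_t$ from the representation of $\He V_t$ given by Lemma~\ref{lem:fluctuationmeasure} together with the Brascamp--Lieb variance inequality; equivalently, and more directly, from Pr\'ekopa's theorem applied to the Gaussian convolution defining $V_t$.

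The Brascamp--Lieb route proceeds as follows. Fix $t>0$ and assume first that $C_t$ is strictly positive definite on $X$, so that $\mu_t^\varphi$ is an honest probability measure on $X$ with density proportional to $e^{-H_t^\varphi(\zeta)}$ for $H_t^\varphi(\zeta) = \tfrac12(\zeta,C_t^{-1}\zeta) - (\zeta,C_t^{-1}\varphi) + V_0(\zeta)$, as in \eqref{e:fluctuationmeasure-alt}. The convexity assumption on $V_0$ gives $\He H_t^\varphi(\zeta) = C_t^{-1}+\He V_0(\zeta)\geq C_t^{-1}$. The second identity in \eqref{eq: hess Vt variance} reads
\begin{equation*}
(f,\He V_t(\varphi)f) \;=\; (f,C_t^{-1}f) - \var_{\mu_t^\varphi}\!\bigl((C_t^{-1}f,\zeta)\bigr),
\end{equation*}
so I only need to bound the variance on the right by $(f,C_t^{-1}f)$. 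By Brascamp--Lieb, for any linear functional $g(\zeta)=(v,\zeta)$,
\begin{equation*}
\var_{\mu_t^\varphi}(g)\;\leq\; \E_{\mu_t^\varphi}\!\bigl[(v,(\He H_t^\varphi)^{-1}v)\bigr]\;\leq\;(v,C_t v).
\end{equation*}
Applying this with $v=C_t^{-1}f$ yields $\var_{\mu_t^\varphi}((C_t^{-1}f,\zeta))\leq (f,C_t^{-1}f)$, hence $(f,\He V_t(\varphi)f)\geq 0$.

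The main obstacle is handling degenerate $C_t$, where $\mu_t^\varphi$ lives on a proper subspace and the second equality in \eqref{eq: hess Vt variance} is not directly available. One resolution is to restrict the argument to the image $X_t$ of $C_t$ on which $C_t$ is invertible, and observe that $V_t$ only depends on $\varphi$ through its projection onto $X_t$; another is to replace $C_t$ by $C_t+\epsilon\,\mathrm{id}_X$ and send $\epsilon\to 0$ using continuity of $V_t$ in the covariance. A cleaner alternative that sidesteps the issue entirely is Pr\'ekopa's theorem: definition \eqref{e:V-def} presents $e^{-V_t(\varphi)}$ as the $\varphi$-marginal of the jointly log-concave function $(\varphi,\zeta)\mapsto e^{-V_0(\varphi+\zeta)}$ multiplied by the Gaussian density in $\zeta$ (which is jointly log-concave in $(\varphi,\zeta)$ since $V_0$ is convex), so the marginal $e^{-V_t}$ is log-concave and therefore $V_t$ is convex. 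Either route gives convexity for all $t\geq 0$, with $t=0$ being the hypothesis.
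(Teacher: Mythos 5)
Your proposal is correct and coincides in substance with the paper's first proof, which likewise invokes Pr\'ekopa's theorem and gives a direct Hessian computation via the Brascamp--Lieb inequality. The one small variant is that you apply Brascamp--Lieb to the linear statistic $(C_t^{-1}f,\zeta)$ via the second identity in \eqref{eq: hess Vt variance} and use $(\He H_t^\varphi)^{-1}\leq C_t$ directly, whereas the paper applies it to $(f,\nabla V_0(\zeta))$ via the first identity and then needs the matrix manipulation $A - A(B+A)^{-1}A = A(B+A)^{-1}B$; your version reaches $\He V_t\geq 0$ with slightly less algebra, while the paper's version gives the somewhat stronger statement $C_t^{1/2}\He V_t C_t^{1/2}\geq \E_{\mu_t^\varphi}[\hat H_t(\id+\hat H_t)^{-1}]$.
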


Thus the standard Bakry--\'Emery criterion can be recovered from Theorem~\ref{thm:LSI-mon}:
if $\He H \geq \lambda\id$ one can choose $A=\lambda \id$ and $\He V_0 \geq 0$
and Theorem~\ref{thm:LSI-mon} guarantees the log-Sobolev inequality with constant
\begin{equation}
  \frac{1}{\gamma} \leq \int_0^\infty e^{-\lambda t} \, dt = \frac{1}{\lambda}.
\end{equation}
This follows from criterion \eqref{e:assCt-mon} applied with $\dot C_t = e^{- \lambda t} \id$ so that 
$\ddot C_t = - \lambda e^{- \lambda t} \id$ and $\dot \lambda_t = \lambda/2$. Note that the decomposition 
$\dot C_t=\id$ on $[0,T]$ with $T=\frac{1}{\lambda}$ (see Remark~\ref{rk:LSI-mon2}) could have been used instead.

\begin{proof}[Proof 1]
  If $V_0$ is convex, then $e^{-V_t(\varphi)}$ is the marginal of the measure $\propto e^{-V_0(\varphi+\zeta)} \, \Pg_{C_t}(d\zeta)$, with density log-concave in $(\zeta,\varphi)$. 
  A theorem of Pr\'ekopa then implies that $V_t$ is convex. 
  It is also possible to directly compute the Hessian:
  the Brascamp--Lieb inequality \cite[Theorem 4.1]{BraLieAppl} states that if a probability measure $\propto e^{-H}$ has strictly convex potential $H$ then
  \begin{equation} \label{e:BL}
     \var(F) \leq \E[(\nabla F, (\He H)^{-1} \nabla F)].
  \end{equation}
  Thus by the first identity in \eqref{eq: hess Vt variance} and then applying the Brascamp--Lieb inequality 
  to estimate the variance, we get
  \begin{align}
    \He V_t(\varphi)
    &= \E_{\mu_t^\varphi}[\He V_0(\zeta)] - \var_{\mu_t^\varphi}(\nabla V_0(\zeta))
      \nnb
    &\geq \E_{\mu_t^\varphi}
      \qB{\He V_0(\zeta)      -
      \He V_0(\zeta) (C_t^{-1}+ \He V_0(\zeta))^{-1} \He V_0(\zeta))}
      \nnb
    &= \E_{\mu_t^\varphi}\qB{\He V_0(\zeta)(C_t^{-1}+ \He V_0(\zeta))^{-1}C_t^{-1}}
      \nnb
    &= \E_{\mu_t^\varphi}\qB{C_t^{-1/2}C_t^{1/2}\He V_0(\zeta)C_t^{1/2}(\id+ C_t^{1/2}\He V_0(\zeta)C_t^{1/2})^{-1}C_t^{-1/2}}.
  \end{align}
  Therefore, with $\hat H_t = C_t^{1/2}\He V_0 C_t^{1/2} \geq 0$,
  \begin{equation}
    C_t^{1/2}    \He V_t(\varphi) C_t^{1/2}
    \geq \E_{\mu_t^\varphi} \qa{\frac{\hat H_t(\zeta)}{\id+ \hat H_t(\zeta)}} \geq 0.
    \qedhere
  \end{equation}
\end{proof}

\begin{proof}[Proof 2 from {\cite[Theorem~9.1]{MR664497}, \cite[Theorem~3.3]{zbMATH05081815}}]
  This alternative approach puts the emphasis on the PDE structure associated with the renormalised potential by application of the maximum princple.
  We give the gist of the proof and refer to \cite[Theorem~3.3, page 129]{zbMATH05081815} for a complete argument.
  Let $H_t = \He V_t$ with $H_0 > 0$, and recall \eqref{e:dVdH}:
  \begin{equation}
    \ddp{H_t}{t} = \LL_t H_t- H_t\dot C_t H_t.
  \end{equation}
  Now assume there is a first time $t_0>0$ and $\varphi_0\in X$ such that $H_{t_0}(\varphi_0)$ has a $0$ eigenvalue with eigenvector $v_0$, i.e.,
  $H_{t_0}(\varphi_0)v_0 = 0$. Define $f_t(\varphi) = (v_0,H_t(\varphi)v_0)$. 
  Therefore
  \begin{equation}
    \ddp{f_{t_0}(\varphi_{0})}{t} = \LL_{t_0} f_{t_0}(\varphi_0) -(v_0,H_{t_0}(\varphi_0)\dot C_{t_0}H_{t_0}(\varphi_0)v_0)
    \geq 0,
  \end{equation}
  where we used that  $f_{t_0}(\varphi)$ is minimum at $\varphi_0$ so that 
  $\LL_{t_0} f_{t_0} (\varphi) = \frac12 \Delta_{\dot C_{t_0}} f_{t_0} (\varphi) \geq 0$  (by the maximum principle) and that by construction
  $(v_0,H_{t_0}(\varphi_0)\dot C_{t_0}H_{t_0}(\varphi_0)v_0) =0$. 
  This shows that $f_t(\varphi_{0})$ cannot cross $0$ after $t_0$.
  A more careful argument involves regularisation, see \cite[Theorem~3.3]{zbMATH05081815}.
\end{proof}

We end this section with a rescaling property of the Polchinski equation.
  
\begin{example} \label{ex:V-F}
  Similarly to \eqref{e:Vinfty}, write the renormalised potential as
  \begin{equation} \label{e:V-F}
    V_t(\varphi) = \frac12 (\varphi, C_t^{-1} \varphi) + F_t(C_t^{-1} \varphi) 
    , 
  \end{equation}
  where $F_t(h) = V_t(0)-\log\Eg_{C_t}[e^{-V_0(\zeta)+(h,\zeta)}]$ is the normalised log partition function of the fluctuation measure at external field $h$.
  Then the Polchinski equation for $V$ is equivalent to  a different Polchinski equation for $F$:
  \begin{equation}
    \ddp{}{t} F_t
    =  \frac12 \Delta_{\dot \Sigma_t} F_t - \frac12 (\nabla F_t)_{\dot \Sigma_t}^2 + \tr(C_t^{-1}\dot C_t),
    \qquad \text{where } \dot\Sigma_t = C_t^{-1}\dot C_t C_t^{-1}.
  \end{equation}
  Note that $\tr(C_t^{-1}\dot C_t)$ is only a constant.
  Indeed,
  $F_t(h) =  V_t(C_th)-\frac12 (h,C_th)$ and thus
  \begin{equation}
    \nabla F_t =  C_t \nabla V_t-C_t h,\qquad                 
    \Delta_{\dot \Sigma_t} F_t = \Delta_{\dot C_t} V_t - \tr(C_t^{-1}\dot C_t),
  \end{equation}
  and
  \begin{align}
    \ddp{}{t} F_t &= \frac12 \Delta_{\dot C_t}V_t - \frac12 (\nabla V_t)_{\dot C_t}^2 +(\nabla V_t, \dot C_t h) -\frac12 (h, \dot C_t h)
                    \nnb
    &=
      \frac12 \Delta_{\dot C_t}V_t -\frac12 (h-\nabla V_t)_{\dot C_t}^2 
      \nnb
      &= \frac12 \Delta_{\dot \Sigma_t}F_t -\frac12 (\nabla F_t)_{\dot \Sigma_t}^2 + \tr(C_t^{-1}\dot C_t)
        .
   \end{align}
\end{example}

\subsection{Example: Convexification along the Polchinski flow for one variable}
\label{sec:1d_example}
The aim of this section is to illustrate the claims that the renormalised measure  becomes progressively  simpler and convex along the Polchinski flow using a simple one variable example. 
Let $H:\R\to\R$ be a $C^2$ potential that is strictly convex outside of a segment: $\inf_{|x|\geq M} H''(x)\geq c>1$ for some $c,M>0$,
but assume that $\inf_{\R} H''<0$, 
and consider the measure:
\begin{equation}
\nu_0(d\varphi)
\propto e^{-H(\varphi)} \, d\varphi
\propto
\exp\Big[-\frac{\varphi^2}{2} - V_0(\varphi)\Big]\, d\varphi
,\qquad 
V_0(\varphi):= H(\varphi) - \frac{\varphi^2}{2}
. 
\label{eq_def_nu_0_1d}
\end{equation}
In~\eqref{eq_def_nu_0_1d}, the Gaussian part $\frac12 \varphi^2$ is singled out to define the Polchinski flow, but this is just a convention up to redefining $V_0$.

By assumption, $\nu_0$ is not log-concave, and 
the Bakry--\'Emery criterion (Theorem~\ref{thm:BE}) does not apply. 
Let us stress that there are many ways to obtain a log-Sobolev inequality for the above measure. 
Our goal, however, is to  exemplify that, using the Polchinski flow, how one can still use a convexity-based argument, 
the multiscale Bakry--\'Emery criterion of Theorem~\ref{thm:LSI-mon}, 
by relying on the convexity of the renormalised measures $\nu_t$ that will be more log-concave than $\nu_0$.

The Polchinski flow is defined in terms of a covariance decomposition, 
which is supposed to decompose the Gaussian part of $\nu_0$ into contributions from different scales. 
In the statistical mechanics examples discussed in Sections~\ref{sec: Difficulties arising from statistical physics perspective}--\ref{sec: Difficulties arising from continuum perspective},
the notion of scale was linked with the geometry of the underlying lattice (e.g., small scales corresponding to information pertaining to spins at small lattice distance). In the single variable case, 
there is no geometry, thus the Gaussian part does not have any structure. 
The only meaningful decomposition, 
written here on $[0,1]$ instead of $[0,\infty)$ for convenience, is therefore:
\begin{equation}
\forall t\in[0,1]: \qquad 
C_t 
:= 
t \id,
\qquad 
\dot C_t 
= 
\id.
\end{equation}
The corresponding renormalised potential reads:
\begin{equation}
e^{-V_t(\varphi)} 
=
\frac{1}{\sqrt{2\pi t}}\int_{\R} \exp\Big[-\frac{\zeta^2}{2t} -V_0(\zeta+\varphi)\Big]\, d\zeta,
\label{eq_V_t_1d_example}
\end{equation}
and the renormalised measure $\nu_t$ defined in \eqref{e:nu-def-bis} and fluctuation measure $\mu_t^\varphi$ 
defined in \eqref{e:fluctuationmeasure-alt} are respectively given by:
\begin{equation}
\nu_t(d\varphi)
\propto \exp\Big[-\frac{\varphi^2}{2(1-t)} - V_t(\varphi)\Big]\, d\varphi
,
\quad
\mu^\varphi_t(d\zeta)
\propto 
\exp\Big[-\frac{\zeta^2}{2t} + \frac{\zeta\varphi}{t}- V_0(\zeta )\Big]\, d\zeta
.
\label{eq_nu_t_mu_t_1d_example}
\end{equation}
Note that in terms of the original Hamiltonian $H(\zeta) = V_0(\zeta) + \frac{\zeta^2}{2}$, the fluctuation measure $\mu_t^\varphi$ is more convex than the initial one:
\begin{equation}
\label{eq: stricte convexite fluct mesure}
\mu^\varphi_t(d\zeta)
\propto 
\exp\Big[-\frac{\zeta^2}{2} \big( \frac{1}{t} - 1 \big) + \frac{\zeta\varphi}{t} - H(\zeta )\Big]\, d\zeta
.
\end{equation}

In other words, $e^{-V_t}$ is the convolution of $e^{-V_0}$ with the heat kernel on $\R$ at time $t$, 
and the Polchinski equation becomes the following well-known Hamilton--Jacobi--Bellman equation:
\begin{equation}
\partial_t V_t
=
\frac{1}{2}\partial^2_\varphi V_t - \frac{1}{2}(\partial_\varphi V_t)^2
.
\label{eq_Polchinski_1d}
\end{equation}
The motivation for the Polchinski decomposition was that one progressively integrates ``small scales'' to recover a measure $\nu_t$ acting on ``large scales'' that one hopes to be better behaved. 
In the present case, 
the only notion of scale refers to the size of fluctuations of the field:\\

$\bullet$ Even though $V_0$ may vary a lot on small values of the field, 
the convolution with the heat kernel at time $t$ means $V_t$ is roughly constant on values much smaller than $\sqrt{t}$.  
Thus small details of $V_0$ are blurred, and $V_t$ varies more slowly than $V_0$. 
This is the translation to the present case of the general idea that ``small scales'' (i.e., values below $\sqrt{t}$) have been removed from $V_t$ and the renormalised potential only sees the ``large scales'' (values above $\sqrt{t}$).\\

$\bullet$ Convolution also improves convexity, in the sense that the renormalised measure $\nu_t$ is more log-concave that $\nu_0$. 
Since $x\mapsto \frac{1}{2(1-t)}\zeta^2$ becomes increasingly convex as $t$ approaches $1$, proving this statement boils down to proving a lower bound on $\partial^2_\varphi V_t$ uniformly on $t\in[0,1]$. 
Semi-convexity estimates for solutions of the Polchinski equation~\eqref{eq_Polchinski_1d} are an active subject of research, 
connected with optimal transport with entropic regularisation, see, e.g.,~\cite{MR4098037,chewi2022entropic,conforti2022weak,conforti2023quantitative} and references therein and Section~\ref{sec:Foellmer} below. 
Informally, the convexity of $V_t$ is given by that of $V_0$, plus an ``entropic'' contribution due to the $\frac{1}{2t}\zeta^2$ term. 
In the present simple case, one can directly compute:
\begin{equation}
  \partial^2_\varphi V_t(\varphi)
  =
  \frac{1}{t}-\frac{1}{t^2}\var_{\mu^\varphi_t}(\zeta) .
\end{equation}
This is an instance of the formula of Lemma~\ref{lem:fluctuationmeasure} valid for a general covariance decomposition. 
It is an example of a general feature of the multiscale Bakry-\'Emery criterion:  
the log-Sobolev constant, which  is not a priori related to spectral properties of the model,
can be estimated by lower bounds on $\partial^2_\varphi V_t$ which are related to variance bounds, i.e., spectral information.
	
\begin{exercise}
  \label{exo: Brascamp-Lieb}
  Using the Brascamp-Lieb inequality \eqref{e:BL} for $t<t_0$ with $t_0^{-1} := -\inf_{\R} V_0''$, 
  and the fact that $\mu^\varphi_t$ satisfies a spectral gap inequality with constant $C$ uniformly in $\varphi\in\R$ and $t>0$, we deduce:
  \begin{equation}
	  \partial^2_\varphi V_t
    \geq 
    \dot \lambda_t := 
    {\bf 1}_{[0,t_0/2]}(t)\Big(\frac{-1}{t_0-t}\Big)  
    + {\bf 1}_{[t_0/2,1]}(t)\Big(\frac{1}{t}-\frac{C}{t^2}\Big)    .
    \label{eq_semiconv_1d}
  \end{equation}	
\end{exercise}

The uniform lower bound~\eqref{eq_semiconv_1d} confirms that $\nu_t$ gets more log-concave as $t$ approaches $1$. 
Injecting the bound~\eqref{eq_semiconv_1d} into the multiscale Bakry--\'Emery criterion of Theorem~\ref{thm:LSI-mon} provides a bound
on the log-Sobolev constant $\gamma$ of $\nu_0$ in terms on parameter $t_0$.

Let us reiterate that one could have obtained a bound on the log-Sobolev constant by standard combination of the usual Bakry--\'Emery and Holley--Stroock criteria,
and here just illustrated that the semi-convexity condition of Theorem~\ref{thm:LSI-mon} remains effective in non-convex cases.
Theorem~\ref{thm:LSI-mon} becomes especially useful in situations with a large state space, 
where the combination of the Bakry-\'Emery and Holley-Stroock criteria do not yield dimension-independent bounds on the log-Sobolev constant 
while effective methods to control the semi-convexity may still exist, see Section~\ref{sec: Applications}.

\subsection{Aside: Geometric perspective on the Polchinski flow}
\label{sec:geometry}

There is a structural resemblance of the renormalisation group flow with geometric flows like the Ricci flow.
The matrices $\dot C_t$ take the role of the inverse of a metric $g_t$ (depending on the flow parameter $t$).
We sketch
the interpretation of the $\dot C_t$ as a scale-dependent metric on the space of fields,
and the natural extension of the above construction in the presence of a non-flat metric.

Suppose $(X,g)$ is a Riemannian manifold.
The metric and its components in coordinates are denoted
\begin{equation}
  g = (g_{ij}),
  \qquad
  g^{ij} = (g^{-1})_{ij},
  \qquad |g| = |\det g|,
\end{equation}
the volume form is
\begin{equation}
  m_{g}(d\varphi) = \sqrt{|g|} \, d\varphi,
\end{equation}
and the covariant derivative and Laplace-Beltrami operator are (with summation convention)
\begin{equation}
  \nabla_g = g^{ij} \partial_j,
  \qquad
  {\rm div}_g U = \frac{1}{\sqrt{|g|}} \partial_i(\sqrt{|g|} U^i),
  \qquad \Delta_g F = \frac{1}{\sqrt{|g|}} \partial_i(\sqrt{|g|}g^{ij}\partial_j F).
\end{equation}
In particular,
\begin{equation}
  \He_g f = \nabla_g\nabla_g f = g^{ik}\partial_k (g^{jl} \partial_l f),
\end{equation}
and
\begin{equation}
  (\nabla_g F)_g^2 = g(\nabla_g F, \nabla_g F) = g^{ij} (\partial_i F) (\partial_j F).
\end{equation}
For a $t$-dependent metric $g_t$, the volume form changes according to
\begin{equation}
  \ddp{}{t} dm_{g_t}
  = (\ddp{}{t} \log \sqrt{|g_t|}) \, dm_{g_t}
  = \frac12 \tr(g_t^{-1}\dot g_t) \, dm_{g_t}
  .
\end{equation}
The Ricci curvature tensor associated with the metric $g$ is denoted $\operatorname{Ric}_g$.

\begin{remark}
The notation  $\Delta_{g_t}$ for the Laplace-Beltrami operator
is \emph{different} from our previous notation for
the covariance-dependent Laplacian $\Delta_{\dot C_t}$ from \eqref{e:DeltaC}.
Indeed, the notation differs by an inverse in the index: The Gaussian Laplacian $\Delta_{\dot C_t}$ corresponds to a Laplace-Beltrami operator if $g_t^{-1} = \dot C_t$.
Thus the infinitesimal covariance $\dot C_t$ plays the role of the \emph{inverse} of a metric.
\end{remark}

\begin{example}
  In the above notation, we can reformulate the previous construction as follows: The covariance decomposition is written as
  \begin{equation}
    A^{-1} = \int_0^\infty g_t^{-1} \, dt.
  \end{equation}
  Then the Polchinski equation reads
  \begin{equation}
    \ddp{}{t}V_t = \frac12 \Delta_{g_t} V_t - \frac12 (\nabla_{g_t}V_t)_{g_t}^2,
  \end{equation}
  and the condition  \eqref{e:assCt-mon} for the log-Sobolev inequality becomes:
  \begin{equation}
    \He_{g_t} V_t + \frac12 \dot g_t \geq \dot\lambda_t g_t.
  \end{equation}
\end{example}

\begin{example}
  Suppose that $A$ is a Laplace operator on $\Lambda \subset \Z^d$ and that $\dot C_t = e^{-tA}$ is its heat kernel.
  Thus the metric $g_t = e^{+tA}$ is the inverse heat kernel. This means that
  \begin{equation}
    |f|_{g_t} \leq 1 \qquad \Leftrightarrow \qquad |e^{tA}f| \leq 1,
  \end{equation}
  i.e., $f = e^{-tA} g$ for some $|g|\leq 1$ where $|\cdot|$ denotes the standard Euclidean norm.
  Therefore the unit ball in the metric $g_t$ corresponds
  to elements $f$ that are obtained by smoothing out elements of the standard unit ball by the heat kernel up to time $t$. In this sense, the 
  geometry associated with $g_t$ implements an approximate block spin picture (in which block averaging has been replaced by convolution with
  a heat kernel).
\end{example}

We now consider the natural extension to a non-flat metrics.
The Laplacian in the presence of a potential $H$ and metric $g$ is
\begin{equation}
  \Delta_{g}^H = e^{H} {\rm div}_{g} (e^{-H} \nabla_g F).
\end{equation}
The analogue of Lemma~\ref{lem:LSI-pf-identity} (with  $Q=\dot C_t$) is as follows.

\begin{lemma} \label{lem:LSI-pf-identity-Ric}
\begin{equation}
  (\LL_t-\partial_t)(\nabla_g \sqrt{F})_g^2 = (\dot g + 2 \He_g V_t+ \operatorname{Ric}_{g}) (\nabla_g \sqrt{F}, \nabla_g \sqrt{F}) + \frac14 |\He_g \log F|_g^2.
\end{equation}
\end{lemma}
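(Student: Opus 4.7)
The plan is to follow the same two-stage strategy used for Lemma~\ref{lem:LSI-pf-identity}: first establish a Bochner-type identity for $|\nabla_g F_t|_g^2$ with $F_t = \PP_{0,t}F$, and then convert it to one for $|\nabla_g \sqrt{F_t}|_g^2$ via the purely algebraic chain-rule manipulation that produced \eqref{e:Bochner-pf1}--\eqref{e:Bochner-pf2}. The only new ingredient is that on a curved manifold the interchange of $\Delta_g$ and $\nabla_g$ produces a Ricci term through the Weitzenböck formula, and the time-dependence of $g_t$ produces an extra $\dot g$ contribution.

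More precisely, for a fixed metric $g$ and potential $V$, write $\LL^{(0)} = \frac12 \Delta_g - (\nabla_g V,\nabla_g\cdot)_g$ and $\Gamma(F,G) = (\nabla_g F,\nabla_g G)_g$. The Bochner--Weitzenb\"ock identity together with the Bakry--\'Emery computation of the carr\'e du champ iter\'e yields
\begin{equation}
\LL^{(0)} |\nabla_g F|_g^2 - 2\Gamma(F,\LL^{(0)} F) = |\He_g F|_g^2 + (2\He_g V + \operatorname{Ric}_g)(\nabla_g F,\nabla_g F).
\end{equation}
I would then allow $g=g_t$ and $V=V_t$ to depend on time, and apply this pointwise in $t$ to $F = F_t = \PP_{0,t}F$. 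Using $\partial_t g_t^{ij} = -g_t^{ia}\dot g_{t,ab} g_t^{bj}$, one computes
\begin{equation}
\partial_t |\nabla_{g_t} F_t|_{g_t}^2 = -\dot g_t(\nabla_{g_t} F_t,\nabla_{g_t} F_t) + 2\Gamma_{g_t}(F_t,\partial_t F_t),
\end{equation}
and combining this with the preceding display, invoking $(\LL_t-\partial_t)F_t=0$ to cancel the $\Gamma_{g_t}(F_t,\LL_t F_t)$ terms, gives the Bochner identity
\begin{equation}
(\LL_t-\partial_t)|\nabla_{g_t} F_t|_{g_t}^2 = |\He_{g_t} F_t|_{g_t}^2 + (\dot g_t + 2\He_{g_t} V_t + \operatorname{Ric}_{g_t})(\nabla_{g_t} F_t,\nabla_{g_t} F_t).
\end{equation}
This is the natural geometric analogue of \eqref{e:Bochner}, now with the $\dot g_t$ replacing $-\ddot C_t$ (consistent with $\dot C_t\leftrightarrow g_t^{-1}$) and with the new $\operatorname{Ric}_{g_t}$ contribution.

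To pass from $|\nabla_g F|_g^2$ to $|\nabla_g \sqrt{F}|_g^2$, I would copy the algebraic identities \eqref{e:Bochner-pf1}--\eqref{e:Bochner-pf2}, which are tensorial and hold verbatim on a Riemannian manifold since the Leibniz and chain rules for $\nabla_g$ and $\He_g$ take the same form as in the flat case (covariant differentiation of a scalar is just partial differentiation, and commutators with $\LL_t$ produce no curvature contributions at this stage because the curvature has already been absorbed in the Bochner step). Both the $\dot g_t + 2\He_{g_t}V_t + \operatorname{Ric}_{g_t}$ factor and the Hessian-of-log term then come out multiplied by $\frac14$ and $F_t$ respectively (as in the flat case: the factor $\PP_{0,t}F$ present in \eqref{e:LSI-pf-identity} appears to be silently included in the normalisation of the Frobenius norm here), giving the stated identity.

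The only genuine subtlety is the Bochner step: one must be careful that the Weitzenb\"ock identity is applied to the time-frozen metric $g_t$, and that the time derivative of the metric is correctly isolated before one uses $\partial_t F_t = \LL_t F_t$; the rest is the routine algebraic reduction already carried out in the flat case.
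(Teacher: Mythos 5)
Your proof is correct and follows essentially the same route as the paper: establish the Bochner identity for $(\nabla_{g}F_t)_g^2$ --- the paper simply cites \cite[Theorem~C.3.3]{MR3155209} where you reconstruct it from the fixed-time Bakry--\'Emery/Weitzenb\"ock formula together with the $\dot g$ contribution coming from differentiating the metric --- and then apply the same algebraic chain-rule reduction \eqref{e:Bochner-pf1}--\eqref{e:Bochner-pf2} to pass to $\sqrt{F_t}$. Your side observation about the factor of $F$ is also apt: that reduction genuinely produces a factor $F_t$ in front of $\frac14 |\He_g \log F_t|_g^2$, exactly as in the flat-case identity \eqref{e:LSI-pf-identity}, and this factor appears to have been dropped from the statement of Lemma~\ref{lem:LSI-pf-identity-Ric}; your derivation supplies it correctly.
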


\begin{proof}
The Bochner formula with a background metric \cite[Theorem C.3.3]{MR3155209} implies:
\begin{equation}
  (\LL_t-\partial_t)(\nabla_g F)_g^2 = (\dot g + 2 \He_g V_t+ \operatorname{Ric}_{g}) (\nabla_g F, \nabla_g F) + |\He_g F|_g^2.
\end{equation}
The Bakry--\'Emery version (with $\sqrt{F}$ instead of $F$) then follows as 
in the proof of Lemma~\ref{lem:LSI-pf-identity}.
\end{proof}

For $t\in [0,T)$ with $T\in (0,+\infty]$, assume that $g_t$ is a given $t$-dependent metric
and that $\Zg_t \, dm_{g_t}$ evolves according to the associated backward heat equation:
\begin{equation}
  \ddp{}{t} \Zg_t \, dm_{g_t} = (-\frac12 \Delta_{g_t} \Zg_t) \, dm_{g_t}.
\end{equation}
The measure $\Zg_t \, dm_{g_t}$ takes the role of the Gaussian measure with covariance $C_\infty-C_t$.
Assume:
\begin{align}
  \ddp{}{t} V_t &= \frac12 \Delta_{g_t} V_t - \frac12 (\nabla_{g_t} V_t)_{g_t}^2
  \\
  \ddp{}{t} F_t &= \frac12 \Delta_{g_t} F_t - (\nabla_{g_t} V_t, \nabla_{g_t} F_t) = \LL_t F_t.
\end{align}
The last equation defines the semigroup $\PP_{s,t}F$ with generators $\LL_t$.
The renormalised measure $\nu_t$ is defined by
\begin{equation}
  \E_{\nu_t}[F] \propto \int F e^{-V_t} \Zg_t \, dm_{g_t}.
\end{equation}
One can check that
\begin{equation}
  \ddp{}{t} \E_{\nu_t} [\PP_{0,t}F] = 0,
\end{equation}
and that the renormalised measure $\nu_t$ again evolves in a dual way to $\PP_{s,t}$: for $t<T$,
\begin{equation}
  \ddp{}{t} \E_{\nu_t}[F] = -\E_{\nu_t} [\LL_t F].
\end{equation}
The analogue of the continuity assumption \eqref{e:continuity} is
\begin{equation} \label{e:continuity-Ric}
  \E_{\nu_t}[g(\PP_{0,t}F)] \to g(\E_{\nu_0}[F]), \qquad (t\to T). 
\end{equation}
Since the evolution of $\Zg_t$ is in general not explicit in the nonflat case, differently from before,
this is now an assumption that seems difficult to verify.
The same proof as that of Theorem~\ref{thm:LSI-mon} using
Lemma~\ref{lem:LSI-pf-identity-Ric} instead of Lemma~\ref{lem:LSI-pf-identity} gives the following condition for the log-Sobolev inequality.

\begin{theorem} \label{thm:LSI-mon-Ric}
  Assume that the continuity assumption \eqref{e:continuity-Ric} holds.
  Suppose there are $\dot\lambda_t$ (allowed to be negative) such that
  \begin{equation} \label{e:assCt-mon-Ric}
    \forall \varphi \in X, t> 0:\qquad
    \He_{g_t(\varphi)} V_t(\varphi) + \frac12 \operatorname{Ric}_{g_t(\varphi)} + \frac{1}{2} \dot g_t(\varphi) \geq \dot\lambda_t g_t(\varphi),
  \end{equation}
  and define
  \begin{equation} \label{e:gammadef-mon-Ric}
    \lambda_t = \int_0^t \dot \lambda_s \, ds,
    \qquad \frac{1}{\gamma} = \int_0^T e^{-2\lambda_t} \, dt.
  \end{equation}
  Then $\nu_0$ satisfies the log-Sobolev inequality
  \begin{equation} \label{e:LSI-mon-Ric}
    \ent_{\nu_0}(F)
    \leq \frac{2}{\gamma} \E_{\nu_0}\qa{ (\nabla \sqrt{F})^2_{g_0}}.
  \end{equation}
\end{theorem}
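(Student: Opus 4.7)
The plan is to mimic the proof of Theorem~\ref{thm:LSI-mon} step by step, replacing the flat Bochner identity by its Riemannian counterpart stated in Lemma~\ref{lem:LSI-pf-identity-Ric} and using the continuity hypothesis \eqref{e:continuity-Ric} in place of \eqref{e:continuity}. Throughout, the duality $\ddp{}{t}\E_{\nu_t}[\PP_{0,t}F]=0$ and $\ddp{}{t}\E_{\nu_t}[F]=-\E_{\nu_t}[\LL_t F]$ supplied just before the theorem will play the role that \eqref{e:polchinski-semigroup} played in the flat case.

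First I would reduce to a function $F:X\to I\subset (0,\infty)$ with compact image, extend $\Phi(x)=x\log x$ to a smooth bounded function on $\R$, and use \eqref{e:continuity-Ric} together with invariance of $\E_{\nu_t}[\PP_{0,t}F]$ in $t$ to write
\begin{equation}
\ent_{\nu_0}(F)
=\E_{\nu_0}[\Phi(F)]-\Phi(\E_{\nu_0}[F])
=-\int_0^{T}\!\ddp{}{t}\E_{\nu_t}[\Phi(F_t)]\,dt,
\qquad F_t:=\PP_{0,t}F.
\end{equation}
Computing the integrand as in \eqref{e:dEnt}, using $\ddp{}{t}\E_{\nu_t}[G_t]=-\E_{\nu_t}[\LL_tG_t]+\E_{\nu_t}[\dot G_t]$ and $\partial_t F_t=\LL_tF_t$, all cross terms involving $\Phi'(F_t)\LL_tF_t$ and $\Phi'(F_t)\dot F_t$ cancel and only the carré du champ part survives, giving the curved de Bruijn identity
\begin{equation}
-\ddp{}{t}\E_{\nu_t}[\Phi(F_t)]
= 2\E_{\nu_t}\bigl[(\nabla_{g_t}\sqrt{F_t})_{g_t}^2\bigr],
\end{equation}
and hence $\ent_{\nu_0}(F)=2\int_0^{T}\E_{\nu_t}\bigl[(\nabla_{g_t}\sqrt{\PP_{0,t}F})_{g_t}^2\bigr]\,dt$.

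Next I would establish the Bakry--\'Emery-type contraction on the Fisher-like quantity, namely
\begin{equation}
\label{eq:plan-contract}
(\nabla_{g_t}\sqrt{\PP_{0,t}F})_{g_t}^2
\;\leq\; e^{-2\lambda_t}\,\PP_{0,t}\!\left[(\nabla_{g_0}\sqrt{F})_{g_0}^2\right].
\end{equation}
For this set $\psi(s):=e^{2\lambda_s}\PP_{s,t}\bigl[(\nabla_{g_s}\sqrt{\PP_{0,s}F})_{g_s}^2\bigr]$ for $s\in[0,t]$ and differentiate. Using $\partial_s\PP_{s,t}=-\PP_{s,t}\LL_s$ one finds $\psi'(s)=e^{2\lambda_s}\PP_{s,t}\bigl[(\LL_s-\partial_s)(\nabla_{g_s}\sqrt{\PP_{0,s}F})_{g_s}^2+2\dot\lambda_s(\nabla_{g_s}\sqrt{\PP_{0,s}F})_{g_s}^2\bigr]$. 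By Lemma~\ref{lem:LSI-pf-identity-Ric},
\begin{equation}
(\LL_s-\partial_s)(\nabla_{g_s}\sqrt{G})_{g_s}^2
=\bigl(\dot g_s+2\He_{g_s}V_s+\operatorname{Ric}_{g_s}\bigr)(\nabla_{g_s}\sqrt{G},\nabla_{g_s}\sqrt{G})+\tfrac14|\He_{g_s}\log G|_{g_s}^2,
\end{equation}
and the assumption \eqref{e:assCt-mon-Ric} together with positivity of the Hessian-squared term gives $(\LL_s-\partial_s)(\nabla_{g_s}\sqrt{G})_{g_s}^2\geq 2\dot\lambda_s(\nabla_{g_s}\sqrt{G})_{g_s}^2$. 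Hence $\psi'(s)\geq 0$, so $\psi(t)\geq \psi(0)$, which is exactly \eqref{eq:plan-contract} after multiplying by $e^{-2\lambda_t}$.

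Finally, plugging \eqref{eq:plan-contract} into the entropy representation and using the duality $\E_{\nu_t}[\PP_{0,t}H]=\E_{\nu_0}[H]$ yields
\begin{equation}
\ent_{\nu_0}(F)\leq 2\,\E_{\nu_0}\!\left[(\nabla_{g_0}\sqrt{F})_{g_0}^2\right]\int_0^{T}e^{-2\lambda_t}\,dt=\frac{2}{\gamma}\E_{\nu_0}\!\left[(\nabla_{g_0}\sqrt{F})_{g_0}^2\right],
\end{equation}
which is \eqref{e:LSI-mon-Ric}. The main obstacle I expect is not the chain of identities, which parallels the flat case, but the legitimacy of the manipulations under the curved setup: one must know that $\PP_{s,t}$ is well-defined, smoothing, and satisfies the duality with $\nu_t$ on the full time interval $[0,T)$, and one must be able to justify differentiating under the expectation and exchanging $\nabla_{g_t}$ with $\PP_{0,t}$ in the curved geometry. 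The continuity hypothesis \eqref{e:continuity-Ric} is precisely designed to absorb the nontrivial part of this difficulty and is, as noted in the text, the substantive standing assumption that would normally need to be verified model by model.
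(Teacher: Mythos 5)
Your plan is exactly the paper's: the paper proves Theorem~\ref{thm:LSI-mon-Ric} by saying, verbatim, that it is ``the same proof as that of Theorem~\ref{thm:LSI-mon} using Lemma~\ref{lem:LSI-pf-identity-Ric} instead of Lemma~\ref{lem:LSI-pf-identity}''. Your outline of the entropy decomposition, curved de Bruijn identity, and Gr\"onwall contraction step reproduces this faithfully.

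There is, however, a sign slip in the contraction step that makes the written argument locally inconsistent, even though the target inequality \eqref{eq:plan-contract} is correct. From $\partial_s\PP_{s,t}=-\PP_{s,t}\LL_s$ one gets
\begin{equation}
\frac{d}{ds}\,\PP_{s,t}G_s
=
-\PP_{s,t}\bigl[(\LL_s-\partial_s)G_s\bigr],
\qquad G_s := \bigl(\nabla_{g_s}\sqrt{\PP_{0,s}F}\bigr)_{g_s}^2,
\end{equation}
so that with $\psi(s)=e^{2\lambda_s}\PP_{s,t}G_s$,
\begin{equation}
\psi'(s)=e^{2\lambda_s}\,\PP_{s,t}\!\bigl[\,2\dot\lambda_s\,G_s-(\LL_s-\partial_s)G_s\,\bigr],
\end{equation}
and the Bochner-type bound $(\LL_s-\partial_s)G_s\geq 2\dot\lambda_s G_s$ gives $\psi'(s)\leq 0$, hence $\psi(t)\leq\psi(0)$; multiplying by $e^{-2\lambda_t}$ yields \eqref{eq:plan-contract}.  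In your proposal the intermediate formula for $\psi'(s)$ has the wrong sign in front of $(\LL_s-\partial_s)G_s$, and you then assert $\psi'(s)\geq 0$ and $\psi(t)\geq\psi(0)$ --- which is the reverse of what is needed and is not in fact equivalent to \eqref{eq:plan-contract}. The structure of the argument is right, but these two compensating sign errors should be straightened out before the proof can be accepted as written.
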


We currently do not know of any interesting applications of the generalised set-up of Theorem~\ref{thm:LSI-mon-Ric}
over that of Theorem~\ref{thm:LSI-mon},
but it would be very interesting to find some.

Some references with related constructions (though different motivation) in the context of the
Ricci flow appeared in \cite{0211159}
and then \cite{MR2666905,MR2507614}
and more recently \cite{MR3864508,MR3869036,MR4311115,MR3790068}.

\subsection{Aside: Entropic stability estimate}
\label{subsec: Entropic stability estimate}

An approach different from the Bakry--\'Emery method to prove (modified) log-Sobolev inequalities,
using the same Polchinski flow, is the entropic stability estimate which underlies \cite{2203.04163}
and has its origins in the spectral and entropic independence conditions introduced in~\cite{MR4232133} and \cite{2106.04105}.
In \cite{2203.04163}, this method is applied from the stochastic localisation perspective whose
equivalence with the Polchinski flow is discussed in Section~\ref{sec:stochloc}.
In this section, we rephrase the entropic stability strategy of \cite{2203.04163} with the notations of the Polchinski  flow to explain the connection with the Bakry--\'Emery method.

\medskip

Let us first introduce some notation.
For a probability measure $\mu$ on $X$ (with all exponential moments)
and $h \in X$ write $T_h\mu$ for the tilted probability measure:
\begin{equation}
  \frac{dT_h\mu}{d\mu}(\zeta) = \frac{e^{(h,\zeta)}}{\E_\mu[e^{(h,\zeta)}]},
\end{equation}
and $\cov(\mu)$ for the covariance matrix of $\mu$.
The key estimate is a proof of the entropic stability from a covariance assumption.

\begin{lemma}[Entropic stability estimate {\cite[Lemmas~31 and 40]{2203.04163}}] 
\label{lem:entstab}
  Let $\mu$ be a probability measure on $X$, let $\dot\Sigma$ be a positive semi-definite
  matrix, and assume there is $\alpha >0$ such that
  \begin{equation}
 \label{eq: assumption covariance eldan}
 \forall h \in X:\qquad \dot \Sigma \cov(T_h\mu) \dot \Sigma \leq \alpha \dot\Sigma.
  \end{equation}
  Then for all nonnegative $F$ with $\E_\mu[F]=1$,
  \begin{equation}
  \label{eq: entropic stability}
    \frac12 (\E_{\mu}[F \zeta]-\E_{\mu}[\zeta])_{\dot \Sigma}^2 
    \leq
    \alpha \ent_\mu(F).
  \end{equation}
  In {\cite[Definition 29]{2203.04163}}, this inequality is called \emph{$\alpha$-entropic stability} of the measure $\mu$ with respect to the function $\psi(x,y) = \frac12 (x- y)_{\dot \Sigma}^2 $.
\end{lemma}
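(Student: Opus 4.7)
The plan is to apply the variational representation \eqref{e:entineq2} of the entropy to a carefully chosen \emph{linear} test function, and to control the resulting log-moment generating function of $\mu$ by a Taylor expansion in which the second-order term is handled by the covariance hypothesis \eqref{eq: assumption covariance eldan}.

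First, I would introduce the log-MGF $\Lambda(h) := \log \E_\mu[e^{(h,\zeta)}]$ and observe that $\nabla \Lambda(h) = \E_{T_h\mu}[\zeta]$ and $\He \Lambda(h) = \cov(T_h\mu)$, so \eqref{eq: assumption covariance eldan} rewrites as $\dot\Sigma \He\Lambda(h) \dot\Sigma \leq \alpha\, \dot\Sigma$ for every $h \in X$. Applying \eqref{e:entineq2} (valid since $\E_\mu[F]=1$) with the linear test $G(\zeta) = (h,\zeta)$ gives
\[
\ent_\mu(F) \geq (h, \E_\mu[F\zeta]) - \Lambda(h), \qquad h \in X.
\]

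Next, I would specialise to $h = \dot\Sigma w$ with $w \in X$ arbitrary and bound $\Lambda(\dot\Sigma w)$ from above by Taylor's formula with integral remainder along the segment $s \mapsto s\dot\Sigma w$: using $\Lambda(0)=0$ and $\nabla\Lambda(0) = \E_\mu[\zeta]$,
\[
\Lambda(\dot\Sigma w) = (\dot\Sigma w, \E_\mu[\zeta]) + \int_0^1 (1-s)\, (w, \dot\Sigma \He\Lambda(s\dot\Sigma w) \dot\Sigma w)\, ds \leq (\dot\Sigma w, \E_\mu[\zeta]) + \frac{\alpha}{2}(w, \dot\Sigma w),
\]
where the inequality is precisely the covariance hypothesis applied pointwise on the segment. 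Writing $v := \E_\mu[F\zeta] - \E_\mu[\zeta]$ and combining the two displays yields, for every $w \in X$,
\[
\ent_\mu(F) \geq (v, \dot\Sigma w) - \frac{\alpha}{2}(w, \dot\Sigma w).
\]
Completing the square in $w$ (the optimal choice is $w=v/\alpha$) produces the supremum $\frac{1}{2\alpha}(v, \dot\Sigma v) = \frac{1}{2\alpha}(v)_{\dot\Sigma}^2$, which rearranges to the desired inequality \eqref{eq: entropic stability}.

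I expect the calculations above to be routine; the only step deserving some care is that $\dot\Sigma$ is merely positive semi-definite and cannot be inverted. However, the argument is specifically designed to avoid this: the sandwiched form $(w, \dot\Sigma \He\Lambda(\cdot)\dot\Sigma w)$ appearing in the Taylor remainder is exactly what the hypothesis controls, and both $(v, \dot\Sigma w)$ and $(w, \dot\Sigma w)$ depend on $w$ only through $\dot\Sigma^{1/2}w$, so completing the square is unambiguous. Smoothness of $\Lambda$ together with the stated expressions for $\nabla\Lambda$ and $\He\Lambda$ follows from the exponential integrability of $\mu$ assumed in the statement.
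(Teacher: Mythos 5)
Your proof is correct, and it takes a genuinely different (and cleaner) route than the one the paper gives. The paper first reduces to exponentially tilted measures $T_h\mu$ via the entropy inequality with a linear test function, observing that among all densities $F$ with a prescribed mean $\E_\mu[F\zeta]$ the relative entropy is minimised by a tilt; it then introduces the Legendre transform $\Gamma$ of the cumulant generating function, identifies $\He\Gamma(\E_{T_h\mu}[\zeta]) = \cov(T_h\mu)^{-1}$, and deduces $\alpha\Gamma(\theta) \geq \frac12(\theta-\E_\mu[\zeta])_{\dot\Sigma}^2$ from the Hessian ordering. This Legendre-dual argument forces the paper to wrangle with potential degeneracy of $\cov(T_h\mu)$ (it devotes a paragraph to restricting to the support subspace $X'$) and with the possibility that no tilt achieves the mean $\E_\mu[F\zeta]$. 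You avoid all of this: applying \eqref{e:entineq2} directly to the linear test $G(\zeta)=(\dot\Sigma w,\zeta)$, Taylor-expanding $\Lambda(\dot\Sigma w)$ with integral remainder, and bounding the remainder via the sandwiched hypothesis $\dot\Sigma\,\He\Lambda(\cdot)\,\dot\Sigma\leq\alpha\dot\Sigma$ immediately yields $\ent_\mu(F)\geq(v,\dot\Sigma w)-\tfrac{\alpha}{2}(w,\dot\Sigma w)$ with $v=\E_\mu[F\zeta]-\E_\mu[\zeta]$, and optimising in $w$ (equivalently, in $\dot\Sigma^{1/2}w$) closes the argument. The only point worth making explicit when writing this up is that Taylor's formula requires the segment $s\mapsto s\dot\Sigma w$ to lie in $X$, which holds since $\dot\Sigma: X\to X$; and that $\E_\mu[F\zeta]$ is finite whenever $\ent_\mu(F)<\infty$ and $\mu$ has all exponential moments (the case $\ent_\mu(F)=\infty$ being trivial). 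Your approach also has the virtue of not needing $\He\Lambda(h)$ to be invertible anywhere, so no reduction to a support subspace is ever necessary.
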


The proof of Lemma \ref{lem:entstab} is postponed to the end of this section and we first show how 
the entropic stability estimate implies a (possibly modified) log-Sobolev inequality, see  \cite[Proposition~39]{2203.04163}.

\begin{corollary}
\label{cor: entropic stabiliy}
Assume there are $\alpha_t>0$ such that
\begin{equation}
\label{e:ass-entstab}
\forall \varphi\in X:\qquad
\dot C_t \He V_t (\varphi)\dot C_t  - \dot C_t C_t^{-1}\dot C_t \geq -\alpha_t \dot C_t,
\end{equation}
or equivalently, with $\dot\Sigma_t = C_t^{-1} \, \dot C_t \, C_t^{-1}$,
\begin{equation}
\label{e:ass-entstab bis}
  \forall \varphi \in X: \qquad \dot\Sigma_t\cov(\mu_t^{\varphi}) \dot\Sigma_t \leq \alpha_t\dot \Sigma_t.
\end{equation}
Then the measure $\mu_t^\varphi$, i.e., $\PP_{0,t}(\cdot)(\varphi)$,
satisfies the following $ \alpha_t$-entropic stability: for all $\varphi \in X$,
\begin{equation} 
\label{e:statement-entstab}
  2 (\nabla \sqrt{\PP_{0,t}F})_{\dot C_t}^2 (\varphi)
  = 2 (\nabla \sqrt{\E_{\mu_t^\varphi}[F]})_{\dot C_t}^2
  \leq
  \alpha_t \ent_{\mu_t^\varphi}(F)
  =
  \alpha_t \qB{ \PP_{0,t}\Phi(F) (\varphi) -\Phi \big( \PP_{0,t}F (\varphi) \big)}.
\end{equation}
This implies the following entropy contraction: for any $s>0$,
\begin{equation}
\label{eq: LSI intermediaire}
  \ent_{\nu_0}(F) \leq e^{\int_s^\infty\alpha_u\, du} \; \E_{\nu_s}\qB{\ent_{\mu_s^\varphi}(F)}.
\end{equation}
\end{corollary}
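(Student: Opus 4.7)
The plan is to establish three things in turn: (i) the equivalence between the Hessian condition \eqref{e:ass-entstab} and the covariance condition \eqref{e:ass-entstab bis}; (ii) the pointwise entropic stability \eqref{e:statement-entstab}; and (iii) the integrated contraction \eqref{eq: LSI intermediaire} via a Gr\"onwall argument on an entropy produced by the Polchinski flow.

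For (i), the second equality in \eqref{eq: hess Vt variance} rewrites the Hessian of the renormalised potential as $\He V_t(\varphi) = C_t^{-1} - C_t^{-1}\cov(\mu_t^\varphi)C_t^{-1}$ (assuming $C_t$ invertible). Substituting this into \eqref{e:ass-entstab} and conjugating by $C_t^{-1}$ on both sides, while recognising $\dot\Sigma_t = C_t^{-1}\dot C_t C_t^{-1}$, converts \eqref{e:ass-entstab} into \eqref{e:ass-entstab bis} and conversely.

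For (ii), the explicit form \eqref{e:fluctuationmeasure-alt} shows that $\mu_t^\varphi(d\zeta) \propto e^{(\zeta,C_t^{-1}\varphi)}$ times a factor depending only on $\zeta$, so tilting by an arbitrary $h\in X$ gives $T_h \mu_t^\varphi = \mu_t^{\varphi+C_t h}$. Consequently, assumption \eqref{e:ass-entstab bis} for every $\varphi$ is precisely the covariance hypothesis \eqref{eq: assumption covariance eldan} for $\mu=\mu_t^\varphi$ and all tilts $h$. Lemma~\ref{lem:entstab} applied to the normalised function $\bar F = F/u$ with $u:=\PP_{0,t}F(\varphi) = \E_{\mu_t^\varphi}[F]$, together with the $1$-homogeneity $\ent_{\mu_t^\varphi}(\bar F) = \ent_{\mu_t^\varphi}(F)/u$, gives
\begin{equation*}
\frac{1}{2u^{2}}\, w_{\dot\Sigma_t}^{2} \leq \frac{\alpha_t}{u}\, \ent_{\mu_t^\varphi}(F),
\qquad w := \E_{\mu_t^\varphi}[F\zeta] - u\,\E_{\mu_t^\varphi}[\zeta].
\end{equation*}
Differentiating the density \eqref{e:fluctuationmeasure-alt} in $\varphi$ and using $\nabla V_t(\varphi) = C_t^{-1}(\varphi - \E_{\mu_t^\varphi}[\zeta])$ from Lemma~\ref{lem:fluctuationmeasure} yields the identity $\nabla_\varphi u = C_t^{-1} w$, whence $(\nabla u)_{\dot C_t}^{2} = (w,\dot\Sigma_t w) = w_{\dot\Sigma_t}^{2}$ and therefore $2(\nabla \sqrt{u})_{\dot C_t}^{2} = w_{\dot\Sigma_t}^{2}/(2u)$. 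This is exactly \eqref{e:statement-entstab}.

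For (iii), set $k(t) := \E_{\nu_t}[\Phi(\PP_{0,t}F)]$. The entropy-production identity \eqref{e:dEnt} from the proof of Theorem~\ref{thm:LSI-mon} gives $-k'(t) = 2\,\E_{\nu_t}\qa{(\nabla \sqrt{\PP_{0,t}F})_{\dot C_t}^{2}}$, while the duality \eqref{e:polchinski-semigroup} gives $\E_{\nu_t}[\PP_{0,t}\Phi(F)] = \E_{\nu_0}[\Phi(F)]$; combined with the two-term decomposition \eqref{eq: entropy decomposition} under the conditioning $\mu_t^\varphi$, this identifies $h(t) := \E_{\nu_0}[\Phi(F)] - k(t) = \E_{\nu_t}\qa{\ent_{\mu_t^\varphi}(F)} \geq 0$. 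Taking $\E_{\nu_t}$ of the pointwise bound \eqref{e:statement-entstab} then produces the Gr\"onwall inequality $h'(t) \leq \alpha_t\, h(t)$. The continuity assumption \eqref{e:continuity} forces $\lim_{t\to\infty} k(t) = \Phi(\E_{\nu_0}[F])$, so $\lim_{t\to\infty} h(t) = \ent_{\nu_0}(F)$, and integrating the differential inequality on $[s,\infty)$ gives $\ent_{\nu_0}(F) \leq e^{\int_s^\infty \alpha_u\,du}\, h(s)$, which is \eqref{eq: LSI intermediaire}. The main delicacy I anticipate is the bookkeeping in (ii): the lemma delivers its conclusion in the $\dot\Sigma_t$-geometry on the fluctuation variable $\zeta$, whereas \eqref{e:statement-entstab} is an inequality in the $\dot C_t$-geometry on the base variable $\varphi$, and the match occurs only because the $\varphi$-derivative of the density of $\mu_t^\varphi$ contributes precisely a factor $C_t^{-1}$ that converts one into the other.
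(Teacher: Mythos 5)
Your proof is correct and follows essentially the same route as the paper's: equivalence of the two hypotheses via $\He V_t = C_t^{-1} - C_t^{-1}\cov(\mu_t^\varphi)C_t^{-1}$, the tilt identity $T_h\mu_t^\varphi = \mu_t^{\varphi+C_th}$ feeding into Lemma~\ref{lem:entstab}, the $\nabla_\varphi u = C_t^{-1}w$ conversion between the $\dot\Sigma_t$- and $\dot C_t$-geometries, and finally the de Bruijn/Gr\"onwall argument on $\E_{\nu_t}[\ent_{\mu_t^\varphi}(F)]$ together with the continuity assumption \eqref{e:continuity}. The only cosmetic difference is that you normalise $\bar F = F/u$ and invoke $1$-homogeneity of entropy, whereas the paper tracks the prefactor $\E_{\mu_t^\varphi}[F]$ explicitly through the tilted measure $\mu_t^{\varphi,F}$.
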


The condition \eqref{e:ass-entstab} on $\alpha_t$ is very similar to  the multiscale Bakry--\'Emery condition \eqref{e:assCt-mon} on $\dot \lambda_t$, but
not identical. We recall that \eqref{e:assCt-mon} reads
\begin{equation} 
\label{e:assCt-mon 2}
    \dot C_t \He V_t(\varphi) \dot C_t - \frac{1}{2} \ddot C_t  \geq \dot\lambda_t \dot C_t.
\end{equation}
The log-Sobolev constant and the closely related entropy contraction are estimated by the time integrals of $\alpha_t$ and $\dot \lambda_t$, respectively.
In terms of the two strategies for proving log-Sobolev inequalities discussed in Section~\ref{sec: Decomposition and properties of the entropy},
the Bakry--\'Emery method corresponds more to the first strategy (but also see Remark~\ref{rk_MSBE} for a rephrasing in terms of the second strategy)
while the entropic stability estimate applies the second strategy of entropy contraction.

The multiscale Bakry--\'Emery condition \eqref{e:assCt-mon 2} is less singular for $t$ close to $0$ as $C_t$ typically vanishes as $t \to 0$ while $\dot C_t$ does not.
To see this, consider the simple one-variable case with the covariance function $c_t = \int_0^t \dot c_u \, du \in \R$ (with $\dot c_u >0$) and assume that $\He V_t(\varphi) \geq 0$. Then the optimal choices are $-2\dot \lambda_t = \ddot c_t  \, \dot c_t^{-1}$ and $\alpha_t = \dot c_t  \, c_t^{-1}$ so that
\begin{equation}  
\label{eq: divergence 0}
-2 \int_s^t \dot \lambda_u \, du = \log \dot c_t - \log \dot c_s
\quad \text{and} \quad 
\int_s^t \alpha_u  \, du = \log c_t - \log c_s.
\end{equation}
In particular, one cannot take the limit $s \to 0$ in \eqref{eq: LSI intermediaire} in which the measure $\mu_s^\varphi$ degenerates, and
the $\ent_{\mu_s^\varphi}(F)$ term in \eqref{eq: LSI intermediaire} must be treated differently for $s$ small
to recover a log-Sobolev inequality.
This last step is called \emph{annealing via a localization scheme} in \cite{2203.04163}.
For example, one can use that since covariance $C_s$ vanishes as $s\to 0$ the measure $\mu_{s}^\varphi$ 
becomes uniformly log-concave so that by the standard Bakry--\'Emery criterion (Theorem~\ref{thm:BE}), the measure 
satisfies a log-Sobolev inequality which can then be plugged into \eqref{eq: LSI intermediaire} to complete the log-Sobolev inequality for $\nu_0$. 

\medskip

As a last remark before proving Corollary~\ref{cor: entropic stabiliy},
we emphasise that \eqref{e:statement-entstab} is a contraction estimate for the expected entropy of the fluctuation measure $\mu_t^\varphi$.
More precisely, as in the proof of the multiscale Bakry--\'Emery criterion in \eqref{e:dEnt},
the expectation of the left-hand side of \eqref{e:statement-entstab} is
\begin{equation}
  \ddp{}{t}\E_{\nu_t}\big[\ent_{\mu^\varphi_t}(F)\big] = -\ddp{}{t} \ent_{\nu_t}(\PP_{0,t}F) =   2 \E_{\nu_t}\qb{ (\nabla \sqrt{\PP_{0,t}F})_{\dot C_t}^2 (\varphi)},
\end{equation}
where the first equality follows from the independence of $t$ of the following entropy decomposition:
\begin{equation}  \label{eq:entdecompcontract}
  \ent_{\nu_0}(F)
  =
  \ent_{\nu_t}(\PP_{0,t}F) + \E_{\nu_t}\big[\ent_{\mu^\varphi_t}(F)\big]
  ,\qquad
  \PP_{0,t}F(\varphi) = \E_{\mu^\varphi_t}[F].
\end{equation}
Thus the entropic stability estimate  \eqref{e:statement-entstab} leads to a differential inequality for $\E_{\nu_t}\big[\ent_{\mu^\varphi_t}(F)\big]$.
The multiscale Bakry-\'Emery criterion also gives an entropy contraction but for $\nu_t$ instead of $\mu_t^\varphi$,
i.e., it leads to a differential inequality for  $\ent_{\nu_t}(\PP_{0,t}F)$ as follows.

\begin{remark} \label{rk_MSBE}
Assume there are $\dot\lambda_t \in \R$ such that
  \begin{equation}
\forall \varphi\in X:
\qquad
\dot C_t\He V_t(\varphi)\dot C_t - \frac{1}{2}\ddot C_t\geq \dot \lambda_t \dot C_t
.
\label{eq_MSBE}
\end{equation}
Then the measure $\nu_t$ satisfies the following entropy contraction:
for any $t>0$, with $\lambda_t=\int_0^t\dot\lambda_s\, ds$,
\begin{equation} \label{eq:entcontract_BE0}
  2 \E_{\nu_t}\qb{(\nabla \sqrt{\PP_{0,t}F})_{\dot C_t}^2 (\varphi)} \leq \qa{\ddp{}{t} \log \int_0^t e^{-2\lambda_s}\, ds} \ent_{\nu_t}(\PP_{0,t}F)
  .
\end{equation}
In particular, one again has an entropy contraction estimate:
\begin{equation} \label{eq:entcontract_BE}
\ent_{\nu_0}(F)
\leq 
\frac{\int_0^\infty e^{-2\lambda_s}\, ds}{\int^t_0 e^{-2\lambda_s}\, ds} 
\E_{\nu_t}\qB{\ent_{\mu^\varphi_t}(F)}
.
\end{equation}
\end{remark}

Note that while the left-hand sides of \eqref{e:statement-entstab} and \eqref{eq:entcontract_BE0}
are identical (after taking expectation over $\nu_t$), the right-hand side of
\eqref{e:statement-entstab} is expressed in terms of the second term  on the right-hand side
of \eqref{eq:entdecompcontract} while  \eqref{eq:entcontract_BE0} is expressed in terms of the first term.

\begin{proof}
The entropy of a test function $F$ decomposes at each scale $t\geq 0$ according to   \eqref{eq:entdecompcontract}.
Under~\eqref{eq_MSBE}, 
the log-Sobolev inequalities for each renormalised measure $\nu_t$ provided by Remark~\ref{rk:LSI-mon1} give, with the same computation as the proof~\eqref{e:dEnt} of the multiscale Bakry-\'Emery criterion:
\begin{align}
\ddp{}{t}\ent_{\nu_t}(F_t)
=
-
2\E_{\nu_t}\big[\big(\nabla \sqrt{F_t}\big)_{\dot C_t}^2\big]
\leq 
- \gamma_t\ent_{\nu_t}(F_t)
  ,
\end{align}
where   $F_t(\varphi) =      \PP_{0,t}F(\varphi) = \E_{\mu^\varphi_t}[F]$ and 
\begin{equation}\gamma_t
= 
\left(\int_t^\infty e^{-2(\lambda_s-\lambda_t)}\, ds\right)^{-1}
=
-\ddp{}{t}\log\int_t^\infty e^{-2\lambda_s}\, ds
.
\end{equation}
This implies for each $t\geq 0$:
\begin{equation}
\ent_{\nu_t}(F_t) 
\leq 
\exp\qa{-\int_0^t\gamma_s\, ds } \ent_{\nu_0}(F_0)
=
\frac{\int_t^\infty e^{-2\lambda_s}\, ds}{\int_0^\infty e^{-2\lambda_s}\, ds}  \ent_{\nu_0}(F_0)
,
\end{equation}
and hence the entropy contraction  \eqref{eq:entcontract_BE} when substituted into \eqref{eq:entdecompcontract}.
\end{proof}

\begin{proof}[Proof of Corollary~\ref{cor: entropic stabiliy}]
From \eqref{e:fluctuationmeasure-def}, recall that the Polchinski semigroup $\PP_{0,t}$ coincides with the fluctuation measure $\mu_t^\varphi$. Thus for smooth $F>0$ and $\varphi \in X$, one has 
\begin{align}
\nabla \log \PP_{0,t}F (\varphi) = \nabla \log \E_{\mu_t^\varphi} [F]
&= 
\frac{\E_{\mu_t^\varphi}[ F \, C_t^{-1} \zeta]}{\E_{\mu_t^\varphi} [F]} 
  -  \E_{\mu_t^{\varphi}}[ C_t^{-1} \zeta]
  \nnb
&= C_t^{-1} \Big( \E_{\mu_t^{\varphi,F}}[  \zeta] -  \E_{\mu_t^{\varphi}}[\zeta] \Big), 
\end{align}
where the measure modified by $F$ is defined by
\begin{equation}
\label{eq: tilt by F}
\frac{d\mu_t^{\varphi,F}}{d \mu_t^\varphi} (\zeta)  = \frac{F(\zeta)}{\E_{\mu_t^\varphi}[F]}.
\end{equation}
Thus, the estimate \eqref{e:statement-entstab}  we are looking for boils down to proving 
an entropic stability result \eqref{eq: entropic stability} for the measure $\mu_t^\varphi$.
Indeed, setting  $\dot\Sigma_t = C_t^{-1} \, \dot C_t \, C_t^{-1}$, then 
\begin{equation}
  2 (\nabla \sqrt{\PP_{0,t}F})_{\dot C_t}^2 (\varphi)
  =
  \frac12 (\nabla \log \PP_{0,t}F)_{\dot C_t}^2 (\PP_{0,t}F) (\varphi)
  = \frac12 (\E_{\mu_t^{\varphi,F}}[\zeta]-\E_{\mu_t^\varphi}[\zeta])_{\dot\Sigma_t }^2 (\varphi) \E_{\mu_t^\varphi}[F],
\end{equation}
and the relative entropy is given by
\begin{equation}
    \ent_{\mu_t^\varphi }(F) = \bbH(\mu_t^{\varphi,F} |\mu_t^\varphi) \; \E_{\mu_t^\varphi}[F]  .
\end{equation}

Assumption \eqref{e:ass-entstab bis} implies the assumption \eqref{eq: assumption covariance eldan} on the covariance of $\mu_t^\varphi$,
\begin{equation}
\label{eq: assumption covariance eldan bis}
\dot \Sigma^{1/2}_t \cov(T_h \mu_t^\varphi) \dot \Sigma^{1/2}_t
=     \dot \Sigma^{1/2}_t \,  \cov( \mu_t^{\varphi + C_t h}) \dot \Sigma^{1/2}_t
\leq \alpha_t \id \qquad \text{for all $h \in X$},
\end{equation}
so that Lemma~\ref{lem:entstab}  gives the claim
\eqref{e:statement-entstab}, i.e.,
\begin{equation} 
  2(\nabla\sqrt{\PP_{0,t}F} (\varphi) )_{\dot C_t}^2 \leq \alpha_t \ent_{\mu_t^\varphi}(F).
\end{equation}

Thus it is enough to show that assumption 
\eqref{e:ass-entstab} is equivalent to the covariance assumption \eqref{e:ass-entstab bis}.
From \eqref{eq: hess Vt variance}, we know that for any $\varphi \in X$,
\begin{equation}
  \He V_t (\varphi) = C_t^{-1} - C_t^{-1} \cov( \mu_t^{\varphi}) C_t^{-1},
  \end{equation}
so that 
\begin{align}
  -\dot C_t \Big( \He V_t (\varphi) - C_t^{-1} \Big) \dot C_t
  &= \dot C_tC_t^{-1} \cov( \mu_t^{\varphi} ) C_t^{-1}\dot C_t
    \nnb
  &=  C_t \dot \Sigma_t \cov( \mu_t^{\varphi} ) \dot \Sigma_t C_t
  \leq \alpha_t C_t \dot\Sigma_t C_t = \alpha_t \dot C_t,
\end{align}
where the inequality holds if and only if $\dot \Sigma \cov( \mu_t^\varphi )\dot \Sigma_t \leq \alpha_t  \, \dot \Sigma_t$.

\bigskip
 
We turn now to the second part of the claim and show how the entropic stability estimate
\eqref{e:statement-entstab} implies the entropy contraction
estimate~\eqref{eq: LSI intermediaire}.
The starting point is the time derivative of the  entropy  \eqref{e:dEnt}:
 \begin{equation}
\ddp{}{t} \E_{\nu_t}\qB{ \PP_{0,t}\Phi(F)-\Phi(\PP_{0,t}F)} 
= 2    \E_{\nu_t} \qbb{ (\nabla \sqrt{ \PP_{0,t} F })_{\dot C_t}^{2}} 
 \end{equation}
which is the counterpart of eq.~(27) in \cite{2203.04163}.
This is bounded thanks to \eqref{e:statement-entstab}:
 \begin{equation}
   \ddp{}{t} \E_{\nu_t}\qB{ \PP_{0,t}\Phi(F)-\Phi(\PP_{0,t}F)} 
   \leq \alpha_t\E_{\nu_t}\qB{ \PP_{0,t}\Phi(F)-\Phi(\PP_{0,t}F)},
 \end{equation}
and thus for any $s <t$, Gr\"onwall's lemma implies
 \begin{equation} \label{e:entstab-last}
   \E_{\nu_t}\qB{ \PP_{0,t}\Phi(F)-\Phi(\PP_{0,t}F)} \leq e^{\int_s^t\alpha_u\, du} \; \E_{\nu_s}\qB{ \PP_{0,s}\Phi(F)-\Phi(\PP_{0,s}F)}.
 \end{equation}
Taking $t\to\infty$, the entropy is recovered from the left-hand side, as in \eqref{eq: consequence continuity},
and therefore \eqref{eq: LSI intermediaire} holds  for any $s>0$.
\end{proof}

We finally prove Lemma~\ref{lem:entstab} following \cite{2203.04163}.

\begin{proof}[Proof of Lemma~\ref{lem:entstab}]
  It suffices to show that, for any $h \in X$,
  \begin{equation}
    \frac12 (\E_{T_h\mu}[\zeta]-\E_{\mu}[\zeta])_{\dot \Sigma}^2 
    \leq
    \alpha \bbH( T_h\mu |\mu)
    =\alpha \ent_\mu\pbb{\frac{e^{(h,\zeta)}}{\E_\mu[e^{(h,\zeta)}]}}
    .
  \end{equation}
  Indeed, for any density $F$ with $\E_\mu[F]=1$ and $\E_{\mu}[F\zeta] =\E_{T_h\mu}[\zeta]$,
  the entropy inequality \eqref{e:entineq2} applied with the test function $G: \zeta \mapsto (h,\zeta)$
  implies
  \begin{equation}
    \ent_{\mu}(F) = \sup_G \hB{\E_{\mu}[FG] - \log \E_{\mu}[e^G]}
    \geq \E_{T_h\mu}[(h,\zeta)] - \log \E_{\mu}[e^{(h,\zeta)}]
    = \bbH( T_h\mu |\mu),
  \end{equation}
  i.e., the relative entropy over probability measures
  with given mean is minimised by exponential tilts $T_h\mu$. 
  Moreover, if there is no $h$ such that
  $\E_{\mu}[F\zeta] =\E_{T_h\mu}[\zeta]$
  the relative entropy is infinite.

From now on, we may assume that $\cov(T_h\mu)$ is strictly positive definite on $X$ for all $h \in X$.
Indeed, otherwise consider the largest linear subspace $X'$ such that $\cov(T_h\mu)$ acts and is strictly positive definite on $X'$,
and note that this subspace is independent of $h$. 
Indeed, let $f\in\R^N$ be such that $\var_{\mu}((f,\zeta))=0$,
and assume without loss of generality that $\E_{\mu}[(f,\zeta)]=0$. 
Then under the assumption of exponential moments also 
$\E_{\mu}[(f,\zeta)^4]=0$ and:
\begin{align}
  \var_{T_h\mu}((f,\zeta)) 
  &\propto \frac12 \E_{\mu\otimes \mu}[(f,\zeta-\zeta')^2e^{(h,\zeta+\zeta')}]
    \nnb
  &\leq 
    \frac12 \E_{\mu\otimes \mu}[(f,\zeta-\zeta')^4]^{1/2} \E_{\mu\otimes \mu}[e^{2(h,\zeta+\zeta')}]^{1/2}
    =0
    .
\end{align}  
This implies that $\mu$ (and thus $T_h\mu$) is supported in an affine subspace of $X$ which is a translation of $X'$,
and by recentering one can replace $X$ by $X'$ in the following.
  
  The relative entropy of $T_h\mu$ can be written as:
  \begin{equation}
    \bbH( T_h\mu |\mu)
    =\E_{T_h \mu} \qa{ ( h,\zeta)} - \log(\E_{\mu}[e^{(h,\zeta)}])
    = (h,\theta) - \log(\E_{\mu}[e^{(h,\zeta)}]), \qquad \theta = \E_{T_h \mu}[\zeta]
    .
  \end{equation}
The positive definiteness of $\cov(T_h\mu)$ on $X$ implies that $X \ni h \mapsto \log \E_{\mu}[e^{(h,\zeta)}]$ is strictly convex, and hence
$h \mapsto \theta(h) = \E_{T_h\mu}[\zeta]$ is strictly increasing in any direction of $X$. Let $K$ be the image of $\theta(h)$,
and for $\theta\in K$, let $h(\theta)$ be the inverse function, and then let
  \begin{equation}
    \Gamma(\theta)
    = \bbH(T_{h(\theta)}\mu|\mu)
        = (\theta,h(\theta)) - \log \E_{\mu}[e^{(\zeta,h(\theta))}].
  \end{equation}
  Thus $\Gamma$ be the Legendre transform of the cumulant generating function of $\mu$, and
  \begin{equation}
    \bbH( T_h\mu |\mu) = \Gamma(\E_{T_h\mu}[\zeta]).
  \end{equation}
  In particular, $h(\E_\mu[\zeta]) = 0$ and properties of Legendre transform imply that, in directions of $X$,
  \begin{align}
    \nabla \Gamma(\theta) &=  (h\mapsto \E_{T_{h}\mu}[\zeta])^{-1}|_{h=h(\theta)}
    \\
    \He \Gamma(\theta) &= \qa{ \He \log \E_{\mu}[e^{(h,\zeta)}] \bigg|_{h=h(\theta)}}^{-1} = \cov(T_{h(\theta)}\mu)^{-1}
  \end{align}
  so that
  \begin{equation}
    \nabla \Gamma(\E_\mu[\zeta]) =0, \qquad \He \Gamma(\E_{T_h\mu}[\zeta]) = \cov(T_h\mu)^{-1}.
  \end{equation}
  The assumption $\dot\Sigma \cov(T_h\mu) \dot\Sigma \leq \alpha\dot\Sigma$
  implies, for $\theta \in K$,
  \begin{equation}
    \alpha \He \Gamma(\theta) \geq \dot \Sigma.
  \end{equation}
  Since $f(\theta) = \frac12 (\theta - \E_{\mu}[\zeta]))_{\dot \Sigma}^2$ satisfies $\nabla f(\E_{\mu}[\zeta]) = 0$ and $\He f = \dot\Sigma$,
  therefore for all $h \in \R^N$:
  \begin{equation}
    \alpha \bbH( T_h\mu |\mu) = 
    \alpha \Gamma(\E_{T_h\mu}[\zeta]) \geq \frac12 (\E_{T_h\mu}[\zeta]-\E_{\mu}[\zeta])_{\dot \Sigma}^2 .
    \qedhere
  \end{equation}
\end{proof}

\section{Pathwise Polchinski flow and stochastic localisation perspective}
\label{sec:pathwise}

\subsection{Pathwise realisation of the Polchinski semigroup}
\label{subsec: Pathwise realisation of the Polchinski semigroup}

From Proposition~\ref{prop:polchinski2}, we recall that
the Polchinski semigroup operates from the right:
\begin{equation}
s\leq r \leq t, \qquad 
  \PP_{s,t} = \PP_{r,t}\PP_{s,r}.
\end{equation}
Thus it acts on probability densities relative to the measure $\nu_t$:
if $d\mu_0 = F \, d\nu_0$ is a probability measure then $d\mu_t = \PP_{0,t}F \, d\nu_t$ is again a probability measure.

This should be compared with the more standard situation of
a time-independent semigroup $\TT_{s,t} = \TT_{t-s}$ that is reversible with respect to a measure $\nu$
such as the original Glauber--Langevin semigroup introduced in \eqref{eq: distribution at time t}.
In this case, one has the dual point of view that $\TT$ describes the evolution of an observable:
if $d\mu_0 = F \, d\nu$ is some initial distribution and $d\mu_t = (\TT_tF) \, d\nu$ denotes the distribution at time $t$ then,
by reversibility,
\begin{equation}
  \E_{\mu_t}[G]
  =
  \int G (\TT_tF) \, d\nu = \int (\TT_tG) F\, d\nu
  = \E_{\mu_0}[\TT_tG].
\end{equation}
The dual semigroup can be realised in terms of a Markov process $(\varphi_t)$ as
$\TT_{t}G(\varphi) = \EE_{\varphi_0=\varphi}[G(\varphi_t)]$.

Since the Polchinski semigroup is not reversible and time-dependent, this interpretation
does not apply to the Polchinski semigroup. 
Instead, the Polchinski semigroup $\PP_{s,t}$ can be realised in terms of an SDE that starts at time $t$
and runs time in the negative direction from $t$ to $s<t$: Given $t>0$, a standard Brownian motion $(B_u)_{u\geq 0}$ and $\varphi_t$,
consider the solution to
\begin{equation}
  \label{e:stochastic}
s\leq t, \qquad  \varphi_{s} =\varphi_t - \int_{s}^t \dot C_u \nabla V_u(\varphi_u) \, du + \int_s^t \sqrt{\dot C_u} \, dB_u.
\end{equation}
This is the equation for the (stochastic) characteristics of the Polchinski  equation, 
see Appendix~\ref{app:HJ} for the classical analogue of a Hamilton--Jacobi equation without viscosity term.
By reversing time direction, this backward in time SDE becomes a standard SDE.
Indeed, to be concrete, we will \emph{interpret} \eqref{e:stochastic} as $\varphi_r=\tilde\varphi_{t-r}$ where $\tilde\varphi$
is the solution to the following standard SDE with $\tilde\varphi_0=\varphi_t$ given and $\tilde B_r=B_t-B_{t-r}$:
\begin{equation}
  \label{e:stochastic-forwards-bis}
0 \leq r \leq t,
\qquad  d \tilde\varphi_r = - \dot C_{t-r}  \, \nabla V_{t-r} (\tilde\varphi_r) dr + \sqrt{\dot C_{t-r}} d\tilde B_r.
 \end{equation}
Denoting by $\E_{\varphi_t=\varphi}[\cdot]$ the expectation with respect to the solution $(\varphi_s)_{s\leq t}$ to   \eqref{e:stochastic}
with $\varphi_t= \varphi$ given, the Polchinski semigroup can be represented as follows.

\begin{proposition} \label{prop:stochastic2}
For $s\leq t$ and any bounded $F: X \to \R$,
\begin{equation} \label{e:stochastic2}
  \PP_{s,t}F(\varphi)= \E_{\varphi_t=\varphi}[F(\varphi_s)].
\end{equation}
Thus if $\varphi_t$ is distributed according to  the renormalised measure $\nu_t$ the   backward in time evolution \eqref{e:stochastic} ensures that $\varphi_s$ is distributed according to $\nu_s$ for $s<t$.
\end{proposition}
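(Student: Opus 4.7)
The plan is a Feynman--Kac argument: I will build from \eqref{e:stochastic-forwards-bis} a martingale whose expectation at two endpoints computes the two sides of \eqref{e:stochastic2}. The key input is the \emph{forward} parabolic equation $\partial_r \PP_{s,r}F = \LL_r \PP_{s,r}F$ from Proposition~\ref{prop:polchinski2}, rather than the backward one in $s$. First I fix $s\leq t$, work under $V_0\in C^2$ bounded below so that Proposition~\ref{prop:polchinski1} yields smoothness of the $V_u$, and interpret the backward SDE \eqref{e:stochastic} via its time-reversed version \eqref{e:stochastic-forwards-bis}: the solution $(\tilde\varphi_\tau)_{\tau\in[0,t-s]}$ with $\tilde\varphi_0=\varphi$ satisfies $\varphi_s=\tilde\varphi_{t-s}$.

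Then I introduce the auxiliary function
$$u(r,x) := \PP_{s,r}F(x),\qquad r\in[s,t],\ x\in X,$$
which by Proposition~\ref{prop:polchinski2} solves the forward Cauchy problem $\partial_r u = \LL_r u$ with initial condition $u(s,\cdot)=F$. I apply It\^o's formula to $M_\tau := u(t-\tau,\tilde\varphi_\tau)$. Using that the quadratic covariation of $\tilde\varphi$ at time $\tau$ is $\dot C_{t-\tau}$, the finite-variation part of $dM_\tau$ equals
$$\qB{-\partial_r u - (\nabla V_{t-\tau},\nabla u)_{\dot C_{t-\tau}} + \tfrac12 \Delta_{\dot C_{t-\tau}} u}\bigl(t-\tau,\tilde\varphi_\tau\bigr)\,d\tau = \qB{-\partial_r u + \LL_{t-\tau}u}\bigl(t-\tau,\tilde\varphi_\tau\bigr)\,d\tau,$$
which vanishes by the PDE satisfied by $u$. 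Hence $M$ is a local martingale.

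Equating expectations at $\tau=0$ and $\tau=t-s$ then yields the desired identity, since
$$M_0 = u(t,\varphi) = \PP_{s,t}F(\varphi)\qquad\text{and}\qquad M_{t-s} = u(s,\tilde\varphi_{t-s}) = F(\tilde\varphi_{t-s}) = F(\varphi_s),$$
and so $\PP_{s,t}F(\varphi) = \E[M_0] = \E[M_{t-s}] = \E_{\varphi_t=\varphi}[F(\varphi_s)]$. The second assertion, that $\varphi_s\sim \nu_s$ whenever $\varphi_t\sim\nu_t$, follows by integrating this identity against $\nu_t$ and invoking the duality in \eqref{e:polchinski-semigroup}:
$$\E\qa{F(\varphi_s)} = \int \E_{\varphi_t=\varphi}[F(\varphi_s)]\,d\nu_t(\varphi) = \E_{\nu_t}\qa{\PP_{s,t}F} = \E_{\nu_s}\qa{F}.$$

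The main obstacle is purely technical and splits into three standard but non-trivial points: (i) global well-posedness of the forward SDE \eqref{e:stochastic-forwards-bis}, since the drift $\dot C_{t-\tau}\nabla V_{t-\tau}$ is in general only locally Lipschitz and one must exploit the coercivity coming from $V_0$ being bounded below together with moment estimates on $\tilde\varphi$; (ii) sufficient $C^{1,2}$ regularity of $u$ in $(r,x)$, which follows from the Gaussian smoothing built into \eqref{e:P-def-bis}; and (iii) upgrading $M$ from a local to a genuine martingale, for which the uniform bound $|u|\leq\|F\|_\infty$ (consequence of $\PP_{s,r}1=1$ and positivity established in Proposition~\ref{prop:polchinski2}) handles the bulk, leaving only a standard control of $\nabla u$ along the paths of $\tilde\varphi$.
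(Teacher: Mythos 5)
Your proof is correct and follows essentially the same route as the paper's: both rest on observing that $r\mapsto \PP_{s,r}F(\varphi_r)$ is a martingale along the time-reversed SDE, using the forward equation $\partial_r\PP_{s,r}F=\LL_r\PP_{s,r}F$ together with It\^o's formula, precisely what the paper encapsulates in \eqref{e:ito}--\eqref{e:ito-expectation} and then invokes. Your additional remarks on well-posedness, regularity and the local-to-true martingale upgrade are the kind of technical hygiene the paper glosses over with the standing boundedness assumptions on $F$ and $V_0$.
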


Our interpretation of this proposition is that,
while the renormalised measures $\nu_t$ are supported on increasing smooth configurations as $t$ grows,
the backward evolution restores the small scale fluctuations of $\nu_0$.
Note that for $s=0$,  the identity \eqref{e:stochastic2} states that
the fluctuation measure $\mu_t^\varphi$ introduced in \eqref{e:fluctuationmeasure-def} is the distribution of  the backward process $\varphi_0$  conditioned on $\varphi$.

\medskip
To verify Proposition~\ref{prop:stochastic2},
we change time direction so that \eqref{e:stochastic} becomes a standard (forward) SDE as follows.
Indeed, as discussed above, set $\tilde \varphi_{r} = \varphi_{t-r}$ and $\tilde B_r=B_t-B_{t-r}$.
Then \eqref{e:stochastic} becomes
\begin{align}
  \tilde \varphi_{r}
  &= \tilde\varphi_0 - \int_{t-r}^t \dot C_u \nabla V_u(\tilde\varphi_{t-u}) \, du + \int_{t-r}^t \sqrt{\dot C_u} \, dB_u
    \nnb
  &= \tilde\varphi_0 - \int_{0}^r \dot C_{t-u} \nabla V_{t-u}(\tilde\varphi_{u}) \, du + \int_{0}^r \sqrt{\dot C_{t-u}} \, d\tilde B_u,
\end{align}
i.e., $\tilde \varphi$ solves the standard SDE \eqref{e:stochastic-forwards-bis}.
It\^o's formula stated for the forward SDE for $\tilde \varphi$ is
\begin{equation}
\label{e:ito-forward}
  d\tilde f(r,\tilde\varphi_r)
  = \ddp{\tilde f}{r}(r,\tilde\varphi_r) + \LL_{t-r} \tilde f(r,\tilde \varphi_r) +     (\nabla \tilde f(r,\tilde \varphi_r), \sqrt{\dot C_{t-r}} \, d\tilde B_r)
  .
\end{equation}
In terms of $\varphi$ rather than $\tilde\varphi$ we will state this as
\begin{equation} 
\label{e:ito}
  df(s,\varphi_s)
  = \ddp{f}{s}(s,\varphi_s) - \LL_s f(s,\varphi_s) + (\nabla f(s,\varphi_s),\sqrt{\dot C_s}dB_s),
\end{equation}
where the left-hand side is interpreted as follows: with $s=t-r$,
\begin{align}
  -d_sf(s,\varphi_s)
  &= d_rf(t-r,\tilde\varphi_{r})
    \nnb
  &=
    -\ddp{f}{s}(t-r,\tilde\varphi_r)
    + \LL_{t-r} f(t-r,\tilde \varphi_r)
    + (\nabla f(t-r,\tilde \varphi_r), \sqrt{\dot C_{t-r}} \, d\tilde B_r)
    \nnb
  &=
    -\ddp{f}{s}(s,\varphi_s)
   + \LL_{s} f(s,\varphi_s)
    - (\nabla f(s,\varphi_s), \sqrt{\dot C_{s}} \, dB_s).
\end{align}
In particular, if $f$ is smooth and bounded and satisfies $(\partial_s-\LL_s)f = 0$ then
\begin{equation} \label{e:ito-expectation}
  \E_{\varphi_t=\varphi}[f(s,\varphi_s)] = f(t,\varphi).
\end{equation}

\begin{proof}[Proof of Proposition~\ref{prop:stochastic2}]
It is enough to prove the claim for bounded smooth $F$ and then extend it by density.
The claim follows from \eqref{e:ito-expectation}
with $f(t,\varphi)=\PP_{s,t}F(\varphi)$ which gives
\begin{equation}
  \E_{\varphi_t=\varphi}[F(\varphi_s)]
  = \E_{\varphi_t=\varphi}[\PP_{s,s}F(\varphi_s)]
  = \E_{\varphi_t=\varphi}[f(s,\varphi_s)] = f(t,\varphi) = \PP_{s,t}F(\varphi).
\end{equation}
This proves \eqref{e:stochastic2}.
For the last statement,
recall from \eqref{e:polchinski-semigroup} that 
$\E_{\nu_s} \qa { F } = \E_{\nu_t} \qa{ \PP_{s,t} F } = \E_{\nu_t} \qa{ F(\varphi_s) }$.
This characterises the distribution at time $s$ of the process \eqref{e:stochastic}.
\end{proof}

Finally, we will consider below an analogue of the backward in time SDE~\eqref{e:stochastic} started at time $t=+\infty$, 
see~\eqref{e:coupling}. 
Equation~\eqref{e:coupling} can analogously be interpreted by reversing time as follows.
Fix any smooth \emph{time-reversing} reparametrisation $a: [0,+\infty] \to [0,+\infty]$.
For simplicity, one can choose $a(t)=1/t$ with $a(0)=+\infty$ and $a(+\infty)=0$.
As in Remark~\ref{rk:LSI-mon2}, set
\begin{equation}
  \tilde C_t = C_{a(t)},
  \qquad
  \tilde V_t = V_{a(t)},
\end{equation}
and also
\begin{equation}
  \tilde \varphi_t = \varphi_{a(t)},
  \qquad
  \tilde B_t=B_{a(t)}.
\end{equation}
Analogously to   \eqref{e:stochastic-forwards-bis},
the solution \eqref{e:coupling} can then be interpreted as $\varphi_t=\tilde \varphi_{a(t)}$ where $\tilde\varphi$
is the solution to the standard SDE:
\begin{equation} \label{e:coupling-reversed}
  d\tilde\varphi_t = \dot C_{a(t)} \dot a(t) \nabla V_{a(t)}(\tilde \varphi_t) \, dt + \sqrt{\dot C_{a(t)}} \, d\tilde B_t, \qquad \tilde\varphi_0=0.
\end{equation}
More generally, as in Remark~\ref{rk:LSI-mon2},  the SDEs~\eqref{e:stochastic}--\eqref{e:coupling} are invariant under reparametrisation, 
thus $[0,+\infty]$ has no special significance and we could have used $[0,1]$ from the beginning instead.

\medskip

We prefer to consider the backward in time evolution corresponding to $\varphi$ (rather than the forward SDE for $\tilde \varphi$) to comply with the
convention that the renormalised potential $V_t$ evolves forward in time according to the Polchinski equation.
From the stochastic analysis point of view, on the other hand, this convention of a stochastic process running backwards in time is less standard and 
related literature which focuses on the SDE rather than the renormalised potential, as we do,
thus uses the opposite convention (see Sections~\ref{sec:stochloc} and~\ref{sec:Foellmer}).

\subsection{Example: Log-Sobolev inequality by coupling}
\label{sec: Example: log-Sobolev inequality by coupling}

Using the representation \eqref{e:stochastic}-\eqref{e:stochastic2}
of the semigroup $\PP_{s,t}$ in terms of the above stochastic process,
one can alternatively prove Theorem~\ref{thm:LSI-mon} using synchronous coupling by adapting the proof from \cite{MR3330820}
for the Bakry--\'Emery theorem.

\begin{proof}[Proof of Theorem~\ref{thm:LSI-mon}]
  Given $t>0$,
  define $(\varphi_s)_{s\leq t}$ and $(\varphi'_s)_{s\leq t}$ as in \eqref{e:stochastic} coupled using the same Brownian motions.
   Then for $s<t$,
   \begin{align}
     e^{-2\lambda_t}  &(\varphi_t -\varphi_t')_{\dot C_t^{-1}}^2
     -
     e^{-2\lambda_s}  (\varphi_s -\varphi_s')_{\dot C_s^{-1}}^2
     \\
     &\qquad
     = \int_s^{t}\bigg[ e^{-2\lambda_u} \Big(-2\dot \lambda_u (\varphi_u -\varphi_u')_{\dot C_{u}^{-1}}^2 - (\varphi_u-\varphi_u')_{\dot C_{u}^{-1}\ddot C_u \dot C_u^{-1}}^2 
     \nnb
     &\hspace{4cm}+ 
     2(\dot C_{u}(\nabla V_{u}-\nabla V_u'), \dot C_{u}^{-1}(\varphi_u-\varphi_u') \Big) \bigg]\, du 
     \ \geq 0,
       \nonumber
   \end{align}
   where the inequality follows from the assumption \eqref{e:assCt-mon} and the mean value theorem.
   Thus
   \begin{equation}
     (\varphi_0 -\varphi_0')_{\dot C_0^{-1}}^2 = e^{-2\lambda_0}(\varphi_0 -\varphi_0')_{\dot C_0^{-1}}^2 \leq  e^{-2\lambda_t} (\varphi_t -\varphi_t')_{\dot C_t^{-1}}^2
   \end{equation}
    with $\varphi_t=\varphi$ and $\varphi_t'=\varphi'$ (the dynamics runs backwards) and the mean value theorem gives
   \begin{align}
      |\PP_{0,t}F(\varphi)-\PP_{0,t}F(\varphi')|
     &    = \big|\E \big[(\nabla F(\psi_0), \varphi_0-\varphi_0')\big]\big|
     \nnb
     &    = 2  \big|\E \Big[\big(\nabla \sqrt{F(\psi_0)}, \sqrt{F(\psi_0)} (\varphi_0-\varphi_0') \big)\Big]\big| 
       \nnb
     & \leq 2  \E \Big[ (\nabla \sqrt{F(\psi_0)})_{\dot C_0}^2 \Big]^{1/2} \; \E \Big[F(\psi_0) (\varphi_0-\varphi_0')_{\dot C_0^{-1}}^2 \Big]^{1/2}\nnb
     & \leq 2 e^{-\lambda_t}   \E \Big[ (\nabla \sqrt{F(\psi_0)})_{\dot C_0}^2 \Big]^{1/2}
       \; \E \big[F(\psi_0) \big]^{1/2} \; 
       \sqrt{(\varphi-\varphi')_{\dot C_t^{-1}}^2} 
   \end{align}
    for some $\psi_0$ between $\varphi$ and $\varphi'$.
    Taking $\varphi-\varphi' = \sqrt{\dot C_t}f$ with $|f|_2 \to 0$ gives
   \begin{equation}
     (\nabla \sqrt{ \PP_{0,t}F })_{\dot C_t}^2
     \leq e^{-2\lambda_t} \PP_{0,t}(\nabla \sqrt F)_{\dot C_0}^2.
   \end{equation}
   This is   \eqref{eq: exp decay exchanged gradient}.
 \end{proof}

\subsection{Example: Coupling with the Gaussian reference measure}
\label{sec:coupling}

Since, by \eqref{e:nu-def-bis},
\begin{equation}
  \E_{\nu_t}[F] = \PP_{t,\infty}F(0),
\end{equation}
one can obtain the following coupling of the field distributed under the measure $\nu_t$ with that of the associated driving  Gaussian field
from the stochastic realisation of $\PP_{t,\infty}$.

\begin{corollary} \label{cor:coupling}
  The distribution of $\nu_t$ is realised by the solution to the SDE
  (which we recall can be interpreted as discussed around \eqref{e:coupling-reversed}):
  \begin{align} \label{e:coupling}
    \varphi_t
    &= - \int_t^\infty \dot C_u \nabla V_u(\varphi_u) \, du + \int_t^\infty \sqrt{\dot C_u} \, dB_u
      \nnb
    &= -\int_t^\infty \dot C_u \nabla V_u(\varphi_u) \, du + \Gamma_t 
    .
  \end{align}
  In particular, at $t=0$, this provides a coupling of the full interacting field $\varphi_0$ with the Gaussian reference field $\Gamma_0$.
\end{corollary}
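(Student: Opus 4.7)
The natural strategy is to apply Proposition~\ref{prop:stochastic2} at a finite horizon $T>t$ and then pass to the limit $T\to\infty$. The starting identity is the definition \eqref{e:nu-def-bis}, which gives
\begin{equation*}
\E_{\nu_t}[F] \;=\; \PP_{t,\infty}F(0) \;=\; \lim_{T\to\infty} \PP_{t,T}F(0),
\end{equation*}
where the limit is justified either by monotone/dominated convergence directly from the explicit Gaussian expression in \eqref{e:nu-def-bis}, or more abstractly via the continuity assumption \eqref{e:continuity} combined with the semigroup property.

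Next, for each fixed $T>t$, apply Proposition~\ref{prop:stochastic2} with lower endpoint $s=t$, upper endpoint $T$, and terminal data $\varphi=0$. This yields $\PP_{t,T}F(0)=\E[F(\varphi^{(T)}_t)]$, where $(\varphi^{(T)}_s)_{s\in[t,T]}$ solves the backward SDE \eqref{e:stochastic} on $[t,T]$ with $\varphi^{(T)}_T=0$, i.e.,
\begin{equation*}
\varphi^{(T)}_s \;=\; -\int_s^T \dot C_u\,\nabla V_u(\varphi^{(T)}_u)\,du \;+\; \int_s^T \sqrt{\dot C_u}\,dB_u,
\end{equation*}
with the usual interpretation as a standard forward SDE for $\tilde\varphi_r=\varphi^{(T)}_{T-r}$ as in \eqref{e:stochastic-forwards-bis}. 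Coupling the equations for different $T$ through a common Brownian motion $B$, one then shows that $\varphi^{(T)}_t$ converges in probability, as $T\to\infty$, to the solution $\varphi_t$ of \eqref{e:coupling}; passing to the limit in expectation gives $\E_{\nu_t}[F]=\E[F(\varphi_t)]$ for every bounded continuous $F$, which is the claim. For the final assertion at $t=0$, the stochastic integral $\Gamma_0=\int_0^\infty \sqrt{\dot C_u}\,dB_u$ is a centred Gaussian with covariance $\int_0^\infty \dot C_u\,du = C_\infty$, hence distributed according to the reference Gaussian measure $\Pg_{C_\infty}$ underlying \eqref{e:nu0-Cinfty}, so \eqref{e:coupling} exhibits $\varphi_0\sim\nu_0$ as an explicit pathwise modification of $\Gamma_0$.

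The main obstacle is the $T\to\infty$ limit of the backward SDE, since a priori the drift $\dot C_u\nabla V_u(\varphi^{(T)}_u)$ must be integrable at $u=\infty$ and the solution must have a well-defined limit. The cleanest way to dispose of this is the time-reversing reparametrisation $a:[0,+\infty]\to[0,+\infty]$ (for instance $a(t)=1/t$) discussed around \eqref{e:coupling-reversed}: under this change of variable, the backward SDE on $[t,\infty)$ becomes a genuine forward SDE on a compact interval, with initial condition $\tilde\varphi_0=0$, driven by a rescaled Brownian motion and coefficients $\dot C_{a(\cdot)}\dot a(\cdot)$, $\sqrt{\dot C_{a(\cdot)}}$ that are well-defined (and in the applications of interest, regular) up to and including the endpoint corresponding to $u=\infty$. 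Existence and uniqueness for this reparametrised equation is then standard, and its solution map from the driving Brownian motion to $\tilde\varphi_{a^{-1}(t)}=\varphi_t$ is continuous; the identification in law with $\nu_t$ follows by continuity from the finite-horizon identity established above.
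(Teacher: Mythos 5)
Your proof is correct and follows essentially the same route as the paper, which deduces the corollary directly from the identity $\E_{\nu_t}[F]=\PP_{t,\infty}F(0)$ of~\eqref{e:nu-def-bis} together with the pathwise representation of the Polchinski semigroup from Proposition~\ref{prop:stochastic2}, interpreting the backward SDE started at $t=+\infty$ via the time-reversing reparametrisation around~\eqref{e:coupling-reversed}. What you add is a careful accounting of the passage from finite horizons $T$ to $T=\infty$ (via coupling through a common Brownian motion and the reparametrisation to a forward SDE on a compact interval) and the identification $\Gamma_0\sim\Pg_{C_\infty}$; the paper treats these as implicit, but your elaboration is consistent with its intent and contains no gaps.
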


As an application of the above coupling, one can relate properties of the Gaussian measure to the interacting one,
see for example \cite{MR4399156,MR4665719}.

\subsection{Renormalised potential and martingales}
The stochastic process \eqref{e:stochastic} can also be used to obtain a representation of the renormalised potential as follows.
These are stochastic interpretations of the formulas in Lemma~\ref{lem:dVdH}.

\begin{proposition} \label{prop:V-mart}
\begin{align} \label{e:V-mart}
  V_t(\varphi)
  &= \E_{\varphi_t=\varphi}\qa{V_0(\varphi_0) + \frac12 \int_0^t (\nabla V_s(\varphi_s))^2_{\dot C_s} \, ds}
    \nnb
  &= \E_{\varphi_t=\varphi}\qa{V_0\Big(\varphi - \int_0^t \dot C_s \nabla V_s(\varphi_s)\, ds + \int_0^t \sqrt{\dot C_s} \, dB_s\Big) + \frac12 \int_0^t (\nabla V_s(\varphi_s))^2_{\dot C_s} \, ds}
\end{align}
and $M_s = V_s(\varphi_s) + \frac12 \int_s^t (\nabla V_u(\varphi_u))^2_{\dot C_u} \, du$ is a martingale (with respect to the backward filtration).
\end{proposition}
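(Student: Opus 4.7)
The plan is to apply It\^o's formula \eqref{e:ito} to the function $f(s,\varphi) = V_s(\varphi)$ along the backward SDE \eqref{e:stochastic}, recognising the drift through the Polchinski equation. First compute $(\partial_s - \LL_s)V_s$: the Polchinski equation \eqref{e:polchinski-bis} gives
\begin{equation}
\partial_s V_s = \tfrac12 \Delta_{\dot C_s} V_s - \tfrac12 (\nabla V_s)^2_{\dot C_s},
\end{equation}
while the definition \eqref{e:polchinski-L} of the generator applied to $V_s$ itself gives $\LL_s V_s = \tfrac12 \Delta_{\dot C_s} V_s - (\nabla V_s)^2_{\dot C_s}$. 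The Laplacian contributions cancel, leaving the clean identity $(\partial_s - \LL_s) V_s = \tfrac12 (\nabla V_s)^2_{\dot C_s}$.

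Substituting into \eqref{e:ito}, one obtains
\begin{equation}
dV_s(\varphi_s) = \tfrac12 (\nabla V_s(\varphi_s))^2_{\dot C_s} \, ds + (\nabla V_s(\varphi_s), \sqrt{\dot C_s}\, dB_s),
\end{equation}
so that $dM_s = dV_s(\varphi_s) - \tfrac12 (\nabla V_s(\varphi_s))^2_{\dot C_s}\, ds = (\nabla V_s(\varphi_s), \sqrt{\dot C_s}\, dB_s)$. The drift has been absorbed into the definition of $M_s$, and the remainder is a (backward) stochastic integral. To verify rigorously that $M_s$ is a martingale with respect to the backward filtration, pass to the forward reparametrisation $\tilde\varphi_r := \varphi_{t-r}$ solving \eqref{e:stochastic-forwards-bis}; then $\tilde M_r := M_{t-r}$ is shown by the standard forward It\^o formula applied to $\tilde f(r,\varphi) := V_{t-r}(\varphi)$ to equal $V_t(\varphi_t)$ plus a standard It\^o integral against $d\tilde B_r$, which is a martingale in the forward filtration of $\tilde B$.

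Taking expectations in the martingale identity $M_t = \E_{\varphi_t=\varphi}[M_0]$ yields
\begin{equation}
V_t(\varphi) = \E_{\varphi_t=\varphi}\qa{V_0(\varphi_0) + \tfrac12 \int_0^t (\nabla V_s(\varphi_s))^2_{\dot C_s}\, ds},
\end{equation}
which is the first line of \eqref{e:V-mart}. The second line then follows by writing $\varphi_0$ explicitly via the backward SDE \eqref{e:stochastic} at $s=0$ with $\varphi_t = \varphi$ and substituting.

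The only genuine subtlety is the bookkeeping of the backward time direction, which is handled cleanly (and in a way already spelled out in the preceding subsection) by the reparametrisation to $\tilde\varphi_r$; everything else is a direct application of It\^o combined with the Polchinski equation. No additional regularity assumptions beyond those implicit in the construction of $V_t$ are needed, apart from ensuring that the stochastic integral is a true martingale rather than merely a local martingale, which in the setting of the paper (where $V_0$ is bounded below and $\dot C_t$ is controlled) follows from standard moment estimates on $\nabla V_s(\varphi_s)$ along the flow.
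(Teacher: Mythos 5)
Your argument is essentially the paper's own: compute $d M_s$ via the It\^o formula \eqref{e:ito} for $f(s,\varphi)=V_s(\varphi)$, observe that $(\partial_s - \LL_s)V_s = \tfrac12(\nabla V_s)_{\dot C_s}^2$ by comparing the Polchinski equation with $\LL_s V_s = \tfrac12\Delta_{\dot C_s}V_s - (\nabla V_s)_{\dot C_s}^2$, and note that this drift is exactly cancelled by the $-\tfrac12(\nabla V_s)_{\dot C_s}^2\,ds$ contribution from the integral term in $M_s$. The added remarks on checking that the local martingale is a true martingale and on the forward reparametrisation are fine but not substantively different from what the paper implicitly relies on.
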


\begin{proof}
It suffices to show that $M_s$ is a martingale.
By It\^o's formula interpreted as in \eqref{e:ito},
\begin{align}
  dM_s
  &= (\ddp{V_s}{s})(\varphi_s)\, ds
    -\LL_s V_s(\varphi_s) \,ds - \frac12 (\nabla V_s)_{\dot C_s}^2\, ds + \text{martingale}
    \nnb
  &= (\ddp{V_s}{s})(\varphi_s)\, ds
    - \frac12 \Delta_{\dot C_s} V_s(\varphi_s)\, ds + \frac12 (\nabla V_s)_{\dot C_s}^2\, ds  + \text{martingale}.
\end{align}
By Polchinski's equation for $V_s$, the right-hand side is a martingale.
\end{proof}

The gradient and Hessian of the renormalised potential have similar representations.
\begin{proposition}
\begin{equation} \label{e:gradV-stoch}
  \nabla V_t(\varphi) = \PP_{0,t}[\nabla V_0](\varphi) = \E_{\varphi_t=\varphi}[\nabla V_0(\varphi_0)]
\end{equation}
and $M_s = \nabla V_s(\varphi_s)$
is a martingale (always with respect to the backwards filtration). Moreover, 
\begin{equation} \label{e:HessV-stoch}
  \He V_t(\varphi) = \E_{\varphi_t=\varphi}\qa{\He V_0(\varphi_0) - \int_0^t \He V_s(\varphi_s)\dot C_s\He V_s(\varphi_s)\, ds}
\end{equation}
and $M_s = \He V_s(\varphi_s)- \int_s^t \He V_u(\varphi_u)\dot C_u \He V_u(\varphi_u)\, du$ is a martingale.
\end{proposition}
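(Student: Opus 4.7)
The plan is to mirror the derivation in Proposition~\ref{prop:V-mart}: apply the backward It\^o formula~\eqref{e:ito} to $\nabla V_s(\varphi_s)$ and $\He V_s(\varphi_s)$ along the Polchinski characteristics~\eqref{e:stochastic}, identify the drift from Lemma~\ref{lem:dVdH}, and then read off the martingale and its expectation. The two ODE identities $\partial_t \nabla V_t = \LL_t \nabla V_t$ and $\partial_t \He V_t = \LL_t \He V_t - \He V_t \dot C_t \He V_t$ from \eqref{e:dVdH} do all the work.

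For the gradient, I would apply \eqref{e:ito} componentwise to $f(s,\varphi)=(\nabla V_s(\varphi))_i$. The drift is
\[
  (\partial_s - \LL_s)\nabla V_s = 0,
\]
exactly the first identity of \eqref{e:dVdH}, so $M_s=\nabla V_s(\varphi_s)$ is a backward martingale with respect to the filtration of $(\varphi_u)_{u\leq t}$. Evaluating at the endpoints and using the stochastic representation \eqref{e:stochastic2} gives
\[
  \nabla V_t(\varphi) = \E_{\varphi_t=\varphi}[M_t] = \E_{\varphi_t=\varphi}[M_0] = \E_{\varphi_t=\varphi}[\nabla V_0(\varphi_0)] = \PP_{0,t}[\nabla V_0](\varphi),
\]
which coincides with the identity \eqref{e:PtdV} for $s=0$ from Lemma~\ref{lem:dVdH}.

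For the Hessian, the same It\^o expansion applied to $\He V_s(\varphi_s)$ yields
\[
  d\He V_s(\varphi_s) = \bigl[(\partial_s - \LL_s)\He V_s\bigr](\varphi_s)\, ds + (\text{martingale}) = -\He V_s(\varphi_s)\,\dot C_s\, \He V_s(\varphi_s)\, ds + (\text{martingale}),
\]
by the second identity in \eqref{e:dVdH}. Adding the compensator $-\int_s^t \He V_u(\varphi_u)\dot C_u \He V_u(\varphi_u)\, du$, whose $s$-differential is $+\He V_s \dot C_s \He V_s\, ds$ (because the integral decreases as $s$ grows toward $t$), cancels the drift and shows that $M_s = \He V_s(\varphi_s) - \int_s^t \He V_u(\varphi_u)\dot C_u \He V_u(\varphi_u)\, du$ is a martingale. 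The representation \eqref{e:HessV-stoch} then follows by equating $\E[M_t]=\He V_t(\varphi)$ (the integral being empty at $s=t$) with $\E[M_0]$.

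The only real obstacle is justifying the It\^o formula rigorously: one needs that $V_t$ is smooth enough and that $\nabla V_s(\varphi_s)$, $\He V_s(\varphi_s)$ and $\He V_s \dot C_s \He V_s$ have enough integrability along the process so that the stochastic integrals are true martingales (not merely local ones) and the optional stopping at $s=0$ and $s=t$ is valid. Under the standing hypothesis $V_0\in C^2$ with $V_0$ bounded below (as in Proposition~\ref{prop:polchinski2}) and assuming the mild polynomial growth one typically has on the derivatives of $V_t$, this follows by the usual localisation argument as in Proposition~\ref{prop:V-mart}. Apart from this technical point, once the correct drift from Lemma~\ref{lem:dVdH} is substituted into the backward It\^o formula, both statements are immediate.
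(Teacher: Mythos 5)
Your argument is correct and follows the paper's own route essentially verbatim: backward It\^o formula \eqref{e:ito} applied to $\nabla V_s(\varphi_s)$ and $\He V_s(\varphi_s)$, combined with the two evolution identities of Lemma~\ref{lem:dVdH}, which makes the drift vanish in the gradient case and produces the compensator $-H_s\dot C_s H_s\,ds$ in the Hessian case. The added remark about integrability/localisation to promote local martingales to true ones is a reasonable elaboration of what the paper leaves implicit but does not change the argument.
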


\begin{proof}
Again, by It\^o's formula \eqref{e:ito} and since $U_s=\nabla V_s$ satisfies $\partial_s U_s = \LL_s U_s$ by  \eqref{e:dVdH},
\begin{equation}
  dU_s(\varphi_s)
  = \ddp{U_s}{t}(\varphi_s) \, ds - \LL_s U_s(\varphi_s) \, ds + (\nabla U_s, \sqrt{\dot C_s}\, dB_s) = (\nabla U_s, \sqrt{\dot C_s}\, dB_s).
\end{equation}
Thus $U_t$ is a martingale, and the expression for its expectation also follows.
Similarly, $H_s=\He V_s$ satisfies $\partial_s H_s = \LL_s H_s-H_s\dot C_sH_s$ by  \eqref{e:dVdH}, and therefore
\begin{equation}
  dH_s(\varphi_s)
  = -H_s(\varphi_s)\dot C_s H_s(\varphi_s) \, ds + (\nabla H_s, \sqrt{\dot C_s}\, dB_s)
\end{equation}
so that $H_s(\varphi_s)-\int_s^t \He V_u(\varphi_u)\dot C_u \He V_u(\varphi_u)\, du$ is a martingale.
\end{proof}

\begin{figure}[h] 
\centering
\includegraphics[width=3in]{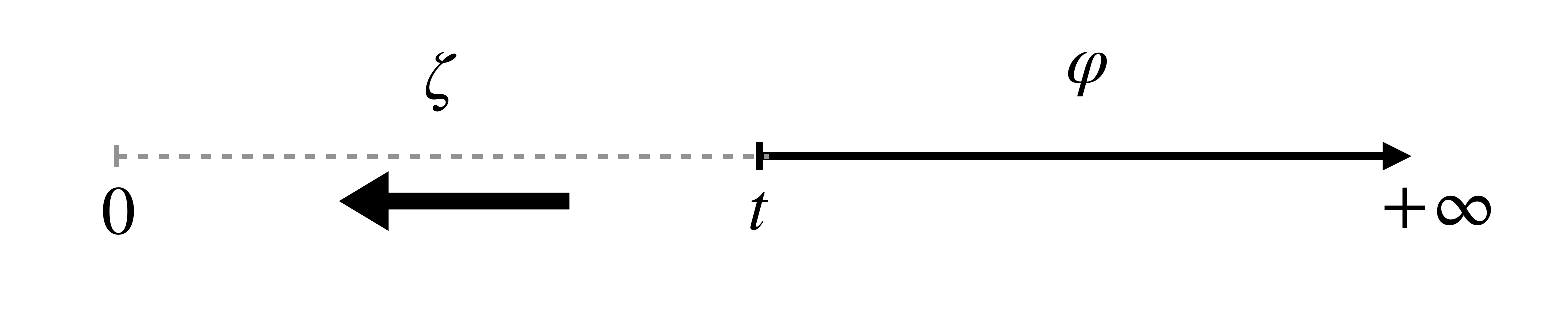}
\caption{
In the renormalisation group approach, the small scales $\zeta$ are averaged out and one considers the projection of the measure to the variables $\varphi$ encoding the large scales; in the figure above, the Polchinski flow goes from $0$ to $+\infty$. 
In fact, $\zeta$ and $\varphi$ play symmetric roles: in particular for $t=0$ the original measure is coded by $\varphi$, while instead for $t =+\infty$ the  original measure is coded by $\zeta$. Stochastic localisation puts the emphasis on the variable  $\zeta$ and therefore flows in the opposite direction (depicted by the thick arrow). \label{fig:direction-flow}
}
\end{figure}

\subsection{Stochastic localisation perspective}
\label{sec:stochloc}

The stochastic evolution \eqref{e:stochastic} has so far been interpreted  as the characteristics associated with the Polchinski equation~\eqref{e:polchinski-bis}.
In this section, we are going to see that this stochastic process is also, after a suitable change of parametrisation, the flow of the \emph{stochastic localisation}, introduced by Eldan. We refer to \cite{Eldan_ICM} for a survey on this method and its numerous applications in general, and to \cite{2203.04163} for more specific developments on modified log-Sobolev inequalities.
The relation between stochastic localisation and a semigroup approach was already pointed out in \cite{2107.09496}.

From Lemma~\ref{lem:fluctuationmeasure}, we recall that
the gradient and Hessian of the renormalised potential $V_t$ can be interpreted as a mean and covariance of the fluctuation measure
$\mu_t^\varphi$ defined in \eqref{e:fluctuationmeasure-def} by
\begin{equation} \label{e:fluctuationmeasure}
  \PP_{0,t}F(\varphi) = \E_{\mu_t^\varphi}[F].
\end{equation}
The measure $\mu_t^\varphi$ is related to $\mu_t^0$ by the exponential tilt $e^{(C_t^{-1}\varphi,\zeta)}$, i.e.,
by the external field $C_t^{-1}\varphi$.
In particular, by Lemma~\ref{lem:fluctuationmeasure}, the gradient of $V_t$ can be written as
\begin{equation}
  \nabla V_t(\varphi) = \E_{\mu_{t}^{\varphi}} [C_t^{-1}(\varphi-\zeta)]
  = C_t^{-1}(\varphi-\E_{\mu_{t}^{\varphi}}[\zeta])
\end{equation}
where $\E_{\mu_{t}^{\varphi}}[\zeta] \in X$ is the mean of $\mu_t^\varphi$.
The stochastic representation \eqref{e:stochastic} can therefore be written
in terms of the fluctuation measure instead of the renormalised potential.
Indeed, let
\begin{equation}
\label{eq: def dot Sigma}
h_t= C_t^{-1}\varphi_t, \qquad
\mu_t = \mu_t^{\varphi_t} = \mu_t^{C_t h_t}, \qquad
\dot\Sigma_t =-\ddp{}{t}C_t^{-1} = C_t^{-1}\dot C_t C_t^{-1}.
\end{equation}
Since
\begin{equation}
  \dot C_t \nabla V_t(\varphi_t) = \dot C_t C_t^{-1}(\varphi-\E_{\mu_{t}^{\varphi}}[\zeta])
   = C_t \dot\Sigma_t(\varphi-\E_{\mu_{t}^{\varphi}}[\zeta]),
\end{equation}
the external field $h_t=C_t^{-1}\varphi_t$ satisfies the following SDE equivalent to \eqref{e:coupling}:
By the It\^o formula \eqref{e:ito} with $f(t,\varphi_t) = C_t^{-1}\varphi_t$,
\begin{align} \label{e:SDE-h}
  h_t
  &= - \int_t^\infty df(u,\varphi_u)\nnb
  &= \int_t^\infty \dot \Sigma_u \varphi_u \, du - \int_t^\infty C_u^{-1}\dot C_u \nabla V_u(\varphi_u) \, du + \int_{t}^\infty C_u^{-1}\dot C_u^{1/2} dB_u\nnb
  &= \int_t^\infty \dot\Sigma_u \E_{\mu_u}[\zeta] \, du + \int_{t}^\infty C_u^{-1}\dot C_u^{1/2} dB_u
    \nnb
  &= \int_t^\infty \dot\Sigma_u \E_{\mu_u}[\zeta] \, du + \int_{t}^\infty \dot\Sigma_u^{1/2} dB_u,
\end{align}
where the last equality holds in distribution in the case that $\dot C_u$ and $C_u^{-1}$ do not commute.

What is known as \emph{stochastic localisation} is the process $(h_t)$ with the direction of time reversed.
Thus in the stochastic localisation perspective, the renormalised potential and measure only play implicit roles, and the main object
of study is the
stochastic process \eqref{e:SDE-h} and the fluctuation measure \eqref{e:fluctuationmeasure}.
For this perspective, it is more convenient to assume that time is parameterised by $[0,T]$ (rather than our previous standard choice $[0,+\infty]$
--- but again everything is reparametrisation invariant, so this is only for notational purposes).
The fluctuation measure $\mu_t = \mu_t^{\varphi_t}$ then ``starts'' at the final time $t=T$ as the full measure
of interest, and as $t$ decreases (time runs backwards) its fluctuations get absorbed into the renormalised measure $\nu_t$
until the fluctuation measure $\mu_t$ ``localises'' to a random Dirac measure $\mu_0 = \delta_{\varphi_0}$ at time $t=0$,
with $\varphi_0$ distributed according to the full measure $\nu_0=\mu_T$. See also Figure~\ref{fig:direction-flow}.

Although time runs backwards from $T$ to $0$ in the stochastic localisation perspective written with our time convention, 
let us change time direction to obtain a forward SDE and connect with the literature on stochastic localisation.
Recalling \eqref{eq: def dot Sigma}, the initial measure $\nu_0 = \mu_T$ coincides with the fluctuation measure 
at time $T$ as $h_T =0$.
As done previously, we will always use tildes to denote change of time:
\begin{equation}
\label{eq: time change}
  \tilde \varphi_t = \varphi_{T-t},
  \quad
  \tilde C_t = C_{T-t},
  \quad \tilde V_t = V_{T-t},
  \quad
  \tilde \mu_t = \mu_{T-t},
  \qquad
  \tilde h_t = h_{T-t},
  \qquad
  \tilde {\dot \Sigma}_t = \dot \Sigma_{T-t}.
\end{equation}
Using the notation ${\bf b}(\mu) = \E_\mu[\zeta]$ for the mean of $\mu$,
the SDE \eqref{e:SDE-h} for $\tilde h$ can then be written as:
\begin{equation}
  d\tilde h_t
  = \tilde{\dot\Sigma}_t {\bf b}(\tilde\mu_t) \, dt +  \tilde {\dot\Sigma}_t^{1/2} d\tilde B_t.
  \label{eq_stochloc}
\end{equation}
This equation is the same as the \emph{stochastic localisation} as it appears for example in \cite[Fact 14]{2203.04163}
(after dropping tildes from the notation and with $y_t$ there corresponding to $\tilde h_t$).

\medskip

The stochastic localisation perspective is different from our renormalisation group perspective
in that the object of interest is (again) the fluctuation measure. 
For example, in the one-variable case $|\Lambda|=1$,
starting from a measure $\tilde\mu_0 (dx)
= \nu_0(dx)  \propto e^{-H(x)}$ (possibly log-concave), the strategy is to make it more convex by considering 
\begin{equation}
\tilde\mu_t (d \zeta) \propto e^{-H(  \zeta ) - \frac{t}{2} \zeta^2 + \tilde h_t  \zeta}\, d\zeta
\end{equation}
with the choice of the process $\tilde h_t$ such that for any test function 
\begin{equation}
\label{eq: constant measure}
\forall t \geq 0: \qquad 
\bbE_{\nu_0} [F] = \bbE \Big[ \bbE_{\tilde\mu_t} ( F) \Big].
\end{equation}
In this one variable example, the fluctuation measure above is the counterpart of \eqref{eq: stricte convexite fluct mesure}
for the choice $C_t  = 1/(1+t)$ with $t$ decreasing  from $+\infty$ to $0$ instead of $C_t = t$ with $t \in [0,1]$.
With this reparametrisation, one gets from \eqref{eq: def dot Sigma} that $\dot\Sigma_t =1$  
so that
\begin{equation}
t \geq 0: \qquad
d \tilde h_t = {\bf b}(\tilde\mu_t) \, dt + d \tilde B_t,
\qquad \text{with} \quad \tilde h_0 =0.
\end{equation}

Starting from a general measure $\tilde\mu_0$, the primary concern in the stochastic localisation perspective is the measure $\tilde\mu_t$ which is now uniformly convex with Hessian at least $t$ (if say $H$ is log-concave), thus general concentration inequalities hold for the twisted measure and can be transferred to $\tilde\mu_0$ thanks to \eqref{eq: constant measure}.  For example, this is a key tool  in current progress on the KLS conjecture, see \cite{Eldan_ICM} for a review.  The larger $t$ is, the better in this respect. 
However, as $t$ grows the twisted measure $\tilde\mu_t (d\zeta)$ loses the features of the original $\tilde\mu_0$ so there is a trade-off in the choice of $t$. 
Contrary to our renormalisation point of view, in the stochastic localisation point of view, the distribution of   $h_t = C_t^{-1} \varphi_t$ (which is given in terms of $\tilde\mu_t$ in~\eqref{eq_stochloc} but can also be written 
in terms of our renormalised measure) does not play an important role (see Figure \ref{fig:direction-flow}).
The process $\tilde h_t$ is there to twist the measure and sometimes if one adds the correct $\dot C_t$ there are preferred directions to add the convexity.

\section{Variational and transport perspectives on the Polchinski flow}
\label{sec:variational-transport}

In this section, we discuss variational and transport-related perspectives on the Polchinski flow.
We refer to \cite{2202.11737} for additional perspectives such as an interpretation in terms of the Otto calculus that we do not discuss here.

\subsection{F\"ollmer's problem}
\label{sec:Foellmer}

By \eqref{e:coupling}, the distribution $\nu_0$ can be realised as the final time distribution $\varphi_0$ of the SDE:
\begin{equation} 
\label{e:coupling-bis}
  \varphi_t
  = - \int_t^\infty \dot C_u \nabla V_u(\varphi_u) \, du + \int_t^\infty \sqrt{\dot C_u} \, dB_u,
\end{equation}
where we recall that the backwards SDE can be interpreted by reversing time as in \eqref{e:coupling-reversed};
as pointed out in Remark \ref{rk:LSI-mon2}, one could have also considered a parametrisation on a bounded time interval.
One can ask whether the distribution $\nu_0$ can be obtained more efficiently if $\nabla V_u(\varphi_u)$
is replaced by another \emph{drift} $U_u(\varphi_u)$, i.e., as the distribution of $\varphi_0^U$ when
$\varphi^U$ is a strong solution of the SDE (again written backward in time):
\begin{equation} 
\label{e:coupling-U}
  \varphi_t^U
  = - \int_t^\infty \dot C_u U_u(\varphi_u^U) \, du + \int_t^\infty \sqrt{\dot C_u} \, dB_u,
\end{equation}
where the parameter $t$ takes values in $[0,+\infty]$ and $\varphi_\infty^U = 0$.
More generally, one could consider non-Markovian adapted processes $U$ and the following remains valid, see e.g. \cite{MR3112438}.

Denote by $\gamma_0 = \Pg_{C_\infty}$ the distribution of the Gaussian reference measure, i.e., of $\int_0^\infty \sqrt{\dot C_u}\, dB_u$.

\begin{theorem}
\label{thme: Follmer general}
The gradient of the renormalised potential $V_t$ of the Polchinski flow \eqref{e:V-def}  can be interpreted as the optimal drift in \eqref{e:coupling-U} in the following sense:
\begin{equation}
\label{eq: Follmer twisted entropy}
\bbH ( \nu_0 | \gamma_0)
= \frac{1}{2} \bbE \qa{ \int_0^\infty   | \nabla   V_t ( \varphi_t) |_{  {\dot C}_t}^2\,  dt  }
\leq  \; \frac{1}{2} \bbE \qa{ \int_0^\infty |  U_t( \varphi_t^U) |_{  {\dot C}_t}^2 \, dt },
\end{equation}
for any drift $U$ such that \eqref{e:coupling-U} has a strong solution with $\varphi_0\sim \nu_0$.
Recall that $(\varphi_t)$ follows \eqref{e:coupling-bis}.
\end{theorem}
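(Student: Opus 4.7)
The plan is to prove the two halves of~\eqref{eq: Follmer twisted entropy} by complementary means: the equality for the Polchinski drift $\nabla V$ via the martingale identity in Proposition~\ref{prop:V-mart}, and the lower bound for a general admissible drift $U$ via Girsanov's theorem combined with the data processing inequality for relative entropy.

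For the equality, I would take expectation in Proposition~\ref{prop:V-mart}: for each $t>0$, the process $M_s = V_s(\varphi_s) + \tfrac12 \int_s^t |\nabla V_u(\varphi_u)|_{\dot C_u}^2\, du$ is a backward martingale along the Polchinski process $(\varphi_u)$. Equating $\bbE[M_0]$ with $\bbE[M_t]$ and sending $t\to\infty$, using that $\varphi_\infty=0$ and therefore $\bbE[V_t(\varphi_t)]\to V_\infty(0)$, gives
\begin{equation*}
  \bbE[V_0(\varphi_0)] + \frac12 \bbE\qa{\int_0^\infty |\nabla V_u(\varphi_u)|_{\dot C_u}^2\, du} = V_\infty(0).
\end{equation*}
By Corollary~\ref{cor:coupling}, $\varphi_0$ has law $\nu_0$, so $\bbE[V_0(\varphi_0)] = \E_{\nu_0}[V_0]$. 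Since $V_\infty(0) = -\log \Eg_{C_\infty}[e^{-V_0}]$ by \eqref{e:V-def}, the definition \eqref{e:nu0-Cinfty} yields $\tfrac{d\nu_0}{d\gamma_0}(\varphi) = e^{-V_0(\varphi)+V_\infty(0)}$, and consequently $\bbH(\nu_0|\gamma_0) = -\E_{\nu_0}[V_0] + V_\infty(0)$. Combining the two identities yields the equality in~\eqref{eq: Follmer twisted entropy}.

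For the lower bound, I would first reverse time (or equivalently reparametrise to a bounded interval as in Remark~\ref{rk:LSI-mon2}) so that~\eqref{e:coupling-U} becomes a standard forward SDE with drift $\dot C\tilde U$ and diffusion $\sqrt{\dot C}$. Let $\bbP$ denote the path law when the drift vanishes (so that the endpoint has law $\gamma_0$) and $\bbQ^U$ the path law with drift $\dot C\tilde U$ (so that the endpoint has law $\nu_0$ by hypothesis). Girsanov's theorem then gives
\begin{equation*}
  \bbH(\bbQ^U | \bbP) = \frac12 \bbE\qa{\int_0^\infty |U_t(\varphi^U_t)|^2_{\dot C_t}\, dt},
\end{equation*}
since the drift $\dot CU$ satisfies $|\dot C U|_{\dot C^{-1}}^2 = |U|_{\dot C}^2$ and the Girsanov stochastic integral has zero mean under $\bbQ^U$. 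Applying the data processing inequality for relative entropy to the endpoint projection gives $\bbH(\nu_0|\gamma_0) \leq \bbH(\bbQ^U|\bbP)$, which is the desired inequality. As an aside, this formulation explains why the Polchinski drift is sharp: by Proposition~\ref{prop:V-mart}, $\log d\bbQ^{\nabla V}/d\bbP$ is a function of the endpoint alone, equal to $V_\infty(0)-V_0(\varphi_0) = \log\tfrac{d\nu_0}{d\gamma_0}(\varphi_0)$, which saturates data processing.

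The main technical obstacle is the justification of the $t\to\infty$ limit in the martingale identity and of Girsanov's theorem on an unbounded interval; both are sidestepped by reparametrising to a bounded time interval, after which standard growth conditions on $V_0$ (ensuring Novikov's condition, say) close the gap.
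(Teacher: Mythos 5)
Your proof is correct, and on the equality half it coincides with the paper's argument: Proposition~\ref{prop:V-mart} is established by exactly the same It\^o computation combined with the Polchinski equation, here specialised to the drift $\nabla V$, so taking expectations and sending $t\to\infty$ is a repackaging of the paper's calculation of $\int_0^\infty \partial_t \bbE[V_t(\varphi_t^{\nabla V})]\,dt$. For the inequality you diverge from the paper. The paper runs the same It\^o-plus-Polchinski computation for a \emph{general} admissible drift $U$, not just $\nabla V$, and obtains the exact ``complete-the-square'' identity
\begin{equation*}
\frac12 \bbE \qa{ \int_0^\infty  | U_t|_{\dot{C}_t}^2 \, dt }  = \bbH ( \nu_0|\gamma_0) + \frac12 \bbE \qa{ \int_0^\infty  \big| \nabla V_t  - U_t  \big|^2_{\dot{C}_t} \, dt },
\end{equation*}
which yields the inequality and the case of equality in one stroke, and moreover shows by how much a suboptimal drift misses the entropy. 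Your route instead invokes Girsanov's theorem to identify $\frac12\bbE[\int|U_t|^2_{\dot C_t}\,dt]$ with the path-space entropy $\bbH(\bbQ^U|\bbP)$ and then applies the data processing inequality to the endpoint marginal. This is a valid and conceptually transparent alternative, and is in fact exactly the viewpoint the paper describes in the \emph{remark following} the theorem (the Proposition identifying $\bbH(\bbQ|\bbP)$ via Girsanov, and the observation that $\bbH(\bbQ|\bbP)\geq \bbH(\nu_0|\gamma_0)$ since $\nu_0,\gamma_0$ are endpoint marginals). What the paper's direct It\^o computation buys is self-containedness (no Girsanov and hence no Novikov-type integrability to verify) and a quantitative excess term; what your Girsanov/DPI route buys is the clean information-theoretic explanation of optimality you already note, namely that $U=\nabla V$ is optimal because the Radon--Nikodym derivative on path space collapses to a function of the endpoint, saturating data processing.
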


\begin{proof}
Let $U$ be a such that there is a strong solution $(\varphi_t^U)$ of  \eqref{e:coupling-U} 
with $\bbE \big[ \int_0^\infty |U_t( \varphi_t^U)|_{  {\dot C}_t}^2 \, dt \big] < \infty$.
By construction $\varphi_0^U$ has law $\nu_0$ so that the relative entropy is given by 
\begin{equation}
\bbH ( \nu_0 | \gamma_0)
= V_\infty (0) - \bbE [ V_0 (  \varphi_0^U) ]
=  \int_0^\infty dt \; \frac{\partial}{\partial t} \bbE [ V_t ( \varphi_t^U ) ],
\end{equation}
with $\varphi^U$ evolving according to \eqref{e:coupling-U}, and
where we used that 
$\nu_0(d\varphi) = e^{+V_\infty(0)} e^{-V_0(\varphi)} \gamma_0(d\varphi)$
with normalisation factor given by 
$e^{-   V_\infty (0)} =  \Eg_{C_\infty} \q{e^{- V_0(\zeta)}}$ as in \eqref{e:nu-def-bis}.
The renormalised potential follows the  Polchinski equation \eqref{e:polchinski-bis}:
\begin{equation}
\label{e:polchinski-bis upside down}
t \in (0,\infty), \qquad 
\ddp{}{t}   V_t =  \frac12 \Delta_{  {\dot C}_t}   V_t - \frac12 (\nabla   V_t)_{  {\dot C}_t}^2.
\end{equation}
Therefore, by It\^o's formula,
\begin{align}
\frac{\partial}{\partial t} \bbE [    V_t (  \varphi_t^U) ]
& = \bbE \qa{ \ddp{}{t}    V_t (  \varphi_t^U) 
+ \big( \nabla   V_t (  \varphi_t^U) ,    U_t (  \varphi_t^U) \big)_{  {\dot C}_t}   
- \frac12 \Delta_{  {\dot C}_t}    V_t (  \varphi_t^U ) } \nnb
& = \bbE \qa{ -  \frac12 (\nabla   V_t(  \varphi_t^U) )_{  {\dot C}_t}^2 +  \big( \nabla   V_t (  \varphi_t^U) ,    U_t (  \varphi_t^U ) \big)_{  {\dot C}_t}  
  } \nnb
& =  \frac12 \bbE \qbb{ -  \big( \nabla   V_t(  \varphi_t^U)   - U_t (  \varphi_t^U) \big)^2_{  {\dot C}_t}  }
 + \frac12 \bbE \qbb{ (U_t (  \varphi_t^U) )_{  {\dot C}_t}^2  } ,
\end{align}
where we used the Polchinski equation \eqref{e:polchinski-bis upside down} on the second line. Thus 
\begin{equation}
\frac12 \bbE \qa{ \int_0^\infty  ( U_t)_{{\dot C}_t}^2 \, dt }  = \bbH ( \nu_0|\gamma_0)
+ \frac12 \bbE \qa{ \int_0^\infty  \big( \nabla   V_t -    U_t  \big)^2_{  {\dot C}_t} \, dt } ,
\end{equation}
and the gradient of the renormalised potential $V_t$ provides the optimal drift. 
This completes the proof of Theorem~\ref{thme: Follmer general}.
\end{proof}

It turns out that the right-hand of \eqref{eq: Follmer twisted entropy} is, in fact, the relative entropy $\bbH({\bf Q}|{\bf P})$
of the path measure $\bf Q$ associated with \eqref{e:coupling-U} with respect to that of the
Gaussian reference process $\bf P$.

\begin{proposition}
The relative entropy of the path measure $\bf Q$  associated with a strong solution of  \eqref{eq: Follmer stochastic}
with respect to the path measure $\bf P$ of the Gaussian reference measure is given by
\begin{equation}
\label{eq: dynamic cost}
\bbH ( {\bf Q} | {\bf P})
= \frac{1}{2} \, \bbE \qa{ \int_0^\infty   | U_t (\varphi_t^U) |_{\dot C_t}^2 \, dt  }
.
\end{equation}
\end{proposition}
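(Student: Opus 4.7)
The identity is the standard Girsanov relative-entropy formula for two diffusions that share the diffusion coefficient $\sqrt{\dot C_t}$ and differ only by the drift $\dot C_tU_t$. The plan is to convert the backward SDE \eqref{e:coupling-U} to a forward one, apply Girsanov, and undo the time change.

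Via Remark~\ref{rk:LSI-mon2}, I would parametrize time by a bounded interval $[0,T]$. As in \eqref{e:stochastic-forwards-bis}, setting $\tilde\varphi_\tau=\varphi^U_{T-\tau}$ and $\tilde B_\tau=B_T-B_{T-\tau}$ converts \eqref{e:coupling-U} (with $\nabla V_u$ replaced by $U_u$) into the standard forward SDE on $[0,T]$,
\[
d\tilde\varphi_\tau = -\dot C_{T-\tau}\, U_{T-\tau}(\tilde\varphi_\tau)\, d\tau+\sqrt{\dot C_{T-\tau}}\, d\tilde B_\tau,\qquad \tilde\varphi_0=0,
\]
with law $\tilde{\bf Q}$ on path space; the reference SDE (with $U\equiv 0$) has law $\tilde{\bf P}$. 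Since $\varphi^U\mapsto\tilde\varphi$ is a deterministic bijection of paths, $\bbH({\bf Q}|{\bf P})=\bbH(\tilde{\bf Q}|\tilde{\bf P})$.

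Girsanov's theorem applied to $\sigma_\tau=\sqrt{\dot C_{T-\tau}}$ and $b_\tau=-\dot C_{T-\tau}U_{T-\tau}$, for which $|\sigma_\tau^{-1}b_\tau|^2=|U_{T-\tau}|^2_{\dot C_{T-\tau}}$, then gives (under Novikov's condition)
\[
\log\frac{d\tilde{\bf Q}}{d\tilde{\bf P}} = \int_0^T\bigl(-\sqrt{\dot C_{T-\tau}}\,U_{T-\tau}(\tilde\varphi_\tau),\, d\tilde B_\tau\bigr)-\frac{1}{2}\int_0^T|U_{T-\tau}(\tilde\varphi_\tau)|^2_{\dot C_{T-\tau}}\, d\tau.
\]
Re-expressing the stochastic integral in terms of the $\tilde{\bf Q}$-Brownian motion produced by the Girsanov shift adds back a $+\int_0^T|U_{T-\tau}|^2_{\dot C_{T-\tau}}\, d\tau$ term, so $\log(d\tilde{\bf Q}/d\tilde{\bf P})$ decomposes as a $\tilde{\bf Q}$-martingale plus $\frac{1}{2}\int_0^T|U_{T-\tau}|^2_{\dot C_{T-\tau}}\, d\tau$. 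Taking $\tilde{\bf Q}$-expectation and undoing the time reversal via $t=T-\tau$ yields $\bbH({\bf Q}|{\bf P})=\frac{1}{2}\bbE\bigl[\int_0^T|U_t(\varphi^U_t)|^2_{\dot C_t}\, dt\bigr]$, which is \eqref{eq: dynamic cost}; reverting to $T=\infty$ by reparametrization invariance concludes.

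The only genuine technical obstacle is the integrability hypothesis needed for Girsanov to apply rigorously — typically a Novikov-type bound on $\exp(\frac{1}{2}\int|U|^2_{\dot C}\, d\tau)$. When it is not immediate, I would localize using stopping times $\tau_N=\inf\{\tau:\int_0^\tau|U|^2_{\dot C}\, d\rho\geq N\}$, apply the identity up to $\tau_N$, and pass to the limit using monotone convergence on the right and lower semicontinuity of relative entropy on the left; the assumed existence of a strong solution with $\varphi^U_0\sim\nu_0$ is what ensures the limit identifies the intended path measure ${\bf Q}$.
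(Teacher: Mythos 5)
Your proposal is correct and takes essentially the same route as the paper, which states the identity as a consequence of Girsanov's theorem citing the reference [MR3112438] for details. You have simply filled in the standard steps (time reversal, Girsanov density, localization via Novikov/stopping times) that the paper leaves implicit.
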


\begin{proof}
  This is essentially a consequence of Girsanov's theorem, see \cite{MR3112438} for details.
\end{proof}

Since $\bbH ( {\bf Q} | {\bf P}) \geq \bbH(\nu_0|\gamma_0)$ always holds,
by the entropy decomposition \eqref{eq: entropy decomposition} and the fact that the laws of $\nu_0$ and $\gamma_0$
are marginals of the path measures ${\bf Q}$ and ${\bf P}$ respectively, the above shows that
the optimal drift $U_t=\nabla V_t$ in fact achieves equality: $\bbH ( {\bf Q} | {\bf P}) = \bbH(\nu_0|\gamma_0)$.
\bigskip

The above question was already studied by F\"ollmer \cite{zbMATH00193914}, and we refer to
\cite{MR3112438} for an exposition of this and connections with Gaussian functional inequalities.
F\"ollmer's objective was to find the optimal drift $b_t$ such that the process $(X_t)_{t \in [0,1]}$ defined
by the following SDE and distributed at time $t =1$ according to a given target measure $\nu$:
\begin{equation}
\label{eq: Follmer stochastic}
X_0 = 0, \quad d X_t = b_t (X_t) \, dt + dB_t \quad \text{and} \quad 
X_1 \sim \nu, 
\end{equation}
minimises the dynamical cost
\begin{equation}
  \frac{1}{2} \bbE \qa{ \int_0^1   |b_t (X_t) |^2 \, dt  }
\end{equation}
over all possible drifts $b$. 
Up to time reversal, parametrisation by $[0,+\infty]$ instead of $[0,1]$,
and introduction of the covariances $\dot C_t$, this is exactly the set-up
of \eqref{e:coupling-U}. 
For us the introduction of the covariances $\dot C_t$ is an important point, though, with the interpretation that the integral
is now an integral over scales measured by the infinitesimal covariances $\dot C_t$ which can also be interpreted as metrics as in Section~\ref{sec:geometry}.

\bigskip

More generally, one can look for the optimal drift to built a target probability measure of the form 
$F(\varphi)  \, \nu_0 (d\varphi)$ using now the optimal stochastic flow
as a reference process, i.e., we want to determine the drift $U$ such that for the process $(\psi_t)_{t\in[0,+\infty]}$ given for $t\geq 0$ by
\begin{equation}
\label{eq: EDS drift U 2}
 \psi_t = - \int_t^\infty    {\dot C}_s    U_s (  \psi_s)\,  ds
 -     \int_t^\infty  {\dot C}_s \nabla    V_s (  \psi_s)\,  ds +   \int_t^\infty \sqrt{  {\dot C}_s} \, d  B_s
 ,
\end{equation}
the cost $\frac{1}{2} \bbE \big[ \int_0^1|U_t (   \psi_t)|^2_{  {\dot C}_t} dt  \big]$ is minimised
and $\psi_0$  is distributed according to $F \, d \nu_0 $.
Proceeding as in the proof of Theorem~\ref{thme: Follmer general}, the optimal drift is given in terms of 
the Polchinski semigroup \eqref{e:P-def-bis} as the gradient of
\begin{equation}
W_t(\varphi) = - \log \PP_{0,t}F(\varphi)
  =  - V_t ( \varphi)    - \log \Eg_{C_t}
    \Big[ F ( \varphi+\zeta ) 
    e^{-    V_0 ( \varphi+\zeta)} \Big].
\end{equation}
Thus one can check that 
\begin{equation}
\label{eq: Follmer twisted entropy 2}
\bbH ( F \nu_0 | \nu_0)
=  \ent_{\nu_0}(F)
= \frac{1}{2} \bbE \qa{ \int_0^\infty  | \nabla   W_t ( \varphi_t) |_{  {\dot C}_t}^2\,  dt  } .
\end{equation}
In this way, we recover from \eqref{eq: Follmer twisted entropy 2} the entropy decomposition \eqref{e:Ent-P}:
\begin{align}
2\int_0^\infty \E_{\nu_{t}} \qa{|\nabla \sqrt{\PP_{0,t} F}|_{\dot C_t}^2} \, dt
& =
\frac{1}{2} \int_0^\infty \E_{\nu_{t}} \qa{|\nabla  \log \PP_{0,t} F|_{\dot C_t}^2 \; \PP_{0,t} F } \, dt \nnb
& = 
\frac{1}{2} \bbE \qa{ \int_0^\infty  | \nabla   W_t ( \varphi_t)|_{  {\dot C}_t}^2\,  dt  } ,
\end{align}
where we used that  the process \eqref{eq: EDS drift U 2} is distributed at time $t$ with density proportional to
\begin{equation}
e^{  -W_t(\varphi) -     V_t (\varphi)} \Pg_{C_\infty-C_t}(d\varphi) \propto
\PP_{0,t}F(\varphi) \, \nu_t (d\varphi)  .
\end{equation}

\bigskip

The above is an instance of the more general version of the Schr\"odinger problem which is to find the optimal drift so that the stochastic evolution \eqref{e:coupling-U} interpolates between two probability measures $\mu$ and $\nu$.
Here, we discussed only the special case where the process starts from a Dirac measure $\mu= \delta_0$,
and refer the reader to the survey \cite{zbMATH06224697} for a general overview  and to \cite{zbMATH07058374}
for a discussion on the role of the convexity of the potential.

In Section~\ref{sec:transport}, we address a related issue, namely that in some cases, the previous flow can be modified in order to achieve an interpolation between the measure of interest and some Gaussian measure.

\subsection{Variational representation of the renormalised potential}
\label{subsec: Variational representation of the renormalised potential}

Let $\nabla V_t = \nabla V_t(\varphi_t)$ and recall that Proposition~\ref{prop:V-mart} states:
\begin{equation}
  V_t(\varphi)
  = \E_{\varphi_t=\varphi}\qa{V_0 \Big( \varphi - \int_0^t \dot C_s 
  \nabla V_s
  \, ds + \int_0^t \sqrt{\dot C_s}\,
  dB_s \Big) + \frac12 \int_0^t 
  |\nabla V_s|^2_{\dot C_s} 
  \, ds}.
\end{equation}
In particular,
\begin{equation}
  V_t(\varphi) \geq \inf_{U} \E \qa{V_0 \Big(\varphi-\int_0^t \dot C_s U_s\, ds + \int_0^t \sqrt{\dot C_s} \,
  dB_s
  \Big) + \frac12 \int_0^t |U_s|_{\dot C_s}^2 \, ds},
\end{equation}
where the above infimum is over all
adapted processes $U: [0,t] \to X$ (where adapted means backwards in time in our convention)
called \emph{drifts}.
For our current purposes, it suffices to consider $U_s=U_s(\varphi_s)$ associated with a strong solution
to the (backward in time) SDE
\begin{equation} \label{e:SDE-U}
  \varphi_s = \varphi-\int_s^t \dot C_uU_u(\varphi_u)\, du + \int_s^t \sqrt{\dot C_u}\, 
  dB_u, 
  \qquad (s\leq t).
\end{equation}
The following  proposition is a special case of the Bou\'e-Dupuis or Borell formula, see 
\cite{MR3112438},
which gives equality in the infimum and is the starting point for the Barashkov--Gubinelli method \cite{MR4173157}.
An in-depth treatment of stochastic control problems of which this is a special case is given in \cite{MR2179357}.

\begin{proposition} \label{prop:BD} 
  \begin{equation}
    V_t(\varphi) = \inf_{U}
    \E \qa{V_0 \Big( \varphi-\int_0^t \dot C_s U_s\, ds + \int_0^t \sqrt{\dot C_s}\,  
    dB_s
     \Big) 
    + \frac12 \int_0^t |U_s|_{\dot C_s}^2 \, ds}.
  \end{equation}
\end{proposition}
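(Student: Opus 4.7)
The plan is to establish the two inequalities $\geq$ and $\leq$ separately. The upper bound $V_t(\varphi) \leq \inf_U(\cdots)$ is already contained in Proposition~\ref{prop:V-mart}: evaluating the functional at the specific Markovian drift $U_s = \nabla V_s(\varphi_s)$, for which the SDE \eqref{e:SDE-U} coincides with the Polchinski characteristics \eqref{e:stochastic}, yields exactly $V_t(\varphi)$ by \eqref{e:V-mart}. The content of the proposition is therefore the lower bound: for any admissible drift $U$,
\begin{equation}
V_t(\varphi) \leq \E\qa{V_0(\varphi_0^U) + \frac12 \int_0^t |U_s|_{\dot C_s}^2 \, ds}.
\end{equation}

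To prove this, I would fix an admissible Markovian drift $U_s = U_s(\varphi)$ generating a strong solution $\varphi^U$ of \eqref{e:SDE-U}, and apply It\^o's formula in the backward-in-time form \eqref{e:ito} to $s \mapsto V_s(\varphi^U_s)$. Using the Polchinski equation \eqref{e:polchinski-bis} to cancel the $\frac12 \Delta_{\dot C_s} V_s$ contributions from the time derivative and the quadratic variation, one obtains
\begin{equation}
V_0(\varphi^U_0) - V_t(\varphi) = \int_0^t \qa{\frac12 |\nabla V_s(\varphi^U_s)|^2_{\dot C_s} - (\nabla V_s(\varphi^U_s), \dot C_s U_s)} \, ds + M_t,
\end{equation}
where $M_t = \int_0^t (\nabla V_s(\varphi^U_s), \sqrt{\dot C_s}\, dB_s)$ is a (local) martingale.

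The proof is then completed by a completion-of-squares argument. Taking expectations, assuming $\E[M_t]=0$, and adding $\frac12\E[\int_0^t |U_s|_{\dot C_s}^2 \, ds]$ to both sides gives
\begin{equation}
\E\qa{V_0(\varphi_0^U) + \frac12 \int_0^t |U_s|_{\dot C_s}^2 \, ds} - V_t(\varphi)
= \frac12 \E\qa{\int_0^t |U_s - \nabla V_s(\varphi^U_s)|_{\dot C_s}^2 \, ds} \geq 0,
\end{equation}
which is the desired lower bound, with equality precisely when $U_s = \nabla V_s(\varphi^U_s)$. This recovers Proposition~\ref{prop:V-mart} as the case of equality and identifies the Polchinski flow as the optimal characteristics.

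The main obstacle is the technical verification that $M_t$ is a true (not merely local) martingale, so that its expectation vanishes. This requires an integrability condition on $U$ and a growth control on $\nabla V_s$; the natural choice is to restrict attention to drifts for which $\E[\int_0^t |U_s|_{\dot C_s}^2 \, ds] < \infty$ and invoke the standing assumption that $V_0$ is bounded below together with the $C^2$ regularity propagated through \eqref{e:V-def}. Extension to general (non-Markovian, adapted) drifts, as in the full Bou\'e--Dupuis formula \cite{MR3112438}, then follows by a standard approximation argument, and drifts violating the integrability condition make the right-hand side infinite and hence do not affect the infimum.
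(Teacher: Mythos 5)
Your proof is correct and follows essentially the same route as the paper's. Both directions match: the paper also obtains $\inf_U \leq V_t(\varphi)$ from Proposition~\ref{prop:V-mart}, and for the converse the paper applies the entropy inequality \eqref{e:entineq2} with $G(\zeta) = -V_0(\varphi+\zeta)$ to bound $V_t(\varphi) = -\log \Eg_{C_t}[e^{-V_0(\varphi+\zeta)}]$ and then controls the resulting relative entropy by $\tfrac12\E\int_0^t|U_s|^2_{\dot C_s}\,ds$ via precisely the It\^o/Polchinski/completion-of-squares computation you carry out directly (the paper points to its identical appearance in the proof of Theorem~\ref{thme: Follmer general}). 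Your version skips the entropy-inequality detour and is slightly more self-contained, but the computational engine is identical; you also correctly flag that the It\^o step does not actually require Markovian drifts. One small slip in exposition: in your opening sentence you attribute the inequality $V_t(\varphi) \leq \inf_U(\cdots)$ to Proposition~\ref{prop:V-mart}, but evaluating at the optimal drift $U=\nabla V$ gives equality and hence $\inf_U(\cdots) \leq V_t(\varphi)$; the roles of "upper" and "lower" are swapped in that sentence, though the rest of your argument proceeds with the correct logic.
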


\begin{proof}[Sketch]
  The entropy inequality \eqref{e:entineq2} with $G(\zeta)=-V_0(\varphi+\zeta)$ applied to the  Gaussian measure $\Pg_{C_t}$ implies that for any density $F$ with $\Eg_{C_t}[F]=1$:
  \begin{equation}
    V_t(\varphi) = - \log \Eg_{C_t}[e^{-V_0(\varphi+\zeta)}] 
    \leq \ent_{\Pg_{C_t}}(F) + \Eg_{C_t} \qa{F(\zeta)V_0(\varphi+\zeta)}.
  \end{equation}
  Given any drift $U_s$, let $F \, d\Pg_{C_t}$ denote the law of $\varphi_0-\varphi$ solving \eqref{e:SDE-U}:
  \begin{equation}
    \varphi_0-\varphi =  -\int_0^t \dot C_s U_s \, ds + \int_0^t \sqrt{\dot C_s} \, 
    dB_s
    .
  \end{equation}
  Then
  \begin{equation}
    V_t(\varphi)
    \leq \E\qa{\frac12 \int_0^t |U_s(\varphi_s)|_{\dot C_s}^2\, ds 
    + V_0 \Big( \varphi-\int_0^t \dot C_s U_s\, ds + \int_0^t \sqrt{\dot C_s}\, 
    dB_s
    \Big)},
  \end{equation}
  where we used that the entropy is bounded by the first term, exactly as in  Theorem~\ref{thme: Follmer general}.
  As already discussed, the converse direction follows from Proposition~\ref{prop:V-mart}.
\end{proof}

The point of view is now that by estimating the expectation on the right-hand side above, for a general drift $U$,
one can obtain estimates on $V_t(\varphi)$, and in particular on $V_\infty(\varphi)$ which we recall from \eqref{e:Vinfty}
is equivalent to the logarithmic moment generating function of the measure $\nu_0$.
For further details and application to construction of the $\varphi^4_d$ measures, we refer to \cite{MR4173157,MR4269211}.

\subsection{Lipschitz transport}
\label{sec:transport}

Instead of a stochastic process, one could also define a map 
$\hat S_t : X \mapsto X$  transporting some measure $\hat \nu_t$ to the desired target measure $\nu_0$:
\begin{equation}
\label{eq: def hat S}
\E_{\nu_0} \qa{ F ( \varphi )} = \E_{\hat\nu_t} \qa{ F \big(\hat S_t ( \varphi) \big)} .
\end{equation}
Under an assumption on its gradient, such a transport map allows to recover functional inequalities for   $\nu_0$ from $\hat \nu_t$ as follows.
For example, assume that $\hat \nu_t$ satisfies a log-Sobolev inequality
(with quadratic form $(\cdot,\cdot)_{Q}$ and denoting the corresponding norm $|f|^2_{Q} = (f,f)_{Q}$ on $X$):
\begin{equation}
  \ent_{\hat \nu_t}[F] \leq
  2\E_{\hat\nu_t}\qB{ |\nabla \sqrt F|_{Q}^2},
\end{equation}
and that  the transport map $\hat S_t$ from $\hat\nu_t$ to $\nu_0$ has
Jacobian  $\nabla \hat S_t(\varphi)$   satisfying the uniform bound:
\begin{equation}
\label{eq: Lipschitz bound}
\forall \varphi \in X, f  \in X: \qquad 
|{}^t\nabla \hat S_t(\varphi) f |_{Q}^2
\leq 
C^2|f|^2 . 
\end{equation}
Then  $\nu_0$ satisfies also a log-Sobolev inequality: 
\begin{align}
\ent_{\nu_0} \qa{ F ( \varphi )} 
& =
\E_{\hat \nu_t} [\Phi(F \circ \hat S_t )]- \Phi(\E_{\hat \nu_t} [F \circ \hat S_t]) \nnb
& \leq 2 \E_{\hat \nu_t} \qa{  \big|\nabla \sqrt{F \circ \hat S_t }\big|_{Q}^2} \nnb
&= 2\E_{\hat  \nu_t} \qa{  \big|^t \nabla \hat S_t  \, (\nabla \sqrt{F} \circ \hat S_t) \big|_{Q}^2} \nnb
  & \leq 2C^2 \E_{\hat  \nu_t} \qa{  \big| (\nabla \sqrt{F} \circ \hat S_t)\big|^2} \nnb
&= 2C^2 \E_{\nu_0} \qa{  |\nabla \sqrt{F  }|^2} .
\label{eq_LSI_transport}
\end{align}
An analogous argument can be applied to more general functional inequalities.

This line of research was first investigated in \cite{zbMATH01631921} where it was understood that a convex perturbation of a Gaussian measure leads to a $1$-Lipschitz transport map $\hat S_t$ to this Gaussian measure,
i.e., \eqref{eq: Lipschitz bound-bis} holds with $Q=\id$ and $C=1$.
We refer to \cite{zbMATH06134307, neeman2022lipschitz, mikulincer2021brownian, mikulincer2022lipschitz, 2107.09496, Jordan_Serres,fathi2023transportation} for more recent developments as well as other applications of the Lipschitz properties of transport maps 
to functional inequalities.
In this section, we follow the work \cite{Shenfeld2022ExactRG} which derived a Lipschitz estimate of the form \eqref{eq: Lipschitz bound} from the multiscale Bakry--\'Emery criterion \eqref{e:assCt-mon} for
the covariance decomposition $\dot C_t=e^{-tA}$, and then we generalise the result also to other decompositions relevant for applications
(see Section~\ref{sec:appl-transport}).
For this generalisation, it is important that $Q$ in the condition  \eqref{eq: Lipschitz bound} is not necessarily equal to the identity.
Using the identity $\|M\|= \|^tM\|$ ($\|\cdot\|$ denotes the operator norm), 
this condition can also be equivalently stated in terms of $\nabla \hat S_t$ instead of $^t\nabla \hat S_t$ as
\begin{equation} \label{eq: Lipschitz bound-bis}
  |\nabla\hat S_t(\varphi) f|^2 \leq C^2|f|_{Q^{-1}}.
\end{equation}

Recall that the measure $\nu_0$ gets renormalised to $\nu_t$  by the Polchinski flow.
By construction  $\E_{\nu_t} [\cdot ] \propto \Eg_{C_\infty -C_t} [ e^{-V_t} \cdot ]$ and 
$\nu_t$ converges to a Dirac mass.
 As the measure  $\nu_t$ degenerates, it is more convenient to consider the measure $\hat \nu_t$ obtained by rescaling $\nu_t$ by some matrix $D_t$ so that the   measures $\nu_0$ and $\hat \nu_t$ are comparable:
\begin{equation}
\E_{\hat \nu_t} \qa{ F  (\varphi) } = \E_{\nu_t} \qa{ F (D_t \varphi)} .
\end{equation}
If $V_0 =0$, a natural choice for $D_t$ is to preserve the Gaussian measure, i.e.,
\begin{equation}
\label{eq: tuning D_t}
D_t^{-1} (C_\infty -C_t)^{-1} D_t^{-1}  = A 
\quad \Rightarrow \quad D_t A D_t = (C_\infty -C_t)^{-1} 
,
\end{equation}
where all inverses are understood to be taken on the range of $A$. 
This is implicitly assumed in the rest of the section, with $\id$ also denoting the identity matrix on the range of $A$.
Assuming that all the matrices depend smoothly on $t$ and commute, the choice~\eqref{eq: tuning D_t} implies the following useful relation:
\begin{equation}
\label{eq: derivative D_t}
2 D_t^{-1} \dot D_t  = \dot C_t (C_\infty -C_t)^{-1} 
= 
\dot C_t D_t A D_t
.
\end{equation}
Using again~\eqref{eq: tuning D_t} we get (recall $C_0=0$ and notice $D_0 =\id$):
\begin{equation}
\ddp{}{t}(D_t^{-2}) = -A\dot C_t
\quad \Rightarrow\quad
D_t = (\id -AC_t)^{-1/2}
.
\label{eq: tuning D_t_bis}
\end{equation}
By construction $\lim_{t \to \infty} D_t^{-1} \varphi = 0$ and the renormalised potential satisfies $\lim_{t \to \infty} V_t ( D_t^{-1} \varphi )= V_\infty(0)$.
This follows from the representation \eqref{e:V-def} of $V_t$ and the standing assumption that $V_0$ is bounded below.
In the same way, we can also show that
the following convergence in distribution to a Gaussian measure holds:
\begin{equation}
\label{eq: Gaussian limit}
\lim_{t \to \infty} \E_{\hat \nu_t} \qa{ F  (\varphi) } = \Eg_{C_\infty}  \qa{ F (\varphi)}
=
\Eg_{A^{-1}}  \qa{ F (\varphi)} .
\end{equation}
We are now going to study the  transport map
$\hat S_t$  between $\nu_t$ and $\hat \nu_0$ defined in \eqref{eq: def hat S}.

The properties of $\hat S_t$ are sensitive to the covariance decomposition $\dot C_t$.
It was realised in \cite{Shenfeld2022ExactRG} that for the choice $\dot C_t = e^{-t A}$ (often used in applications, see Section~\ref{sec: Applications})
the Lipschitz structure associated with $\hat S_t$ is directly related to (a variant of)  the multiscale Bakry--\'Emery criterion \eqref{e:assCt-mon}.
\begin{theorem}[\!\!\cite{Shenfeld2022ExactRG}]
\label{thme: Lipschitz}
Let $\dot C_t =e^{-tA}$ for $t\geq 0$.
Under the assumption 
\begin{equation} 
  \label{e:assCt-mon transport}
  \forall \varphi \in X: \qquad
    \dot C_t^{1/2} \He V_t(\varphi) \dot C_t^{1/2} \geq \dot \mu_t \id, 
    \quad \text{with} \quad \mu_t := \int_0^t ds \, \dot \mu_s,
\end{equation}
the transport map $\hat S_t   : X \mapsto X$ introduced in \eqref{eq: def hat S} is $\exp ( -  \frac12 \mu_t)$-Lipschitz,
i.e., \eqref{eq: Lipschitz bound-bis} holds with $Q=\id$ and $C= \exp ( -  \frac12 \mu_t)$.
\end{theorem}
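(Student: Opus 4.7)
My plan is to construct $\hat S_t$ from the stochastic characteristics of the Polchinski equation (Proposition~\ref{prop:stochastic2}) composed with the rescaling $D_t^{-1}$, and to derive the Lipschitz estimate by a synchronous coupling argument entirely parallel to the one used in Section~\ref{sec: Example: log-Sobolev inequality by coupling} to reprove Theorem~\ref{thm:LSI-mon}. Concretely, if $\varphi_t\sim\nu_t$ and $(\varphi_s)_{s\le t}$ solves the backward SDE $d\varphi_s = -\dot C_s\nabla V_s(\varphi_s)\,ds + \sqrt{\dot C_s}\,dB_s$, then $\varphi_0\sim\nu_0$, so the composition $\hat\varphi\mapsto D_t^{-1}\hat\varphi\mapsto \varphi_0$ is a stochastic transport from $\hat\nu_t$ to $\nu_0$. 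The deterministic map $\hat S_t$ is then obtained either by following the characteristics of the continuity equation satisfied by the family $(\hat\nu_t)$ (whose density has the explicit form $\hat\rho_t(\hat\varphi)\propto \exp(-V_t(D_t^{-1}\hat\varphi) - \tfrac12|\hat\varphi|_A^2)$, with $\hat V_t := V_t\circ D_t^{-1}$ satisfying a rescaled Hamilton--Jacobi equation), or equivalently via the Brownian transport map framework of Kim--Milman and Mikulincer--Shenfeld.

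For the coupling step, let $(\varphi_s),(\varphi'_s)$ solve the backward SDE with the same Brownian motion and different terminal values at time $t$. Their difference $\psi_s := \varphi_s-\varphi'_s$ is deterministic given $B$ and satisfies the linear ODE $\partial_s\psi_s = \dot C_s \bar H_s\psi_s$ with $\bar H_s := \int_0^1 \He V_s(\varphi'_s+u\psi_s)\,du$. Using the specific structure $\dot C_s = e^{-sA}$ so that $D_s = e^{sA/2}$ and $\dot C_s = D_s^{-2}$ all commute as functions of $A$, and that $D_s\dot C_s = D_s^{-1}$, the reparametrised and rescaled difference $\zeta_r := D_{t-r}\psi_{t-r}$ (with $r=t-s$, so that $\zeta_0 = D_t\psi_t$ encodes the perturbation of $\hat\varphi$ and $\zeta_t=\psi_0$ the perturbation of $\varphi_0$) obeys
\begin{equation*}
\partial_r\zeta_r = -\bigl[\tfrac{1}{2}A + D_{t-r}^{-1}\bar H_{t-r}D_{t-r}^{-1}\bigr]\zeta_r.
\end{equation*}
The assumption \eqref{e:assCt-mon transport}, which with $\dot C_s^{1/2}=D_s^{-1}$ reads $D_s^{-1}\He V_s D_s^{-1}\ge \dot\mu_s \id$ and is inherited by the convex average $\bar H_s$, then gives $\tfrac{d}{dr}|\zeta_r|^2 \le -\zeta_r^T A\zeta_r - 2\dot\mu_{t-r}|\zeta_r|^2 \le -2\dot\mu_{t-r}|\zeta_r|^2$. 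Gr\"onwall's inequality yields $|\psi_0|^2 = |\zeta_t|^2 \le e^{-2\mu_t}|\zeta_0|^2 = e^{-2\mu_t}|D_t\psi_t|^2$, and since $D_t\psi_t = \hat\varphi-\hat\varphi'$ this gives the operator norm bound $\|\nabla\hat S_t\|\le e^{-\mu_t}$, which a fortiori implies the claimed $e^{-\mu_t/2}$-Lipschitz estimate with $Q=\id$.

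The main obstacle I anticipate is the first step: producing a genuinely \emph{deterministic} map $\hat S_t:X\to X$ from the stochastic flow $\varphi_t\mapsto\varphi_0$ (which depends on the Brownian sample path) and verifying that the pathwise estimate transfers. Once the construction is fixed, the coupling computation above is essentially an exercise in linear algebra exploiting that all the deterministic matrices in sight commute. The cleanest route for the construction seems to be through the continuity equation for $\hat\nu_t$: the rescaled density $\hat\rho_t$ evolves under a drift--diffusion whose characteristics define $\hat S_t$ as their time-$t$ flow, and the synchronous coupling identity above is precisely the Jacobian estimate for that flow. A minor secondary point is to check that the interpolated Hessian $\bar H_s$ inherits the bound from the pointwise assumption on $\He V_s$, which is immediate since the inequality is linear in $\He V_s$ and is integrated over the convex-combination parameter.
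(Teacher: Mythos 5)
Your synchronous coupling argument is a correct computation, but it proves a Lipschitz estimate for the wrong map, and the clue is precisely the factor-of-two mismatch you brush aside at the end. For a fixed Brownian path $\omega$, the backward stochastic flow $\hat\varphi\mapsto\varphi_0(\hat\varphi;\omega)$ is indeed a deterministic $e^{-\mu_t}$-Lipschitz map by your Gr\"onwall bound, but it is not a transport map: for fixed $\omega$ it does \emph{not} push $\hat\nu_t$ forward to $\nu_0$ (the pushforward of the measure $\hat\nu_t$ under this map, for a given noise realisation, is not $\nu_0$; only the joint law over $\hat\varphi$ \emph{and} $\omega$ has $\nu_0$ as its second marginal). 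Nor does averaging over $\omega$ help: $\hat\varphi\mapsto\E_\omega[\varphi_0(\hat\varphi;\omega)]$ is $e^{-\mu_t}$-Lipschitz but pushes $\hat\nu_t$ onto the law of a conditional expectation, which is strictly more concentrated than $\nu_0$. So the bound $e^{-\mu_t}$ you obtain, and the extra negative $-(\zeta_r,A\zeta_r)$ term your computation picks up, belong to a different object than the transport map of the theorem; they cannot be downgraded ``a fortiori'' into the claimed $e^{-\mu_t/2}$ because there is no logical implication from one to the other.

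The route you gesture at in passing --- ``following the characteristics of the continuity equation'' --- is the one that works, and it is what the paper actually does, but it must be carried out and it is \emph{not} equivalent to the synchronous SDE coupling. Differentiating the defining identity $\E_{\nu_0}[F(S_t(\varphi))] = \E_{\nu_t}[F(D_t\varphi)]$ in $t$, integrating the Laplacian of the Polchinski generator by parts against $\nu_t$, and using that the rescaling $D_t$ satisfying \eqref{eq: derivative D_t} cancels exactly the Gaussian drift, one lands on the deterministic ODE
\begin{equation*}
\partial_t S_t(\varphi) = \tfrac12\, D_t \dot C_t \nabla V_t\big(D_t^{-1}S_t(\varphi)\big),
\end{equation*}
which for $\dot C_t=e^{-tA}$ becomes $\partial_t S_t = \tfrac12\dot C_t^{1/2}\nabla V_t(\dot C_t^{1/2}S_t)$. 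This vector field carries a factor $\tfrac12$ that the SDE drift $\dot C_t\nabla V_t$ does not; the two flows are genuinely different maps. Differentiating in the initial condition gives $\partial_t\nabla S_t = \tfrac12\dot C_t^{1/2}\He V_t(\dot C_t^{1/2}S_t)\dot C_t^{1/2}\nabla S_t$, and now the hypothesis \eqref{e:assCt-mon transport} yields $\partial_t|\nabla S_t f|^2\ge\dot\mu_t|\nabla S_t f|^2$ with no spare $A$ term, hence $|\nabla S_t f|^2\ge e^{\mu_t}|f|^2$, and the inverse function theorem gives $\|\nabla\hat S_t\|\le e^{-\mu_t/2}$ --- exactly the stated constant. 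In short: the ingredients you list are all present in the paper's proof, but the argument is a Jacobian estimate for a deterministic ODE flow obtained by matching $t$-derivatives of distributions, not a pathwise coupling of the backward SDE, and the factor of $\tfrac12$ is the fingerprint distinguishing them.
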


From the convergence \eqref{eq: Gaussian limit} to the Gaussian measure and if $\mu_\infty := \int_0^\infty ds \, \dot \mu_s < \infty$, 
then from the previous theorem, one can extract a $\exp (- \frac12 \mu_\infty)$-Lipschitz map from the Gaussian measure
${\sf P}_{C_\infty} = {\sf P}_{A^{-1}}$ to $\nu_0$ (see \cite[Lemma 2.1]{neeman2022lipschitz}).

\bigskip

Another useful covariance decomposition (see Theorem~\ref{thm_continuous_phi4}) is of the form 
$\dot C_t = (t A + \id)^{-2}$. 
The proof of Theorem~\ref{thme: Lipschitz} can be extended to that case as follows.

%
\begin{theorem}
\label{thme: Lipschitz2}
Let $\dot C_t=(tA+\id)^{-2}$ for $t\geq 0$. 
Under the multiscale Bakry-\'Emery criterion
\begin{equation} 
  \label{e:assCt-mon transport2}
  \forall \varphi \in X: \qquad
    \dot C_t \He V_t(\varphi) \dot C_t - \frac{1}{2}\ddot C_t \geq \dot \lambda_t \dot C_t, 
    \quad \text{with} \quad \lambda_t := \int_0^t ds \, \dot \lambda_s,
\end{equation}
the inverse map $\hat S_t = S_t^{-1} : X \mapsto X$ satisfies 
\begin{equation}\label{eq_Lipschitz_hatS}
    \forall  \varphi \in X, f\in X: \qquad
    |\nabla \hat S_t(\varphi) f|^2\leq e^{-\lambda_t}|\sqrt{1+tA}\, f|^2
    = e^{-\lambda_t} |f|_{1+tA}^2
.
\end{equation}
\end{theorem}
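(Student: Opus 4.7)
The plan is to follow the strategy of \cite{Shenfeld2022ExactRG} used in the proof of Theorem~\ref{thme: Lipschitz}, adapting it to the new covariance decomposition. For $\dot C_t = (tA+\id)^{-2}$, direct integration gives $C_t = t(tA+\id)^{-1}$, so that $C_\infty - C_t = A^{-1}(tA+\id)^{-1}$ and \eqref{eq: tuning D_t_bis} yields $D_t = (\id+tA)^{1/2}$. Crucially, the operators $A$, $C_t$, $D_t$, $\dot C_t$ and $\ddot C_t$ all commute via the functional calculus of $A$. The transport map $S_t\colon X\to X$ with $S_t\#\nu_0 = \hat\nu_t$ is constructed, as in \cite{Shenfeld2022ExactRG}, by composing $D_t^{-1}$ with a deterministic map $R_t\colon \nu_0 \to \nu_t$ associated with the Polchinski characteristics of Proposition~\ref{prop:stochastic2}, and $\hat S_t := S_t^{-1}$ is its inverse.

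The core of the proof is a differential inequality for the spatial Jacobian $J_t(\varphi) := \nabla S_t(\varphi)$. Differentiating the flow defining $S_t$ in the initial condition, and accounting for the rescaling $D_t^{-1}$, leads to a matrix-valued linear ODE of the schematic form
\begin{equation*}
\ddp{}{t} J_t = -\qa{D_t^{-1}\dot D_t + D_t^{-1}\dot C_t \He V_t(\varphi_t) D_t} J_t.
\end{equation*}
Using the explicit formulae $D_t^{-1}\dot D_t = \tfrac12 A(\id+tA)^{-1}$ coming from~\eqref{eq: derivative D_t} together with $\ddot C_t = -2A(\id+tA)^{-3}$, the operator inside the brackets reorganises so as to exhibit the multiscale Bakry--\'Emery combination $\dot C_t \He V_t - \tfrac12 \ddot C_t$ controlled by hypothesis~\eqref{e:assCt-mon transport2}.

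Concretely, I would then prove the matrix monotonicity
\begin{equation*}
\ddp{}{t} \qB{ (D_t J_t)(D_t J_t)^\top } \geq 2 \dot\lambda_t (D_t J_t)(D_t J_t)^\top,
\end{equation*}
which, since $S_0 = \mathrm{id}$ and $D_0 = \id$, Gr\"onwall-integrates to $(D_t J_t)(D_t J_t)^\top \geq e^{\lambda_t}\id$, equivalently $|J_t^{-1} D_t^{-1} g|^2 \leq e^{-\lambda_t}|g|^2$ for every $g \in X$. Setting $g = D_t f$ and using that $\nabla \hat S_t(\psi) = J_t(S_t^{-1}(\psi))^{-1}$ at the corresponding point, this rewrites exactly as $|\nabla \hat S_t(\psi) f|^2 \leq e^{-\lambda_t}|D_t f|^2 = e^{-\lambda_t}|f|_{\id+tA}^2$, which is~\eqref{eq_Lipschitz_hatS}. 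Verifying this differential inequality reduces, after the algebraic reorganisation in the previous step, exactly to~\eqref{e:assCt-mon transport2}.

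The main obstacle is the algebraic bookkeeping linking the ODE for $J_t$ to the hypothesis~\eqref{e:assCt-mon transport2}. In the case $\dot C_t = e^{-tA}$ of \cite{Shenfeld2022ExactRG} the ratio $\ddot C_t \dot C_t^{-1} = -A$ is $t$-independent and the first-order criterion \eqref{e:assCt-mon transport} already suffices; here $\ddot C_t \dot C_t^{-1} = -2A(\id+tA)^{-1}$ is genuinely time-dependent, so the $-\tfrac12 \ddot C_t$ correction in the multiscale condition~\eqref{e:assCt-mon transport2} must arise from, and cancel with, the time-derivative of $D_t^{-2} = (\id+tA)^{-1}$. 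The specific form $D_t = (\id+tA)^{1/2}$ is tailor-made for this cancellation and is what produces the $(\id+tA)$-weighted Lipschitz constant appearing in~\eqref{eq_Lipschitz_hatS}; once the algebra is properly organised, the remainder of the argument runs in parallel to \cite{Shenfeld2022ExactRG}.
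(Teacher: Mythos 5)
Your plan matches the paper's proof on all essential points: the choice $D_t = (\id + tA)^{1/2}$, the Gr\"onwall argument for a $D_t$-weighted Jacobian, and the observation that the $-\tfrac12\ddot C_t$ correction in \eqref{e:assCt-mon transport2} is precisely what cancels the time-dependence of $D_t$ (the paper expresses this as $\dot B_t B_t^{-1} = -\tfrac14 \ddot C_t \dot C_t^{-1}$ for the choice $B_t = D_t$). However, your schematic ODE for $J_t = \nabla S_t$ has a sign and a factor wrong: differentiating \eqref{eq: evolution St} in $\varphi$ gives
\begin{equation*}
\ddp{}{t}\nabla S_t(\varphi) = \tfrac{1}{2}\,\dot C_t D_t\,\He V_t\big(D_t^{-1}S_t(\varphi)\big)\,D_t^{-1}\,\nabla S_t(\varphi),
\end{equation*}
with a $+$ sign, a $\tfrac12$, and no separate $D_t^{-1}\dot D_t$ term (the rescaling is already folded into \eqref{eq: evolution St}); a minus sign would push the Gr\"onwall estimate in the wrong direction and the conclusion would not follow. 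Relatedly, the differential inequality you write, $\ddp{}{t}\big[(D_tJ_t)(D_tJ_t)^\top\big] \geq 2\dot\lambda_t\,(D_tJ_t)(D_tJ_t)^\top$, integrates to $e^{2\lambda_t}\id$, not $e^{\lambda_t}\id$; the $\tfrac12$ above cancels the $2$ from differentiating the square, leaving a single $\dot\lambda_t$, which is what matches the exponent in \eqref{eq_Lipschitz_hatS}. Finally, if $R_t$ pushes $\nu_0$ forward to $\nu_t$ then $S_t = D_t\circ R_t$, not $D_t^{-1}\circ R_t$; the paper avoids this bookkeeping entirely by deriving \eqref{eq: evolution St} as an ODE for $S_t$ directly. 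These are repairable slips rather than conceptual gaps: once fixed, the proof closes exactly along the route you describe, which is the same as the paper's.
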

\begin{remark}
Contrary to the Lipschitz transport map of Theorem~\ref{thme: Lipschitz}, 
the gradient of the map of Theorem~\ref{thme: Lipschitz2}
is bounded with respect to a different input norm that increases with $t$.
In particular, if $A=-\Delta+1$, then $|\cdot|_{1+tA}$ is a (discrete) Sobolev norm.
In our examples (see~Section~\ref{sec:appl-transport}),
the constants $\lambda_t$ diverge like $\log (1+t)$ (while the constants $\mu_t$ in Theorem~\ref{thme: Lipschitz} remain bounded)
and therefore the combination of $e^{-\lambda_t}(1+tA)$ remains bounded by $A$ uniformly in $t$.
Thus $\nabla\hat S_t(\varphi)$ is uniformly bounded from $|\cdot|_A$ to $|\cdot|$
and thus $^t\nabla \hat S_t(\varphi)$ from $|\cdot|$ to $|\cdot|_{A^{-1}}$.
As seen in \eqref{eq_LSI_transport},
when using transport maps to prove log-Sobolev inequalities, one can use that the Gaussian measure $\hat\nu_\infty$
satisfies a log-Sobolev inequality with quadratic form $(\cdot,\cdot)_{A^{-1}}$ to compensate the loss of regularity in the transport map and recover a log-Sobolev inequality with standard quadratic form.
\end{remark}

\medskip

\begin{proof}[Proof of Theorem \ref{thme: Lipschitz}]
Consider $S_t$ a transport map  between $\nu_0$ and $\hat \nu_t$, so that 
\begin{equation}
\E_{\nu_0} \qa{ F \big( S_t (\varphi) \big)} = \E_{\hat \nu_t} \qa{ F  (\varphi) }
=
\E_{\nu_t} \qa{ F (D_t \varphi)} .
\label{eq_def_S_t}
\end{equation}
Note that ultimately we are interested in $\hat S_t$ which is the inverse of $S_t$, see \eqref{eq: def hat S}.
We are going to determine an evolution for $S_t : X \to X$ for general general covariance decompositions $\dot C_t$ and use the precise form only to conclude the proof.
On the one hand,
\begin{equation}
 \ddp{}{t}   \E_{\nu_0} \qa{ F \big( S_t (\varphi) \big)} = 
 \E_{\nu_0} \qa{ \big( \nabla F \big( S_t (\varphi) \big) , \partial_t S_t (\varphi) \big)},
\end{equation}
and on the other hand from \eqref{e:polchinski-semigroup}:
\begin{equation}
 \ddp{}{t}  \E_{\nu_t} \qa{ F (D_t \varphi)} 
= \E_{\nu_t} \qa{ - \LL_t F (D_t \varphi) +  ( \dot D_t \varphi, \nabla F (D_t \varphi)) } ,
\end{equation}
with $    \LL_tF = \frac12 \Delta_{\dot C_t} F - (\nabla V_t, \nabla F)_{\dot C_t}$.
Integrating the Laplacian by parts, this gives 
\begin{align}
   \ddp{}{t} \E_{\nu_t} \qa{ F (D_t \varphi)} 
& = \E_{\nu_t} \qa{ \frac12 (\dot C_t\nabla V_t  (\varphi) - \frac12 \dot C_t \, (C_\infty - C_t)^{-1} \varphi 
+ D_t^{-1} \dot D_t \varphi, D_t  \nabla F (D_t \varphi)) } \nnb
& = \E_{\nu_t} \qa{ \frac12 (\nabla V_t (\varphi), D_t  \nabla F (D_t \varphi))_{\dot C_t} } \nnb
& = \E_{\nu_0} \qa{ \frac12 ( \nabla V_t (D_t^{-1} \cdot ), D_t  \nabla F )_{\dot C_t} ( S_t (\varphi) )} ,
\end{align}
where we used \eqref{eq: derivative D_t}.
Thus the evolution of $S_t$ is given by 
\begin{align}
\label{eq: evolution St}
\ddp{}{t}  S_t (\varphi)
= \frac12  D_t \, \dot C_t   \nabla V_t (D_t^{-1} S_t (\varphi) ) .
\end{align}

For $\dot C_t = e^{-t A}$, the definition \eqref{eq: tuning D_t_bis} implies $D_t = e^{\frac12 tA} = \dot C_t^{-1/2}$ and the evolution \eqref{eq: evolution St} becomes particularly simple:
\begin{align}
\label{eq: evolution St 1}
\ddp{}{t}  S_t (\varphi)
=  \frac12  \dot C_t^{1/2}  \nabla V_t \big( \dot C_t^{1/2} S_t (\varphi) \big) .
\end{align}
The Jacobian evolves according to
\begin{align}
\ddp{}{t}  \nabla S_t (\varphi)
= \frac12  \dot C_t^{1/2}  \He V_t \big( \dot C_t^{1/2} S_t (\varphi) \big)  \dot C_t^{1/2} \nabla S_t (\varphi) .
\end{align}
As a consequence of \eqref{e:assCt-mon transport}, the Gr\"onwall inequality implies
\begin{align}
\forall f \in X, \quad 
\ddp{}{t}  | \nabla S_t (\varphi)f|^2 \geq \dot \mu_t ( \nabla S_t (\varphi) f)^2
\quad \Rightarrow \quad
| \nabla S_t (\varphi) f|^2 \geq \exp ( \mu_t ) |f|^2 .
\end{align}
By the inverse function theorem, we deduce that the operator norm of $\nabla \hat S_t $ is less than 
$\exp ( - \frac12 \mu_t)$.
\end{proof}
\begin{proof}[Proof of Theorem~\ref{thme: Lipschitz2}]
For each $t\geq 0$, we look for a matrix $B_t$ depending only on $C_t$ and its derivatives, 
commuting with them, and such that we can set up a Gr\"onwall estimate for $(B_t\nabla S_t(\varphi)f)^2$ for each $\varphi,f\in X$. 
Using that $\dot C_t = (tA+\id)^{-2}$, the definition~\eqref{eq: tuning D_t_bis} of $D_t$ implies $D_t=\dot C_t^{-1/4}$, and
in particular $D_t$ commutes with $B_s,C_s,\dot C_s,\ddot C_s$ for any $s$. 
Equation~\eqref{eq: evolution St} gives:
\begin{equation}
\ddp{}{t} \nabla S_t(\varphi)
=
\frac{1}{2}\dot C_t D_t\He V_t(D_t^{-1}S_t(\varphi)) D_t^{-1}\nabla S_t(\varphi)
.
\end{equation}
Therefore:
\begin{align}
&\ddp{}{t} \big|B_t\nabla S_t(\varphi)f\big|^2\nnb
&\qquad =
\Big(B_t\nabla S_t(\varphi)f, \Big[ B_t\dot C_t D_t \He V_t(D_t^{-1}S_t(\varphi))D_t^{-1}B_t^{-1}  + 2\dot {B_t} B_t^{-1}\Big]B_t\nabla S_t(\varphi)f\Big)
.
\end{align}
In order to use the Hessian bound~\eqref{e:assCt-mon transport2}, 
we need to choose $B_t$ in such a way that, for each $t\geq 0$:
\begin{align}
& B_t\dot C_t D_t \He V_t(D_t^{-1}S_t(\varphi))D_t^{-1}B_t^{-1}  + 2\dot {B_t} B_t^{-1}
\nnb
&\qquad 
\geq 
\dot C_t^{1/2}\He V_t(D_t^{-1}S_t(\varphi))\dot C_t^{1/2}
-\frac{1}{2}\dot C_t^{-1/2} \ddot C_t\dot C_t^{-1/2} 
.
\label{eq: lower bound operators}
\end{align}
With the choice $B_t = D_t^{-1}\dot C_t^{-1/2} = \dot C_t^{-1/4}=D_t$, we have $\dot B_t B_t^{-1} =  -\frac{1}{4}\ddot C_t \dot C_t^{-1}$
and  the left- and right-hand sides in \eqref{eq: lower bound operators} are in fact equal.
As a result, 
\begin{equation}
\forall t\geq 0:\qquad
\ddp{}{t} \big|D_t\nabla S_t(\varphi)f\big|^2
\geq 
\dot \lambda_t\big|D_t\nabla S_t(\varphi)f\big|^2
.
\end{equation}
The Gr\"onwall inequality then yields:
\begin{equation}
\forall t\geq 0:\qquad
\big|D_t\nabla S_t(\varphi)f\big|^2
\geq 
 e^{\lambda_t}|f|^2
,
\label{eq_lip_norm_th55}
\end{equation}
and the inverse function theorem yields
\begin{equation}
\forall t\geq 0:\qquad
\big|\nabla \hat S_t(\varphi)D_t^{-1}f\big|^2
\leq 
 e^{-\lambda_t}|f|^2
 ,
\end{equation}
which is equivalent to the claim.
\end{proof}

\section{Applications}
\label{sec: Applications}
In this section, we present concrete examples to which the multiscale Bakry-\'Emery criterion of Theorem~\ref{thm:LSI-mon} can be applied. 
The criterion gives a bound on the log-Sobolev constant in terms of real numbers $\dot\lambda_t$ ($t>0$) obtained  through convexity lower bounds on the renormalised potential:
\begin{equation}
\forall\varphi\in X, t\geq 0: \qquad
\dot C_t \He V_t(\varphi)\dot C_t - \frac{1}{2}\ddot C_t\geq \dot\lambda_t\dot C_t
.
\label{eq_lower_bound_Hess_V_t_sec5}
\end{equation}
These lower bounds depend on the choice of the covariance decomposition $(C_t)$. 
While Theorem~\ref{thm:LSI-mon} holds for any decomposition, checking~\eqref{eq_lower_bound_Hess_V_t_sec5} for concrete models often requires
a specific choice of decomposition. This will be illustrated in examples in the following sections.
We expect that the precise choice of the covariance decomposition is technical, as long as 
it takes into account the  important physical features of the model, e.g., the mode structure explained
in Example~\ref{example_free_field}.

For now, we discuss the sharpness of the criterion in Theorem~\ref{thm:LSI-mon}.
To fix ideas, suppose we have a model defined on $\Lambda_{\epsilon,L}=L\T^d\cap \epsilon\Z^d$ for $d\geq 2$, 
where either $\epsilon=1$ is fixed and $L\to \infty$ (statistical mechanics model)
or $\epsilon \to 0$ is a small regularisation parameter and $L$ is fixed or $L\to\infty$ (continuum field theory model in finite or infinite volume). 
From the discussion in Sections~\ref{sec: Difficulties arising from statistical physics perspective}--\ref{sec: Difficulties arising from continuum perspective} recall that the speed of convergence of an associated dynamics is often related to the presence of phase transitions in equilibrium.
These phase transitions are phenomena arising in the limit of large volumes, i.e., large $L$. 
In this limit, one typically expects that the log-Sobolev constant should be bounded from below independently of $\Lambda_{\epsilon,L}$ as long as no phase transition occurs. 
On the other hand, the additional regularisation parameter $\epsilon>0$ is not expected to affect the dynamics, i.e.,
the log-Sobolev constant should also be bounded from below as $\epsilon \to 0$.

Sharpness of the criterion of Theorem~\ref{thm:LSI-mon} is therefore evaluated through the following questions:
\begin{enumerate}
	\item In absence of a phase transition, does the criterion provide a lower bound on the log-Sobolev constant uniform in the volume (i.e., in $L$)?
	\item Does it give a bound on the log-Sobolev constant independent of the regularisation parameter $\epsilon$ for continuum models?
	\item If the first point holds, can one correctly estimate how the log-Sobolev constant vanishes as a function of the distance to the critical point, or at the critical point as a function of $L$?
\end{enumerate}
The third point is considerably more involved than the first two. 
To provide a guideline to read the next sections, we collect here the answers to the above three questions   obtained by studying the examples presented below. 
\begin{itemize}
\item For statistical mechanics models ($\epsilon=1$) at high temperature, i.e., far away from the critical point,
the multiscale Bakry-\'Emery criterion implies point (i) very generally, see, e.g.,
Theorem~\ref{thm: high_temp_Ising} for the specific case of the Ising model (which extends similarly to a much broader class of models).
This regime is also covered by many other criteria (see, e.g., the monographs~\cite{MR2352327,MR1971582} and~\cite{MR1715549}), 
with the notable exception of mean-field spin-glass models, see the discussion in~\cite{MR3926125},
for which the spectral nature of the criterion is important.

\item The criterion~\eqref{eq_lower_bound_Hess_V_t_sec5} can be sharp enough to reach the critical point, 
  see, e.g., Theorem~\ref{thm:ising} and Example~\ref{ex_lattice_phi4_mf}
  below for the Ising and $\varphi^4$ models.
  In other words, there are models for which the criterion implies (i) up to the phase transition. 
  We expect that the criterion should imply (i) up to the critical point for a large class of models.
  
\item The criterion can provide a bound on the log-Sobolev constant that is uniform as $\epsilon \to 0$ 
  (see Theorems~\ref{thm:sinegordon} and \ref{thm_continuous_phi4} for the $\varphi^4$ and sine-Gordon models), thus satisfying point (ii).
\end{itemize}
Point (iii) is in general open and in this generality hopelessly difficult, but some positive results exist.
The simplest models are ones with quadratic mean-field interaction such as the Curie-Weiss model,
in which case one can answer (iii) in the affirmative. 
Using this perspective, a detailed analysis above and below the critical temperature was also carried out for mean-field $O(n)$ models in~\cite{zbMATH07286868}. 
For certain more general continuum particle systems with mean-field interaction,
the behaviour close to the critical point is the subject of ongoing work~\cite{meanfield}.
For more models with more complicated spatial structure
in which computations can still be carried out, such as the hierarchical $\varphi^4_4$ model~\cite{MR4061408}, 
the criterion provides almost matching upper and lower bounds on how fast the log-Sobolev constant vanishes at the critical point.
For the nearest-neighbour Ising model in $d\geq 5$, the criterion implies polynomial bounds on the log-Sobolev constant at and near the critical temperature
\cite{MR4705299}.

\subsection{Applications to Euclidean field theory}
\label{sec: Applications to Euclidean field theory}

In this section, $\Lambda =\Lambda_{\epsilon,L}$ will be a discrete torus of mesh size $\epsilon$ and side length $L$ (assumed to be a multiple of $\epsilon$),
i.e., $\Lambda_{\epsilon,L} = L\T^d\cap \epsilon\Z^d$, and the discrete Laplacian on $\Lambda_{\epsilon,L}$ is given by
\begin{equation}
\forall \varphi\in\R^{\Lambda_{\epsilon,L}}:
\qquad
(\Delta^\epsilon\varphi)_x =\epsilon^{-2}\sum_{y\sim x}(\varphi_y-\varphi_x).\label{eq_def_continuous_Laplacian}
\end{equation}
The (lattice regularised) Euclidean field theory models we consider are of the form
\begin{equation} 
  \nu^{\epsilon,L}(d\varphi) \propto
  \exp\bigg[-\frac{\epsilon^d}{2}\sum_{x\in\Lambda_{\epsilon,L}}  \varphi_x(-\Delta^\epsilon\varphi)_x 
  -\,
  \epsilon^d\sum_{x\in \Lambda_{\epsilon,L}} V^\epsilon(\varphi_x)
  \bigg]\, d\varphi,
  \label{eq_lattice_EQFT}
\end{equation}
where $d\varphi$ denotes the Lebesgue measure on $\R^{\Lambda_{\epsilon,L}}$ and 
the single-site potential $V^\epsilon$ is a real-valued function chosen in such a way that
the $\epsilon\rightarrow 0$ limit of the measure
exists (in a suitable space of generalised functions) and is non-Gaussian. 
Writing $\nabla V^\epsilon(\varphi) = ((V^\epsilon)'(\varphi_x))_x$ for $\varphi\in\R^{\Lambda_{\epsilon,L}}$, 
the dynamics is the 
(lattice regularised) SPDE
\begin{equation} \label{e:SDE}
  d\varphi_t = \Delta^\epsilon \varphi_t\, dt - \nabla V^\epsilon(\varphi_t)\, dt + \sqrt{2}
  dB_t^{\epsilon,L}
\end{equation}
where $dB^{\epsilon,L}$ is space-time white noise on $\R_+ \times \Lambda_{\epsilon,L}$, i.e., the 
$t\mapsto B^{\epsilon,L}_{t,x}$ are independent Brownian motions with variance $\epsilon^{-d}$ for $x\in \Lambda_{\epsilon,L}$, or equivalently a standard Brownian motion with respect to the continuum inner product $(u,v)_\epsilon = \epsilon^d \sum_{x\in\Lambda_{\epsilon,L}} u_xv_x$. 
The $\epsilon\to 0$ limit of~\eqref{e:SDE} is a singular SPDE.
The pathwise (short time) limit theory for such SPDEs is the subject of Hairer's regularity
structure theory
\cite{MR3785597,MR3274562,MR3468250},
the paracontrolled method of Gubinelli et al.~\cite{MR3406823,MR3846835,MR3758734},
and the pathwise renormalisation group approach \cite{MR3459120,duch2022flow}.  
The log-Sobolev inequality for the associated dynamics takes the form:
\begin{equation}
\ent_{\nu^{\epsilon,L}}(F)
\leq 
\frac{2}{\gamma} D_{\nu^{\epsilon,L}}(\sqrt{F}),
\end{equation}
with the standard Dirichlet form with respect to the gradient $\nabla^\epsilon$ corresponding to $(\cdot,\cdot)_{\epsilon}$:
\begin{equation}
D_{\nu^{\epsilon,L}}(F)
= \E_{\nu^{\epsilon,L}}\qB{(\nabla^\epsilon F,\nabla^\epsilon F)_\epsilon}
=
\frac{1}{\epsilon^d}\sum_{x\in\Lambda_{\epsilon,L}} \E_{\nu^{\epsilon,L}}\Bigg[\bigg|\ddp{F}{\varphi_x}\bigg|^2\Bigg]
,
\end{equation}
i.e., $(\nabla^\epsilon F)_x = (\nabla_\varphi^\epsilon F)_x = \epsilon^{-d} \ddp{F}{\varphi_x}$. 
(Thus this gradient acts on functionals of fields $F: \R^{\Lambda_{\epsilon,L}} \to \R$
while the Laplacian \eqref{eq_def_continuous_Laplacian} acts on fields $\varphi \in \R^{\Lambda_{\epsilon,L}}$.)
\bigskip

We now discuss two prototypical models.

\paragraph{Continuum sine-Gordon model}

Let $d=2$.
For $0<\beta<8\pi$ and $z\in \R$,
the sine-Gordon model is defined by the single-site potential
\begin{equation} 
\forall \varphi \in \R:  \qquad 
  V^\epsilon(\varphi) = 2z \epsilon^{-\beta/4\pi} \cos(\sqrt{\beta}\varphi).
\end{equation}
One can also add a convex quadratic part to the measure: the massive sine-Gordon model with mass $m>0$ corresponds to the single-site potential $V^\epsilon(\varphi) +\frac12 m^2 \varphi^2$.

\paragraph{Continuum $\varphi^4$ model}

Let $d=2$ or $d=3$.
For $g>0$ and $r\in \R$,
the $\varphi^4_d$ measure is defined by
\begin{equation} \label{e:Veps-phi4}
\forall \varphi \in \R:  \qquad 
  V^\epsilon(\varphi) = \frac{g}{4}\varphi^4 + \frac{r +a^\epsilon(g)}{2} \varphi^2 ,
\end{equation}
where $a^\epsilon(g)$ is a divergent counterterm.
The notation  $g,r$ of the parameters introduced in \eqref{eq: Ginzburg--Landau potential} are also used for the continuum setting.
Explicitly, for an arbitrary fixed $m^2 > 0$, 
one can take $a^\epsilon(g)=a^\epsilon(g,m^2)$ with
\begin{equation} \label{e:counterterm1}
  a^\epsilon(g,m^2) := -3g \big(-\Delta^\epsilon+m^2)^{-1}(0,0) + 6g^2 \big\|\big(-\Delta^\epsilon+m^2\big)^{-1}(0,\cdot)\big\|^3_{L^3(\Lambda_{\epsilon,L})}
  ,
\end{equation}
and the notation $\|f\|^p_{L^p(\Lambda_{\epsilon,L})} = \epsilon^d \sum_{x\in\Lambda_{\epsilon,L}} |f(x)|^p$ for $p>0$. 
The counterterms defined in terms of different $m^2$ differ by additive constants and thus the choice of $m^2$ corresponds
to a normalisation. In the following we take $m^2=1$ for the definition of the counterterm.

\medskip

The sine-Gordon and $\varphi^4_d$ models are defined on the discretised torus $\Lambda_{\epsilon,L} = L\T^d\cap \epsilon\Z^d$. 
As explained in Section~\ref{sec: Difficulties arising from continuum perspective}, 
these models should be thought of as discretised versions of limiting models defined on the continuum torus $L\T^d$. 
Recall that the criterion of Theorem~\ref{thm:LSI-mon} asks for a lower bound on the Hessian of the renormalised potential uniformly in the field:
for some $\dot\lambda_t\in\R$,
\begin{equation}
\forall \varphi \in\R^{\Lambda_{\epsilon,L}}, t\geq 0:\qquad
\dot C_t \He V_t(\varphi)\dot C_t - \frac{1}{2}\ddot C_t
\geq 
\dot\lambda_t\dot C_t
.
\end{equation}
To obtain information on the dynamics in the $\epsilon\to 0$ limit, 
one is therefore interested in estimates of the renormalised potential that are uniform in $\epsilon,\varphi$.

For the sine-Gordon model, this was carried out in~\cite{MR4303014} by providing an explicit description of the renormalised potential at each scale
following \cite{MR914427}. By writing the renormalised potential as the Fourier type series
\begin{equation} \label{e:Vt-fourier-SG}
  V_t(\varphi) = \sum_{n=0}^\infty \frac{1}{n!} \int_{(\Lambda_{\epsilon,L} \times \{\pm 1\})^n} \tilde V_t(\xi_1,\dots, \xi_n) e^{i\sum_{i=1}^n \sigma_i\varphi_{x_i}} \, d\xi_1 \cdots d\xi_n,
\end{equation}
where $\xi_i= (x_i,\sigma_i) \in \Lambda_{\epsilon,L} \times \{\pm 1\}$ and we used the notation
\begin{equation}
  \int_{\Lambda_{\epsilon,L} \times \{\pm 1\}} F(\xi) d \xi = \epsilon^d \sum_{x\in\Lambda_{\epsilon,L}} \sum_{\sigma \in \{\pm 1\}} F(x,\sigma),
\end{equation}
the Polchinski equation for $V_t$ reduces to a triangular system of ODEs for the Fourier coefficients $\tilde V_t(\xi_1,\dots, \xi_n)$.
For $\beta<6\pi$ one can obtain the following control  on $\He V_t$ by estimating these Fourier coefficients.
It remains an interesting problem to extend such estimates to the optimal regime $\beta<8\pi$.
(In this regime $\beta<8\pi$, weaker estimates that are sufficient for the construction of the limiting continuum measure
are known \cite{MR1777310,MR849210},
see also the discussion in \cite{2010.07096}. These estimates are however insufficient for our purposes.)

\begin{theorem}\label{thm:sinegordon}
  For the massive continuum sine-Gordon model with mass $m>0$, let $A=-\Delta^\epsilon+m^2\id$ and $\dot C_t = e^{-tA}$ ($t\geq 0$). 
  Then, 
  if $\beta<6\pi$, 
  there is a constant $\mu^* = \mu^*(\beta,z,m,L)>0$ that does not depend on $\epsilon,t$, 
  such that with
  \begin{equation}
    V_0(\varphi) = \epsilon^2 
      \sum_{x\in\Lambda_{\epsilon,L}} 2 \epsilon^{-\frac{\beta}{4\pi}}z \cos(\sqrt{\beta}\varphi_x)
  \end{equation}
  the renormalised potential satisfies
  \begin{equation}
    \forall \varphi \in \R^{\Lambda_{\epsilon,L}}, t\geq 0:\qquad 
  \dot C_t  \He V_t(\varphi)\dot C_t  \geq 
     \dot \mu_t \dot C_t
    \quad \text{with} \quad \sup_{t\geq 0}\Big| \int_0^t \dot \mu_s  ds \Big| \leq \mu^*
\label{eq: condition sine Gordon}.
  \end{equation}
Given $\beta,z,m,L$, this yields a lower bound on the log-Sobolev constant $\inf_\epsilon \gamma^{\epsilon,L}(\beta,z,m)>0$.

  Moreover,
  the log-Sobolev constant is uniform in the large-scale parameter $L$ under the following condition. 
  If $L$ satisfies $m\geq 1/L$ and the coupling constant $z$ is such that $|z|\leq \delta_\beta m^{2+\beta/4\pi}$ for a small enough $\delta_\beta>0$, 
  then $\inf_{\epsilon} \gamma^{\epsilon,L}(\beta,z,m)>m^2 - O_\beta \big( m^{\beta/4 \pi} |z| \big)$, uniformly in $L\geq 1/m$.
\end{theorem}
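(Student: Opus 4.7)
The plan is to exploit the very special algebraic structure of the initial potential, which is a neutral combination of complex exponentials $e^{\pm i\sqrt{\beta}\varphi_x}$. Because the Gaussian convolution $\varphi \mapsto \Eg_{C_t}[\,\cdot\,(\varphi+\zeta)]$ acts diagonally on such exponentials (picking up a Gaussian factor $e^{-\frac{\beta}{2}\sigma\sigma' C_t(x,x')}$), and because the Polchinski nonlinearity pairs two exponentials at a time, the renormalised potential $V_t$ keeps a Fourier-type representation
\begin{equation}
V_t(\varphi) = \sum_{n=1}^\infty \frac{1}{n!}\int \tilde V_t^{(n)}(\xi_1,\ldots,\xi_n)\, \prod_{j=1}^n e^{i\sigma_j\sqrt{\beta}\varphi_{x_j}}\, d\xi_1\cdots d\xi_n,
\end{equation}
with $\xi_j = (x_j,\sigma_j) \in \Lambda_{\epsilon,L}\times\{\pm1\}$, where charge neutrality $\sum_j \sigma_j = 0$ is preserved by the flow. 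Substituting into the Polchinski equation produces a triangular system of ODEs for the coefficients $\tilde V_t^{(n)}$: the derivative of $\tilde V_t^{(n)}$ is a sum over pair-contractions of $\tilde V_t^{(k)}$ with $k\le n$ times explicit Gaussian factors built out of $\dot C_t$ and $C_t$. I would solve this system inductively.

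The heart of the argument is then to establish uniform (in $\epsilon$ and $t$) Brydges--Kennedy / tree-type bounds on $\tilde V_t^{(n)}$ of the schematic form
\begin{equation}
|\tilde V_t^{(n)}(\xi_1,\ldots,\xi_n)|\ \lesssim\ |z|^n\, \epsilon^{-n\beta/4\pi}\,\prod_{j<k} e^{-\sigma_j\sigma_k\, \beta\, C_t(x_j,x_k)/2}\ W_t(\xi_1,\ldots,\xi_n),
\end{equation}
with a weight $W_t$ describing how well the $n$ charges cluster on scale $\sqrt{t}$. The $\epsilon^{-n\beta/4\pi}$ divergence is absorbed by the Gaussian factor on the diagonal combined with the charge neutrality constraint, and the integrability of the resulting two-body kernel $e^{\beta C_t(x,y)}-1$ in $d=2$ is exactly what dictates the threshold $\beta < 8\pi$ for the theory and $\beta < 6\pi$ for the sharper three-point estimates needed to bound the nonlinear term in the ODE inductively and close the bootstrap. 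Once these coefficient bounds are in place, differentiating the expansion twice in $\varphi$ gives
\begin{equation}
(f,\He V_t(\varphi) f) = -\beta\sum_{n\ge 2}\frac{1}{n!}\int \tilde V_t^{(n)}(\xi)\Big(\sum_j \sigma_j f_{x_j}\Big)^2 \prod_j e^{i\sigma_j\sqrt{\beta}\varphi_{x_j}}\, d\xi,
\end{equation}
and bounding term by term in absolute value produces an estimate of the form $|\dot C_t^{1/2} \He V_t(\varphi) \dot C_t^{1/2}| \le -\dot\mu_t\, \id$ for some scalar $\dot\mu_t$ with $\int_0^\infty |\dot\mu_s|\, ds \le \mu^*<\infty$. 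The integrability comes from two sources: for large $t$, $\dot C_t = e^{-tA}$ decays exponentially at rate $m^2$; for small $t$, the short-distance divergence of $C_t(x,x)$ is exactly compensated by the $\epsilon^{-\beta/4\pi}$ normalisation of $z$.

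The main obstacle is the inductive control of the $\tilde V_t^{(n)}$ uniformly in $\epsilon$: the key point is that the relevant Gaussian two-body interaction $e^{\beta C_t(x,y)}$, after renormalisation, has to be integrated against itself in the bound for $\tilde V_t^{(2)}$, and only for $\beta < 6\pi$ does this integral converge uniformly in $\epsilon$; pushing into $[6\pi,8\pi)$ would require a more refined multiscale expansion allowing for partial resummations, which I would not attempt here. Once the Hessian bound is proved, the log-Sobolev inequality follows from Theorem~\ref{thm:LSI-mon} since $-\frac12\ddot C_t = \frac12 A\dot C_t \ge \frac12 m^2 \dot C_t$, giving $\gamma^{\epsilon,L}(\beta,z,m) \ge e^{-2\mu^*} m^2$. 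For the $L$-uniform statement under $m\ge 1/L$ and $|z|\le \delta_\beta m^{2+\beta/4\pi}$, the same estimates go through with constants depending only on $m$ (because $C_t$ and all its derivatives are dominated in infinite volume by a kernel on scale $1/m$), and the smallness of $z$ in units of $m^{2+\beta/4\pi}$ makes $\mu^* = O_\beta(|z|m^{-(2+\beta/4\pi)})$, so that $\gamma \ge m^2(1 - 2\mu^*) + o(\mu^*) \ge m^2 - O_\beta(m^{\beta/4\pi}|z|)$ as claimed.
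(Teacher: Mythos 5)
Your overall strategy matches the one taken in \cite{MR4303014} (following \cite{MR914427}) and sketched in the paper: expand the renormalised potential $V_t$ as a Fourier series in the exponentials $e^{i\sigma\sqrt{\beta}\varphi_x}$, observe that the Polchinski equation reduces to a triangular system of ODEs for the Fourier coefficients $\tilde V_t^{(n)}$, and establish $\epsilon$-uniform inductive bounds on these coefficients that give the Hessian control for $\beta<6\pi$. However, two of your claims are incorrect.

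First, charge neutrality $\sum_j\sigma_j=0$ is \emph{not} preserved by the Polchinski flow started from a cosine. The initial Fourier data lives entirely in the $n=1$, charge $\pm1$ sector: $\tilde V_0^{(1)}(x,\sigma)=z\,\epsilon^{-\beta/4\pi}$. The Laplacian term $\frac12\Delta_{\dot C_t}$ acts diagonally on each Fourier mode (multiplying $e^{i\sigma\sqrt\beta\varphi_x}$ by $-\frac\beta2\dot C_t(x,x)$), while the quadratic nonlinearity $-\frac12(\nabla V_t)^2_{\dot C_t}$ pairs a total-charge $q_1$ term with a total-charge $q_2$ term to produce a total-charge $q_1+q_2$ term. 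Starting from charges $\pm1$, every integer charge sector is therefore populated as soon as $t>0$. In particular, the $n=1$ term (the renormalised cosine) persists for all $t$ and is the dominant contribution to $\He V_t$ on the diagonal. Your Hessian identity with $\sum_{n\geq 2}$ omits this leading term; the sum must start at $n=1$. Your surrounding observation that the Gaussian factor $e^{-\frac\beta2 C_t(x,x)}$ tames the $\epsilon^{-\beta/4\pi}$ divergence is exactly right, but it is precisely in the $n=1$ sector that this balance is decisive and must be tracked.

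Second, the $m$-scaling of $\mu^*$ in the $L$-uniform paragraph has the wrong sign in the exponent. A dimensional check (the dimensionless coupling at the mass scale $1/m$ is $z\,m^{-(2-\beta/4\pi)}$, since $z$ carries scaling dimension $2-\beta/(4\pi)$) shows the first-order bound should read $\mu^*=O_\beta\bigl(|z|\,m^{-(2-\beta/4\pi)}\bigr)$, not $O_\beta\bigl(|z|\,m^{-(2+\beta/4\pi)}\bigr)$. Substituting into $\gamma\geq e^{-2\mu^*}m^2$ reproduces the theorem's error term $O_\beta\bigl(m^{+\beta/4\pi}|z|\bigr)$; your version gives $O_\beta\bigl(m^{-\beta/4\pi}|z|\bigr)$, larger by a factor $m^{-\beta/2\pi}$, which does not match the statement.
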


For the sine-Gordon model, 
the multiscale Bakry-\'Emery criterion of Theorem~\ref{thm:LSI-mon} is thus seen to provide the optimal independence of $\epsilon$ of the log-Sobolev constant in finite volume, 
as well as of $L$ under an additional small coupling assumption depending on the external mass.
Under the condition \eqref{eq: condition sine Gordon} derived in Theorem \ref{thm:sinegordon}, 
Shenfeld~\cite{Shenfeld2022ExactRG} also used the multiscale Bakry-\'Emery criterion to construct a Lipschitz transport map between
the continuum sine-Gordon model and the free field with the same mass (i.e., the model~\eqref{eq_lattice_EQFT} with $V^\epsilon(\varphi)=\frac12 m^2\varphi^2$),
see Example~\ref{ex_sine_gordon_transport}.
\bigskip

The log-Sobolev inequality for the $\varphi^4_d$ model (where $d=2,3$) was obtained in~\cite{MR4720217}.
In this case, a sufficiently strong description of the renormalised potential is difficult to obtain directly.
To provide some context, we remark that
Polchinski's original article \cite{polchinski1984269} assumes a representation of $V_t$ as a \emph{formal} power series
analogous to \eqref{e:Vt-fourier-SG},
\begin{equation}
  V_t(\varphi) = \sum_{m=0}^\infty \frac{1}{(2m)!} \int_{(\Lambda_{\epsilon,L})^{2m}}  \tilde V_t(x_1,\dots, x_{2m}) \varphi_{x_1}\cdots\varphi_{x_{2m}} \, dx_1\cdots dx_{2m}.
\end{equation}
In this representation, the Polchinski equation again formally reduces to a system of ODEs for the kernels $\tilde V_t$,
and this representation is very successful for understanding the renormalised potential as a formal power series.
However,  
the series representation in powers of $\varphi$ is necessarily divergent as it would imply analyticity of the solution
around $0$ as a function of the coupling constant $g$ in front of the $\varphi^4$ term.
It therefore seems difficult to use this naive expansion in powers of $\varphi$ to obtain nonperturbative analytic information about  $V_t$,
as is needed for the construction of the measure and even more so to prove the log-Sobolev inequality.

Nonetheless, the renormalised potential  itself is well defined,
and it is immediate from the definition \eqref{e:V-def}
that its derivatives correspond to correlation functions of the fluctuation measure $\mu_t^{\varphi}$
defined in \eqref{e:fluctuationmeasure-alt}.
In particular, by Lemma~\ref{lem:fluctuationmeasure},
\begin{equation}
  \He V_t(\varphi) = C_t^{-1} - C_t^{-1}\cov(\mu_t^\varphi) C_t^{-1},
\end{equation}
where $\cov(\mu_t^\varphi)$ is the covariance matrix of $\mu_t^\varphi$.
Thus understanding the Hessian of the renormalised potential is equivalent to understanding the two-point function of the fluctuation measure.
To analyse this two-point function, the following choice of covariance decomposition (known as Pauli--Villars regularisation) is very helpful:
\begin{equation}
  C_t= (A+1/t)^{-1} 
  .
\end{equation}
Indeed, with this choice, the fluctuation measure $\mu^\varphi_t$ has the same structure as the original measure, except for  an additional
mass term $\frac{1}{2t} \zeta^2$ and an external field term $(\zeta,C_t^{-1}\varphi)$.
Therefore various correlation inequalities available for the original $\varphi^4$ measure also apply to the fluctuation measure.
These include the FKG inequality (for all $\varphi$) which shows that the covariance matrix has positive entries,
the Griffiths inequalities (when $\varphi=0$) which shows that the two-point function is monotone in $t$ and $\mu$, and 
the recent inequality of Ding--Song--Sun (DSS)~\cite{MR4586225} which shows that the covariance matrix is entrywise maximised at $\varphi=0$.
In this setting, with $\dot C_t = (tA+1)^{-2}$ and $-\frac12 \ddot C_t = A(tA+1)^{-3}$,
the left-hand side of the multiscale Bakry--\'Emery criterion becomes:
\begin{equation}
  \dot C_t \He V_t(\varphi) \dot C_t- \frac12 \ddot C_t = \dot C_t^{1/2}\Big(\frac{1}{t} - \frac{\cov(\mu_t^\varphi)}{t^2}\Big) \dot C_t^{1/2},
\end{equation}
and the combination of the FKG and DSS inequalities with the Perron--Frobenius theorem imply that the right-hand
side is lower bounded as a quadratic form by the following term involving only $\varphi=0$
(where $\|M\|$ denotes the operator norm of a matrix $M$):
\begin{equation}
\Big(\frac{1}{t}- \frac{\|\cov(\mu_t^{\varphi=0})\|}{t^2}\Big)\dot C_t
,
\end{equation}
see  \cite{MR4720217} for details.

This representation is the starting point for the proof of the following theorem, proved in \cite{MR4720217}.
For concreteness, we choose the counterterms \eqref{e:counterterm1} in the definition of the $\varphi^4_d$ model with $m=1$
and denote the 0-field susceptibility of the $\varphi^4_d$ model by
\begin{equation}
  \chi^{\epsilon,L}(g,r)
  :=
  \epsilon^d \sum_{x\in\Lambda_{\epsilon,L}}\big<\varphi_0\varphi_x\big>^{\epsilon,L}_{g,r}
  ,
\end{equation}
where $\big<\cdot\big>^{\epsilon,L}_{g,r}$ denotes the expectation of the $\varphi^4_d$ measure.
The second Griffiths inequality implies that $\chi^{\epsilon,L}(g,r)$ is monotone in $r$.
We then define the critical point by
\begin{equation}
  r_c (g) =  \inf\Big\{ r \in \R: \chi^{\epsilon,L}(g,r) \text{ is uniformly bounded as $\epsilon \to 0$ and $L \to \infty$} \Big\}.
\end{equation}
Choosing a linear test function,
the Poincar\'e constant of the $\varphi^4_d$ model (and thus the log-Sobolev constant) is not uniformly bounded away from $0$ when $r< r_c(g)$.
The following theorem shows that it is bounded when $r > r_c(g)$.

\begin{theorem}\label{thm_continuous_phi4}
  For the continuum $\varphi^4$ model in $d=2,3$, let
  $A=-\Delta^\epsilon+\id$ and
  $\dot C_t = (tA+\id)^{-2}$ ($t\geq 0$)  and take $m=1$ in the definition~\eqref{e:counterterm1} of the counterterm.
  Then with
  \begin{equation}
    V_0(\varphi) = \epsilon^d\sum_{x\in\Lambda_{\epsilon,L}} (\frac{g}{4}\varphi^4_x + \frac{r-1+a^\epsilon(g)}{2}\varphi_x^2),
    \label{eq_def_V_0_phi4}
  \end{equation}
  the renormalised potential satisfies
  \begin{equation}
        \forall \varphi \in \R^{\Lambda_{\epsilon,L}}, t\geq 0:\qquad 
    \dot C_t \He V_t(\varphi)\dot C_t - \frac12 \ddot C_t \geq 
    \Big(\underbrace{\frac{1}{t}-\frac{\chi(g, r+1/t)}{t^2}}_{\dot \lambda_t}\Big) \dot C_t
    ,
    \label{eq_hessian_bound_phi4}
  \end{equation}
  and $\lambda_t \geq \log(1+t)-C(g,r,L)$ uniformly in $\epsilon>0$ for any $g>0$, $r \in \R$ and any fixed $L$,
  and $C(g,r,L)$ is independent of $L$ if $r> r_c(g)$.
  In particular, the following integral is bounded under the same conditions:
  \begin{equation}
    \int_0^\infty e^{-2 \lambda_t }\, dt, \qquad \lambda_t =\int_0^t \dot \lambda_s \, ds
    .
  \end{equation}
  The log-Sobolev constant $\gamma^{\epsilon,L}(g,r)$ thus satisfies 
  $\inf_{\epsilon,L}\gamma^{\epsilon,L}(g,r)>0$ for any $g>0,r \in\R$ for which the measure does not have a phase transition ($r>r_c(g)$),
  and $\inf_\epsilon \gamma^{\epsilon,L}(g,r)>0$ for any $g>0,r\in\R$ and $L$ fixed.
\end{theorem}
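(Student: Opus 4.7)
The strategy is to apply the multiscale Bakry--\'Emery criterion of Theorem~\ref{thm:LSI-mon} in the Pauli--Villars parametrisation $C_t=(A+1/t)^{-1}$, so that $\dot C_t=(tA+\id)^{-2}$ and $-\tfrac12\ddot C_t=A(tA+\id)^{-3}$. Since $A,C_t,\dot C_t,\ddot C_t$ all commute (they are functions of $A$), a direct computation shows
\begin{equation}
\dot C_t C_t^{-1}=\tfrac{1}{t}(tA+\id)^{-1},\qquad \dot C_tC_t^{-1}\dot C_t-\tfrac12\ddot C_t=\tfrac{1}{t}\dot C_t.
\end{equation}
Combined with the variance formula $\He V_t(\varphi)=C_t^{-1}-C_t^{-1}\cov(\mu_t^\varphi)C_t^{-1}$ from Lemma~\ref{lem:fluctuationmeasure}, this identifies the left-hand side of \eqref{eq_lower_bound_Hess_V_t_sec5} with
\begin{equation}
\dot C_t\He V_t(\varphi)\dot C_t-\tfrac12\ddot C_t=\dot C_t^{1/2}\Big(\tfrac{1}{t}-\tfrac{\cov(\mu_t^\varphi)}{t^2}\Big)\dot C_t^{1/2},
\end{equation}
so that verifying \eqref{eq_hessian_bound_phi4} reduces to an upper bound on the operator norm of $\cov(\mu_t^\varphi)$ as a quadratic form.

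Next I would identify the fluctuation measure explicitly. Plugging $V_0$ from~\eqref{eq_def_V_0_phi4} into the formula \eqref{e:fluctuationmeasure-alt} and using $C_t^{-1}=A+1/t=-\Delta^\epsilon+1+1/t$, the mass shift $r-1+1/t+1=r+1/t$ cancels the $-1$ and one sees that $\mu_t^{\varphi=0}$ is exactly the lattice $\varphi^4_d$ measure on $\Lambda_{\epsilon,L}$ with the same $g$ but with $r$ replaced by $r+1/t$ (the counterterm is unchanged since it depends on $g$ and the reference mass $m=1$ only). In particular, $\cov(\mu_t^{\varphi=0})(x,y)=\langle\zeta_x\zeta_y\rangle^{\epsilon,L}_{g,r+1/t}$ is translation invariant on the torus.

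To reduce the $\varphi$-dependent bound to $\varphi=0$, I would invoke the three ingredients highlighted in the excerpt. FKG applied to $\mu_t^\varphi$ (which has the form of a ferromagnetic $\varphi^4$ measure with external field) gives $\cov(\mu_t^\varphi)(x,y)\geq 0$ entrywise. The Ding--Song--Sun monotonicity then gives the entrywise bound $\cov(\mu_t^\varphi)(x,y)\leq\cov(\mu_t^{0})(x,y)$. Since both matrices are entrywise nonnegative and symmetric, Perron--Frobenius yields $\|\cov(\mu_t^\varphi)\|\leq\|\cov(\mu_t^{0})\|$ as operator norms. Because $\cov(\mu_t^{0})$ is a nonnegative translation-invariant kernel, its operator norm is attained at the constant eigenvector, giving $\|\cov(\mu_t^{0})\|=\epsilon^d\sum_x\cov(\mu_t^{0})(0,x)=\chi(g,r+1/t)$. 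Putting these together produces exactly the lower bound $\dot\lambda_t=1/t-\chi(g,r+1/t)/t^2$ in \eqref{eq_hessian_bound_phi4}.

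The final task is to integrate $\dot\lambda_s$ and extract the logarithmic growth $\lambda_t\geq\log(1+t)-C(g,r,L)$, which by $\int_0^\infty e^{-2\lambda_t}\,dt\leq e^{2C}\int_0^\infty(1+t)^{-2}\,dt<\infty$ immediately implies the log-Sobolev inequality through Theorem~\ref{thm:LSI-mon}. Writing the integrand as $\tfrac{1}{s}(1-\chi(g,r+1/s)/s)$, the Brascamp--Lieb inequality applied to the convex part of the Hamiltonian of $\mu_s^{0}$ yields $\chi(g,r+1/s)\leq 1/(r+1/s)$, so the integrand is integrable and in fact $O(s)$ near $s=0$, independently of $\epsilon$. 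For large $s$ one has $\chi(g,r+1/s)\to\chi(g,r)$, giving the leading $\int^t ds/s=\log t$ behaviour. The hard part, and the real content of the result, is obtaining uniform-in-$\epsilon$ control of $\chi(g,r+1/s)$ for $s$ away from $0$: in finite volume this follows for all $r\in\R$ from the existence of the $\varphi^4_d$ continuum limit (the counterterms $a^\epsilon(g)$ are precisely designed to keep two-point functions uniformly bounded as $\epsilon\to 0$), whereas uniform-in-$L$ bounds at $r>r_c(g)$ require that the susceptibility stays bounded in the thermodynamic limit, which is the definition of the noncritical regime. Once the uniform bound $\chi(g,r+1/s)\leq K(g,r,L)$ is in hand, the integral splits as $\lambda_t\geq\int_0^1(\cdots)\,ds+\int_1^t \tfrac{1}{s}(1-K/s)\,ds\geq\log(1+t)-C(g,r,L)$, concluding the proof.
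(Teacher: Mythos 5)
Your computation identifying the left-hand side of the criterion with
\begin{equation}
\dot C_t\He V_t(\varphi)\dot C_t-\tfrac12\ddot C_t
=\dot C_t^{1/2}\Big(\tfrac{1}{t}\,\id-\tfrac{1}{t^2}\cov(\mu_t^\varphi)\Big)\dot C_t^{1/2}
\end{equation}
is correct, and the use of FKG, Ding--Song--Sun, and Perron--Frobenius to reduce $\cov(\mu_t^\varphi)$ to $\cov(\mu_t^{0})$ and then to the susceptibility is exactly the route the paper takes (the sketch in the text preceding the theorem is precisely this). So the bound~\eqref{eq_hessian_bound_phi4} is fine. The gap is in the claim that the right-hand side integrates to $\log(1+t)-C(g,r,L)$ \emph{uniformly in $\epsilon$}.

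You justify the small-$s$ regime by asserting that Brascamp--Lieb gives $\chi(g,r+1/s)\le(r+1/s)^{-1}$. For the lattice model ($\epsilon=1$, no counterterm), this is right and is exactly Example~\ref{ex_lattice_phi4}. For the continuum model it is not: the Hessian of the Hamiltonian of $\mu_s^0$ is $-\Delta^\epsilon+1/s+r+a^\epsilon(g)+3g\zeta_x^2$, and Brascamp--Lieb requires $r+1/s+a^\epsilon(g)>0$. Since $a^\epsilon(g)\to-\infty$ (logarithmically in $d=2$, like $1/\epsilon$ in $d=3$), the window of $s$ on which the fluctuation measure is log-concave shrinks to $\{0\}$ as $\epsilon\to 0$; there is no fixed $s_0>0$, independent of $\epsilon$, on which you can apply Brascamp--Lieb. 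This is the very phenomenon the paper emphasises in Section~\ref{sec: Difficulties arising from continuum perspective} (``due to the counterterms, the resulting measures are usually again very non-convex microscopically, precluding the use of the Bakry--\'Emery theory''). Without the small-$s$ bound $\chi(g,r+1/s)\lesssim s$ uniform in $\epsilon$, the integral $\int_0^1\chi(g,r+1/s)s^{-2}\,ds$ cannot be controlled by a crude uniform bound $\chi\le K$, since $\int_0^1 K s^{-2}\,ds=\infty$.

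Your hand-wave that for $s$ away from $0$ the uniform-in-$\epsilon$ bound ``follows from the existence of the continuum limit'' also passes over the genuine content. Existence of the limit is an assertion about the measure at a \emph{fixed} mass $r$; what is needed here is a family of bounds on $\chi^{\epsilon,L}(g,r+1/s)$ for all masses $r+1/s$ simultaneously, with the right linear vanishing as $1/s\to\infty$, that is moreover monotone/propagated through the flow. The Griffiths inequality gives $\chi(g,r+1/s)\le\chi(g,r)$, but as noted this is useless near $s=0$. Obtaining the correct asymptotics $\chi(g,r+1/s)\simeq s$ uniformly in $\epsilon$ in the presence of the divergent counterterm is the technical heart of the theorem and is deferred in the survey to \cite{MR4720217}; your argument does not supply a substitute.

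In short, you have correctly reproduced the paper's structural reduction and correctly proved the \emph{lattice} version (Example~\ref{ex_lattice_phi4}), but the step you treat as the ``easy'' small-$s$ part is precisely where the continuum difficulty lies, and your Brascamp--Lieb bound is false there.
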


In the $\varphi^4_d$ case, 
the multiscale criterion gives a log-Sobolev constant bounded uniformly in the volume and $\epsilon$ in the entire single-phase region. 
However, the bound obtained on the log-Sobolev constant has accurate dependence on $g, r$ only far away from the transition, 
corresponding to values of $g,r$ such that $r >0$ and $g \ll r$.
In $d>4$ where the $\varphi^4$ model does not have a continuum limit,
one can obtain a polynomial bound near the critical point on the log-Sobolev constant for the lattice $\varphi^4_d$ model.
The same applies to the Ising model.
The next sections detail these proofs of the log-Sobolev inequality for the lattice $\varphi^4$ and Ising models.

\subsection{Applications to lattice $\varphi^4$ models}

The lattice $\varphi^4$ model corresponds to the case $\epsilon=1$ in~\eqref{eq_def_V_0_phi4},
and we denote its single-site potential for $g>0,r\in\R$ by:
\begin{equation}
V(\varphi) 
=
\frac{g}{4}\varphi^4 + \frac{r}{2}\varphi^2,
\end{equation}
and choose the coupling matrix as $A=-\Delta$ where $\Delta$ is the lattice Laplace operator corresponding to $\epsilon=1$
in \eqref{eq_def_continuous_Laplacian}.
Boundary conditions do not matter much, but for concreteness we consider periodic boundary conditions, i.e.,
the state space is $\Lambda_L = \Z^d/(L\Z)^d$ for any $d\geq 1$.
The associated expectation is denoted by $\big<\cdot\big>_{g,r}$ and the finite volume susceptibility is defined by
\begin{equation}
  \chi^L(g,r)
= 
\sum_{x\in \Lambda_L}\big<\varphi_0\varphi_x\big>_{g,r}
.
\end{equation}
The critical point $r_c(g)$ is again defined as the infimum over all $r \in \R$ such that $\chi^L(g,r)$ is bounded from above uniformly in $L$.
It is the case that $r_c(g)<0$.
As a special case, Theorem~\ref{thm_continuous_phi4} yields a uniform lower bound on the log-Sobolev constant for all $r>r_c(g)$,
but the proof of this statement can be simplified considerably in this case as we now outline.

\begin{example}[\!\!{\cite[Example 3.1]{MR4720217}}]
  \label{ex_lattice_phi4}
For the lattice $\varphi^4$ model on $\Lambda_L = \Z^d/(L\Z)^d$ in any dimension,
let $\dot C_t = (tA+\id)^{-2}$. 
Then analogous to \eqref{eq_hessian_bound_phi4}:
\begin{equation}
  \forall \varphi \in \R^{\Lambda_{\epsilon,L}}, t\geq 0:\qquad 
  \dot C_t \He V_t(\varphi)\dot C_t - \frac12 \ddot C_t \geq 
  \Big(\underbrace{\frac{1}{t}-\frac{\chi(g,r+1/t)}{t^2}}_{\dot \lambda_t}\Big) \dot C_t
  ,
  \label{eq_hessian_bound_phi4_lattice}
\end{equation}
and the following integral (and thus the inverse log-Sobolev constant) is bounded for all $r>r_c(g)$:
\begin{equation}   \label{eq_hessian_bound_phi4_lattice_integral}
  \int_0^\infty e^{-2 \lambda_t} \, dt.
\end{equation}
\end{example}

\begin{example} \label{ex_lattice_phi4_mf}
Assume that the susceptility satisfies the mean-field bound:
for some $D>1/2,\delta\in [0,1]$:
\begin{equation} \label{e:lattice_phi4_mfb}
 \chi^L (g,r+1/t) \leq \frac{D}{\delta+1/t}.
\end{equation}
This bound holds, in particular, for the near-critical $\varphi^4$ model on $\Z^d$ when $d \geq 5$, for some $D>1/2$
and $\delta=L^{-d} + r-r_c(g)$ ($r\geq r_c(g)$), see \cite{MR678000}.
Then
\begin{equation}
  \int_0^\infty e^{-2 \lambda_t} \, dt \leq C(g,D) \delta^{-2D+1}
  \label{eq: LS constant delta}
\end{equation}
is bounded polynomially in $1/\delta$.
\end{example}

\begin{proof}[Sketch of Examples~\ref{ex_lattice_phi4}--\ref{ex_lattice_phi4_mf}]
For $t_0 >0$ small enough such that $r+ 1/t_0 >0$, the measure with coupling constants $(g,r+1/t_0)$ is log-concave and
the Brascamp--Lieb inequality \eqref{e:BL} implies
\begin{equation}
\forall s \leq t_0, \quad 
\chi^L (g,r+1/s) \leq \frac{1}{r+1/s}  
\end{equation}
and hence
\begin{equation} \label{e:phi4-ellt-1}
  \lambda_{t_0}  = \int_0^{t_0} \pa{\frac{1}{s} - \frac{\chi^L (g,r+1/s)}{s^2}} \, ds
  \geq \int_0^{t_0} \frac{r}{1+rs} \, ds
  = \log(1+r {t_0}).
\end{equation}
On the other hand, for any $t >0$, the second Griffiths inequality implies $\chi^L (g,r+1/t) \leq \chi^L (g,r)$ and thus
\begin{equation} \label{e:phi4-ellt-2}
  \lambda_{t} - \lambda_{t_0} \geq \int_{t_0}^t \pa{\frac{1}{s} - \frac{\chi^L (g,r)}{s^2}} \, ds 
  \geq 
  \log(\frac{t}{t_0}) - \frac{\chi^L (g,r)}{t_0}.
\end{equation}
Combining both bounds with $t_0$ such that $r+1/t_0>0$, it follows that the integral
\eqref{eq_hessian_bound_phi4_lattice_integral} (and thus the log-Sobolev constant) is bounded below whenever $\chi^L(g,r)$ is bounded above.

Now assume that the susceptibility satisfies the mean-field bound \eqref{e:lattice_phi4_mfb}.
Then
\begin{equation} \label{e:phi4-ellt-3}
  \lambda_t - \lambda_{t_0}
  \geq \int_{t_0}^t \pa{\frac{1}{s} - \frac{D}{\delta s^2 + s}} \, ds
  \geq C(D,t_0) + \log t - D\log(\frac{t}{1+\delta t})
  .
\end{equation}
Combining this bound with $\lambda_{t_0} \geq \log(1+r t_0)$ for $t_0\leq 1/|r|$ gives
\begin{equation}
  \int_0^\infty e^{-2 \lambda_t } \, dt   \leq C(g,D) \delta^{1-2D}
\end{equation}
as claimed.
\end{proof}

\subsection{Applications to transport maps}
\label{sec:appl-transport}

Another application of the bounds on the Hessian of renormalised potential are  to the transport maps of
Theorems~\ref{thme: Lipschitz}--~\ref{thme: Lipschitz2}, which can be used to recover the log-Sobolev inequalities.

\begin{example}\label{ex_sine_gordon_transport}
  For the continuum sine-Gordon model (under the same assumptions as in Theorem~\ref{thm:sinegordon}),
  the transport map of Theorem~\ref{thme: Lipschitz} has Lipschitz constant bounded uniformly in $t$:
    \begin{equation}
    \forall \varphi\in X, f\in X: \qquad |\nabla \hat S_t(\varphi) f| \leq C(\beta,z,L)|f|.
  \end{equation}
  The constant $C(\beta,z,L)$ is independent of $L$ under the same assumptions as in Theorem~\ref{thm:sinegordon}.
\end{example}

\begin{proof}[Sketch]
  Using bounds on the Hessian of the renormalised potential from Theorem~\ref{thm:sinegordon},
  this is a direct consequence of Theorem~\ref{thme: Lipschitz}.
\end{proof}

\begin{example}\label{ex_continuum_phi4_transport}
  For the continuum $\varphi^4$ model (under the same assumptions as in Theorem~\ref{thm_continuous_phi4}) 
  where $A = -\Delta^\epsilon +\id$,
  the gradient of the transport map of Theorem~\ref{thme: Lipschitz2} is uniformly bounded from $H^1$ to $L^2$ in the following sense:
  \begin{equation} \label{eq_continuum_phi4_transport}
    \forall \varphi\in X, f\in X: \qquad |\nabla \hat S_t(\varphi) f| \leq C(g,r,L)|f|_{A}
  \end{equation}
  where $|f|_A = |\sqrt{A}f|$ is the (discrete) Sobolev norm and $C(g,r,L)$ is independent of $L$ if $r>r_c(g)$.
\end{example}

In particular, in the application \eqref{eq_LSI_transport},
the reference Gaussian measure $\hat \nu_\infty$ satisfies the log-Sobolev inequality with quadratic form $Q=A^{-1}$ and
the bound on the transport map \eqref{eq_continuum_phi4_transport} is exactly the required assumption
\eqref{eq: Lipschitz bound-bis} to recover the log-Sobolev inequality for the continuum $\varphi^4$ measure.

\begin{proof}[Sketch]
  Theorem~\ref{thme: Lipschitz2} gives the following bound on the gradient of the transport map:
  \begin{equation}
    |\nabla \hat S_t(\varphi) f|^2 \leq e^{-\lambda_t}|f|_{1+tA}^2.
  \end{equation}
  By Theorem~\ref{thm_continuous_phi4}), $\lambda_t \geq \log(1+t)-C(g,r)$ and therefore
  \begin{equation}
    |\nabla \hat S_t(\varphi) f|^2 \leq \frac{C(g,r)}{1+t}|f|_{1+tA}^2 \leq C(g,r) |f|_A^2
  \end{equation}
  where we used that $(1+tA)/(1+t) \leq A$ if $A \geq \id$.
\end{proof}

\begin{example} \label{ex_lattice_phi4_transport}
For the lattice $\varphi^4$ model, for any $r>r_c(g)$,
the transport map of Theorem~\ref{thme: Lipschitz2} has Lipschitz constant  bounded uniformly in $L$ and $t$.
Moreover, if the mean-field bound  \eqref{e:lattice_phi4_mfb} holds then the Lipschitz constant of the transport map is of order $\delta^{-D/2}$:
  \begin{equation}
    \forall \varphi\in X, f\in X: \qquad |\nabla \hat S_t(\varphi) f| \leq C(g,D)\delta^{-D/2}|f|.
  \end{equation}
\end{example}

\begin{proof}[Sketch]
Using that the operator norm $\|A\|$ is bounded,
Theorem~\ref{thme: Lipschitz2} gives the following bound on the Lipschitz constant of the transport map:
\begin{equation}
\sqrt{t\|A\| +1}\, e^{- \frac12 \lambda_t} 
\leq C \sqrt{1+t}\, e^{- \frac12 \lambda_t}
.
\label{eq: Lipschitz constant}
\end{equation}
On the other hand, by \eqref{e:phi4-ellt-1} and \eqref{e:phi4-ellt-2}, for $r>r_c(g)$, we have
\begin{equation}
  e^{- \frac12 \lambda_t} \leq C(g, r) \frac{1}{\sqrt{1+t}}.
\end{equation}
Combining both bounds gives the uniform bound on the Lipschitz constant for any $r>r_c(g)$.

Substituting the bound on the Hessian obtained from the mean-field bound~\eqref{e:lattice_phi4_mfb} on the susceptibility into the above bound on the Lipschitz constant of the transport map
again yields a polynomial bound in $\delta$. 
Indeed, by \eqref{e:phi4-ellt-3}, the bound obtained is of order
\begin{equation}
  \sqrt{t\|A\| +1}\, e^{- \frac12 \lambda_t}
  \leq C \sqrt{1+t}\, e^{- \frac12 \lambda_t}
  \leq C(g,D) \pa{\frac{t}{1+\delta t}}^{D/2}
  \leq C(g,D) \delta^{-D/2}
\end{equation}
which is the claimed bound.
\end{proof}

\subsection{Applications to Ising models}

In this section, we explain how to apply the ideas developed in Section~\ref{sec_gaussian_integration_and_polchinski} to Ising models with discrete spins.
Using the Bakry--\'Emery criterion and its multiscale version, these ideas were developed in \cite{MR3926125,MR4705299},
while closely related results were obtained using spectral and entropic independence and stability estimates in \cite{MR4408509,2106.04105,2203.04163}.
Similar ideas apply to $O(n)$ models (see \cite{MR3926125}) 
for which it is however not known in general that the critical point can be reached due to the lack of appropriate correlation inequalities.
For concreteness and because the results are most complete in this case, we focus on the situation of Ising models.

\subsubsection{Renormalised potential}
\label{subsubsec: Renormalised potential}

The Ising model with coupling matrix $A$ at inverse temperature $\beta>0$ and (site-dependent) external field $h$ on a finite set $\Lambda$ is defined by:
\begin{equation}
  \E_{\mu}[F] = \E_{\mu_{\beta,h}}[F] \propto \sum_{\sigma \in \{\pm 1\}^\Lambda} e^{-\frac12 (\sigma, \beta A\sigma) + (h,\sigma)} F(\sigma).
  \label{eq: Ising model-bis}
\end{equation}
Since $\sigma_x^2=1$ for each $x$, 
the measure is invariant under the change $A\to A+\alpha\id$, $\alpha\in\R$. 
Therefore, without loss of generality, 
we can assume that the coupling matrix $A$ is positive definite. 
We also assume that it has spectral radius bounded by $1$; this just amounts to a choice of normalisation for $\beta$. 
It is helpful to think of the Ising model as a denerate case  of the $\varphi^4$ measure in which the Gaussian part has covariance $(\beta A)^{-1}$
and the potential is singular to enforce the values $\{\pm 1\}$, i.e., $\int_{\R^\Lambda} (\cdot) e^{-V_0(\varphi)} \, d\varphi$ is replaced
by $\sum_{\sigma \in \{\pm 1\}^\Lambda} (\cdot)$. It turns out that the Polchinski equation still makes sense and that the renormalised
potential indeed becomes smooth immediately.
A natural covariance decomposition of $(\beta A)^{-1}$ is
\begin{equation}
\label{eq: inverse covariance}
\forall 0 \leq t \leq \beta: \qquad
 C_t= (tA + (\alpha -t)\id)^{-1}, 
\end{equation}
for a parameter $\alpha>\beta$ which will be unimportant. 
For $t<\alpha$ the matrix $C_t$ is positive definite and for $\beta<\alpha$, 
as explained above, the Ising model at inverse temperature $\beta$ can be written as
\begin{equation}
  \E_{\mu}[F] \propto \sum_{\sigma \in \{\pm 1\}^\Lambda} e^{-\frac12 (\sigma, C_\beta^{-1}\sigma) + (h,\sigma)} F(\sigma).
\end{equation}
Strictly speaking, $C_t$ is not a covariance decomposition since $C_0=\alpha^{-1}\id \neq 0$, 
different from the assumption in Section~\ref{sec_gaussian_integration_and_polchinski}.  
However, all results from that section can be applied to the covariance decomposition $C_t-C_0$, 
and we will do this without further emphasis.
The renormalised potential can be defined analogously to the continuous setting as:
\begin{equation}
  V_t(\varphi) = -\log \sum_{\sigma\in \{\pm 1\}^\Lambda} e^{-\frac12(\sigma-\varphi, C_t^{-1} (\sigma-\varphi)) + (h,\sigma)}.
  \label{eq_def_V_ising}
\end{equation}
This leads to a decomposition of the Ising measure~\eqref{eq: Ising model-bis} as:
\begin{equation}
\forall 0\leq t<\beta: \qquad 
\E_\mu[F]
=
\E_{\nu_{t,\beta}}\big[\E_{\mu_{t}^\varphi}[F]\big].
\label{eq_decomp_Ising_measure}
\end{equation}
By analogy with \eqref{e:fluctuationmeasure-alt}, the fluctuation measure $\mu^\varphi_{t}$ used above
is the Ising measure with coupling matrix $C_{t}^{-1}$ and external field $C_t^{-1}\varphi + h$: 
\begin{equation}
\label{eq: nouvelle mesure fluctuation}
\mu^\varphi_{t} (\sigma) = \mu_{t,C_t^{-1}\varphi+h} \propto   e^{-\frac12 (\sigma, C_t^{-1}\sigma) + (C_t^{-1}\varphi +h,\sigma)},
\end{equation}
and the renormalised measure $\nu_t = \nu_{t,\beta}$ is supported on the image of $C_\beta-C_t$ in $\R^\Lambda$:
\begin{equation}   \label{eq_nut_ising}
\nu_{t,\beta}(d\varphi)
\propto
e^{-V_t(\varphi)} \, \Pg_{C_\beta-C_t}(d\varphi)
\propto
\exp\qa{-\frac{1}{2}(\varphi,(C_\beta-C_t)^{-1}\varphi) - V_t(\varphi)}\, d\varphi
.
\end{equation}
Even though $\sigma:\Lambda\to\{\pm 1\}$
is discrete, the renormalised field $\varphi: \Lambda\to\R$ is continuous as soon as $t>0$.
Convexity-based criterions, 
such as the Bakry-\'Emery or multiscale Bakry-\'Emery criterions of Theorems~\ref{thm:BE} and \ref{thm:LSI-mon}, can therefore be used to
derive log-Sobolev inequalities for $\nu_{t,\beta}$. 

\medskip
Before discussing these, we summarise results about the infinite temperature (product) Ising model, which serve as input to these arguments.

\subsubsection{Preliminaries: single-spin inequalities}\label{sec: single_spin_ineq}

At infinite temperature $\beta=0$ the spin models we consider become product measures.
By tensorisation, it thus suffices to know the log-Sobolev constant of a single spin (see Example \ref{example: tensorisation}).
The following summarises known results for these.

\paragraph{Ising model, standard Dirichlet form}

Let $\mu$ be the probability measure on $\{\pm 1\}$ with $\mu(+1)=p=1-q$.
The standard Dirichlet form is
\begin{equation}
  D_\mu(F) = \frac12 \E_\mu (F(\sigma^x)-F(\sigma))^2= \frac12 (F(+1)-F(-1))^2 .
\end{equation}

\begin{proposition}\label{prop: standard_single_spin_LSI}
  Let $\mu$ be the probability measure on $\{\pm 1\}$ with $\mu(+1)=p=1-q$. Then
  \begin{equation}
    \ent_\mu (F) \leq \frac{pq(\log p-\log q)}{p-q}(\sqrt{F(+1)}-\sqrt{F(-1)})^2.
  \end{equation}
  Thus the log-Sobolev constant with respect to the standard Dirichlet
  form on $\{\pm 1\}$ is at least $2$:
  \begin{equation}
    \ent_\mu (F) \leq D(\sqrt{F})
    = \frac{2}{\gamma_0}D(\sqrt{F}), \qquad \gamma_0=2.
  \end{equation}
\end{proposition}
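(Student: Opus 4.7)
The two-point setting is explicit. Writing $a := \sqrt{F(+1)}$ and $b := \sqrt{F(-1)}$, the proposition asserts that
\begin{equation}
pa^2 \log a^2 + qb^2\log b^2 - (pa^2+qb^2)\log(pa^2+qb^2) \leq C(p,q)(a-b)^2,
\end{equation}
with $C(p,q) := pq(\log p - \log q)/(p-q)$. Both sides are $2$-homogeneous in $(a,b)$, so I would normalize $b = 1$ (and recover $b = 0$ by continuity), reducing the claim to $\phi(a) \leq 0$ for all $a \geq 0$, where
\begin{equation}
\phi(a) := pa^2\log a^2 - (pa^2+q)\log(pa^2+q) - C(p,q)(a-1)^2.
\end{equation}

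A direct substitution identifies two zeros of $\phi$: the obvious one at $a = 1$, and a second one at $a = q/p$, where the algebraic identity $pa^2+q = q/p = a$ collapses the logarithms and the defining form of $C(p,q)$ produces an exact cancellation. Differentiating yields
\begin{equation}
\phi'(a) = 2pa\log\frac{a^2}{pa^2+q} - 2C(p,q)(a-1),
\end{equation}
and one checks that $\phi'$ also vanishes at both $a = 1$ and $a = q/p$. In other words, the sharp constant $C(p,q)$ is precisely the value that makes the line $a \mapsto 2C(p,q)(a-1)$ a double tangent to the smooth curve $a \mapsto 2pa\log\frac{a^2}{pa^2+q}$. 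The proof is then completed by checking $\phi \leq 0$ at the boundary values $a = 0$ and $a \to \infty$ (elementary limits) and cataloguing the finitely many critical points of $\phi$ on $(0,\infty)$: the two tangential zeros both attain the global maximum $0$, while the remaining interior critical point is a local minimum. The main obstacle is that $\phi$ is not globally concave, so a naive tangent-line bound at a single point does not suffice; one must genuinely exploit the double-tangency structure and follow the sign of $\phi'$ across all three critical points.

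For the second statement $\gamma_0 \geq 2$, it then suffices to prove $C(p,q) \leq 1/2$ uniformly in $p \in (0,1)$ with $q = 1 - p$. This is equivalent to $L(p,q) \geq 2pq$ for the logarithmic mean $L(p,q) := (p-q)/(\log p - \log q)$. Chaining $L(p,q) \geq \sqrt{pq}$ (the logarithmic–geometric mean inequality) with $\sqrt{pq} \geq 2pq$—itself equivalent to $\sqrt{pq} \leq 1/2$, which is AM–GM applied to $p + q = 1$—gives $L(p,q) \geq 2pq$ and hence $C(p,q) \leq 1/2$, completing the proof.
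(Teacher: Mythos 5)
Your normalisation to one variable and the double-tangency observation---that $\phi$ and $\phi'$ both vanish at $a=1$ and at $a=q/p$---are correct and give an elegant interpretation of where the constant $C(p,q)$ comes from. (The paper defers entirely to the cited references here, so there is no internal argument to compare against.) The final step, chaining the logarithmic--geometric mean inequality with AM--GM on $p+q=1$ to get $C(p,q)\le 1/2$, is also correct.

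The genuine gap is in the middle: nothing in your argument bounds the \emph{number} of critical points of $\phi$, and the assertion that there are exactly three, with the two tangency points as global maxima, is the crux on which everything rests. Without it, $\phi$ could have further extrema---say a fourth local maximum past $q/p$ where $\phi>0$---and neither of your boundary checks would detect this, since a finite value at $a=0^+$ and divergence to $-\infty$ at $a\to\infty$ are perfectly compatible with $\phi$ exceeding $0$ somewhere in between. (Also, $\phi(0^+)\le 0$ is not an ``elementary limit'': it reads $-q\log q\le C(p,q)$, which for $p+q=1$ is equivalent to $\min(p,q)\log\min(p,q)\le\max(p,q)\log\max(p,q)$, a small but genuine inequality.) The missing ingredient is a third-derivative computation. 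Writing $\psi(a):=2pa\log\frac{a^2}{pa^2+q}$, so that $\phi'(a)=\psi(a)-2C(p,q)(a-1)$, one finds
\begin{equation}
\phi'''(a)=\psi''(a)=\frac{4pq\,(q-pa^2)}{a\,(pa^2+q)^2},
\end{equation}
which changes sign exactly once, at $a=\sqrt{q/p}$. Therefore $\phi''$ is strictly increasing then strictly decreasing, hence has at most two distinct zeros, and by Rolle $\phi'$ has at most three distinct zeros. Together with the two you exhibited at $1$ and $q/p$ and the one strictly between them forced by Rolle applied to $\phi(1)=\phi(q/p)=0$, there are exactly three, all simple. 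Since $\phi'(0^+)=2C(p,q)>0$, the sign of $\phi'$ must alternate $+,-,+,-$ across these three zeros, and $\phi\le 0$ on all of $(0,\infty)$ then follows directly from $\phi(1)=\phi(q/p)=0$, with no separate boundary analysis needed.
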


\begin{proof}
  The proof can be found in \cite{MR1490046} or \cite{MR1845806}.
\end{proof}

\paragraph{Ising model, heat bath Dirichlet form}
With $\mu$ as above, the heat-bath Dirichlet form is
\begin{equation}
  D^{\rm HB}(F)
  =
  \frac12 \sum_{\sigma} \frac{\mu(\sigma)\mu(\sigma^x)}{\mu(\sigma)+\mu(\sigma^x)} (F(\sigma^x)-F(\sigma))^2
  =
  pq (F(+1)-F(-1))^2
  .
\end{equation}

\begin{proposition}  \label{prop: HB_single_spin_LSI}
    Let $\mu$ be the probability measure on $\{\pm 1\}$ with $\mu(+1)=p=1-q$. Then
  \begin{equation}
    \ent_\mu(F) \leq  pq(\log F(+1)-\log F(-1))(F(+1)-F(-1)).
  \end{equation}
  Thus the modified log-Sobolev constant with respect to the heat-bath Dirichlet form is at least $1/2$:
  \begin{equation}
    \ent_\mu F \leq D^{\rm HB}(\log F, F)
    = \frac{1}{2\gamma_0} D^{\rm HB}(\log F, F), \qquad \gamma_0 =  \frac{1}{2}
    .
  \end{equation}  
\end{proposition}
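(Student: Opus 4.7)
The plan is to reduce the claimed inequality to a direct consequence of Jensen's inequality by an elementary homogeneity plus parametrization argument. Let me write $a = F(+1)$ and $b = F(-1)$, both positive. Both the entropy $\ent_\mu(F)$ and the expression $pq(\log a - \log b)(a-b)$ scale linearly in $F$ (the entropy via its standard homogeneity, the right-hand side because $\log a - \log b$ is scale-invariant and $a-b$ is linear). Therefore I would first normalise by $\E_\mu[F] = pa + qb = 1$, which costs nothing.

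Under this normalisation, the constraint $pa+qb=1$ suggests the parametrisation $a = 1+qy$, $b = 1-py$ with $y = a-b \in (-1/q, 1/p)$. The next step is to compute the difference
\begin{equation}
\Delta(y) := pq(\log a - \log b)(a-b) - \ent_\mu(F) = pqy\bigl[\log(1+qy) - \log(1-py)\bigr] - p(1+qy)\log(1+qy) - q(1-py)\log(1-py).
\end{equation}
The coefficient of $\log(1+qy)$ simplifies to $pqy - p(1+qy) = -p$ and similarly the coefficient of $\log(1-py)$ is $-q$, so all the $y$-dependent factors in front of the logarithms cancel and one obtains the remarkably clean identity
\begin{equation}
\Delta(y) = -p\log(1+qy) - q\log(1-py).
\end{equation}

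The final step is to invoke Jensen's inequality for the concave function $\log$ with the weights $p,q$: since $p(1+qy) + q(1-py) = 1$, we have $p\log(1+qy) + q\log(1-py) \leq \log 1 = 0$, hence $\Delta(y) \geq 0$. This proves the pointwise inequality, and by the scaling argument it extends to all positive $F$. The second statement then follows by recognising the right-hand side as the polarised heat-bath Dirichlet form: since $D^{\rm HB}(F,G) = pq(F(+1)-F(-1))(G(+1)-G(-1))$ by polarisation of the quadratic form given before the proposition, one reads off $\ent_\mu(F) \leq D^{\rm HB}(\log F, F)$, which matches \eqref{e:mLSI} with $\gamma_0 = 1/2$.

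I do not anticipate any real obstacle here: the only mildly non-obvious step is guessing the right parametrisation $a = 1+qy$, $b=1-py$, after which the miraculous cancellation in $\Delta(y)$ reduces the modified log-Sobolev inequality for a two-point space to a one-line application of concavity of the logarithm. Alternative approaches (interpolation along a Markov semigroup, or differentiation in $p$) would also work but are heavier; the Jensen reduction is the cleanest.
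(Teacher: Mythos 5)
Your proof is correct. The paper itself gives no argument here and simply defers to the external reference \cite[Example~3.8]{MR2283379}, so your calculation is a self-contained alternative. Checking the details: with $p a + q b = 1$ the parametrisation $a = 1+qy$, $b = 1-py$ (so $a-b = y$) is exactly the right change of variables, and the collected coefficients of $\log(1+qy)$ and $\log(1-py)$ do reduce to $-p$ and $-q$ respectively, giving the identity $pq(\log a - \log b)(a-b) - \ent_\mu(F) = -\bigl[p\log(1+qy) + q\log(1-py)\bigr]$, after which concavity of $\log$ and $p(1+qy)+q(1-py)=1$ finish the job. Your identification $D^{\rm HB}(\log F, F) = pq(\log F(+1)-\log F(-1))(F(+1)-F(-1))$ by polarisation also matches the single-spin heat-bath Dirichlet form $D^{\rm HB}(F) = pq(F(+1)-F(-1))^2$ given in the text, so the second display follows with $\gamma_0 = 1/2$. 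In short, where the paper cites out, you have supplied a clean two-line reduction to Jensen; the only nontrivial insight is the affine parametrisation under the constraint $\E_\mu[F]=1$, which you correctly exploit.
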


\begin{proof}
  See  \cite[Example~3.8]{MR2283379}.
\end{proof}

Similar single-spin inequalities are available for $O(n)$ models \cite{MR2854732,MR4276491} and allow to extend for example
Theorem~\ref{thm: high_temp_Ising} below for the Ising model to these models with little change \cite{MR3926125}.

\subsubsection{Entropy decomposition}
To prove a log-Sobolev inequality (or modified log-Sobolev inequality) for the Ising measure,
we start from the decomposition~\eqref{eq_decomp_Ising_measure} with $t=0$,
decomposing $\mu=\mu_\beta^0$ into two parts: an infinite temperature Ising measure $\mu^\varphi_0$ with external field $C_{0}^{-1}\varphi+h$ 
and the renormalised measure $\nu_{0,\beta}$. 
The corresponding entropy decomposition \eqref{eq: entropy decomposition} is:
\begin{equation} \label{e:ent-decomp-Ising}
\ent_{\mu}(F)
=
\E_{\nu_{0,\beta}}[\ent_{\mu_0^\varphi}(F)]
+
\ent_{\nu_{0,\beta}}\big(\E_{\mu_0^\varphi}[F]\big)
.
\end{equation}
To prove the log-Sobolev inequality (or modified log-Sobolev inequality) with respect to a Dirichlet form $D_\mu$,
we want to bound both terms by a multiple of $D_\mu(\sqrt{F})$
(or $D_\mu(F,\log F)$).
In this discussion, we focus on the log-Sobolev inequality
with respect to the standard Dirichlet form:
\begin{equation} \label{e:Ising-Dirichlet-standard}
  D_\mu(F) = \frac12 \sum_{x\in\Lambda} \E_\mu\qB{(F(\sigma)-F(\sigma^x))^2}.
\end{equation}
However, the same strategy applies with different jump rates
and a modified log-Sobolev inequality or a spectral inequality as input (and output),
see Sections~\ref{sec: single_spin_ineq} and~\ref{sec:choice-Dirichlet}.

Under the assumption that $\mu_0^\varphi$ satisfies a log-Sobolev inequality with constant $\gamma_0$
uniformly in $\varphi$, the first term on the right-hand side of \eqref{e:ent-decomp-Ising} is bounded by
\begin{equation}
  \frac{2}{\gamma_0}\E_{\nu_{0,\beta}}[D_{\mu_0^\varphi}(\sqrt{F})]=\frac{2}{\gamma_0}D_{\mu}(\sqrt{F}).
\end{equation}
Since $\mu_0^\varphi$ is a product measure, this assumption is well understood and one can take $\gamma_0=2$ for the standard Dirichlet form,
as discussed in Section~\ref{sec: single_spin_ineq}.
The last equality relies on the specific jump rates in the standard Dirichlet form \eqref{e:Ising-Dirichlet-standard}. 
For other Dirichlet forms, one can also get an inequality, see Section~\ref{sec:choice-Dirichlet}.

Bounding the second term on the right-hand side of \eqref{e:ent-decomp-Ising} essentially amounts to estimating the log-Sobolev constant of the renormalised measure $\nu_{0,\beta}$. Indeed, if $\nu_{0,\beta}$ satisfies a log-Sobolev inequality with constant $\gamma_{0,\beta}$ (and standard Dirichlet form) then
this term is bounded by
\begin{equation}
  \frac{2}{\gamma_{0,\beta}}\E_{\nu_{0,\beta}}\Big[\big|\big(\nabla_\varphi \E_{\mu^\varphi_{0}}[F]^{1/2}\big)\big|^2\Big]
  \leq
  \frac{4\alpha^2}{\gamma_0\gamma_{0,\beta}} D_\mu(\sqrt{F}),
\end{equation}
where the second inequality is elementary and follows from the next exercise (which is similar to the argument in the tensorisation proof of
Example~\ref{example: tensorisation}) and the Cauchy--Schwarz inequality.

\begin{exercise}\label{exo: covariance}
For each $x\in\Lambda$, 
let $\mu^{\varphi,x}_0$ denote the law of $\sigma_x$ under the product measure $\mu^\varphi_0$. 
Then:
\begin{equation}
\nabla_{\varphi_x} \big(\E_{\mu^\varphi_{0}}[F]^{1/2}\big)
=
\frac{\alpha}{2\sqrt{\E_{\mu^\varphi_0}[F]}}\cov_{\mu^{\varphi}_0}(F,\sigma_x)
=
\frac{\alpha}{2\sqrt{\E_{\mu^\varphi_0}[F]}}\E_{\mu_0^\varphi}\Big[\cov_{\mu^{\varphi,x}_0}(F,\sigma_x)\Big]
\end{equation}
(recall $C_0 = \alpha^{-1}\id$) and:
\begin{equation}
\label{eq: borne cov 2nd inegalite}
\cov_{\mu^{\varphi,x}_0}(F,\sigma_x)^2
\leq 
8\E_{\mu_0^{\varphi,x}}[F]\var_{\mu^{\varphi,x}_0}(\sqrt{F})
\leq 
\frac{8}{\gamma_0}\E_{\mu_0^{\varphi,x}}[F]D_{\mu^{\varphi,x}_0}(\sqrt{F})
.
\end{equation}
\end{exercise} 

\begin{proof}[Sketch]
The first equation is a simple computation.
The first inequality in \eqref{eq: borne cov 2nd inegalite} follows from Cauchy-Schwarz inequality using the following general expression for the covariance of functions $G_1,G_2$ under a measure $m$:
\begin{equation}
\cov_m(G_1,G_2)
=
\frac{1}{2}\int \big(G_1(x)-G_1(y)\big)\big(G_2(x)-G_2(y)\big)\, dm(x)\, dm(y)
.
\end{equation}
The second inequality is the general fact that the spectral gap is always larger than the log-Sobolev constant, see Proposition~\ref{prop:spectralgap}.
Details can be found in~\cite{MR3926125}.
\end{proof}

In summary, if $\mu_0^\varphi$ satisfies a log-Sobolev inequality with constant $\gamma_0$ and $\nu_{0,\beta}$ satisfies
a log-Sobolev inequality with constant $\gamma_{0,\beta}$ then the inverse log-Sobolev constant $\gamma^{-1}$ of $\mu$ satisfies
\begin{equation} \label{e:Ising-summary1}
\frac{1}{\gamma}
\leq 
  \frac{1}{\gamma_0}\qa{1+\frac{2 \alpha^2}{\gamma_{0,\beta}}}.
\end{equation}

At this point the objective is to bound the log-Sobolev constant $\gamma_{0,\beta}$ of the renormalised measure $\nu_{0,\beta}$. Results are stated next under different conditions, with the corresponding verifications postponed to Section~\ref{sec_hessian_Ising}.

  Recall that the renormalised measure is $\nu_{0,\beta}(d\varphi) \propto 
  e^{- V_0(\varphi)}\Pg_{C_\beta-C_0}(d\varphi)$ with the renormalised potential $V_0$ defined in~\eqref{eq_def_V_ising}. 
If the temperature is sufficiently high, namely  if $\beta<\alpha<1$,
then it turns out that $V_0$ is strictly convex 
so that the standard
Bakry--\'Emery criterion is applicable \cite{MR3926125} and gives (see Exercise~\ref{ex_high_temperature_ising}):
\begin{equation}
\label{eq: BE haute temperature}
  \gamma_{0,\beta} \geq \alpha-\alpha^2.
\end{equation}
Taking $\alpha \downarrow \beta$, this leads to the following theorem. 
We recall the convention fixed below   \eqref{eq: Ising model-bis} that the coupling matrix $A$ has spectrum in $[0,1]$.
This spectral condition appeared in \cite{MR3926125} (and unlike previously existing conditions applies to the Sherrington--Kirkpatrik spin glass).
See also \cite{MR4408509,2106.04105,2208.07844,2307.10466} for recent results on spin glasses.

\begin{theorem}[Spectral high temperature condition \cite{MR3926125}]\label{thm: high_temp_Ising}
  For $\beta<1$ (under the conventions stated below   \eqref{eq: Ising model-bis}),
  the Ising model $\mu$ satisfies the log-Sobolev inequality: 
for each $F:\{-1,1\}^\Lambda\to\R_+$,
\begin{equation} \label{e:lsiht}
\ent_{\mu}(F)
\leq 
\Big(1+\frac{2 \beta}{1-\beta}\Big)D_\mu(\sqrt{F})
.
\end{equation}
\end{theorem}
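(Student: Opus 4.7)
The plan is to combine the entropy decomposition \eqref{e:ent-decomp-Ising} with the summary estimate \eqref{e:Ising-summary1} for the covariance decomposition $C_t = (tA+(\alpha-t)\id)^{-1}$, where $\alpha$ is chosen in $(\beta,1)$ and eventually sent to $\beta$ (this is possible precisely because $\beta<1$). The task then reduces to two bounds: a uniform (in $\varphi$) log-Sobolev constant $\gamma_0$ for the conditional measure $\mu_0^\varphi$ with respect to the standard Dirichlet form, and a log-Sobolev constant $\gamma_{0,\beta}$ for the renormalised measure $\nu_{0,\beta}$.

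For the conditional measure, the key observation is that $C_0 = \alpha^{-1}\id$, so the term $-\tfrac12(\sigma, C_0^{-1}\sigma) = -\tfrac{\alpha|\Lambda|}{2}$ in \eqref{eq: nouvelle mesure fluctuation} is constant on $\{\pm 1\}^\Lambda$. Hence $\mu_0^\varphi(\sigma)\propto e^{(\alpha\varphi + h,\sigma)}$ is exactly a product Ising measure with site-dependent external field $\alpha\varphi+h$. Proposition \ref{prop: standard_single_spin_LSI} combined with tensorisation (Example \ref{example: tensorisation}) then gives $\gamma_0 = 2$, uniformly in $\varphi$.

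For the renormalised measure, whose Hamiltonian is $\tfrac12(\varphi,(C_\beta - C_0)^{-1}\varphi) + V_0(\varphi)$, I will invoke the classical Bakry--\'Emery criterion (Theorem \ref{thm:BE}). The quadratic part contributes a nonnegative semidefinite form, so everything reduces to a lower bound on $\He V_0$. Applying the second identity in Lemma \ref{lem:fluctuationmeasure} at $t=0$:
\begin{equation}
\He V_0(\varphi) = C_0^{-1} - C_0^{-1}\cov(\mu_0^\varphi)C_0^{-1} = \alpha \id - \alpha^2 \cov(\mu_0^\varphi).
\end{equation}
Since $\mu_0^\varphi$ is a product measure on $\{\pm 1\}^\Lambda$, the matrix $\cov(\mu_0^\varphi)$ is diagonal and each entry is the variance of a $\pm 1$-valued random variable, hence at most $1$. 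Therefore $\He V_0(\varphi) \geq \alpha(1-\alpha)\id$ uniformly in $\varphi$, yielding $\gamma_{0,\beta} \geq \alpha(1-\alpha)$.

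Plugging $\gamma_0 = 2$ and $\gamma_{0,\beta} \geq \alpha(1-\alpha)$ into \eqref{e:Ising-summary1} gives
\begin{equation}
\frac{1}{\gamma} \leq \frac{1}{2}\Big(1 + \frac{2\alpha^2}{\alpha(1-\alpha)}\Big) = \frac{1}{2}\cdot\frac{1+\alpha}{1-\alpha},
\end{equation}
so $\ent_\mu(F) \leq \tfrac{2}{\gamma} D_\mu(\sqrt{F}) \leq \tfrac{1+\alpha}{1-\alpha} D_\mu(\sqrt{F})$; letting $\alpha\downarrow\beta$ yields \eqref{e:lsiht}. The argument is short because the general machinery has already been set up; the only essential computational input is the trivial bound $\cov(\mu_0^\varphi)\leq\id$, and the only real constraint is $\alpha<1$, which is what renders $V_0$ strictly convex and is exactly where the high-temperature hypothesis $\beta<1$ enters.
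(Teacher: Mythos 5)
Your proof is correct and follows essentially the same route as the paper: you use the conditional decomposition \eqref{e:ent-decomp-Ising} and summary estimate \eqref{e:Ising-summary1}, identify $\gamma_0 = 2$ via Proposition~\ref{prop: standard_single_spin_LSI} and tensorisation, obtain $\gamma_{0,\beta}\geq\alpha(1-\alpha)$ from the Hessian formula $\He V_0 = \alpha\id - \alpha^2\cov(\mu_0^\varphi) \geq \alpha(1-\alpha)\id$ via Bakry--\'Emery, and send $\alpha\downarrow\beta$. This is precisely the paper's argument (cf.\ Exercise~\ref{ex_high_temperature_ising}), modulo the cosmetic choice of invoking Lemma~\ref{lem:fluctuationmeasure} rather than Exercise~\ref{exo: formula_Hessian_sec5} for the same Hessian identity.
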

For $\beta>1$, the renormalised potential $V_0$ is in general not convex and the log-Sobolev constant $\gamma_{0,\beta}$ of the renormalised measure $\nu_{0,\beta}$ cannot be bounded using the Bakry--\'Emery criterion. 
However, the argument above can be generalised by using the multiscale Bakry--\'Emery criterion instead. 
Assuming $\dot\lambda_t$ are as in the multiscale Bakry--\'Emery condition \eqref{eq_lower_bound_Hess_V_t_sec5},
Theorem~\ref{thm:LSI-mon} gives:
\begin{equation}
  \frac{1}{\gamma_{0,\beta}}
  \leq
  |\dot C_0|\int_0^\beta e^{-2\lambda_t \, dt}
  =
  \frac{1}{\alpha^2}\int_0^\beta e^{-2\lambda_t}\, dt,
    \qquad \lambda_t = \int_0^t \dot\lambda_s \, ds,
\end{equation}
and substituting this into \eqref{e:Ising-summary1} gives the following log-Sobolev inequality:
\begin{equation} \label{e:Ising-summary-polchinski}
  \ent_{\mu}(F)
  \leq \frac{2}{\gamma} D_\mu(\sqrt{F}), \qquad \frac{1}{\gamma}\leq
  \frac{1}{\gamma_0}\qa{1+2\int_0^\beta e^{-2\lambda_t} dt },
  \qquad \lambda_t = \int_0^t \dot\lambda_s \, ds.
\end{equation}

In the same situation, 
instead of using the multiscale Bakry--\'Emery criterion to bound $\gamma_{0,\beta}$ in~\eqref{e:Ising-summary1}, 
one can prove a log-Sobolev inequality for $\mu$ by using the entropic stability estimate
discussed in Section~\ref{subsec: Entropic stability estimate}.
This was done in \cite{2203.04163} (to prove a modified log-Sobolev inequality, but the argument generalises to a log-Sobolev inequality for the standard Dirichlet form, see below). 
It turns out that, for the decomposition \eqref{eq: inverse covariance}, the conditions
\eqref{e:assCt-mon} and \eqref{e:ass-entstab bis} of the multiscale Bakry--\'Emery criterion and of the entropic stability estimate are \emph{identical}
provided $\dot\lambda_t = -\alpha_t$, see Exercise~\ref{ex:Ising_pol_entstab}.
This is not the case for other covariance decompositions, and in particular not for those used for continuous models,
see the discussion in Section~\ref{subsec: Entropic stability estimate}.
Using the entropic stability estimate \eqref{eq: LSI intermediaire}, the entropy is therefore bounded by:
\begin{equation}
\ent_{\mu}(F)
\leq 
e^{-\lambda_\beta}\E_{\nu_{0,\beta}}[\ent_{\mu_0^\varphi}(F)],
\qquad \lambda_t = \int_0^t \dot\lambda_s \, ds,
\end{equation}
and with the uniform log-Sobolev inequality for $\mu_0^\varphi$ this gives
\begin{equation} \label{e:Ising-summary-eldanchen}
\ent_{\mu}(F)
\leq 
\frac{2}{\gamma} D_\mu(\sqrt{F}), \qquad \frac{1}{\gamma} \leq 
\frac{1}{\gamma_0} e^{-\lambda_\beta},
\qquad \lambda_t = \int_0^t \dot\lambda_s \, ds
.
\end{equation}
To be precise, for the Ising model where $C_0 \neq 0$,
the estimate \eqref{eq: LSI intermediaire} holds with the left-hand side there replaced by $\ent_\mu(F)$ instead of $\ent_{\nu_0}(F)$.
To see this, replace $\PP_{0,t}$ by $\PP_t := \E_{\mu_t^\varphi}[\cdot ] = \PP_{0,t} \E_{\mu_0^\varphi}[\cdot]$
in the argument leading to \eqref{e:entstab-last}.
Then \eqref{e:entstab-last} continues to hold and gives the claim.

Estimate \eqref{e:Ising-summary-eldanchen} is very similar to the estimate \eqref{e:Ising-summary-polchinski} obtained using the multiscale Bakry--\'Emery criterion,
but not exactly identical. Both estimates can be applied up to the critical point in a very general setting for ferromagnetic Ising models and yield a
polynomial bound on the log-Sobolev constant under the mean-field bound which holds
on $\Lambda\subset \Z^d$ in $d\geq 5$, see Theorem~\ref{thm:ising} below.
The strategies of the two proofs have different advantages.
We summarise the results as follows.

\begin{theorem}[Covariance conditions for Ising models \cite{MR4705299,2203.04163}]\label{thm: near_crit_Ising}
  The log-Sobolev constant of the Ising model at inverse temperature $\beta$ is bounded by
  \eqref{e:Ising-summary-polchinski} or \eqref{e:Ising-summary-eldanchen}.
\end{theorem}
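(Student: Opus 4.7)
The plan is to combine the entropy decomposition \eqref{e:ent-decomp-Ising} with the single-spin log-Sobolev inequality (Proposition~\ref{prop: standard_single_spin_LSI}) for the fluctuation piece, and then control the renormalised piece in two different ways: via the multiscale Bakry--\'Emery criterion (Theorem~\ref{thm:LSI-mon}) for \eqref{e:Ising-summary-polchinski}, and via the entropic stability estimate (Corollary~\ref{cor: entropic stabiliy}) for \eqref{e:Ising-summary-eldanchen}. For the fluctuation piece, since $C_0 = \alpha^{-1}\id$ the measure $\mu_0^\varphi$ from \eqref{eq: nouvelle mesure fluctuation} factorises into independent Bernoulli laws on $\{\pm 1\}$, one per site, so tensorisation of Proposition~\ref{prop: standard_single_spin_LSI} (which gives $\gamma_0 = 2$ for the standard Dirichlet form) yields $\ent_{\mu_0^\varphi}(F) \leq (2/\gamma_0) D_{\mu_0^\varphi}(\sqrt{F})$ uniformly in $\varphi$, and integrating against $\nu_{0,\beta}$ gives $\E_{\nu_{0,\beta}}[\ent_{\mu_0^\varphi}(F)] \leq (2/\gamma_0) D_\mu(\sqrt{F})$.

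For the multiscale Bakry--\'Emery route I would apply Theorem~\ref{thm:LSI-mon} to the renormalised measure $\nu_{0,\beta}$, using the covariance decomposition $C_t - C_0 = \int_0^t \dot C_s\, ds$ on the compact interval $[0,\beta]$ (see Remark~\ref{rk:LSI-mon2}). The theorem provides $\ent_{\nu_{0,\beta}}(G) \leq (2/\gamma_{0,\beta}) \E_{\nu_{0,\beta}}[(\nabla \sqrt{G})_{\dot C_0}^2]$ with $1/\gamma_{0,\beta} \leq \int_0^\beta e^{-2\lambda_t}\, dt$. Plugging $G = \E_{\mu_0^\varphi}[F]$ into this inequality and then applying Exercise~\ref{exo: covariance} together with Cauchy--Schwarz to rewrite the resulting gradient as a bounded multiple of $\alpha^2 D_\mu(\sqrt{F})/\gamma_0$ produces \eqref{e:Ising-summary-polchinski} when reassembled with the fluctuation bound from the first paragraph.

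For the entropic stability route I would invoke Corollary~\ref{cor: entropic stabiliy} on the time interval $[0,\beta]$. The key observation, made in Exercise~\ref{ex:Ising_pol_entstab}, is that for the Ising covariance \eqref{eq: inverse covariance} the Hessian conditions \eqref{e:assCt-mon} and \eqref{e:ass-entstab bis} coincide with $\alpha_t = -\dot\lambda_t$, so any $\dot\lambda_t$ witnessing the multiscale Bakry--\'Emery criterion also witnesses the entropic stability one. Running the Gr\"onwall argument of \eqref{e:entstab-last} in the Ising setting yields $\ent_\mu(F) \leq e^{-\lambda_\beta}\E_{\nu_{0,\beta}}[\ent_{\mu_0^\varphi}(F)]$ in place of the version with $\ent_{\nu_0}(F)$ on the left, and combining with the fluctuation bound yields \eqref{e:Ising-summary-eldanchen}.

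The main technical point on both routes is that $C_0 = \alpha^{-1}\id \neq 0$, so the Polchinski flow does not start at a Dirac mass but at the product measure $\mu_0^\varphi$: this requires (a) applying the multiscale Bakry--\'Emery bound to the shifted decomposition $C_t - C_0$, which produces the prefactor $|\dot C_0| = 1/\alpha^2$ in the bound for $1/\gamma_{0,\beta}$, and (b) for the entropic stability route, rerunning the derivation of \eqref{e:entstab-last} with $\PP_t := \E_{\mu_t^\varphi}[\cdot] = \PP_{0,t}\E_{\mu_0^\varphi}[\cdot]$ in place of $\PP_{0,t}$, after checking that this semigroup still satisfies the same time-derivative identity because averaging over the product fluctuation measure $\mu_0^\varphi$ commutes with $\partial_t$.
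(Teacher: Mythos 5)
Your proposal is correct and follows essentially the same route as the paper: the entropy decomposition \eqref{e:ent-decomp-Ising}, the single-spin log-Sobolev inequality (Proposition~\ref{prop: standard_single_spin_LSI}) plus Exercise~\ref{exo: covariance} to control the fluctuation piece, the multiscale Bakry--\'Emery bound on $\gamma_{0,\beta}$ for \eqref{e:Ising-summary-polchinski}, and the entropic stability estimate with $\PP_t := \E_{\mu_t^\varphi}[\cdot]$ in place of $\PP_{0,t}$ for \eqref{e:Ising-summary-eldanchen}. You also correctly identified the two technical points (the shifted decomposition starting from $C_0 = \alpha^{-1}\id$ giving the $|\dot C_0|$ prefactor, and the need to modify the semigroup in the derivation of \eqref{e:entstab-last}) that the paper itself emphasises.
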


As discussed previously, we formulated the results for the log-Sobolev inequality with respect to the standard Dirichlet form.
This  is a canonical choice (as already explained in Section~\ref{sec:generalities}),
but the argument can be adapted easily to other choices of jump rates with the conclusion of a possibly
modified log-Sobolev inequality, see Section~\ref{sec:choice-Dirichlet}.
As pointed out in \cite{MR4408509}, other choices are of interest when the jump rates are unbounded.

\subsubsection{Hessian of the renormalised potential and covariance}\label{sec_hessian_Ising}

In both strategies, using the multiscale Bakry--\'Emery criterion or the entropic stability estimate,
the estimate of the log-Sobolev constant reduces to estimating the constants $\dot\lambda_t=-\alpha_t$ bounding the
Hessian of the renormalised potential from below.
From Lemma~\ref{lem:fluctuationmeasure}, 
 recall that these estimates follow from bounds on the covariance of the fluctuation measure, a point of view that is particularly useful for the Ising model.
Indeed, 
the Hessian of the renormalised potential can be represented as follows.

\begin{exercise}
\label{exo: formula_Hessian_sec5}
Show that
  \begin{equation}
    \He V_t(\varphi) = C_t^{-1} - C_t^{-1} \Sigma_{t}(C_t^{-1}\varphi+h)C_t^{-1},
 \label{eq: Hessian Ising case}
  \end{equation}
  where $\Sigma_t(g)= (\cov_{\mu_{t,g}}(\sigma_x,\sigma_y))_{x,y\in\Lambda}$
  is the covariance matrix of the Ising model $\mu_{t,g}$ at inverse temperature $t$ and site-dependent magnetic field $g$
  (so that \eqref{eq: nouvelle mesure fluctuation} reads $\mu_t^\varphi = \mu_{t,C_t^{-1}\varphi + h}$).
\end{exercise}

For $\beta<1$, one can obtain the following convexity directly from this representation,
which allows to apply the standard Bakry--\'Emery criterion to derive \eqref{eq: BE haute temperature} and conclude the proof of Theorem~\ref{thm: high_temp_Ising}.

\begin{exercise}\label{ex_high_temperature_ising}
  Let $1 \geq \alpha > \beta$, and set $C_t = (t A + (\alpha-t)\id)^{-1}$.
  Then $V_t$ is convex for all $t\in [0,\alpha]$.
\end{exercise}

\begin{proof}
Since $\mu_{0,g}$ is a product measure and $|\sigma_x|\leq 1$,
\begin{equation}
  \Sigma_0(g) = \diag(\var_{\mu_{0,g}}(\sigma_x))_{x\in\Lambda}
  \leq \id
  .
\end{equation}
Using that $C_0=\alpha^{-1}\id$, we deduce from \eqref{eq: Hessian Ising case} that 
\begin{equation}
  \He V_0(\varphi) = \alpha\id - \alpha^2 \Sigma_0(\alpha \varphi+h)  
  \geq (\alpha-\alpha^2)\id.
\end{equation}
Thus if $\alpha \leq 1$, it follows that $V_0$ is convex. By Proposition~\ref{prop:convex}, $V_t$ is convex for all $t>0$.
\end{proof}

For general $\beta >0$, semi-convexity criteria on the Hessian can equivalently be formulated as covariance estimates that hold uniformly in an external field.

\begin{exercise} \label{ex:Ising_pol_entstab}
From $C_t= (tA+(\alpha-t)\id)^{-1}$, one has
$\dot C_t = (\id -A)C_t^2$, $\ddot C_t = 2(\id-A)\dot C_tC_t$. 
The multiscale Bakry--\'Emery criterion \eqref{e:assCt-mon} and the entropic stability criterion \eqref{e:ass-entstab bis} thus hold with
\begin{equation}
-\dot\lambda_t=\alpha_t = \bar\chi_t
\end{equation}
where
\begin{equation}
\bar\chi_t 
= 
\sup_{g\in\R^\Lambda}\chi_t(g)
,
\qquad
\chi_t(g)
= 
\|\Sigma_t(g)\|
\end{equation}
is a uniform upper bound on the spectral radius of the covariance matrix $\Sigma_t(g)$ of an Ising model uniformly in an external field $g$.
\end{exercise}

Now a significant simplification occurs for Ising models with ferromagnetic interaction, 
meaning $A_{xy}\leq 0$ for $x\neq y$. 
This includes the case of the lattice Laplacian $\Delta$ acting on configurations according to $(\Delta\sigma)_x = \sum_{y\sim x}[\sigma_y-\sigma_x]$.
For ferromagnetic interactions, it turns out that the spectral radius of the covariance matrix is maximal at $0$ field:
\begin{equation}
\bar\chi_t 
= 
\chi_t(0)
.
\label{eq_bound_spectral_radius_Ising}
\end{equation}
This is a consequence of
the FKG inequality (which implies that the covariance matrix has pointwise nonnegative coefficients),
the Perron--Frobenious theorem (which therefore implies that the largest eigenvector has nonnegative entries),
and the following remarkable correlation inequality due to Ding--Song--Sun \cite{MR4586225} which implies that the covariance between any two spins is maximised at $0$ field.
\begin{proposition}[Ding--Song--Sun inequality {\cite[Corollary 1.3]{MR4586225}}] \label{prop:DSS}
Let $\mu=\mu_{\beta,h}$ be the Ising measure~\eqref{eq: Ising model-bis} with ferromagnetic interaction $A$ 
and external field $h\in [-\infty,\infty]^\Lambda$, 
with values $\pm\infty$ corresponding to boundary conditions. 
Then: 
\begin{equation}
\forall (x,y)\in\Lambda^2:\qquad
\cov_{\mu_{\beta,h}}(\sigma_x,\sigma_y)
\leq 
\cov_{\mu_{\beta,0}}(\sigma_x,\sigma_y) 
=
\E_{\mu_{\beta,0}}[\sigma_x\sigma_y],
.
\end{equation}
\end{proposition}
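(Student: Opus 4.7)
The plan is to follow the random-current approach pioneered by Aizenman and refined by Ding, Song and Sun in \cite{MR4586225}, which is the natural framework in which ferromagnetic correlation inequalities of this type become accessible.

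First, I would reduce to the zero-field case via the standard ghost-vertex trick. Introduce an auxiliary vertex $g$ with ferromagnetic coupling $|h_x|$ to each $x\in\Lambda$, and work on $\Lambda \cup \{g\}$ at inverse temperature $\beta$ with $\sigma_g \equiv +1$ frozen; after flipping the spins at sites where $h_x<0$, the resulting model $\tilde\mu$ is a zero-field ferromagnetic Ising measure on the enlarged graph. Under this identification, $\E_{\mu_{\beta,h}}[\sigma_x] = \langle \sigma_x \sigma_g \rangle_{\tilde\mu}$ and
\begin{equation*}
\cov_{\mu_{\beta,h}}(\sigma_x,\sigma_y) = \langle \sigma_x\sigma_y\rangle_{\tilde\mu} - \langle \sigma_x\sigma_g\rangle_{\tilde\mu}\langle \sigma_y\sigma_g\rangle_{\tilde\mu}.
\end{equation*}
Thus the target inequality becomes a statement about two zero-field correlations on $\Lambda\cup\{g\}$ versus one on $\Lambda$ alone.

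The next step is to expand each correlation via the random-current representation: $\langle \sigma_A\rangle_{\tilde\mu}$ is a ratio of partition functions over $\mathbb{Z}_{\geq 0}$-valued currents on edges with source set $A$ (i.e., odd vertex-degree exactly on $A$). The switching lemma converts a product $\langle \sigma_A\rangle \langle \sigma_B\rangle$ of two correlations into a connectivity probability for the sum $\mathbf{n}_1 + \mathbf{n}_2$ of two independent currents with appropriately modified sources. Applied to the difference above, it rewrites $\cov_{\mu_{\beta,h}}(\sigma_x,\sigma_y)$ as $\langle \sigma_x\sigma_y\rangle_{\tilde\mu}$ multiplied by the probability that, in the double-current measure, $x$ is connected to $y$ while neither is connected to $g$. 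An analogous switching applied to $\langle \sigma_x\sigma_y\rangle_{\mu_{\beta,0}}$ expresses it as a connectivity probability on $\Lambda$ alone, and the task reduces to comparing these two connectivity events.

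The main obstacle is precisely this comparison, because deletion of the ghost edges does not merely remove possible connections: it also changes the law of the currents on the bulk edges of $\Lambda$. The key insight of \cite{MR4586225} is to condition on a well-chosen backbone realizing the $x\leftrightarrow y$ connection and then use an FKG-type argument on the residual current, turning the global comparison into an edge-by-edge monotonicity after conditioning. Carrying out this conditioning while preserving independence and the correct source structure, so that the FKG step is legitimate, is the technical heart of the argument and the step where the rest of the calculation would stand or fall. Once that monotonicity is in place, the claimed bound $\cov_{\mu_{\beta,h}}(\sigma_x,\sigma_y) \leq \E_{\mu_{\beta,0}}[\sigma_x\sigma_y]$ follows by combining the two connectivity representations.
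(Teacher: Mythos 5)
The survey does not actually prove this proposition: it is quoted directly from~\cite{MR4586225} and no proof is given in the text, so there is no internal argument to compare against. Judged as a self-contained plan, your outline has a genuine gap at the very first step, and it is precisely the step that makes the statement nontrivial for $h$ of mixed sign. Flipping $\sigma_x\mapsto \operatorname{sign}(h_x)\,\sigma_x$ at sites with $h_x<0$ does make the ghost couplings equal to $|h_x|\geq 0$, but it also multiplies each bulk coupling $A_{xy}$ by $\operatorname{sign}(h_x)\operatorname{sign}(h_y)$; wherever $h$ changes sign across an edge of the interaction graph, the gauged coupling becomes antiferromagnetic. The transformed model on $\Lambda\cup\{g\}$ is therefore \emph{not} a zero-field ferromagnetic Ising model unless $h$ has a single sign on each connected component, and the random-current representation and the switching lemma, which require nonnegative couplings on every edge including ghost edges, are simply unavailable in the form you invoke. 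Everything you build afterwards (rewriting the covariance as a double-current connectivity event, conditioning on a backbone, FKG on the residual current) rests on that unavailable representation.

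To see why this is the real obstruction rather than a technicality: when $h$ has a single sign the result already follows without random currents from the GHS inequality, since $\partial_{h_z}\cov_{\mu_{\beta,h}}(\sigma_x,\sigma_y)$ is exactly the GHS truncated three-point function and hence $\leq 0$ for $h\geq 0$; integrating along the monotone path from $0$ to $h$ gives the bound for $h\geq 0$, and $h\leq 0$ follows by global spin flip. What remains, and what the proposition deliberately allows by taking $h\in[-\infty,\infty]^\Lambda$, is the mixed-sign case, which your sketch reduces away with an invalid gauge transformation and then never addresses. That mixed-sign case is the substance of~\cite{MR4586225}. Finally, the trailing equality $\cov_{\mu_{\beta,0}}(\sigma_x,\sigma_y)=\E_{\mu_{\beta,0}}[\sigma_x\sigma_y]$ is simply the vanishing of $\E_{\mu_{\beta,0}}[\sigma_x]$ by spin-flip symmetry at zero field, a one-line remark that your write-up should nonetheless include.
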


In particular, if the interaction $A$ is ferromagnetic 
and (for simplicity) in addition translation invariant, i.e., $A_{x,y}=A_{0,x-y}$,  then
\begin{equation}
\bar\chi_t = \chi_t =
\chi_t(0)
=
\sum_{x\in\Lambda}\E_{\mu_{t,0}}[\sigma_0\sigma_x]
\end{equation}
is the \emph{susceptibility} of the Ising model. 
It characterises the phase transition of the ferromagnetic Ising model, in the sense that, e.g., for $\Lambda\subset\Z^d$, the critical value $\beta_c$ of $\beta$ satisfies:
\begin{equation}
\beta_c 
:=
\sup\ha{ \beta>0 : \sup_{\Lambda\uparrow\Z^d}\sum_{x\in\Lambda}\E_{\mu_{\beta,0}}[\sigma_0\sigma_x]<\infty}
.
\end{equation}
By combining the Ding--Song--Sun correlation bound with the multiscale Bakry--\'Emery criterion, and in view of the above characterisation of $\beta_c$, 
the following log-Sobolev inequality up to the critical point for ferromagnetic Ising models on general geometries was proven in~\cite{MR4705299}. 
\begin{theorem}[\!\!{\cite[Theorem 1.1]{MR4705299}}] \label{thm:ising}
The log-Sobolev constant $\gamma_{\beta,h}$ of the Ising measure~\eqref{eq:  Ising model-bis} with ferromagnetic interaction satisfies:
\begin{equation}
\frac{1}{\gamma_{\beta,h}} 
\leq 
\frac{1}{2}+\int_0^\beta e^{2\int_0^t\chi_s\, ds}\, dt
,
\label{eq_bound_LSI_Ising}
\end{equation}
with $\chi_\beta = \sup_x \sum_y \E_{\mu_{\beta,0}}[\sigma_x\sigma_y]$ or more generally equal to the largest eigenvalue of $(\E_{\mu_{\beta,0}}[\sigma_x\sigma_y])_{x,y}$.
\end{theorem}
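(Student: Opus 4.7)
The plan is to combine the two-step entropy decomposition \eqref{e:ent-decomp-Ising} with the multiscale Bakry--\'Emery criterion of Theorem \ref{thm:LSI-mon}, using the Ding--Song--Sun inequality to dispense with the external field. Fix $\alpha > \beta$ (with $\alpha \downarrow \beta$ at the end), let $C_t = (tA + (\alpha-t)\id)^{-1}$ for $t \in [0,\beta]$, and split
\[
\ent_\mu(F) \;=\; \E_{\nu_{0,\beta}}\bigl[\ent_{\mu_0^\varphi}(F)\bigr] \;+\; \ent_{\nu_{0,\beta}}\bigl(\E_{\mu_0^\varphi}[F]\bigr).
\]
The first summand is the easy one: $\mu_0^\varphi$ is a Bernoulli product measure in the magnetic field $\alpha\varphi+h$, so tensorisation of the single-spin inequality of Proposition \ref{prop: standard_single_spin_LSI} (with $\gamma_0=2$) gives $\ent_{\mu_0^\varphi}(F)\leq D_{\mu_0^\varphi}(\sqrt F)$, and averaging over $\nu_{0,\beta}$ exactly reconstructs the standard Dirichlet form $D_\mu(\sqrt F)$ of the Ising measure. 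This will produce the $\tfrac12$ in \eqref{eq_bound_LSI_Ising}.

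For the second summand I would invoke a log-Sobolev inequality for $\nu_{0,\beta}$, coupled with Exercise \ref{exo: covariance}, which rewrites $\nabla_\varphi\sqrt{\E_{\mu_0^\varphi}[F]}$ in terms of single-site variances of $\sqrt F$ and reinstates $D_\mu(\sqrt F)$ at the cost of a factor $2\alpha^2/\gamma_0$. The log-Sobolev constant $\gamma_{0,\beta}$ of $\nu_{0,\beta}$ itself is furnished by Theorem \ref{thm:LSI-mon}, whose assumption \eqref{e:assCt-mon} reduces—after plugging in $\dot C_t = (\id-A)C_t^2$ and $\ddot C_t = 2(\id-A)\dot C_t C_t$ and the Hessian formula $\He V_t(\varphi) = C_t^{-1} - C_t^{-1}\Sigma_t(C_t^{-1}\varphi+h)C_t^{-1}$ of Exercise \ref{exo: formula_Hessian_sec5}—exactly to the uniform covariance bound
\[
\sup_{g\in\R^\Lambda}\|\Sigma_t(g)\| \,\leq\, -\dot\lambda_t,
\]
as spelled out in Exercise \ref{ex:Ising_pol_entstab}.

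The crux of the argument, and the step that makes essential use of the ferromagnetic hypothesis, is the identification
\[
\sup_{g\in\R^\Lambda}\|\Sigma_t(g)\| \;=\; \|\Sigma_t(0)\| \;=\; \chi_t.
\]
FKG ensures every entry of $\Sigma_t(g)$ is nonnegative; the Perron--Frobenius theorem then produces a nonnegative Perron eigenvector $v$ with $\|\Sigma_t(g)\| = (v,\Sigma_t(g)v)$; and finally the Ding--Song--Sun inequality (Proposition \ref{prop:DSS}) asserts that each entry $\cov_{\mu_{t,g}}(\sigma_x,\sigma_y)$ is maximised at $g=0$, giving $(v,\Sigma_t(g)v)\leq (v,\Sigma_t(0)v) \leq \|\Sigma_t(0)\| = \chi_t$. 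This is the main obstacle: it is the one ingredient that cannot be obtained from convexity or tensorisation alone and is precisely what restricts the argument to ferromagnetic couplings.

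Once this is in hand I would set $-\dot\lambda_t = \chi_t$ in Theorem \ref{thm:LSI-mon}, yielding $\gamma_{0,\beta}^{-1} \leq \alpha^{-2}\int_0^\beta e^{2\int_0^t\chi_s\,ds}\,dt$, and then assemble the two summands via \eqref{e:Ising-summary1} to obtain
\[
\gamma_{\beta,h}^{-1} \;\leq\; \tfrac12\bigl(1 + 2\alpha^2\gamma_{0,\beta}^{-1}\bigr) \;\leq\; \tfrac12 + \int_0^\beta e^{2\int_0^t\chi_s\,ds}\,dt.
\]
The right-hand side is independent of $\alpha$, so letting $\alpha\downarrow\beta$ is harmless and delivers \eqref{eq_bound_LSI_Ising}.
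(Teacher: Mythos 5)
Your proposal is correct and recovers the paper's own proof essentially step for step: the same two-level entropy decomposition \eqref{e:ent-decomp-Ising}, the same single-spin bound producing the $\tfrac12$, Exercise~\ref{exo: covariance} with the factor $2\alpha^2/\gamma_0$, the reduction of the multiscale Bakry--\'Emery condition to a uniform bound on $\|\Sigma_t(g)\|$ via Exercise~\ref{ex:Ising_pol_entstab}, and the combination of FKG, Perron--Frobenius and the Ding--Song--Sun inequality to replace $\sup_g\|\Sigma_t(g)\|$ by the zero-field susceptibility $\chi_t$. The final assembly and the observation that one can take $\alpha\downarrow\beta$ for free are also as in \cite{MR4705299}.
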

The above result implies that $\gamma_{\beta,h}$ is bounded below uniformly in the size of the lattice as long as $\beta<\beta_c$.
For special geometries, this could already be argued from~\cite{MR4586225} (which establishes the strong spatial mixing property).

As seen in Example \ref{ex_lattice_phi4_mf} for the lattice $\varphi^4$ models,
\eqref{eq_bound_LSI_Ising} also gives an explicit bound on the log-Sobolev constant. 
For Ising models with mean-field interactions, 
the bound implies that $\gamma_{\beta,h}$ is of order $\beta_c-\beta$, 
which is the correct scaling. 
More significantly, the bound~\eqref{eq_bound_LSI_Ising} 
implies a polynomial bound on $1/\gamma_{\beta,h}$ on $\Lambda \subset \Z^d$ in dimension five and higher
(and more generally under the so-called mean-field bound on the susceptibility, i.e., when the susceptibility diverges linearly),
where polynomial means as a function of $\beta_c-\beta$ when $\beta<\beta_c$, 
and of the lattice size when $\beta=\beta_c$.
The degree of this polynomial is not expected to be sharp unless the constant $D$ in Example~\ref{example: mean field} is equal to $1$.

\begin{example}
\label{example: mean field}
  If $\chi_\beta \leq D/(\beta_c-\beta)$ then $\gamma_{\beta,h}$ is bounded polynomially in $\beta_c-\beta$,
  and if $\chi_\beta \leq D/(\beta_c-\beta+L^{-\alpha})$ then $\gamma_{\beta_c,h}$ is polynomial in $L$.
  These assumptions hold for the ferromagnetic nearest-neighbour Ising model on $\Lambda\subset \Z^d$ when $d\geq 5$ \cite{MR678000}.
\end{example}

Similarly, one can recover the main result of \cite{MR3059200} from \eqref{eq_bound_LSI_Ising}, which improves the high temperature condition 
of \eqref{e:lsiht} in the case of ferromagnetic Ising models on graphs with maximal degree $d$ (for the mixing time and spectral gap).
For the ferromagnetic Ising model, $A$ is the negative adjacency matrix of the graph
which we now (for comparison) \emph{do not} normalise to have spectrum contained in $[0,1]$.
The condition of \eqref{e:lsiht} cannot be improved for general non-ferromagnetic interactions with bounded spectral radius,
but for ferromagnetic models the condition established in \cite{MR3059200} is
$\beta < \operatorname{artanh}(1/(d-1))$, whereas the condition of \eqref{e:lsiht} translates in this case to $\beta< 1/(2d)$.
The value $\beta_u=\operatorname{artanh}(1/(d-1))$ is the uniqueness threshold for the ferromagnetic Ising model on the infinite $d$-regular tree
and also the critical point for ferromagnetic Ising models on random $d$-regular graphs \cite{MR3059200}.
This application is summarised in the next example.

\begin{example}
   Let $A$ be the negative adjacency matrix of a finite graph of maximal degree $d$ (not normalised to have spectrum in $[0,1]$).
   If $(d-1) \tanh \beta < 1$ then $\gamma_{\beta,h}$ is uniformly bounded below (and diverges polynomially in $\beta \uparrow \beta_u= \operatorname{artanh}(1/(d-1))$ and in the size of the graph if $\beta=\beta_u$).
 \end{example}

 \begin{proof}[Sketch]
From  \cite[Section~5.4, Eq. (39)]{MR3059200}, we deduce that for any sites $x,y$ in the graph:
\begin{equation}
\label{eq: decay comparison with tree}
\E_{\mu_{\beta,0}}[\sigma_x \sigma_y]\leq 
\sum_{w \in \mathcal{S}(y)}  \big( \tanh \beta \big)^{\text{dist}(x,w) },
\end{equation}
where the bound is obtained by comparing the graph with a  $d$-regular tree  as in \cite{zbMATH06373885}
 and $\mathcal{S}(y)$ stands for the set of sites associated with $y$ in this tree.
Summing over the sites $y$ in the graph boils down to sum over all the sites in $\cup_y \mathcal{S}(y)$, i.e., in the $d$-regular tree.
Thus we get 
\begin{equation}
\chi_\beta = \sup_x \sum_y \E_{\mu_{\beta,0}}[\sigma_x\sigma_y]
\leq  1+ C_d \sum_{\ell \geq 1}  \big( (d-1) \tanh \beta \big)^\ell
\leq \frac{C_d}{1 - (d-1) \tanh \beta},
\end{equation}
i.e., one finds a divergence of the susceptibility as in Example~\ref{example: mean field} as $\beta \uparrow \beta_u=\operatorname{artanh}(1/(d-1))$.
Moreover, when $\beta$ approaches $\beta_u$,
one can use that for a graph with $N$ sites then trivially $\chi_\beta \leq N$, 
so that 
\begin{equation}
  \chi_\beta
  \leq \frac{C_d}{1 - (d-1) \tanh \beta + N^{-1}}
  .
\end{equation}
By Theorem~\ref{thm:ising}, we deduce a  polynomial lower bound for the log-Sobolev constant in the size of the graph at $\beta=\beta_u=\operatorname{artanh}(1/(d-1))$. 
\end{proof}

\subsubsection{Choice of Dirichlet form} \label{sec:choice-Dirichlet}
As in the original references \cite{MR3926125,MR4705299},
the above discussion is formulated in terms of the standard Dirichlet form \eqref{e:Ising-Dirichlet-standard}.
There exist general comparison arguments between Dirichlet forms that ensure that one can transfer the log-Sobolev inequality obtained for a certain dynamics to another one, 
see, e.g., Chapter~4 in~\cite{MR1490046}. 
Namely, if $c_1,c_2$ are families of jump rates reversible with respect to the same measure $\nu$ on $\{-1,1\}^\Lambda$ 
and $\gamma_1,\gamma_2$ denote the associated log-Sobolev constants, 
then, for $K>0$:
\begin{equation}
\forall \sigma,\sigma'\in\{-1,1\}^\Lambda: 
\qquad 
K^{-1} c_2(\sigma,\sigma')\leq c_1(\sigma,\sigma')\leq K c_2(\sigma,\sigma')
\quad \Rightarrow\quad 
K\gamma_2
\leq 
\gamma_1
\leq 
\gamma_2/K
.
\label{eq_comparison_jump_rates}
\end{equation}
One can for instance check that the heat-bath- and canonical jump rates associated with an Ising measure with interaction $\beta A$ and external field $h\in\R^\Lambda$ satisfy such a bound, 
with a constant $K$ that depends only on $\|h\|_\infty$ and $\beta \max_i\sum_j|A_{ij}|$. 
The heat-bath Dirichlet form is \cite[(3.4)--(3.6)]{MR1746301}:
\begin{equation}\label{e: HB_Dirichlet_form}
  D_\mu^{\rm HB}(F)
  =
  \frac12 \sum_{\sigma\in\{\pm 1\}^\Lambda} \sum_{x\in\Lambda} \Psi(\mu(\sigma),\mu(\sigma^x)) (F(\sigma)-F(\sigma^x))^2, \qquad \Psi(a,b)=\frac{ab}{a+b}.
\end{equation}

There are however situations in which the comparison argument~\eqref{eq_comparison_jump_rates} is not applicable.
This observation was made in~\cite{MR4408509} in the situation of the SK model treated in~\cite{MR3926125},
which is an Ising model with random couplings which can take arbitrarily large values.
This makes the constant $K$ in~\eqref{eq_comparison_jump_rates} arbitrarily small. 
In such cases, 
if one is interested in a dynamics different from the one induced by the canonical jump rates, 
it is desirable to directly obtain the log-Sobolev inequality (or modified log-Sobolev inequality) for this dynamics. 
This was done for the heat-bath dynamics, for the modified log-Sobolev inequality, in 
\cite{MR4408509,2106.04105,2203.04163}.

One can however check that the arguments of~\cite{MR3926125,MR4705299} sketched above 
are not specific to the standard Dirichlet form.  
It is in fact straightforward to apply them to other dynamics as explained below in the heat-bath case, provided:
\begin{itemize}
	\item the associated single-spin LSI constant (or modified LSI constant) is uniform in the field;
	\item the associated Dirichlet form is a concave function of the measure (this can be generalised).
\end{itemize}
Let us show how this works in the heat-bath case~\eqref{e: HB_Dirichlet_form}, for which both points are satisfied in view of Proposition~\ref{prop: HB_single_spin_LSI} and the concavity of $\Psi(a,b)=ab/(a+b)$. 
Instead of using the single-spin log-Sobolev inequality for the standard Dirichlet form of Proposition~\ref{prop: standard_single_spin_LSI}, 
one can instead apply the single-spin modified LSI with the heat-bath Dirichlet form of Proposition~\ref{prop: HB_single_spin_LSI} since it also holds uniformly in the external field.
For instance, for $\E_{\nu_{0,\beta}}[\ent_{\mu_0^\varphi}(F)]$ the bound becomes:
\begin{align}
  &\E_{\nu_{0,\beta}} \qa{\ent_{\mu_0^\varphi}(F(\sigma))}
    \nnb
  &\leq \frac12 \sum_{\sigma} \sum_x \E_{\nu_{0,\beta}}\qb{ \Psi(\mu_0^\varphi(\sigma),\mu_0^\varphi(\sigma^x)) } (F(\sigma)-F(\sigma^x))(\log F(\sigma)-\log F(\sigma^x))
    \nnb
  &\leq \frac12 \sum_{\sigma} \sum_x \Psi(\mu(\sigma),\mu(\sigma^x)) (F(\sigma)-F(\sigma^x))(\log F(\sigma)-\log F(\sigma^x))
    = D_\mu^{\rm HB}(F,\log F),
\end{align}
where the first inequality is the single-spin modified log-Sobolev inequality from Proposition~\ref{prop: HB_single_spin_LSI},
and the second inequality is Jensen's inequality, using
$\mu(\sigma) = \E_{\nu_{0,\beta}}[\mu_0^\varphi(\sigma)]$ and that $\Psi$ is concave. 
The bound for the other term in~\eqref{e:ent-decomp-Ising} works analogously and gives $4D_\mu^{\rm HB}(\sqrt{F}) \leq D_\mu^{\rm HB}(F,\log F)$
instead of $D_\mu(\sqrt{F})$ with the standard Dirichlet form.

The conclusion is that the modified log-Sobolev constant for the heat-bath Dirichlet form satisfies exactly the same bound
(up to an overall factor $4$ from different normalisations): 
\begin{equation}
  \ent_{\mu}(F) \leq \frac{1}{2\gamma_0^{\rm HB}} D_\mu^{\rm HB}(F,\log F),
  \qquad
  \frac{1}{\gamma_{0}^{\rm HB}} \leq 2 + 4 \int_0^\beta e^{-2\lambda_t} \,dt.
\end{equation}
This strategy can be further generalised to Dirichlet forms which are not concave in the measure,
see \cite{2310.04609}.

\subsection{Applications to conservative dynamics}
The criterion of Theorem~\ref{thm:LSI-mon} in principle also applies to dynamics with a conservation law. 
This is for instance the case for spin models with constrained magnetisation:
\begin{equation}
\mu_{N,m}(d\varphi)
\propto 
	e^{-V_0(\varphi)}\, d\varphi|_{X_{N,m}}
,
\label{eq_conservative_measure}
\end{equation}
with $X_{N,m}$ the hyperplane of spins with magnetisation $m\in\R$:
\begin{equation}
X_{N,m} 
:= 
\ha{\varphi\in\R^N : \sum_i\varphi_i = Nm}
.
\end{equation}
The infinite temperature case is $V_0=\sum_{i=1}^NV(\varphi_i)$ and $V:\R\to\R$ a $C^2$ potential, 
assumed to be strictly convex outside of a segment for definiteness.  
The associated conservative dynamics reads:
\begin{equation}
d\varphi_t
=
-\nabla V_0(\varphi_t)\, dt + \sqrt{2} \, dB_t
,
\label{eq_conservative_dynamics}
\end{equation}
where $(B_t)$ is a standard Brownian motion on $X_{N,0}$.

See~\cite{2310.04609} and the forthcoming work \cite{kawasakicontinuum} for results in discrete and continuous spin settings,
and \cite{MR4303014} for such results for the continuum sine-Gordon model.

\appendix
\section{Classical renormalised potential and Hamilton--Jacobi equation}
\label{app:HJ}

\subsection{Hamilton--Jacobi equation}
In classical field theory, fields are typically minimisers of an action functional $S$:
\begin{equation}
  \varphi_0 \in \operatorname{argmin}_\varphi S(\varphi) , \qquad S(\varphi) = \frac12 |\varphi|^2 + V(\varphi).
\end{equation}
These can be related to the Hamilton--Jacobi equation
\begin{equation} 
\label{e:HJ}
  \ddp{V_t}{t} = - \frac12 (\nabla V_t)^2, \qquad V_0(\varphi) = V(\varphi).
\end{equation}
Indeed, its unique viscosity solution is given by
the Hopf--Lax formula \cite[Section~3.3.2]{MR1625845}:
\begin{equation}
\label{eq: def Vt HJ}
  V_t(\varphi) = \min_{\zeta}\pa{V_0(\zeta)+\frac{t}{2} |\frac{\varphi-\zeta}{t}|^2}.
\end{equation}
In particular, the minimum of the action $S$ is given by $V_1(0)$.
Note the analogy with the renormalised potential from~\eqref{e:V-def}.
The constructions of Sections~\ref{sec:pathwise} and~\ref{sec:variational-transport}
have classical analogues in which the role of the Polchinski equation is replaced by that
of the Hamilton--Jacobi equation, as we will see in this appendix.

Unlike solutions of the Polchinski equation, which are smooth at least for $\dot C_t$
nondegenerate and a finite number of variables (as in our discussion), the Hamilton--Jacobi equation can develop shocks and the appropriate weak solutions are not necessarily smooth.
However, we can assume that $V$ is locally Lipschitz continuous and that \eqref{e:HJ} holds almost everywhere.
We refer to \cite[Chapters 3 and 10]{MR1625845} for an introduction.
In statistical physics, Hamilton--Jacobi equations are well-known to arise in mean-field \emph{limits}
of statistical mechanical models, see \cite{MR2310196} and in particular \cite{MR4275243,dominguez-Mourrat-2023} and references for recent
work in the context of disordered models, as well as Section~\ref{sec:CW} below.
Shocks of the Hamilton--Jacobi equations are related to phase transitions.
Note that the Polchinski equation in a finite number of variables 
describes finite systems (rather than limits) and thus has smooth solutions.
It therefore provides a complete description of the models (no information is lost)
while the Hamilton--Jacobi equations describing mean-field systems
are effective equations describing macroscopic information.
In the thermodynamic limit, where the number of variables tends to infinity, shocks can also form in the Polchinski equation and then likewise correspond to phase transitions
in the statistical mechanical models. We illustrate the above in the simple example of the mean-field Ising model in Section~\ref{sec:CW} below.

\medskip

We now discuss the `classical' analogues of the constructions of Sections~\ref{sec:pathwise}--\ref{sec:variational-transport} for the Hamilton--Jacobi equation.
Our goal is to emphasise the analogy and to provide a different intuition also for the stochastic
constructions. We will therefore impose convenient regularity assumptions in all statements.

We begin with a classical analogue of Corollary~\ref{cor:coupling}. Define the
classical renormalised action by
\begin{equation}
  S_t(\varphi)=\frac{|\varphi|^2}{2(1-t)}+V_t(\varphi).
\end{equation}
Minimisers of $S_t$ will take the role of the renormalised measure introduced in \eqref{e:nu-def-bis}.

\begin{proposition} \label{prop:coupling-classical}
Assume that $(\varphi_t)_{t\in [0,1]}$ is differentiable in $t\in (0,1)$,
that $\varphi_t\to 0$ as $t\to 1$,
that $V$ is smooth along $(\varphi_t)_{t\in (0,1)}$,
and that $\varphi_t$ is an isolated local minimum of $S_t$ for each $t\in [0,1)$.
Then for $t\in [0,1)$,
\begin{equation} \label{e:coupling-classical}
  \varphi_t = - \int_t^1 \nabla V_u(\varphi_u)\, du.
\end{equation}
\end{proposition}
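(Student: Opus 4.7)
The plan is to turn the statement ``$\varphi_t$ is a local minimum of $S_t$'' into an ODE along the curve, and then integrate it backward from $t=1$ using $\varphi_1=0$. The identity $\varphi_t=-\int_t^1\nabla V_u(\varphi_u)\,du$ will be obtained by showing that the curve satisfies $\dot\varphi_t=\nabla V_t(\varphi_t)$, which is the Hamilton--Jacobi analogue of the stochastic characteristic equation~\eqref{e:coupling} with the diffusion turned off.

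First I would write down the first-order condition at the local minimum,
\begin{equation}
\nabla S_t(\varphi_t)=\frac{\varphi_t}{1-t}+\nabla V_t(\varphi_t)=0,
\label{e:FOC-app}
\end{equation}
valid for $t\in[0,1)$. Differentiating \eqref{e:FOC-app} in $t$ (using that $\varphi_t$ is $C^1$ by hypothesis and $V$ is smooth along the curve) gives
\begin{equation}
\frac{\dot\varphi_t}{1-t}+\frac{\varphi_t}{(1-t)^2}+(\partial_t\nabla V_t)(\varphi_t)+\He V_t(\varphi_t)\,\dot\varphi_t=0.
\end{equation}
Next I would differentiate the Hamilton--Jacobi equation~\eqref{e:HJ} once in $\varphi$ to get $\partial_t\nabla V_t=-\He V_t\,\nabla V_t$, and substitute it above. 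Using \eqref{e:FOC-app} to eliminate $\varphi_t/(1-t)$ in favour of $-\nabla V_t(\varphi_t)$, the equation collapses to
\begin{equation}
\qB{\id+(1-t)\He V_t(\varphi_t)}\bigl[\dot\varphi_t-\nabla V_t(\varphi_t)\bigr]=0,
\end{equation}
i.e.\ $(1-t)\He S_t(\varphi_t)\bigl[\dot\varphi_t-\nabla V_t(\varphi_t)\bigr]=0$. If $\He S_t(\varphi_t)$ is invertible, this forces $\dot\varphi_t=\nabla V_t(\varphi_t)$, and one concludes by integrating from $t$ to $1$ and using $\varphi_1=0$:
\begin{equation}
\varphi_t=\varphi_1-\int_t^1\dot\varphi_u\,du=-\int_t^1\nabla V_u(\varphi_u)\,du.
\end{equation}

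The main obstacle is the invertibility of $\He S_t(\varphi_t)$ in Step~3: a local minimum of $S_t$ only guarantees that $\He S_t(\varphi_t)$ is positive semidefinite, and an isolated local minimum does not upgrade this to positive definiteness (think of $x\mapsto x^4$ at the origin). To address this I would either (i) strengthen the hypothesis to a non-degenerate local minimum, in which case the implicit function theorem even ensures the $C^1$ dependence of $\varphi_t$ on $t$ assumed in the statement, or (ii) run a perturbation argument, replacing $V$ by $V+\tfrac{\varepsilon}{2}|\cdot|^2$ to obtain strict convexity in a neighbourhood of $\varphi_t$, derive the ODE for the perturbed curve, and pass to the limit $\varepsilon\downarrow 0$ using the continuity of $\nabla V_t$ and of $\dot\varphi_t$ along the curve.

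As a sanity check, note that \eqref{e:FOC-app} together with $\dot\varphi_t=\nabla V_t(\varphi_t)$ give the autonomous linear ODE $\dot\varphi_t=-\varphi_t/(1-t)$, whose solutions are $\varphi_t=(1-t)c$; these are exactly the classical characteristics of the Hopf--Lax formula~\eqref{eq: def Vt HJ}, and they automatically satisfy $\varphi_1=0$, consistent with the boundary assumption of the proposition.
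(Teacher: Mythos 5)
Your proof follows exactly the route taken in the paper: differentiate the first-order condition $\nabla S_t(\varphi_t)=0$ in $t$, use $\partial_t\nabla V_t=-\He V_t\,\nabla V_t$ to collapse the terms to $\He S_t(\varphi_t)[\dot\varphi_t-\nabla V_t(\varphi_t)]=0$, invoke invertibility of the Hessian, and integrate using $\varphi_1=0$. Your scepticism about the final step is well founded: the paper (which labels this a ``sketch'') jumps from ``isolated local minimum'' to ``$\He S_t(\varphi_t)$ strictly positive definite,'' and, as your $x\mapsto x^4$ example shows, that implication fails in general; either of your proposed fixes (assuming a non-degenerate minimum so the implicit function theorem applies, or a vanishing-regularisation argument) would close the gap cleanly.
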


Different from the situation in Corollary~\ref{cor:coupling},
the choice of $\varphi$ is not necessarily unique because $S$ may have multiple minimisers and $V$ need not be globally smooth.
Indeed, the equation \eqref{e:coupling-classical} for $\varphi$ is nothing but the equation for
the characteristics of Hamilton--Jacobi equation \eqref{e:HJ}, see \cite[Section 3.3]{MR1625845}.
These are the curves $(\varphi_t)$ such that $\frac12 U_t(\varphi_t)^2$ is constant in $t$,
where $U_t = \nabla V_t$. 
Since by the Hamilton--Jacobi equation \eqref{e:HJ},
\begin{equation} \label{e:HJ-U}
  \ddp{U_t}{t} = -(\nabla U_t, U_t)
  ,
\end{equation}
provided $U$ is smooth at $(t,\varphi)$, then
\begin{equation} \label{e:HJ-U2}
  \ddp{}{t}U_t(\varphi_t) = -(\nabla U_t(\varphi_t), U_t(\varphi_t)) + (\nabla U_t(\varphi_t),\dot\varphi_t)
  = -(U_t(\varphi_t)-\dot\varphi_t,\nabla U_t(\varphi_t)) = 0.
\end{equation}

\begin{proof}[Sketch of Proposition~\ref{prop:coupling-classical}]
Since $\varphi_t \to 0$ as $t\to 1$, the claim is equivalent to proving that 
\begin{equation}
  \dot\varphi_t = \nabla V_t(\varphi_t).
\end{equation}
Since $\varphi_t$ minimises $S_t$, it satisfies the Euler--Lagrange equation
\begin{equation}
  \frac{\varphi_t}{1-t} + \nabla V_t(\varphi_t) =   \nabla S_t(\varphi_t) = 0.
\end{equation}
Differentiating this equation in $t$, 
\begin{equation}
  \frac{\dot\varphi_t}{1-t} + \frac{\varphi_t}{(1-t)^2} + (\partial_t \nabla V_t)(\varphi_t) + \He V_t(\varphi_t)\dot\varphi_t= 0.
\end{equation}
Using the Euler--Lagrange equation $\varphi_t/(1-t)=-\nabla V_t(\varphi_t)$
again and $\partial_t \nabla V_t = -\He V_t \nabla V_t$ 
 (which is \eqref{e:HJ-U}), we obtain
\begin{equation}
  \frac{\dot\varphi_t - \nabla V_t(\varphi_t)}{1-t} - \He V_t(\varphi_t)\nabla V_t(\varphi_t) + \He V_t(\varphi_t)\dot\varphi_t= 0.
\end{equation}
Therefore
\begin{equation}
  \He S_t(\varphi_t)[-\nabla V_t(\varphi_t) + \dot\varphi_t]
  =(\frac{1}{1-t}+\He V_t(\varphi_t))[-\nabla V_t(\varphi_t) + \dot\varphi_t]=0.
\end{equation}
Since $\varphi_t$ is an isolated local minimum of $S_t$, the Hessian on the left-hand side
is strictly positive definite so that necessarily $\nabla V_t(\varphi_t)-\dot\varphi_t=0$.
\end{proof}

The classical version of F\"ollmer's problem is the following control problem.
We continue to use the convention that the ODE is backwards in time so that the
Hamilton--Jacobi equation has initial rather than terminal condition.
Suppose that $U = (U_u(\cdot))_{u\in [0,1]}$ are given smooth functions and that $\varphi^U$ solves
the classical analogue of  \eqref{e:coupling-U}: $\varphi^U_t \to 0$ as $t\to 1$ and
\begin{equation} 
\label{e:coupling-U-HJ}
  \varphi_t^U
  = - \int_t^1 U_u(\varphi_u^U) \, du,
\end{equation}
with $\varphi_0^U$ an absolute minimum of $S$ (which we recall is the classical analogue of demanding that $\varphi_0^U$
is a random sample from a desired target measure $\nu_0 \propto e^{- \beta S}$).
The classical analogues of Theorem~\ref{thme: Follmer general} and Proposition~\ref{prop:BD} are as follows.

\begin{proposition} \label{prop:ent-classical}
  The optimal drift is given by $U= \nabla V$  in the following sense.
  For any smooth $U$ and associated trajectory $\varphi^U$ that satisfies
  \eqref{e:coupling-U-HJ}, 
  \begin{equation} \label{e:ent-classical}
    \frac12 |\varphi^U_0|^2 \leq \frac12 \int_0^1 |U_u(\varphi_u^U)|^2\, du,
  \end{equation}
  with equality if $U = \nabla V$. 
\end{proposition}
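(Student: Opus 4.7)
The plan is to mirror the proof of Theorem~\ref{thme: Follmer general}, replacing the Polchinski equation by the Hamilton--Jacobi equation~\eqref{e:HJ} and It\^o's formula by the ordinary chain rule. The key identity I would establish is
\[
V_1(0) - V_0(\varphi_0^U) \;=\; \frac12 \int_0^1 |U_t(\varphi_t^U)|^2 \, dt \;-\; \frac12 \int_0^1 |\nabla V_t(\varphi_t^U) - U_t(\varphi_t^U)|^2 \, dt,
\]
from which the proposition follows by identifying the left-hand side with $\frac12|\varphi_0^U|^2$ via the Hopf--Lax formula~\eqref{eq: def Vt HJ}, and observing that the last integral is nonnegative and vanishes precisely when $U = \nabla V$ along the trajectory.

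First I would compute the time derivative of $V_t(\varphi_t^U)$. Differentiating the backward representation~\eqref{e:coupling-U-HJ} gives $\dot\varphi_t^U = U_t(\varphi_t^U)$, so the chain rule combined with $\partial_t V_t = -\frac12 |\nabla V_t|^2$ yields
\[
\frac{d}{dt} V_t(\varphi_t^U) \;=\; -\tfrac12 |\nabla V_t(\varphi_t^U)|^2 + (\nabla V_t(\varphi_t^U), U_t(\varphi_t^U)) \;=\; \tfrac12 |U_t(\varphi_t^U)|^2 - \tfrac12 |\nabla V_t(\varphi_t^U) - U_t(\varphi_t^U)|^2,
\]
by completing the square. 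This is the deterministic analogue of the calculation in the proof of Theorem~\ref{thme: Follmer general}, with the quadratic deviation between the given drift $U$ and the optimal drift $\nabla V$ playing the role of the negative squared difference that showed up there.

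Next I would integrate this identity from $t=0$ to $t=1$. The boundary condition $\varphi_1^U = 0$ (read off from \eqref{e:coupling-U-HJ} at $t=1$) gives $V_1(\varphi_1^U) = V_1(0)$, and the Hopf--Lax formula~\eqref{eq: def Vt HJ} identifies $V_1(0) = \min_\zeta[V_0(\zeta) + \tfrac12|\zeta|^2] = \min_\zeta S(\zeta)$. Using the standing hypothesis that $\varphi_0^U$ is an absolute minimum of $S$, I conclude $V_1(0) - V_0(\varphi_0^U) = \tfrac12 |\varphi_0^U|^2$, which combined with the integrated identity gives the displayed formula above. The inequality~\eqref{e:ent-classical} then follows immediately, and the equality case $U = \nabla V$ is consistent with the fact that, by Proposition~\ref{prop:coupling-classical}, the characteristic starting from a minimiser of $S$ is precisely the trajectory driven by $\nabla V$.

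The main obstacle is regularity rather than algebraic manipulation: the Hamilton--Jacobi equation~\eqref{e:HJ} only holds almost everywhere in general, and $V_t$ can fail to be differentiable on shocks, so the chain rule step above requires $V_t$ to be $C^1$ along the specific curve $(\varphi_t^U)_{t \in (0,1)}$. Under smoothness assumptions this is automatic, but a fully rigorous statement allowing shocks would instead compare $V_0(\varphi_0^U) + \tfrac12 \int_0^1 |U_t(\varphi_t^U)|^2\, dt$ with $V_1(0)$ using the Hopf--Lax representation directly, thereby avoiding any appeal to differentiability of $V_t$.
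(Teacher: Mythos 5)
Your proof is correct and mirrors the paper's argument exactly: both integrate $\partial_t V_t(\varphi_t^U)$ using the Hamilton--Jacobi equation, complete the square to isolate the quadratic deviation $|\nabla V_t - U_t|^2$, and invoke the Hopf--Lax formula together with the absolute-minimum hypothesis to identify $V_1(0) - V_0(\varphi_0^U) = \tfrac12|\varphi_0^U|^2$. The regularity caveat you raise at the end is the same one the paper defers to by calling it a \emph{sketch} and assuming local Lipschitz continuity of $V$.
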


Comparing with Theorem~\ref{thme: Follmer general},
the classical analogue of $\bbH(\nu_0|\gamma_0)$ is simply $\frac12 |\varphi_0|^2$
and the classical analogue of the path space entropy $\bbH({\bf Q}|{\bf P})$ is the cost $\frac12 \int_0^1 |U_u(\varphi_u^U)|^2\, du
= \frac12 \int_0^1 |\dot\varphi_u^U|^2 \, du$.

\begin{proof}[Sketch]
  The proof is analogous to that of Theorem~\ref{thme: Follmer general}.
  Indeed, by the Hopf--Lax formula,
  \begin{equation}
    V_1(0) - V_0(\varphi^U_0) \leq V_0(\varphi^U_0) + \frac12 |\varphi^U_0|^2 - V_0(\varphi^U_0) = \frac12 |\varphi_0^U|^2,
  \end{equation}
  with equality if and only if $\varphi_0^U$ is an absolute minimum of $S$.
  On the other hand, assuming $V$ is locally Lipschitz continuous (so that the fundamental theorem of calculus hold), one has
  \begin{equation}
V_1 (0) - V_0 (  \varphi_0^U)
=  \int_0^1 dt \; \frac{\partial}{\partial t} V_t ( \varphi_t^U ),
\end{equation}
with $\varphi^U$ evolving according to \eqref{e:coupling-U-HJ}, and therefore (whenever the derivative exists classically)
\begin{align}
\frac{\partial}{\partial t} V_t (  \varphi_t^U)
& = (\ddp{}{t}    V_t) (  \varphi_t^U) 
+ \big( \nabla   V_t (  \varphi_t^U) ,    U_t (  \varphi_t^U) \big)
  \nnb
& = -  \frac12 (\nabla   V_t(  \varphi_t^U) )^2 +  \big( \nabla   V_t (  \varphi_t^U) ,    U_t (  \varphi_t^U ) \big)
  \nnb
& =  - \frac12  \big( \nabla   V_t(  \varphi_t^U)   - U_t (  \varphi_t^U) \big)^2  
 + \frac12 (U_t (  \varphi_t^U) )^2 ,
\end{align}
where we used the Hamilton--Jacobi equation on the second line. In particular, if $\varphi_0^U$ is an absolute minimum of $S$,
\begin{equation}
  \frac12 \int_0^1  (\nabla   U_t(\varphi_t^U))^2 \, dt  
  =
  \frac12 |\varphi_0^U|^2
  + \frac12 \int_0^1  \big( \nabla   V_t(\varphi_t^U) -    U_t(\varphi_t^U)  \big)^2 \, dt
  \geq   
  \frac12 |\varphi_0^U|^2,
\end{equation}
and the gradient of the renormalised potential $V_t$ provides the optimal drift. 
\end{proof}

As in \eqref{e:stochastic},
instead of \eqref{e:coupling-U-HJ}, 
one can also consider  the equations for the characteristics in a reduced time interval $[0,t]$ with $t \leq 1$ and
$\varphi_t^U = \varphi$,
\begin{equation} 
\label{e:coupling-U-HJ time s}
  \varphi_s^U
  = \varphi - \int_s^t U_u(\varphi_u^U) \, du , \qquad (s \leq t).
\end{equation}

\begin{proposition} \label{prop:BD-classical}
  Under certain regularity conditions on $V_0$,  and for  $t \leq 1$ then
  \begin{equation}
  \label{eq: variation HJ temps intermediaire}
    V_t(\varphi) = \inf_{U}
    \qa{V_0 \Big( \varphi- \int_0^t U_s(\varphi_s^U) \, ds \Big) 
    + \frac12 \int_0^t |U_s(\varphi_s^U)|^2 \, ds}.
  \end{equation}
\end{proposition}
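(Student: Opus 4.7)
The plan is to reduce the variational problem on the right-hand side of \eqref{eq: variation HJ temps intermediaire} to the Hopf--Lax formula \eqref{eq: def Vt HJ} by optimising first over trajectories with fixed endpoints and then over the endpoint. The underlying idea is simply that in the classical (deterministic) setting, the Bou\'e--Dupuis formula of Proposition~\ref{prop:BD} degenerates to a classical action minimisation problem for a free particle with potential energy paid only at the initial time.

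First I would differentiate the integral equation \eqref{e:coupling-U-HJ time s} in $s$ to obtain $\dot\varphi_s^U = U_s(\varphi_s^U)$, so that the kinetic term can be rewritten as
\begin{equation}
\frac12 \int_0^t |U_s(\varphi_s^U)|^2 \, ds = \frac12 \int_0^t |\dot\varphi_s^U|^2 \, ds.
\end{equation}
Setting $\zeta = \varphi_0^U = \varphi - \int_0^t U_s(\varphi_s^U)\, ds$, the right-hand side of \eqref{eq: variation HJ temps intermediaire} becomes a double infimum: an outer infimum over the endpoint $\zeta \in X$, and an inner infimum over absolutely continuous paths $s \mapsto \varphi_s$ with $\varphi_0=\zeta$ and $\varphi_t=\varphi$, of the quantity $V_0(\zeta) + \frac12 \int_0^t |\dot\varphi_s|^2\, ds$.

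Next I would evaluate the inner infimum by Cauchy--Schwarz:
\begin{equation}
\int_0^t |\dot\varphi_s|^2 \, ds \;\geq\; \frac{1}{t}\Big|\int_0^t \dot\varphi_s \, ds\Big|^2 = \frac{|\varphi-\zeta|^2}{t},
\end{equation}
with equality if and only if $\dot\varphi_s$ is constant, i.e.\ $\varphi_s = \zeta + (s/t)(\varphi-\zeta)$. This corresponds to the admissible constant drift $U_s \equiv (\varphi-\zeta)/t$, which produces exactly the straight-line trajectory with endpoints $\zeta$ and $\varphi$. Hence the inner infimum equals $|\varphi-\zeta|^2/(2t)$ and is attained. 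Substituting back, the right-hand side of \eqref{eq: variation HJ temps intermediaire} equals
\begin{equation}
\inf_{\zeta \in X} \Big[ V_0(\zeta) + \frac{|\varphi-\zeta|^2}{2t} \Big],
\end{equation}
which is precisely the Hopf--Lax formula \eqref{eq: def Vt HJ} for the viscosity solution $V_t(\varphi)$ of the Hamilton--Jacobi equation \eqref{e:HJ}.

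The main obstacle is purely regularity-theoretic rather than conceptual. For the Hopf--Lax minimiser $\zeta^*$ to exist and realise the infimum, one needs $V_0$ to be lower semicontinuous and to satisfy some coercivity (e.g.\ bounded below with super-linear growth blocked by the quadratic $|\varphi-\zeta|^2/(2t)$ for small enough $t$, or equivalently $V_0$ locally Lipschitz with at most quadratic growth). One also has to specify the class of admissible drifts $U$ (for example measurable and locally integrable so that \eqref{e:coupling-U-HJ time s} has a solution, with enough regularity to apply the fundamental theorem of calculus) and verify that the piecewise-constant drift $U_s \equiv (\varphi-\zeta^*)/t$ lies in this class. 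Consistency with Proposition~\ref{prop:coupling-classical} is automatic: the optimal drift corresponds to the characteristic flow $U_s = \nabla V_s$, which coincides with $(\varphi-\zeta^*)/t$ at the optimal endpoint by the Euler--Lagrange equation $\nabla V_0(\zeta^*) + (\zeta^*-\varphi)/t = 0$ derived from differentiating the Hopf--Lax expression in $\zeta$.
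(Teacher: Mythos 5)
Your argument is correct and takes a genuinely different route from the paper. You evaluate the right-hand side of \eqref{eq: variation HJ temps intermediaire} directly as a Hopf--Lax infimum: first you pass from drifts to arbitrary absolutely continuous trajectories via $\dot\varphi_s^U = U_s(\varphi_s^U)$, then you split the infimum into an inner optimisation over paths with fixed endpoints $(\zeta,\varphi)$, which you solve exactly by Cauchy--Schwarz (straight-line minimiser with constant drift $(\varphi-\zeta)/t$), and an outer optimisation over $\zeta$ which is exactly \eqref{eq: def Vt HJ}. The paper instead proves the two inequalities separately: the upper bound $V_t(\varphi)\leq$~RHS follows from Hopf--Lax combined with the kinetic-energy bound from Proposition~\ref{prop:ent-classical}, and equality is established by taking the characteristic drift $U=\nabla V$ and using the Hamilton--Jacobi equation to show (as in Proposition~\ref{prop:V-mart}) that $V_s(\varphi_s) + \frac12\int_s^t(\nabla V_u(\varphi_u))^2\,du$ is constant in $s$. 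Your route is more elementary and self-contained, reducing everything to the Hopf--Lax calculation; the paper's route is slightly more indirect but makes the analogy with the stochastic Bou\'e--Dupuis formula of Proposition~\ref{prop:BD} more visible, where the optimal drift is naturally $\nabla V_t$ rather than the constant $(\varphi-\zeta^*)/t$ (the two coincide along the optimal characteristic, as you correctly note in your final paragraph). One small caveat worth being explicit about in the degenerate case of shocks: when the Hopf--Lax minimiser $\zeta^*$ is non-unique (i.e., when $V_0$ is not semi-convex enough and characteristics cross), your straight-line optimal drift is also non-unique, which matches the paper's remark that the characteristic solution ``need not be unique.''
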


This is discussed in \cite[Section I.9 and I.10]{MR2179357}. Namely, with $L(t,x,v) = \frac12 v^2$ and $\psi = V_0$
(and opposite time direction), the statement follows from  \cite[Theorem 10.1]{MR2179357}.
This minimiser does not have to be unique if the Hamilton--Jacobi equation has a shock, see \cite[Theorem 9.1 and 10.2]{MR2179357}.

\begin{proof}[Sketch]
  The Hopf--Lax formula implies that
  \begin{equation} \label{e:BD-classical-pf1}
      V_t(\varphi)
      = \min_{\zeta}\pa{\frac{t}{2} |\frac{\varphi-\zeta}{t}|^2+ V_0(\zeta)}
      \leq
      \frac{1}{2t} |\varphi-\varphi_0^U|^2 + V_0(\varphi_0^U)
    \end{equation}
    where
    given any drift $U_s$, we let $\varphi_0^U$ be the final condition of \eqref{e:coupling-U-HJ time s}. 
    The first term on the right-hand side of  \eqref{e:BD-classical-pf1}  is bounded as in \eqref{e:ent-classical}:
  \begin{equation}
    \frac{1}{2t} |\varphi-\varphi_0^U|^2 \leq \frac12 \int_0^t |U_s(\varphi_s^U)|^2\, ds.
  \end{equation}
  This shows
  \begin{equation}
    V_t(\varphi)
    \leq \frac12 \int_0^t |U_s(\varphi_s^U)|^2\, ds 
    + V_0 \Big( \varphi-\int_0^t U_s(\varphi_s^U)\, ds  \Big)
    .
  \end{equation}

  On the other hand, 
  equality if $U=\nabla V$ follows from the Hamilton--Jacobi equation 
  as in Proposition~\ref{prop:V-mart}:
  if $\varphi$ is a solution to \eqref{e:coupling-classical},
 \begin{equation}
     \ddp{}{s} \qa{V_s(\varphi_s)
       + \frac12 \int_s^t (\nabla V_u(\varphi_u))^2\, ds}
       = \ddp{V_s}{s}(\varphi_s) + (\nabla V_s(\varphi_s))^2 - \frac12 (\nabla V_s(\varphi_s))^2 = 0,
     \end{equation}
     i.e.,
     \begin{equation}
       V_t(\varphi) - V_0(\varphi_0) = \frac12 \int_0^t (\nabla V_u(\varphi_u))^2\, du.
     \end{equation}
     This solution need not be unique.
\end{proof}

\subsection{Example: Mean-field Ising model}
\label{sec:CW}

We conclude this section with the example of the mean-field Ising model,
which can be described both in terms of a Polchinski equation and, in the limit, by a Hamilton--Jacobi equation. 
The mean-field Ising model is given by the measure
    \begin{equation}
      \E_\nu[G] = \frac{1}{Z_N(\beta, {\bf h} )} \sum_{\sigma \in \{\pm 1\}^N} e^{-\frac{\beta}{4 N}\sum_{i,j}(\sigma_i - \sigma_j)^2 + (\sigma, {\bf h})} G(\sigma),
    \end{equation}
where the vector ${\bf h} \in \R^N$ is a possibly site-dependent external field and $Z_N(\beta, {\bf h} )$ is a normalisation factor.
It is convenient to rewrite it     as
    \begin{equation}
      \E_\nu[G] = \frac{1}{Z_N(\beta, {\bf h} )} \sum_{\sigma \in \{\pm 1\}^N} e^{-\frac{\beta}{2}(\sigma,P\sigma) + (\sigma, {\bf h})} G(\sigma),
    \end{equation}
    where $P= \id-Q$ and $Q$ is the orthogonal projection onto constants:
    $Q f = \big( \frac{1}{N}\sum_i f_i \big) {\bf 1}$ with ${\bf 1} = (1,\dots, 1) \in \R^N$.
    For ${\bf h} = h {\bf 1}$ with $h \in \R$, the free energy $F(\beta,h)$ is the limit $N\to\infty$ of $F_N(\beta,h)$ where
    \begin{equation}
      F_N(\beta,h) = -\frac{1}{N} \log Z_N(\beta, h {\bf 1}).
    \end{equation}
    (Physically more correctly, the right-hand side should have been divided by $\beta$, but it is here more convenient to omit this.)
    It is well-known and easy to check that 
    \begin{equation}
      \ddp{F_N}{\beta} = \frac{1}{2N} \ddp{^2F_N}{h^2} -\frac12 (\ddp{F_N}{h})^2, \qquad F_N(0,h) = -\log\cosh(h),
    \end{equation}
    and thus that the limiting free energy $F$ is the viscosity solution of the Hamilton--Jacobi equation
    \begin{equation} \label{e:CW-F-HJ}
      \ddp{F}{\beta} = -\frac12 (\ddp{F}{h})^2, \qquad F(0,h) = -\log\cosh(h)
      ,
    \end{equation}
    see in particular \cite{MR4275243}. 
    Equivalently, $F$ is given by the Hopf--Lax formula which coincides with the well-known variational formula
    for the free energy alternatively obtained from Laplace's Principle (see for example \cite[Chapter~1]{MR3969983} or \cite{MR1239893}):
    \begin{equation}
      F(\beta,h)
      = \min_{g\in\R} \qa{ \frac{1}{2\beta}(g-h)^2 - \log \cosh(g)}
      = \min_{\varphi\in\R} \qa{ \frac{\beta}{2}\varphi^2 - \log \cosh(\beta\varphi +h)}.
    \end{equation}

    This Hamilton--Jacobi equation for the free energy can be related, as follows, to the Polchinski equation studied earlier  in Section \ref{subsubsec: Renormalised potential}.
    Recall that $\id =P+Q$ with $Q$ the orthogonal projection onto constant vectors in $\R^N$ and $PQ=0$.
    For $\alpha>\beta$, let
    \begin{equation} \label{e:CW-dotC}
      C_t = (tP + (\alpha-t))^{-1}, \qquad 
      \dot C_t 
      = (\alpha-t)^{-2}Q,
    \end{equation}
    where we used $PQ=0$ to simplify $\dot C_t$. For $\varphi \in \R^N$, the renormalised potential
    \eqref{eq_def_V_ising} then is
  \begin{equation}
    V_t(\varphi)
    = -\log \sum_{\sigma \in \{\pm 1\}} e^{-\frac12 (\sigma-\varphi,(tP+(\alpha-t))(\sigma-\varphi))} + \text{(constant)}
  \end{equation}
  and satisfies the Polchinski equation (for appropriate otherwise irrelevant choice of the constants):
  \begin{equation}
    \ddp{V_t}{t} = \frac12 \Delta_{\dot C_t} V_t - \frac12 (\nabla V_t)_{\dot C_t}^2
    = (\alpha-t)^{-2} \qa{\frac12 \Delta_{Q}V_t - \frac12 (\nabla V_t)_{Q}^2}    .
  \end{equation}
  Note that the right-hand side only depends on derivatives of $V_t$ in constant directions (i.e., in the image of $Q$).
  By \eqref{e:CW-dotC}, the covariance $C_\beta-C_t$ of the renormalised measure $\nu_t =  \nu_{t,\beta}$ defined
  in  \eqref{eq_nut_ising} is also proportional to $Q$ and therefore supported on constant fields.
  Thus one can restrict the renormalised potential $V_t$ to constant fields
  and the restriction satisfies a closed equation.
  Explicitly, for $\tilde\varphi \in \R$, define $\tilde V(\tilde\varphi) = \frac{1}{N} V(\tilde\varphi {\bf 1})$.
  In other words, $V_t(\varphi)= N \tilde V_t(Q\varphi)= N\tilde V_t(\frac{1}{N}\sum_i \varphi_i)$ holds for constant fields $\varphi=Q\varphi$ and
  \begin{align}
    \ddp{\tilde V_t}{t}(\tilde \varphi)
    = \frac{1}{N}\ddp{V_t}{t}(\tilde\varphi{\bf 1})
    &=
    (\alpha-t)^{-2} \frac{1}{N}\qa{ \frac12 \Delta_Q V_t - \frac12 (\nabla_Q V_t)^2} (\tilde \varphi{\bf 1})
    \nnb
    &=
      (\alpha-t)^{-2} \qa{ \frac12 \ddp{V_t}{\varphi_1^2} - \frac12 (\ddp{V_t}{\varphi_1})^2} (\tilde \varphi{\bf 1})
      \nnb
      &    = (\alpha-t)^{-2} \qa{\frac{1}{2N} \tilde V_t'' - \frac12 (\tilde V'_t)^2 }(\tilde\varphi).
  \end{align}
  Thus the reduced (one-variable) Polchinski equation that $\tilde V_t$ satisfies has a prefactor $1/N$ in front of the Laplacian term,
  and its limit $\overline{V_t}$ as $N\to\infty$ is the unique viscosity solution of the following (one-variable) Hamilton--Jacobi equation
  (now dropping tilde from the variable $\varphi$):
  \begin{equation}
  \label{eq: HJ mean field overline V}
    \ddp{\overline{V_t}}{t}(\varphi) = - \frac12 (\alpha-t)^{-2} (\overline{V_t}'(\varphi))^2, \qquad \overline{V_0}(\varphi) = \frac{\alpha}{2}\varphi^2 - \log \cosh(\alpha \varphi), \qquad (\varphi \in \R).
  \end{equation}

\medskip

To conclude this section,  we relate the Hamilton--Jacobi equation \eqref{eq: HJ mean field overline V} for the renormalised potential to the one satisfied by the free energy \eqref{e:CW-F-HJ}.
We follow the argument in Example~\ref {ex:V-F} and first note that for a constant field $\varphi {\bf 1}$ with $\varphi \in \R$, the renormalised potential can be written as (see \eqref{e:V-F})
  \begin{equation} \label{e:CW-V-F}
    V_t(\varphi {\bf 1})
    = N \qa{ \frac{\alpha-t}{2}\varphi^2 + F_N(t,(\alpha-t)\varphi) }.
  \end{equation}
  Thus $F(t,h) = - \frac12 (\alpha-t)^{-1} h^2 + \overline {V_t}((\alpha-t)^{-1}h)$
  and the Hamilton--Jacobi equation \eqref{e:CW-F-HJ} for $F$ follows from the one of $\overline{V}$ exactly as in Example~\ref{ex:V-F}.
  Indeed, in the setting of that example with the relation \eqref{e:V-F} between $V_t$ and $F_t$ one has in general that
  \begin{equation}
   \ddp{}{t} V_t = - \frac12 (\nabla V_t)_{\dot C_t}^2
    \qquad\Leftrightarrow\qquad
    \ddp{}{t} F_t 
    = -\frac12 (\nabla F_t)_{\dot \Sigma_t}^2,
  \end{equation}
  and in the present example the choice of $C_t$ corresponds to $\dot\Sigma_t = Q$.

\section*{Acknowledgements}

This work was supported by the European Research Council under the European Union's Horizon 2020 research and innovation programme
(grant agreement No.~851682 SPINRG). 

We thank D.~Chafai, R.~Eldan, N.~Gozlan, J.~Lehec, Y.~Shenfeld, and H.-T.~Yau for various discussions related to the material of this introduction,
encouragement, and for pointing out several additional references.

We thank the organisers of the following summer schools at which some of the material was presented:
the One World Probability Summer School on ``PDE and Randomness'' in Bath/Zoom organised by Hendrik Weber and Andris Gerasimovics;
the ``Summer School on SPDE and Related Fields'' in Beijing/Zoom organised by Hao Shen, Scott Smith, Rongchan Zhu, and Xiangchan Zhu;
and the Summer School on ``PDE and Randomness'' at the Max Planck Institute for Mathematics in the Sciences organised by
Rishabh Gvalani, Francesco Mattesini, Felix Otto, and  Markus Tempelmayr. In particular, we also thank Jiwoon Park for leading the exercise classes
at the last summer school.

\bibliography{all}
\bibliographystyle{plain}

\end{document}